\def\@url#1{{\tt\def~{\lower3.5pt\hbox{\char'176}}\def\_{\char'137}#1}}
\newtheorem{thm}{Theorem}[section]
\newtheorem{cor}[thm]{Corollary}
\newtheorem{prop}[thm]{Proposition}
\newtheorem{lem}[thm]{Lemma}
\theoremstyle{definition}
\newtheorem{defn}[thm]{Definition}
\newtheorem{ex}[thm]{Example}
\newtheorem{example}[thm]{Example}
\newtheorem{rem}[thm]{Remark}
\newtheorem{rmk}[thm]{Remark}
\let\c@lem=\c@thm
\let\c@cor=\c@thm
\let\c@prop=\c@thm
\let\c@lem=\c@thm
\let\c@defn=\c@thm
\let\c@exmps=\c@thm
\let\c@rem=\c@thm
\let\c@warn=\c@thm
\let\c@claim=\c@thm
\numberwithin{equation}{subsection}
\def\endash{\mathchar"2D}
\newcommand{\cell}{\endash \textnormal{cell} \endash}
\newcommand{\leftmod}{\endash \textnormal{mod}}
\newcommand{\A}{\mathsf{A}}
\newcommand{\RR}{\mathbb{R}}
\newcommand{\Mdef}[2]{\newcommand{#1}{\relax \ifmmode #2 \else $#2$\fi}}
\Mdef{\bA}{\mathbb{A}}
\Mdef{\bB}{\mathbb{B}}
\Mdef{\bC}{\mathbb{C}}
\Mdef{\bD}{\mathbb{D}}
\Mdef{\bE}{\mathbb{E}}
\Mdef{\bF}{\mathbb{F}}
\Mdef{\bG}{\mathbb{G}}
\Mdef{\bH}{\mathbb{H}}
\Mdef{\bI}{\mathbb{I}}
\Mdef{\bJ}{\mathbb{J}}
\Mdef{\bK}{\mathbb{K}}
\Mdef{\bL}{\mathbb{L}}
\Mdef{\bM}{\mathbb{M}}
\Mdef{\bO}{\mathbb{O}}
\Mdef{\bP}{\mathbb{P}}
\Mdef{\bQ}{\mathbb{Q}}
\Mdef{\bR}{\mathbb{R}}
\Mdef{\bS}{\mathbb{S}}
\Mdef{\bT}{\mathbb{T}}
\Mdef{\bU}{\mathbb{U}}
\Mdef{\bV}{\mathbb{V}}
\Mdef{\bW}{\mathbb{W}}
\Mdef{\bX}{\mathbb{X}}
\Mdef{\bY}{\mathbb{Y}}
\Mdef{\bZ}{\mathbb{Z}}
\Mdef{\mcA}{\mathcal{A}}
\Mdef{\mcB}{\mathcal{B}}
\Mdef{\mcC}{\mathcal{C}}
\Mdef{\mcD}{\mathcal{D}} 
\Mdef{\mcE}{\mathcal{E}}
\Mdef{\mcF}{\mathcal{F}}
\Mdef{\mcG}{\mathcal{G}}
\Mdef{\mcH}{\mathcal{H}} 
\Mdef{\mcI}{\mathcal{I}}
\Mdef{\mcJ}{\mathcal{J}}
\Mdef{\mcK}{\mathcal{K}}
\Mdef{\mcL}{\mathcal{L}}
\Mdef{\mcM}{\mathcal{M}}
\Mdef{\mcN}{\mathcal{N}}
\Mdef{\mcO}{\mathcal{O}}
\newcommand{\cO}{\mathcal O}
\newcommand{\cP}{\mathcal{P}}
\newcommand{\cC}{\mathcal C}
\newcommand{\cT}{\mathcal T}
\Mdef{\mcP}{\mathcal{P}}
\Mdef{\mcQ}{\mathcal{Q}}
\Mdef{\mcR}{\mathcal{R}}
\Mdef{\mcS}{\mathcal{S}}
\Mdef{\mcT}{\mathcal{T}}
\Mdef{\mcU}{\mathcal{U}}
\Mdef{\mcV}{\mathcal{V}}
\Mdef{\mcW}{\mathcal{W}}
\Mdef{\mcX}{\mathcal{X}}
\Mdef{\mcY}{\mathcal{Y}}
\Mdef{\mcZ}{\mathcal{Z}}
\Mdef{\At}{\tilde{A}}
\Mdef{\Bt}{\tilde{B}}
\Mdef{\Ct}{\tilde{C}}
\Mdef{\Et}{\tilde{E}}
\Mdef{\Ht}{\tilde{H}}
\Mdef{\Kt}{\tilde{K}}
\Mdef{\Lt}{\tilde{L}}
\Mdef{\Mt}{\tilde{M}}
\Mdef{\Nt}{\tilde{N}}
\Mdef{\Pt}{\tilde{P}}
\newcommand{\tcT}{\widetilde{\mathcal T}}
\newcommand{\tcD}{\widetilde{\mathcal D}}
\Mdef{\tA}{\tilde{A}}
\Mdef{\tB}{\tilde{B}}
\Mdef{\tC}{\tilde{C}}
\Mdef{\tE}{\tilde{E}}
\Mdef{\tH}{\tilde{H}}
\Mdef{\tK}{\tilde{K}}
\Mdef{\tL}{\tilde{L}}
\Mdef{\tM}{\tilde{M}}
\Mdef{\tN}{\tilde{N}}
\Mdef{\tP}{\tilde{P}}
\Mdef{\ft}{\tilde{f}}
\Mdef{\xt}{\tilde{x}}
\Mdef{\yt}{\tilde{y}}
\Mdef{\Ab}{\overline{A}}
\Mdef{\Bb}{\overline{B}}
\Mdef{\Cb}{\overline{C}}
\Mdef{\Db}{\overline{D}}
\Mdef{\Eb}{\overline{E}}
\Mdef{\Fb}{\overline{F}}
\Mdef{\Gb}{\overline{G}}
\Mdef{\Hb}{\overline{H}}
\Mdef{\Ib}{\overline{I}}
\Mdef{\Jb}{\overline{J}}
\Mdef{\Kb}{\overline{K}}
\Mdef{\Lb}{\overline{L}}
\Mdef{\Mb}{\overline{M}}
\Mdef{\Nb}{\overline{N}}
\Mdef{\Ob}{\overline{O}}
\Mdef{\Pb}{\overline{P}}
\Mdef{\Qb}{\overline{Q}}
\Mdef{\Rb}{\overline{R}}
\Mdef{\Sb}{\overline{S}}
\Mdef{\Tb}{\overline{T}}
\Mdef{\Ub}{\overline{U}}
\Mdef{\Vb}{\overline{V}}
\Mdef{\Wb}{\overline{W}}
\Mdef{\Xb}{\overline{X}}
\Mdef{\Yb}{\overline{Y}}
\Mdef{\Zb}{\overline{Z}}
\Mdef{\db}{\overline{d}}
\Mdef{\hb}{\overline{h}}
\Mdef{\qb}{\overline{q}}
\Mdef{\rb}{\overline{r}}
\Mdef{\tb}{\overline{t}}
\Mdef{\ub}{\overline{u}}
\Mdef{\vb}{\overline{v}}
\Mdef{\hc}{\hat{c}}
\Mdef{\he}{\hat{e}}
\Mdef{\hf}{\hat{f}}
\Mdef{\hA}{\hat{A}}
\Mdef{\hH}{\hat{H}}
\Mdef{\hJ}{\hat{J}}
\Mdef{\hM}{\hat{M}}
\Mdef{\hP}{\hat{P}}
\Mdef{\hQ}{\hat{Q}}
\Mdef{\thetab}{\overline{\theta}}
\Mdef{\phib}{\overline{\phi}}
\Mdef{\uA}{\underline{A}}
\Mdef{\uB}{\underline{B}}
\Mdef{\uC}{\underline{C}}
\Mdef{\uD}{\underline{D}}
\Mdef{\bolda}{\mathbf{a}}
\Mdef{\boldb}{\mathbf{b}}
\Mdef{\boldD}{\mathbf{D}}
\newcommand{\lra}{\longrightarrow}
\newcommand{\Ch}{\mathrm{Ch}}
\newcommand{\co}{\colon}
\Mdef{\av}{\mathrm{av}}
\Mdef{\infl}{\mathrm{inf}}
\Mdef{\defl}{\mathrm{def}}
\Mdef{\res}{\mathrm{res}}
\Mdef{\ind}{\mathrm{ind}}
\Mdef{\coind}{\mathrm{coind}}
\Mdef{\id}{\mathrm{Id}}
\newcommand{\adjunction}[4]{
\xymatrix{
#1:#2 \ar@<0.7ex>[r] &
\ar@<0.7ex>[l] #3:#4
}}
\newcommand{\reverseadjunction}[4]{
\xymatrix{
#1:#2 \ar@<-0.7ex>[r] &
\ar@<-0.7ex>[l] #3:#4
}}
\definecolor{orange}{RGB}{255, 127, 0}
\definecolor{darkgreen}{RGB}{35, 127, 89}
\newcommand{\T}{ {\mathbb{T}} }
\newcommand{\smashprod}{\wedge}
\newcommand{\Hom}{\mathrm{Hom}}
\Mdef{\bhom}{\mathbf{\hat{H}om}}
\Mdef{\Mod}{\mathrm{mod}}
\newcommand{\colimit}{\mathrm{colim}}
\newcommand{\mackey}[1]{\textnormal{Mackey}(#1)}
\newcommand{\barmackey}[1]{\overline{\textnormal{Mackey}}(#1)}
\newcommand{\mackeyfunctor}[1]{\textnormal{Mack}_{#1}}
\newcommand{\trivialgroup}{{\{1\}}}
\newcommand{\trivgp}{\trivialgroup}
\newcommand{\burnsidering}{\A_\bQ}
\newcommand{\preextend}{i_\#}
\newcommand{\preforget}{i^\#}
\newcommand{\otherfunctor}{\varepsilon_\#}
\newcommand{\preinflate}{\varepsilon^\#}
\title{An introduction to algebraic models
for rational \texorpdfstring{$G$}{G}-spectra}
\author[Barnes]{David Barnes}
\address[Barnes]{Mathematical Sciences Research Centre, Queen's University Belfast}
\author[K\k{e}dziorek]{Magdalena K\k{e}dziorek}
\address[K\k{e}dziorek]{IMAPP, Radboud University Nijmegen}
\begin{document}
\begin{abstract}
The project of Greenlees et al.\ on understanding rational $G$-spectra in terms of algebraic categories has had many successes, classifying rational $G$-spectra for finite groups, $SO(2)$, $O(2)$, $SO(3)$, free and cofree $G$-spectra as well as
rational  toral $G$-spectra for arbitrary compact Lie groups.

This paper provides an introduction to the subject in two parts.  
The first discusses rational $G$-Mackey functors, the action of the Burnside ring and change of group functors.
It gives a complete proof of the well-known classification of rational Mackey functors for finite $G$.  
The second part discusses the methods and tools from equivariant stable homotopy theory 
needed to obtain algebraic models for rational $G$-spectra.
It gives a summary of the key steps in the classification of rational $G$-spectra
in terms of a symmetric monoidal algebraic category. 

Having these two parts in the same place allows one to clearly see 
the analogy between the algebraic and topological classifications.
\end{abstract}

\maketitle

\setcounter{tocdepth}{1}
\tableofcontents

\section{Introduction}

Suppose $G$ is a compact Lie group. 
The project of  understanding the homotopy theory of rational $G$-spectra
in terms of algebraic categories was started by Greenlees in 1999 \cite{gre99}. It has had many successes since, classifying
rational $G$-spectra for finite groups, $SO(2)$, $O(2)$, $SO(3)$,
free and cofree $G$-spectra as well as rational toral $G$-spectra for an arbitrary compact Lie group $G$.
The project has expanded to consider (commutative) ring spectra in terms of these algebraic models.
This paper provides an introduction to this body of work,
whose papers often assume a deep familiarity with rational equivariant homotopy theory.

Starting from the definition of rational $G$-Mackey functors for a finite group $G$, we explain how
the rational Burnside ring acts on this category and how change of groups functors
behave. Combining these functors, we give an accessible account of the structure and
classification of rational $G$-Mackey functors in terms of group rings
and a comparison of the monoidal structures. We explain how this classification is the
template for the classifications of rational $G$-spectra for varying $G$.

The second half of the paper considers rational $G$-spectra for $G$ a compact Lie group.
Here the rational Burnside ring appears as the ring of self maps
of the sphere spectrum. We describe the structure of this ring and
its idempotents. Following the template, we show how the same approach
(Burnside ring actions, restriction to subgroups and fixed points)
is used in the various classifications of rational $G$-spectra.
We also discuss the additional complexities (isotropy separation,
localisations and cellularisations) that are needed for spectra.

The conjecture by Greenlees states that for any compact Lie group $G$ there is a nice graded abelian category $\mcA(G)$, such that the category $d\mcA(G)$ of differential objects in $\mcA(G)$\footnote{In other words objects of $\mcA(G)$ equipped with a differential} with a certain model structure is Quillen equivalent to the category of rational $G$-spectra
$$G\endash\text{Sp}_\bQ \simeq_Q d\mcA(G).$$
\emph{Nice} here means that the category $\mcA (G)$ is of homological dimension (that is, injective dimension) equal to the rank of $G$ and of a form that is easy to use in calculations. 
If we find such $\mcA(G)$ and  $d\mcA(G)$ equipped with a model structure Quillen equivalent to $G\endash\text{Sp}_\bQ$, we say that $\mcA(G)$ is an \emph{abelian model} and $d\mcA(G)$ is an \emph{algebraic model} for rational $G$-spectra. The conjecture
is known for quite a number of groups in some form.
Particularly useful examples are the case of $O(2)$ as given in  \cite{barneso2} and \cite{greo2}; and 
$SO(3)$ as given in \cite{KedziorekSO(3)} and \cite{GreenleesSO3}. 
We refer to the introduction of \cite{tnqcore} for a more complete summary of the known cases. 

Since \cite{tnqcore} was published, there have been significant developments in the field.
This includes extending the existence of algebraic models to profinite groups 
(see \cite{barnessugrue20}
and \cite{sugruethesis}) as well as taking various complexities with monoidal structure 
into account (see \cite{BGKeinfty}, \cite{BGKso2einfty} and \cite{JordanLucaCofree}).
We refer the reader to \cite{BalchinGreenlees} for a related result stating that a nice stable, monoidal model category has a model built from categories of modules over completed rings in an adelic fashion.
Recently, monoidal algebraic models (when $G$ is a finite abelian group) were used in establishing the uniqueness of naive and genuine commutative ring structures on the rational equivariant complex $K$-theory spectrum, 
see Bohmann et al.\  \cite{WIT3}, \cite{WIT3_paper2}.

The aim of this paper is to give a new introduction and explanation to some of these existing results
while demonstrating the analogy between the algebraic and topological sides. 
By doing so, we intend to give an 
overview of the methods and tools used in obtaining algebraic models for rational $G$-spectra and provide a step-by-step guide, at least in some cases. 

\subsection*{Acknowledgements}
The second author is grateful for support from the Dutch Research Council (NWO)
under Veni grant 639.031.757.

\part{The structure of rational Mackey functors}\label{part:1}

\section{An introduction to rational Mackey functors}

For $G$ a finite group, the category of Mackey functors is an abelian category
that is important to group theorists 
(see Nakaoka \cite{Nakaoka16} for example)
and algebraic topologists working equivariantly.
Working over the rationals greatly simplifies the category,
rationally it splits into
a direct product of modules over group rings of the
Weyl groups of subgroups of $G$ (counted up to conjugacy).
We use the rationals for definiteness, but the splitting holds when working 
over any commutative ring $R$ where $|G|^{-1} \in R$.

This result is stated formally as Theorem \ref{thm:mainsplitting}.
It was proven independently by two sources,
Greenlees and May \cite[Appendix A]{gremay95}
and Th\'{e}venaz and Webb \cite[Theorems 8.3 and 9.1]{tw90} 
The former took an approach from equivariant stable homotopy theory, the latter
from algebra.
We find the former approach simpler, so we follow it,
expanding substantially on the proofs.
General references for the results on Mackey functors are
Greenlees \cite{projremarks92},
Greenlees and May \cite{GM92structure}, 
Th\'{e}venaz and Webb \cite{tw95}
and
Webb \cite{webb00}.
For a discussion on Mackey functors for compact Lie groups see \cite{LewisMackeyConpactG}.

From the many equivalent definitions of a Mackey functor, we choose
one in terms of induction and restriction maps.
\begin{defn}
A \emph{rational $G$-Mackey functor}\index{Mackey functor} $M$ is:
\begin{itemize}
\item a collection of $\mathbb{Q}$-modules $M(G/H)$ for each subgroup $H \leqslant G$,
\item for subgroups $K,H \leqslant G$ with $K\leqslant H$ and any $g \in G$
we have a \emph{restriction} map, an \emph{induction} map and a \emph{conjugation} map
\[
R^H_K \co M(G/H)\rightarrow M(G/K), \quad
I^H_K \co M(G/K)\rightarrow M(G/H) \quad \text{ and } \quad
C_g \co M(G/H)\rightarrow M(G/gHg^{-1}).
\]
\end{itemize}
These maps satisfy the following conditions.

\begin{enumerate}
\item For all subgroups $H$ of $G$ and all  $h\in H$
\[
R^H_H=\id_{M(G/H)}=I^H_H \quad \text{ and } \quad  C_h=\id_{M(G/H)}.
\]

\item For $L\leqslant K\leqslant H$ subgroups of $G$ and $g,h\in G$, there are composition rules
\[
I^H_L=I^H_K\circ I^K_L, \qquad R^H_L=R^K_L\circ R^H_K, \quad \text{ and } \quad  C_{gh}=C_g\circ C_h.
\]
The first two are \emph{transitivity} of induction and restriction. The last is \emph{associativity} of conjugation.

\item For $g\in G$ and $K\leqslant H$ subgroups of $G$, there are composition rules
\[
R^{gHg^{-1}}_{gKg^{-1}}\circ C_g=C_g\circ R^H_K  \quad \text{ and } \quad  I^{gHg^{-1}}_{gKg^{-1}}\circ C_g=C_g\circ I^H_K.
\]
This is the \emph{equivariance} of restriction and induction.

\item For subgroups $K,L\leqslant H$ of $G$
\[
R^H_K\circ I^H_L=\sum_{x\in[K\diagdown H \diagup L]} I^K_{K\cap xLx^{-1}}\circ C_x\circ R^L_{L\cap x^{-1}Kx}.
\]
This condition is known as the \emph{Mackey axiom}.
\end{enumerate}

We denote the category of rational Mackey functors by $\mackey{G}$.
\end{defn}

To save space, many texts shorten the input and write $M(H) := M(G/H)$.
This notation aligns better with the terms induction and restriction, but precludes the following
remark.

\begin{rmk}
Since every finite $G$-set is (up to non-canonical isomorphism) a disjoint union of orbits $G/H$,
we can (by choosing such an isomorphism) extend any Mackey functor to take input
from the category of finite $G$-sets and $G$-maps by sending disjoint union to
direct sums. We will repeatedly use this extension (without further notice) in the adjunctions on Mackey functors that we define later.
\end{rmk}

Lindner \cite{lindner76} uses this extension to give an equivalent definition
of Mackey functors in terms of
a pair of covariant and contravariant functors from finite $G$-sets to
$\mathbb{Q}$-modules. These functors agree on objects,
send disjoint unions to direct sums
and satisfy a pullback condition (that is equivalent to the
Mackey axiom). The equivalence is proven via the decomposition
\[
G/K \times G/H = \coprod_{x\in[K\diagdown H \diagup L]} G/ (H \cap xKx^{-1}) .
\]
A further definition in terms of spans of $G$-sets (the Burnside category) is also given in that reference.

We illustrate how the structure works for two small groups.
\begin{ex}
Let $G=C_2 = \{1, \sigma \}$. A rational Mackey functor is a pair of $\bQ$-modules
$M(C_2/C_2)$ and $M(C_2/\trivgp)$. The conjugation maps imply that both $\bQ$-modules
have an action of $C_2$, but it is trivial on the first module.
There is a restriction map, which commutes with the $C_2$-actions
\[
M(C_2/C_2) \lra M(C_2/1)^{C_2} \hookrightarrow M(C_2/\trivgp).
\]
Similarly there is an induction map, which commutes with the $C_2$-actions
\[
M(C_2/\trivgp) \lra M(C_2/\trivgp)/{C_2} \lra M(C_2/C_2).
\]
The Mackey axiom (for $H=C_2$, $K=L= \trivgp$) says that
\[
R_\trivgp^{C_2} \circ  I_\trivgp^{C_2}
= \sum_{x\in[\trivgp \diagdown C_2 \diagup \trivgp]} I^\trivgp_{\trivgp}\circ C_\trivgp \circ R^\trivgp_{\trivgp}
= \sum_{x\in  C_2 } C_x = \id + C_{\sigma}
\]
\end{ex}

\begin{ex}
Let $G=C_6$. A rational $C_6$-Mackey functor consists of  four $\bQ$-modules with maps between them.
We draw this as a Lewis diagram below.
The looped arrows indicate the group that acts on each module.
Section \ref{sec:c6examples} gives several examples of rational $C_6$-Mackey functors.  

\[
\xymatrix@R+0.5cm@C+0.5cm{
& M(C_6/C_6)
\ar@(ul,ur)^{\trivgp}
\ar@/^1.5pc/[dr]|{R_{C_2}^{C_6}}
\ar@/_1.5pc/[dl]|{R_{C_3}^{C_6}}
\\
M(C_6/C_3)
\ar@(u,l)_{C_6/C_3}
\ar@/_1.5pc/[dr]|{R_{C_1}^{C_3}}
\ar@/_1.5pc/[ur]|{I_{C_3}^{C_6}}
&&
M(C_6/C_2)
\ar@(u,r)^{C_6/C_2}
\ar@/^1.5pc/[dl]|{R_{C_1}^{C_3}}
\ar@/^1.5pc/[ul]|{I_{C_2}^{C_6}}
\\
&
M(C_6/C_1)
\ar@(dr,dl)^{C_6}
\ar@/^1.5pc/[ur]|{I_{C_1}^{C_2}}
\ar@/_1.5pc/[ul]|{I_{C_1}^{C_3}}
}
\]
The Mackey axiom also implies that
\[
R_{C_3}^{C_6} \circ I_{C_2}^{C_6} =
I_{C_1}^{C_3} \circ R_{C_1}^{C_2}
\quad
\textrm{ and }
\quad
R_{C_2}^{C_6} \circ I_{C_3}^{C_6} =
I_{C_1}^{C_2} \circ R_{C_1}^{C_3}.
\]
\end{ex}

There are several general constructions that give examples of Mackey functors.
\begin{ex}\label{ex:constantMackey}
The \emph{constant Mackey functor} at a $\bQ$-module $A$ takes value $A$ at
each $G/H$. The conjugation and restriction maps are the identity map of $A$, induction from $G/K$ to $G/H$ is multiplication by the index of $K$ inside $H$. Given that the restriction maps are identities, the Mackey axiom prevents the
induction maps from being identity maps.

We may also define the
\emph{co-constant Mackey functor} at a $\bQ$-module $A$ takes value $A$ at
each $G/H$. The conjugation and induction maps are the identity of $A$
and restriction from $G/H$ to $G/K$
is multiplication by the index of $K$ inside $H$.
\end{ex}

The similarity between the constant and co-constant Mackey functors is an example of duality of Mackey functors. 
That is, the co-constant Mackey functor is dual to the constant one, in the following sense.
\begin{lem}
Given a Mackey functor $M$, there is a \emph{dual Mackey functor} $DM$,
that at $G/H$ takes value
\[
DM (G/H) = \Hom(M(G/H), \bQ).
\]
The conjugation maps for $M$ induce conjugation maps for $DM$, though
the contravariance of $D(-)$ requires us to use $C_{g^{-1}}$ for $M$ to define $C_g$ for $DM$.
The induction maps of $DM$ are induced from the restriction maps of $M$
and the restriction maps are induced from the induction maps of $M$.
\end{lem}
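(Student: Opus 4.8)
The plan is to define each structure map of $DM$ as the $\bQ$-linear dual of a structure map of $M$, and then to deduce the four axioms of a Mackey functor for $DM$ from those for $M$, using that $\Hom(-,\bQ)$ is a contravariant functor. Write $f^* := \Hom(f,\bQ)$. For subgroups $K\leqslant H\leqslant G$ and $g\in G$, I would declare the structure maps of $DM$ to be
\[
C_g := (C_{g^{-1}})^*, \qquad I^H_K := (R^H_K)^*, \qquad R^H_K := (I^H_K)^*,
\]
where on the right the maps are those of $M$ and $DM(G/H) = \Hom(M(G/H),\bQ)$ as in the statement. The first thing to check is that sources and targets are correct: $C_{g^{-1}}\co M(G/gHg^{-1})\to M(G/H)$ dualises to $DM(G/H)\to DM(G/gHg^{-1})$, which is the shape of a conjugation map (so the inversion is forced here by variance, as the statement's remark indicates); $R^H_K\co M(G/H)\to M(G/K)$ dualises to $DM(G/K)\to DM(G/H)$, the shape of an induction map; and $I^H_K$ dualises to $DM(G/H)\to DM(G/K)$, the shape of a restriction map. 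So $DM$ is at least a collection of $\bQ$-modules equipped with maps of the required shapes.

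Axioms (1)--(3) should then fall out formally. For (1), $(R^H_H)^* = (I^H_H)^* = \id^*=\id$, and $(C_{h^{-1}})^*=\id^*=\id$ for $h\in H$. For (2), transitivity of induction for $DM$, namely $I^H_L = I^H_K\circ I^K_L$, is the dual of transitivity of restriction for $M$, namely $R^H_L = R^K_L\circ R^H_K$ (using $(f\circ g)^* = g^*\circ f^*$); transitivity of restriction for $DM$ is dual to transitivity of induction for $M$; and associativity of conjugation for $DM$ is dual to that for $M$ once one uses $(gh)^{-1} = h^{-1}g^{-1}$. For (3), the equivariance identity for $DM$ dualises to the equivariance identity (3) for $M$ with $g$ replaced by $g^{-1}$ and $(H,K)$ by $(gHg^{-1},gKg^{-1})$, and the inversion built into $C_g$ on $DM$ is exactly what makes the two sides line up.

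The axiom that needs genuine work is the Mackey axiom (4), and I expect this to be the main obstacle. Dualising its left-hand side for $DM$ gives $R^H_K\circ I^H_L = (I^H_K)^*\circ (R^H_L)^* = (R^H_L\circ I^H_K)^*$. The idea is to apply the Mackey axiom \emph{for $M$} but with the roles of $K$ and $L$ interchanged, rewriting $R^H_L\circ I^H_K$ as a sum over $y\in[L\diagdown H \diagup K]$; then dualise each summand (which reverses the composite and turns $C_y$ for $M$ into $C_{y^{-1}}$ for $DM$, restrictions into inductions and inductions into restrictions); and finally reindex by $x = y^{-1}$. The hard part will be checking that $y\mapsto y^{-1}$ is a bijection from a set of double-coset representatives for $L\diagdown H\diagup K$ onto one for $K\diagdown H\diagup L$, and that under it $K\cap y^{-1}Ly \mapsto K\cap xLx^{-1}$, $L\cap yKy^{-1}\mapsto L\cap x^{-1}Kx$ and $C_{y^{-1}}^{DM}\mapsto C_x^{DM}$, so that the resulting sum is exactly $\sum_{x\in[K\diagdown H\diagup L]} I^K_{K\cap xLx^{-1}}\circ C_x\circ R^L_{L\cap x^{-1}Kx}$ computed in $DM$; that is, one must see that the $K\leftrightarrow L$ swap forced by dualising a composite is undone precisely by the inversion of double-coset representatives, and that the $g\mapsto g^{-1}$ in the conjugation maps of $DM$ is exactly calibrated to match it. Once this is done, the remaining points are routine: $\Hom(-,\bQ)$ preserves finite direct sums, so $DM$ is compatible with the extension of Mackey functors to all finite $G$-sets discussed above, and a map $M\to N$ of Mackey functors induces, by dualising componentwise, a map $DN\to DM$, so that $D$ is a contravariant endofunctor of $\mackey{G}$.
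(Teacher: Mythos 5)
Your construction and verification are correct, and they follow the only natural route --- the paper in fact states this lemma without proof, treating the dualisation as routine, so your argument simply supplies the details it leaves implicit. The one step you flag as needing work is indeed fine: inversion sends the double coset $LyK$ to $Ky^{-1}L$, hence induces a bijection $[L\diagdown H \diagup K]\to[K\diagdown H \diagup L]$, and since $y(K\cap y^{-1}Ly)y^{-1}=yKy^{-1}\cap L$ the dualised summands match the Mackey-axiom terms for $DM$ exactly as you predict.
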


Many well-known structures arising from group theory can be assembled into Mackey functors.

\begin{ex}
Let $R(G)$ denote the ring of complex representations of the finite group $G$.
We define a rational Mackey functor $M_R$ by $M_R(G/H) = R(H) \otimes \bQ$,
with induction and restriction induced by induction and restriction of representations.

The ring structure on $R(G)$ gives more structure to this Mackey functor;
it is in fact a \emph{Tambara functor}. See Strickland \cite{stricktamb} for a survey of such functors
and related notions like Green functors.
\end{ex}

\begin{ex}
The zeroth equivariant stable homotopy groups of a $G$-spectrum form a Mackey functor.
For $X$ an orthogonal $G$-spectrum over a complete $G$-universe,
let $[-, X]^G \otimes \bQ$ denote the functor that sends $G/H$ to
\[
[\Sigma^\infty G/H_+, X]^G \otimes \bQ
\cong
[\Sigma^\infty S^0, X]^H \otimes \bQ
\cong
\pi_0^H(X)\otimes \bQ.
\]
We leave the induction, restriction and conjugation maps to the standard references
of May \cite[Chapter XIX]{may96} and Lewis et al.\ \cite[Section V.9]{lms86}.

We also note that $G$-equivariant cohomology theories use Mackey functors
as their coefficients, rather than abelian groups.
\end{ex}

\begin{ex}\label{ex:fixedpointmackeyfunctors}
Given a $\bQ[G]$-module $V$, we may define a rational Mackey functor $\mackeyfunctor{G}(V)$ (also called $FP_V$) as taking value $V^H$ at $G/H$.
The restriction maps are inclusion of fixed points and the induction maps are given by coset orbits.

We could also define a Mackey functor $FQ_V$ by taking value $V/H$ at $G/H$, with induction maps 
the quotient maps and restriction given by summing over a coset. 
The two functors are related via duality, see \cite[Proposition 4.1]{tw95}.

In the rational case the values $V^H$ and $V/H$  are isomorphic, as we now explain.
Since $G$ is finite, there is a diagram
\[
\xymatrix@C+0.5cm{
V^H \ar[r]_{\textrm{inclusion}} &
V \ar[r]_{\textrm{quotient}} \ar@/_1pc/[l]_{\av_H}   &
V/H  \ar@/_1pc/[l]_{\av'_H}
}
\]
where
\[
\av_H(x) = \frac{1}{|H|} \sum_{h \in H} hx
\quad \textrm{and} \quad
\av'_H([x]) = \frac{1}{|H|} \sum_{h \in H} hx .
\]
The composite
of inclusion and quotient $V^H \cong V/H$
is an isomorphism with inverse given by
the composite $\av_H \circ \av_H'$.
\end{ex}
When $V=\bQ$ with trivial $G$-action, $\mackeyfunctor{G}(\bQ)$ is an instance of
the constant Mackey functor, see Example \ref{ex:constantMackey}.

\begin{ex}\label{ex:burnsideringasmackey}
The rational Burnside rings for subgroups of $G$ assemble into a Mackey functor,
$\burnsidering(G/H)=\burnsidering(H)$,
the rational Grothendieck ring of finite $H$-sets, see for example \cite[Section 1.2]{tomdieck1} for details. 
The structure maps of this Mackey functor are the usual
restriction and induction of sets with group actions.
Moreover, the restriction maps are maps of rings.
See Examples \ref{ex:c6burnside} and~\ref{ex:c6burnsidemackey}
for worked examples in the case of $G=C_6$. 

\end{ex}

As is well-known, the rational Burnside ring splits, which is an immediate consequence of the following result.
\begin{lem}[tom Dieck's Isomorphism]\label{lem:IsoBurnsideFinite}
For $G$ a finite group, there is an isomorphism of rings
\[
\burnsidering(G) \lra C(\mathrm{Sub}(G)/G, \bQ) = C(\mathrm{Sub}(G), \bQ)^G = \prod_{(H) \leqslant G} \bQ
\]
where $\mathrm{Sub}(G)/G$ is the set of conjugacy classes of subgroups
of $G$ and $C(\mathrm{Sub}(G)/G, \bQ)$ is the set of continuous maps
between the two spaces (both equipped with the discrete topology). Here $(H)$ denotes the conjugacy class of $H$ in $G$.
We define $C(\mathrm{Sub}(G), \bQ)$ to have a $G$-action
by conjugation on the domain.

We define $e_H^G \in \burnsidering(G)$ to be the element of the Burnside ring corresponding to
the characteristic function of $(H)$ in $C(\mathrm{Sub}(G)/G, \bQ)$: the function that sends $(H)$ to $1 \in \bQ$ and all the other points to $0$. We may omit the superscript $G$ from the notation on idempotents when the context is clear. 
\end{lem}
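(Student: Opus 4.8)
The plan is to establish tom Dieck's isomorphism by constructing an explicit ring homomorphism to the function ring, then proving it is injective on a spanning set and surjective by a dimension/cardinality count after tensoring with $\bQ$. The key map is the "mark homomorphism" (also called the table of marks map): to a finite $H$-set $T$ we assign the function $\phi_T$ on $\mathrm{Sub}(H)$ defined by $\phi_T(K) = |T^K|$, the cardinality of the $K$-fixed points. First I would check this is well-defined on the Grothendieck group (it is additive on disjoint unions, which correspond to sums) and multiplicative, since $(S \times T)^K = S^K \times T^K$ gives $\phi_{S\times T} = \phi_S \cdot \phi_T$ pointwise; the one-point set goes to the constant function $1$, so we get a ring homomorphism $A(H) \to C(\mathrm{Sub}(H),\bZ)$. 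It is $G$-equivariant for the conjugation actions (for the statement with $G$ acting), and it clearly factors through $G$-conjugacy-invariant functions since $|T^K| = |T^{gKg^{-1}}|$. Tensoring with $\bQ$ gives $\burnsidering(G) \to C(\mathrm{Sub}(G),\bQ)^G = C(\mathrm{Sub}(G)/G,\bQ)$.

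Next I would show this map becomes an isomorphism after tensoring with $\bQ$. For injectivity (integrally, in fact), order the conjugacy classes of subgroups $(H_1),\ldots,(H_n)$ by non-decreasing order $|H_i|$, so that if $(H_i)$ is subconjugate to $(H_j)$ then $i \le j$ (refine the partial order to a total one). The matrix of marks $\bigl(|(G/H_j)^{H_i}|\bigr)_{i,j}$ is then triangular: $|(G/H_j)^{H_i}| \ne 0$ forces $H_i$ to be subconjugate to $H_j$, hence $i \le j$, so the matrix is lower-triangular with respect to this ordering, wait—let me be careful: $(G/H_j)^{H_i} \neq \emptyset$ iff $H_i$ is subconjugate to $H_j$, which in our ordering means $i \leq j$, so the matrix is upper-triangular, with nonzero diagonal entries $|(G/H_i)^{H_i}| = |W_G(H_i)| = |N_G(H_i)/H_i| \geq 1$. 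Being triangular with nonzero diagonal, the matrix is invertible over $\bQ$, so the classes $[G/H_j]$ map to a $\bQ$-basis of $C(\mathrm{Sub}(G)/G, \bQ)$. Since the $[G/H_j]$ span $\burnsidering(G)$ over $\bQ$ (every finite $G$-set is a disjoint union of orbits) and there are exactly $n = |\mathrm{Sub}(G)/G|$ of them, matching $\dim_\bQ C(\mathrm{Sub}(G)/G,\bQ) = n$, the map is a $\bQ$-linear isomorphism; being also a ring homomorphism, it is a ring isomorphism. Finally I would identify the image of the idempotent $e_H^G$: since it is defined as the preimage of the characteristic function of $(H)$, there is nothing to prove beyond noting such a preimage exists and is unique by the isomorphism just established, and that it is idempotent because the characteristic function is.

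\textbf{The main obstacle} is not the algebra of the triangular matrix — that is routine — but rather being careful about the two descriptions of the target and the precise bookkeeping of conjugacy. One must verify that $C(\mathrm{Sub}(G)/G,\bQ)$, $C(\mathrm{Sub}(G),\bQ)^G$, and $\prod_{(H)\leq G}\bQ$ are genuinely the same object: the first two agree because a continuous (= arbitrary, as the space is discrete) $G$-invariant function on $\mathrm{Sub}(G)$ is the same as a function on the orbit set, and the product description is just choosing the basis of characteristic functions of orbits. A secondary subtlety is ensuring the total order refining subconjugacy exists and that the diagonal entries are what is claimed; one should note $|(G/H)^H| = |(N_G(H)/H)|$, which could a priori be confused with $|(G/H)^H|$ counting all fixed cosets — but $gH \in (G/H)^H$ iff $H g H = gH$ iff $g^{-1} H g \supseteq H$ iff (finiteness) $g \in N_G(H)$, so indeed $|(G/H)^H| = |N_G(H)/H| = |W_G(H)|$. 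With these points pinned down the proof is essentially the standard table-of-marks argument, and rationally one does not even need the more delicate integral statements about the cokernel.
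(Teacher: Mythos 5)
Your proof is correct, and the map you construct (the mark homomorphism $T \mapsto ((H) \mapsto |T^H|)$) is exactly the one used in the paper; where you diverge is in how the map is shown to be bijective. The paper notes that domain and codomain have the same $\bQ$-dimension and then deduces surjectivity from the explicit idempotent formula of Lemma \ref{lem:idempotentformula} (Gluck's formula, which the paper cites rather than proves), so its proof is really a deferral to that combinatorial identity. You instead prove invertibility directly by the classical table-of-marks argument: ordering conjugacy classes of subgroups by non-decreasing order, the matrix $\bigl(|(G/H_j)^{H_i}|\bigr)$ is triangular because a nonzero entry forces $H_i$ to be subconjugate to $H_j$, and the diagonal entries $|(G/H_i)^{H_i}| = |W_G H_i|$ are nonzero (your verification that $gH \in (G/H)^H$ forces $g \in N_G(H)$ is the right point to pin down, and the containment direction you momentarily wrote the wrong way round is harmless since conjugation preserves cardinality for finite $H$). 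Your route is more self-contained, needing no explicit formula for the idempotents; the paper's route, by leaning on Lemma \ref{lem:idempotentformula}, simultaneously puts on record the concrete expressions for $e_H^G$ in the basis $\{[G/K]\}$ that are used repeatedly in later sections (e.g.\ in the proofs of Lemma \ref{lem:inflationidempotent} and Theorem \ref{thm:mackeydecomp}), which is the main thing your argument does not provide. Your careful identification of $C(\mathrm{Sub}(G)/G,\bQ)$, $C(\mathrm{Sub}(G),\bQ)^G$ and $\prod_{(H)\leqslant G}\bQ$, and the observation that the idempotent claim about $e_H^G$ is immediate once the ring isomorphism is in hand, are both fine.
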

\begin{proof}
The isomorphism is defined by sending a $G$-set $T$ to the map $(H) \mapsto |T^H|$.
Since the domain and codomain have the same dimension, the result follows from
proving the map is surjective, which follows from the formulas of the following lemma.
\end{proof}

Lemma \ref{lem:IsoBurnsideFinite} describes idempotents of the rational Burnside ring of $G$ in a simple, but more abstract way. It is often useful to write these idempotents in terms of the additive basis of the Burnside ring. The formula is given by Gluck \cite[Section 3]{gluck}.
\begin{lem}\label{lem:idempotentformula}
For $H$ a subgroup of $G$, the idempotent $e_H^G \in \burnsidering(G)$ is given by the formula
\[
e_H^G =  \sum_{K \leqslant H} \frac{|K|}{|N_G H|} \mu(K,H) G/K
\]
where $\mu(K,H) = \Sigma_i (-1)^i c_i$ for $c_i$ the number of
strictly increasing chains of subgroups from $K$ to $H$ of length $i$.
The length of a chain is one less than the number of subgroups involved
and $\mu(H,H)=1$ for all $H \leqslant G$.

For $H$ a subgroup of $G$, the set $G/H$ can be expressed as a sum of idempotents
\[
G/H = \sum_{K \leqslant H} \frac{|N_G K|}{|H|} e_K^G.
\]
\end{lem}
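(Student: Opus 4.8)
The plan is to verify both displayed identities by applying tom Dieck's mark homomorphism $\phi\colon\burnsidering(G)\to C(\mathrm{Sub}(G)/G,\bQ)$, $T\mapsto\bigl((L)\mapsto|T^L|\bigr)$ of Lemma~\ref{lem:IsoBurnsideFinite}, and then reading off coefficients against the idempotent basis $\{e_K^G\}$. The one non-formal ingredient is the table of marks: for subgroups $K,L\leqslant G$,
\[
|(G/K)^L|\;=\;\frac{|N_G L|}{|K|}\cdot\#\{\,L'\leqslant K : L'\sim_G L\,\}.
\]
One proves this by observing that $gK\in(G/K)^L$ if and only if $g^{-1}Lg\leqslant K$, then partitioning the set $\{g\in G : g^{-1}Lg\leqslant K\}$ according to the value of the subgroup $g^{-1}Lg$; each non-empty fibre $\{g : g^{-1}Lg=L'\}$ is a single right coset of $N_G L$, and dividing by $|K|$ (to pass from $g$ to $gK$) gives the displayed count.

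For the first identity I would compute $\phi$ of the element defined by the right-hand side. By linearity of $\phi$ and the table of marks, its $(L)$-component, for a subgroup $L\leqslant G$, is
\[
\sum_{K\leqslant H}\frac{|K|}{|N_G H|}\,\mu(K,H)\,|(G/K)^L|
\;=\;\frac{|N_G L|}{|N_G H|}\sum_{K\leqslant H}\mu(K,H)\,\#\{L'\leqslant K : L'\sim_G L\}.
\]
Writing $\#\{L'\leqslant K : L'\sim_G L\}$ as a sum of $1$'s over such $L'$ and interchanging the order of summation rewrites this as $\frac{|N_G L|}{|N_G H|}\sum_{L'\,\sim_G\,L,\ L'\leqslant H}\bigl(\sum_{L'\leqslant K\leqslant H}\mu(K,H)\bigr)$. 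Since the alternating chain count $\mu(K,H)$ of the statement is the Möbius function of the poset of subgroups (Philip Hall), the inner interval sum equals $\delta_{L',H}$, so the whole expression collapses to $\frac{|N_G L|}{|N_G H|}$ when $L\sim_G H$ and to $0$ otherwise; as $|N_G L|=|N_G H|$ in the former case, this is exactly the characteristic function of $(H)$. Because $\phi$ is injective (the table of marks is nonsingular), the element defined by the formula must be $\phi^{-1}(\chi_{(H)})=e_H^G$, as claimed. As a by-product this exhibits a preimage of every characteristic function, which is precisely the surjectivity of $\phi$ that was deferred to this lemma in the proof of Lemma~\ref{lem:IsoBurnsideFinite}.

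For the second identity, since $\phi(e_K^G)=\chi_{(K)}$ and $\phi$ is an isomorphism, every $G$-set $T$ satisfies $T=\sum_{(K)}|T^K|\,e_K^G$ in $\burnsidering(G)$. Taking $T=G/H$ and inserting the table of marks formula gives
\[
G/H\;=\;\sum_{(K)}\frac{|N_G K|}{|H|}\,\#\{K'\leqslant H : K'\sim_G K\}\;e_K^G,
\]
and regrouping the conjugates — using that $e_K^G=e_{K'}^G$ and $|N_G K|=|N_G K'|$ whenever $K\sim_G K'$ — turns this sum over conjugacy classes into the stated sum $\sum_{K'\leqslant H}\frac{|N_G K'|}{|H|}e_{K'}^G$ over all subgroups of $H$. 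I expect the main obstacle to be the first computation: getting the table of marks formula right up to $G$-conjugacy, and then arranging the interchange of summation so that the Möbius identity applies in the interval form $\sum_{L'\leqslant K\leqslant H}\mu(K,H)=\delta_{L',H}$. Everything else is bookkeeping with the fact that $|N_G(-)|$ is a conjugacy invariant.
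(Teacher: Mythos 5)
Your argument is correct, and in fact the paper offers no proof of this lemma at all: it simply cites Gluck \cite[Section 3]{gluck}, and moreover its proof of Lemma \ref{lem:IsoBurnsideFinite} \emph{uses} these formulas to get surjectivity of the mark homomorphism. Your route — compute the marks $|(G/K)^L|=\frac{|N_G L|}{|K|}\#\{L'\leqslant K: L'\sim_G L\}$, apply Philip Hall's identification of the alternating chain count with the M\"obius function of the subgroup poset, and collapse the double sum via $\sum_{L'\leqslant K\leqslant H}\mu(K,H)=\delta_{L',H}$ — is essentially Gluck's own proof, so you have supplied the missing external ingredient rather than diverged from the paper. Two points worth making explicit to keep the logic non-circular with Lemma \ref{lem:IsoBurnsideFinite}: the injectivity of $\phi$ that you invoke should be justified independently of that lemma, either by noting that the table of marks is triangular with nonzero diagonal entries $|N_GK|/|K|$ when subgroups are ordered by size, or more economically by observing that your computation already shows every characteristic function $\chi_{(H)}$ lies in the image, so $\phi$ is surjective and hence bijective by the equality of dimensions; and your closing remark that this establishes the surjectivity deferred in the proof of Lemma \ref{lem:IsoBurnsideFinite} is exactly right and is how the two lemmas are meant to fit together. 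The second identity is then pure bookkeeping, as you say, since $e_K^G=e_{K'}^G$ and $|N_GK|=|N_GK'|$ for $G$-conjugate $K,K'$.
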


\begin{ex}
Let $G=C_2$, the ring $\burnsidering(G)$ is additively generated by the one-point space $1=C_2/C_2$
which is the monoidal unit, and $C_1/\trivgp$. 
The only non-evident multiplication is
\[
C_1/\trivgp \times C_1/\trivgp = 2 C_1/\trivgp.
\]
It follows that $e_1 = (1/2) C_2$ is an idempotent, as is $e_{C_2} = 1- e_{1}$.
Looking at the fixed points of these sets show that the idempotents are correctly named
and we recover the isomorphism
\[
\burnsidering(C_2) \cong  \bQ \langle e_1 \rangle \times \bQ \langle e_{C_2} \rangle.
\]
See Example \ref{ex:c6burnside} for the case of $G=C_6$.

\end{ex}

\begin{rmk}\label{rmk:restrictidempotents}
The restriction map $\burnsidering(H) \to \burnsidering(K)$ in terms of
\[
C(\mathrm{Sub}(H)/H, \bQ) \to C(\mathrm{Sub}(K)/K, \bQ)
\]
corresponds to precomposing with the map including subgroups
$\mathrm{Sub}(K) \to \mathrm{Sub}(H)$ and taking suitable orbits.
We can use this description to see how the restriction map interacts with idempotents.
For $A$ and $H$ subgroups of $G$, the restriction of the idempotent
$e_H^G$ to $A$ is still an idempotent, but it is not always $e_H^A$.
Instead,
\[
R_A^G (e_H^G) =   \sum_{\substack{K \leqslant_A A \\ K \in (H)_G }} e_K^A
\]
where the sum runs over $A$-conjugacy classes of subgroups $K$ of $A$, such that
$K$ is $G$-conjugate to $H$.

We see that if $H$ is not $G$-subconjugate to $A$, this will be zero.
Contrastingly, if $H$ is $G$-conjugate to $A$, then the only term in the summand
will be $K=A$ and $R_A^G (e_H^G) = e^A_A$.
\end{rmk}

Given a $G$-Mackey functor $M$, we can define an action of the Burnside ring $\burnsidering(H)$ on the
abelian group $M(G/H)$ by
\[
[H/K]: =I_{K}^{H} \circ  R_{K}^{H} \co M(G/H) \lra M(G/H)
\]
and extending linearly from the additive basis for $\burnsidering(H)$ given by
$H/K$ for subgroups $K$ of $H$.
The Mackey axiom implies that this action is compatible with the
multiplication of $\burnsidering(H)$, so that $M(G/H)$ is a module over
$\burnsidering(H)$. Moreover, the following square commutes.
\[
\xymatrix{
\burnsidering(G/H) \otimes M(G/H)
\ar[r] \ar[d]_{R_K^H \otimes R_K^H} &
M(G/H) \ar[d]_{R_K^H} \\
\burnsidering(G/K) \otimes M(G/K)
\ar[r] &
M(G/K)
}
\]
The action of Burnside rings is compatible with induction in the sense
of the \emph{Frobenius reciprocity} relations.
For $\alpha \in \burnsidering(G/H)$, $\beta \in \burnsidering(G/K)$,
$m \in M(G/H)$ and $n \in N(G/H)$
\[
\alpha \cdot I_K^H (m) = I_K^H ( R_K^H (\alpha) \cdot m)
\quad \quad
I_K^H(\beta) \cdot n = I_K^H ( \beta \cdot R_K^H(n))
\]
See \cite[Definition 2.3 and Example 2.11]{yosh80}.

\begin{lem}
Given an idempotent $e \in \burnsidering(G)$ and a $G$-Mackey functor $M$,
we can define a new Mackey functor $eM$ by
\[
(eM)(G/H) = R_H^G(e) M(G/H).
\]
\end{lem}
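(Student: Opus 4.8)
The plan is to verify that the assignment $H \mapsto R_H^G(e)M(G/H)$ is closed under all the structure maps of a Mackey functor and that these inherited maps satisfy the axioms. First I would note that, by Remark \ref{rmk:restrictidempotents}, each $R_H^G(e)$ is an idempotent in $\burnsidering(H)$, so $R_H^G(e)M(G/H)$ is a well-defined direct summand of the $\burnsidering(H)$-module $M(G/H)$; in fact $(eM)(G/H)$ is precisely the image of the idempotent endomorphism of $M(G/H)$ given by acting with $R_H^G(e)$. This is the clean way to package the construction: $eM$ is the image of an idempotent natural transformation $M \to M$, and images of idempotent endomorphisms in an abelian category are automatically subobjects on which all compatible structure restricts.

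Second I would check that the restriction, induction and conjugation maps of $M$ carry $(eM)(G/H)$ into the appropriate summand. For restriction $R_K^H \co M(G/H) \to M(G/K)$ this is exactly the commuting square displayed just before Example \ref{ex:fixedpointmackeyfunctors}'s successor (the square with vertical maps $R_K^H \otimes R_K^H$ and $R_K^H$): it says $R_K^H$ is a map of modules over the restriction homomorphism $\burnsidering(G/H) \to \burnsidering(G/K)$, and since $R_K^G(R_H^G(e)) = R_K^G(e)$ by transitivity of restriction of idempotents, $R_K^H$ sends $R_H^G(e)M(G/H)$ into $R_K^G(e)M(G/K)$. For induction $I_K^H$ one uses Frobenius reciprocity: $I_K^H(R_K^H(e')\cdot n)$ with $e' = R_H^G(e)$ equals $R_H^G(e)\cdot I_K^H(n)$, so $I_K^H$ takes $(eM)(G/K)$ into $(eM)(G/H)$. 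For conjugation $C_g \co M(G/H) \to M(G/gHg^{-1})$ one observes that conjugation is compatible with the conjugation isomorphism $\burnsidering(H) \to \burnsidering(gHg^{-1})$, and that this isomorphism carries $R_H^G(e)$ to $R_{gHg^{-1}}^G(e)$ since $e$ is $G$-fixed — this follows from the equivariance relations for restriction and conjugation in Definition of a Mackey functor together with $C_g$ fixing $e \in \burnsidering(G)$.

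Finally I would observe that, once all structure maps of $M$ are seen to restrict to maps among the subobjects $(eM)(G/H)$, the Mackey functor axioms (1)--(4) for $eM$ are inherited verbatim from those for $M$, since they are identities among composites of these same maps and each $(eM)(G/H)$ is a submodule of $M(G/H)$ on which equality of two maps can be checked by restricting the corresponding equality in $M$. I expect the main obstacle to be bookkeeping rather than mathematical depth: the one point requiring genuine care is the conjugation compatibility, i.e.\ showing the conjugation isomorphism $\burnsidering(H) \cong \burnsidering(gHg^{-1})$ sends $R_H^G(e)$ to $R_{gHg^{-1}}^G(e)$, which ultimately rests on $e$ being central/$G$-invariant in $\burnsidering(G)$ (every element of $\burnsidering(G)$ is $G$-fixed under its own conjugation action, so in particular $C_g(e) = e$) combined with the equivariance axiom $R_{gHg^{-1}}^{G} \circ C_g = C_g \circ R_H^G$ applied to $e$.
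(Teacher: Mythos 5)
Your proof is correct and follows the same route as the paper: the restriction and conjugation maps of $M$ restrict to the summands $R_H^G(e)M(G/H)$ because the Burnside ring action is compatible with restriction (and conjugation), and the induction maps restrict by Frobenius reciprocity, so all axioms are inherited. The paper's own argument is just a terser version of this, leaving the conjugation compatibility you spell out implicit.
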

\begin{proof}
The conjugation and restriction maps are as for $M$, since these
actions are compatible with restriction.

By Frobenius reciprocity, the induction map for $K \leqslant H$ gives a map
\[
R_K^G(e) M(G/K)
\xrightarrow{I_K^H}
R_H^G(e) M(G/H). \qedhere
\]
\end{proof}

\section{Change of group functors}

As one should expect, we have adjunctions
coming from inclusions of subgroups and projections onto quotients.

\begin{defn}
Given an inclusion of a subgroup $i \co H \to G$, there
are functors
\[
\preextend \co \mackey{G} \lra \mackey{H}
\quad
\text{and}
\quad
\preforget \co \mackey{H} \lra \mackey{G}.
\]
Using the extension of Mackey functors to finite $G$-sets,
we may define the functor $\preforget$ as pre-composition with the
forgetful functor on sets with group actions.
The functor $\preextend$ is defined by pre-composition with extension of groups.
Thus for $M \in \mackey{G}$, $N \in \mackey{H}$, $A$ a $G$-set and $B$ a $H$-set,
\[
(\preextend M )(B) = M(G \times_H B)
\qquad \qquad
(\preforget N)(A) = N(i^* A).
\]
Similar definitions hold for the induction, restriction
and conjugation maps, and for morphisms of Mackey functors.
\end{defn}

\begin{lem}
Given an inclusion of a subgroup $i \co H \to G$, there is an adjunction
\[
\adjunction{\preextend}{\mackey{G}}{\mackey{H}}{\preforget}
\]
with each functor both left and right adjoint to each other.
\end{lem}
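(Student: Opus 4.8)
The plan is to establish the double adjunction by producing the unit and counit natural transformations directly and checking the triangle identities, using the key set-theoretic fact that for an inclusion $i \co H \to G$ the functor $G \times_H (-)$ on $H$-sets is simultaneously left and right adjoint to the restriction functor $i^*$ on $G$-sets. Since both $\preextend$ and $\preforget$ are defined by precomposition (with $G\times_H(-)$ and with $i^*$ respectively) applied to the Mackey functor viewed as a functor on $G$-sets resp.\ $H$-sets, the adjunctions on Mackey functors should be obtained by "transporting" the adjunctions on the underlying $G$-set and $H$-set categories.

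First I would recall the relevant $G$-set adjunctions: the counit $\epsilon_A \co G \times_H (i^*A) \to A$, $(g,a)\mapsto ga$, and the unit $\eta_B \co B \to i^*(G\times_H B)$, $b \mapsto (e,b)$, exhibit $G\times_H(-) \dashv i^*$; and because $H$ has finite index in $G$, there is also a "transfer" style pair exhibiting $i^* \dashv G\times_H(-)$ (concretely, $G\times_H B \cong \map^H(G, B)$ as $G$-sets since $G/H$ is finite, giving the opposite adjunction). Precomposing a Mackey functor $M \in \mackey{G}$ (extended to $G$-sets) with these natural transformations of $G$-sets, and using that $M$ sends $G$-maps to the appropriate module maps, yields natural transformations $\preforget \preextend M \to M$ and $M \to \preextend \preforget M$ (and their opposites), which are the unit/counit of the Mackey-functor adjunctions. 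The triangle identities for the Mackey-functor adjunctions then follow formally by applying $M$ (resp.\ $N$) to the triangle identities already satisfied at the level of $G$-sets. One also checks compatibility with the induction, restriction and conjugation maps, but since all of these are built from the $G$-set (resp.\ $H$-set) structure and $M$ is functorial, this is automatic.

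The main obstacle will be organizing the bookkeeping for the \emph{second} adjunction, where $\preextend$ is the \emph{left} adjoint of $\preforget$: this relies on the isomorphism $G\times_H B \cong \map^H(G,B)$ of $G$-sets (valid precisely because $[G:H]<\infty$), and one must make sure the induced natural isomorphism of Mackey functors is compatible with all structure maps and that the correct unit and counit are identified. I would handle this by exhibiting the explicit $G$-set isomorphism $\nu_B \co G\times_H B \xrightarrow{\sim} \map^H(G,B)$, $\nu_B(g,b)(g') = $ (the $H$-translate of $b$ determined by writing $g'g \in H \cdot(\text{coset rep})$), verifying naturality in $B$, and then transporting the standard adjunction $i^* \dashv \map^H(G,-)$ through $\nu$. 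The remaining verifications (that the two composite adjunctions have the claimed units and counits, and that everything descends to $\mackey{G}$ and $\mackey{H}$) are then routine naturality chases that I would state but not belabour.
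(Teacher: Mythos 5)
There is a genuine gap: the ``key set-theoretic fact'' your whole strategy rests on is false. On categories of sets with group action, induction $G\times_H(-)$ is left adjoint to $i^*$, but it is \emph{not} also right adjoint to it: the right adjoint of restriction is coinduction $\map^H(G,-)$, and this is not isomorphic to induction even though $[G:H]<\infty$. For instance, for $B$ a one-point $H$-set one has $G\times_H B \cong G/H$ while $\map^H(G,B)$ is a single point; more generally, a right adjoint of $i^*$ must preserve the terminal object, which $G\times_H(-)$ visibly does not, and induced sets have no $G$-fixed points while coinduced ones do. The ``induction $\cong$ coinduction in finite index'' you invoke is a Wirthm\"{u}ller-type isomorphism that holds only in additive contexts (modules over group rings, the span/Burnside category, equivariant spectra after stabilisation), not for $G$-sets. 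So there is no $G$-set isomorphism $\nu_B\co G\times_H B \xrightarrow{\sim}\map^H(G,B)$ to transport, and the second Mackey-level adjunction cannot be obtained by precomposing with a set-level adjunction $i^*\dashv G\times_H(-)$, because no such adjunction exists. (Incidentally, the half you flagged as delicate, with $\preextend$ as left adjoint, is the straightforward one; it is the adjunction with $\preforget$ as left adjoint that needs genuinely more than the set-level adjunction $G\times_H(-)\dashv i^*$ transported contravariantly.)

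Your construction of the first adjunction is fine and is exactly the paper's: apply the restriction (contravariant) functoriality of $M$ and $N$ to $\eta_B\co B\to i^*(G\times_H B)$ and $\varepsilon_A\co G\times_H i^*A\to A$, and deduce the triangle identities from the set-level ones. The correct repair of the second adjunction is not a second set-level adjunction but a second use of the Mackey-functor structure: apply the \emph{induction} (covariant) maps of $M$ and $N$ to the very same $\eta_B$ and $\varepsilon_A$. Because a Mackey functor is simultaneously covariant and contravariant on finite $G$-sets, the single adjunction $G\times_H(-)\dashv i^*$ produces units and counits in both directions, which is precisely how the paper obtains that $\preextend$ and $\preforget$ are both left and right adjoint to each other (this ambidexterity lives in the Burnside/span category, which is where your ``transfer'' intuition actually becomes a theorem). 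One should also record, for either half, that the resulting transformations are genuinely maps of Mackey functors, i.e.\ compatible with the other variance; this uses that the naturality squares of $\eta$ and $\varepsilon$ are pullbacks together with the Mackey axiom, a point your proposal (like the paper) treats as routine.
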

\begin{proof}
To see that this is an adjunction with $\preextend$ as the left adjoint, we take a map $f \co M \to \preforget N$
and construct a map $\bar{f} \co \preextend M \to N$. Consider an $H$-set $B$,
the map $\bar{f}(B)$ is given by the composite
\[
M(G \times_H B) \xrightarrow{f(B)}
N(i^* (G \times_H B))
\xrightarrow{N(\eta_{B})}
N(B)
\]
where the second map is induced (by using restriction maps) from the canonical map of $H$-sets
$\eta_B \co B \lra i^*( G \times_H B)$.
Conversely, given $g \co \preextend M \to N$ we construct
$\hat{g} \co M \to \preforget N$ in a similar way.
Given a $G$-set $A$, $\hat{g}(A)$ is the composite
\[
M(A) \xrightarrow{{M(\varepsilon_A)}}
M(G \times_H i^* A)  \xrightarrow{g(i^* A)} N(i^* A)
\]
where the first map is induced (by using restriction maps) from
$\varepsilon_A \co G \times_H i^* A \lra A$.

Now we take a map $f \co M \to \preforget N$ and show that it is equal to
$\hat{\bar{f}} \co M \to \preforget N$ (the other case of $\bar{\hat{g}}=g$  is similar).
The map $\hat{\bar{f}}$ is defined by taking the lower path in the following diagram.
\[
\xymatrix@C+1cm{
M(A) \ar[r]^{f(A)} \ar[d]_{M(\varepsilon_A)}
&
N(i^* A) \\
M(G \times_H i^* A)  \ar[r]^{f(G \times_H i^* A)}
&
N(i^* G \times_H i^* A)
\ar[u]_{N(\eta_{i^* A})}
}
\]
That we have an adjunction  follows as
\[
N(\eta_{i^* A}) \circ f(G \times_H i^* A) \circ M(\varepsilon_A)
=
N(\eta_{i^* A}) \circ N(i^* \varepsilon_A) \circ  f(A)
=
f(A)
\]
by the triangle identity for sets with group actions.

The proof that $(\preforget , \preextend)$ is an adjunction is very similar to the
previous case. The primary difference is that one uses
induction maps rather than restriction maps.
\end{proof}

We want to reproduce this construction for a quotient $\varepsilon \co G \to G/N$.
To make an adjunction, we need to restrict the category of $G$-Mackey functors somewhat.
We take a strong restriction, so that the two functors we produce will be both left and right adjoint
to each other.

\begin{defn}
For $N$ a normal subgroup of $G$, the category
$\mackey{G}/N$ is the full subcategory of $\mackey{G}$ of Mackey functors that are trivial on
those $G/K$ where $K$ does not contain $N$.
\end{defn}

\begin{defn}
Given a quotient map $\varepsilon \co G \to G/N$ for $N$ a normal subgroup of $G$, there
are functors
\[
\preinflate  \co \mackey{G}/N \lra \mackey{G/N}
\quad
\text{and}
\quad
\otherfunctor  \co \mackey{G/N} \lra \mackey{G}/N.
\]
Thus for $M \in \mackey{G}/N$, $M' \in \mackey{G/N}$, $K$ a subgroup of
$G$ containing $N$ and $B$ a $G/N$-set,
we define
\[
\preinflate  M(B) = M(\varepsilon^* B)
\quad \text{and} \quad
\otherfunctor  M'(G/K) = M'((G/N)/(K/N) ).
\]
If $K$ does not contain $N$ we set $\otherfunctor  M'(G/K)=0$.

The structure maps of $M$ and $M'$ are defined in terms of these formulas, as
are maps of Mackey functors.
\end{defn}

\begin{lem}
Given $N$ a normal subgroup of $G$, there is an adjunction
\[
\adjunction{\preinflate }{\mackey{G}/N}{\mackey{G/N}}{\otherfunctor }
\]
with each functor both left and right adjoint to each other.
\end{lem}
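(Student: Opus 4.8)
The plan is to mimic the proof just given for the adjunction $(\preextend, \preforget)$, replacing the unit/counit of the $(G\times_H -, i^*)$ adjunction on sets with the analogous natural transformations attached to the quotient map $\varepsilon \co G \to G/N$, and being careful everywhere about the side conditions "$K \supseteq N$" that define the subcategory $\mackey{G}/N$. First I would pin down the two natural transformations on set-level: for a $G/N$-set $B$ there is the canonical isomorphism $\varepsilon^*\!B \to$ (itself) — in fact $\varepsilon^*$ is fully faithful, since $G/N$-sets are precisely the $G$-sets on which $N$ acts trivially — and for a $G$-set $A$ with $N$ acting trivially there is a canonical iso $A \cong \varepsilon^*(A/N)$. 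So the content is that on the relevant subcategory of sets, $\varepsilon^*$ and "take $N$-orbits $=$ take $N$-fixed points" are mutually inverse equivalences. This is where the restriction to $\mackey{G}/N$ earns its keep: a Mackey functor in $\mackey{G}/N$ is determined by its values on $G$-sets all of whose isotropy contains $N$, i.e. on $G$-sets with trivial $N$-action, and on those sets $\varepsilon^*$ is an equivalence onto $G/N$-sets.

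Next I would check that $\preinflate$ and $\otherfunctor$ land where claimed and are genuinely inverse on the nose (not just adjoint): for $M \in \mackey{G}/N$ and a subgroup $K \supseteq N$, $(\otherfunctor \preinflate M)(G/K) = (\preinflate M)((G/N)/(K/N)) = M(\varepsilon^*((G/N)/(K/N))) = M(G/K)$, using the canonical iso $\varepsilon^*((G/N)/(K/N)) \cong G/K$ of $G$-sets; and $M$ vanishes on $G/K$ with $N \not\subseteq K$ by definition, while $\otherfunctor\preinflate M$ does too by fiat. Conversely $(\preinflate \otherfunctor M')(B) = (\otherfunctor M')(\varepsilon^* B)$, and writing the $G/N$-set $B$ as a disjoint union of orbits $(G/N)/(L/N)$ and using that Mackey functors send disjoint unions to direct sums, this is $\bigoplus M'((G/N)/(L/N)) = M'(B)$. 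One then verifies these identifications are compatible with restriction, induction and conjugation maps — routine, since all structure maps are defined by the same pre-composition formulas and the set-level isomorphisms are natural and $G$-equivariant. Hence $\preinflate$ and $\otherfunctor$ are inverse equivalences of categories, which in particular makes each a two-sided adjoint of the other, with unit and counit the identity natural isomorphisms.

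If one prefers to argue in the same explicit adjunction style as the previous lemma rather than invoking "equivalence of categories," the alternative is: given $f \co M \to \otherfunctor M'$ in $\mackey{G}/N$, define $\bar f \co \preinflate M \to M'$ on a $G/N$-set $B$ by $M(\varepsilon^* B) \xrightarrow{f(\varepsilon^* B)} (\otherfunctor M')(\varepsilon^* B) = M'((G/N)/\!\!-)\cdots$, unwinding $\otherfunctor M'$ of $\varepsilon^* B$ back to $M'(B)$ via the canonical iso; and conversely build $\hat g$ from $g \co \preinflate M \to M'$ the same way. The triangle identities then reduce to the triangle identities for the pair $(\varepsilon^* , \text{$N$-orbits})$ on the category of $G$-sets with trivial $N$-action, exactly as in the subgroup case the triangle identities reduced to those for $(G\times_H-, i^*)$. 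For the other adjunction (with $\preinflate$ as right adjoint) one uses induction maps in place of restriction maps, word for word as in the earlier proof.

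The main obstacle — really the only subtle point — is making sure the subcategory $\mackey{G}/N$ is exactly the right domain: one must check that $\preinflate$ does not see, and $\otherfunctor$ cannot produce, any data on orbits $G/K$ with $N \not\subseteq K$, so that the two functors are well defined into the stated categories and the round-trip composites are literally the identity. Everything else is bookkeeping with the pre-composition formulas; no homological or size hypotheses (no $|G|^{-1}$) are needed here, since this lemma is purely formal.
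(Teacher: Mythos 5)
Your proposal is correct, and it in fact proves slightly more than the paper does. The paper's own proof is a two-line hom-set construction: using the identification $\varepsilon^*\big((G/N)/(K/N)\big) = G/K$, a map $f \co M \to \otherfunctor M'$ is converted to $\bar{f} \co \preinflate M \to M'$ by setting $\bar{f}((G/N)/(K/N)) = f(G/K)$, with the other adjunction "similar" --- this is exactly the alternative argument in your third paragraph. Your primary route instead observes that $\preinflate$ and $\otherfunctor$ are mutually quasi-inverse equivalences between $\mackey{G}/N$ and $\mackey{G/N}$ (Mackey functors vanishing off subgroups containing $N$ are the same data as $G/N$-Mackey functors, since for $N \leqslant K, L \leqslant H$ all subgroups appearing in the Mackey axiom contain $N$ and double cosets match up), and then the two-sided adjointness is automatic. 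That buys a stronger and arguably cleaner statement, at the cost of having to verify compatibility of the canonical set-level isomorphisms with all structure maps; both arguments hinge on the same key fact $\varepsilon^*\big((G/N)/(K/N)\big) \cong G/K$. One small imprecision: the round-trip composites are not literally the identity "on the nose" as you first say, since $\varepsilon^*\big((G/N)/(K/N)\big)$ is only canonically isomorphic to $G/K$ (and the extension of a Mackey functor to general $G$-sets involves chosen orbit decompositions), so the unit and counit are canonical natural isomorphisms rather than identities --- exactly as your own phrasing later concedes; this does not affect the conclusion.
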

\begin{proof}
Both cases are similar and use the fact that
\[
\varepsilon^* (G/N)/(K/N) = G/K.
\]
We give one part of the proof as an illustration.

Take $f \co M \lra \otherfunctor  M'$ a map of $G$-Mackey functors that are trivial on
those $G/K$ where $K$ does not contain $N$.
We want to construct $\bar{f} \co \preinflate  M \lra  M'$.
Take a subgroup $K/N$ of $G/N$, we define
\[
\bar{f}((G/N)/(K/N)) = f(G/K) \co M(G/K) \to M'((G/N)/(K/N)). \qedhere
\]
\end{proof}

We give one more adjunction, between the category of rational $G$-Mackey functors and $\bQ$-modules
with an action of $G$.
\begin{lem}
There is an adjunction
\[
\adjunction{(-)(G/e)}{\mackey{G}}{\bQ[G] \leftmod}{\mackeyfunctor{G}}
\]
with each functor both left and right adjoint to each other.

The functor $(-)(G/e)$ sends a $G$-Mackey functor to the value $M(G/e)$.
Its adjoint $\mackeyfunctor{G}$ is defined in Example \ref{ex:fixedpointmackeyfunctors}
and at $G/H$ takes value $V^H$.
\end{lem}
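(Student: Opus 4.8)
The plan is to exhibit, for each of the two claimed adjunctions, an explicit natural bijection of hom-sets, assembled from the structure maps of a Mackey functor and the concrete description of $\mackeyfunctor{G}(V)=FP_V$ recalled in Example~\ref{ex:fixedpointmackeyfunctors}. Recall that the $\bQ[G]$-module structure on $M(G/e)$ is the one given by the conjugation maps $C_g\co M(G/e)\to M(G/geg^{-1})=M(G/e)$ (a left action, since $C_{gh}=C_g\circ C_h$), while in $FP_V$ the conjugation map $C_g\co V^H\to V^{gHg^{-1}}$ is $v\mapsto gv$, the restriction map $R^H_K\co V^H\to V^K$ is the inclusion of fixed points, and the induction map $I^H_K\co V^K\to V^H$ sends $v$ to $\sum_{hK\in H/K}hv$.

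For the adjunction $(-)(G/e)\dashv\mackeyfunctor{G}$, a morphism $\phi\co M\to FP_V$ is sent to its value $\phi(G/e)\co M(G/e)\to V$, which is $\bQ[G]$-linear because $\phi$ commutes with conjugation maps. Conversely, given a $\bQ[G]$-map $f\co M(G/e)\to V$, I would define $\phi(G/H):=f\circ R^H_e$. The first point to check is that this lands in $V^H$: by equivariance of restriction together with $C_h=\id$ on $M(G/H)$, one has $C_h\circ R^H_e=R^H_e$ as maps $M(G/H)\to M(G/e)$ for $h\in H$, so $R^H_e(m)\in M(G/e)^H$ and hence $f(R^H_e(m))\in V^H$. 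Next one checks that $\phi$ respects the structure maps: compatibility with restriction is transitivity $R^K_e\circ R^H_K=R^H_e$; compatibility with conjugation combines equivariance of restriction with $\bQ[G]$-linearity of $f$; and compatibility with induction is the Mackey axiom for the subgroups $e,K\leqslant H$, which gives $R^H_e\circ I^H_K=\sum_{x\in[e\diagdown H\diagup K]}C_x\circ R^K_e$ and so, after applying $f$, reproduces the induction map of $FP_V$. Finally the two assignments are mutually inverse: evaluating at $G/e$ the $\phi$ built from $f$ returns $f\circ R^e_e=f$, while if $\phi'$ is built from $\phi(G/e)$ then $\iota_H\circ\phi'(G/H)=\phi(G/e)\circ R^H_e=\iota_H\circ\phi(G/H)$ for the inclusion $\iota_H\co V^H\hookrightarrow V$, whence $\phi'=\phi$ by injectivity of $\iota_H$. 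Naturality in $M$ is clear.

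For the adjunction $\mackeyfunctor{G}\dashv(-)(G/e)$, a morphism $\psi\co FP_V\to M$ is sent to $\psi(G/e)\co V\to M(G/e)$, again $\bQ[G]$-linear. Conversely, given a $\bQ[G]$-map $g\co V\to M(G/e)$, I would define $\psi(G/H)(v):=\tfrac1{|H|}\,I^H_e(g(v))$ for $v\in V^H$; this is the step where rationality is used. Compatibility with restriction follows from the Mackey axiom $R^H_K\circ I^H_e=\sum_{x\in[K\diagdown H\diagup e]}I^K_e\circ C_x$ together with $C_x(g(v))=g(xv)=g(v)$ for $x\in H$ and $v\in V^H$; compatibility with induction uses transitivity $I^H_K\circ I^K_e=I^H_e$ and the identity $I^H_e\circ C_h=I^H_e$ for $h\in H$ (which follows from equivariance of induction and $C_h=\id$ on $M(G/H)$); compatibility with conjugation uses equivariance of induction. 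That the two assignments are mutually inverse rests on the Mackey-axiom identity $R^H_e\circ I^H_e=\sum_{h\in H}C_h$ on $M(G/e)$: precomposing with $g$ and restricting to $v\in V^H$ turns the right-hand side into multiplication by $|H|$, cancelling the normalising factor, while evaluating $\psi$ at $G/e$ returns $g$ since $I^e_e=\id$. Naturality is again clear.

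I expect the main obstacle to be the bookkeeping in the induction-compatibility verifications: one must keep track of the double-coset representatives produced by the Mackey axiom and, in the second adjunction, of the normalising factors $1/|H|$. The latter is precisely where working rationally (or over any commutative ring in which $|G|$ is invertible) is essential; conceptually it reflects the fact that the fixed-point and orbit Mackey functors, which are respectively the right and left adjoints of $(-)(G/e)$ before rationalisation, become identified after rationalisation by the averaging isomorphisms of Example~\ref{ex:fixedpointmackeyfunctors}.
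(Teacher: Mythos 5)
Your argument follows essentially the same route as the paper's (terse) proof: for the first adjunction the paper also evaluates at $G/e$ and sends a $\bQ[G]$-map $g$ to $g^H\circ R^H_e$, and for the second it invokes the rational identification of $V^H$ with $V/H$ and the induction maps of $M$, which is exactly what your explicit normalising factor $1/|H|$ encodes; your verifications of well-definedness and compatibility with the structure maps are correct. The one step whose stated justification does not suffice is the check that $\psi\mapsto\psi(G/e)\mapsto\psi'$ returns $\psi$ in the second adjunction: the identity $R^H_e\circ I^H_e=\sum_{h\in H}C_h$ only yields $R^H_e\circ\psi'(G/H)=R^H_e\circ\psi(G/H)$, and unlike the inclusion $V^H\hookrightarrow V$ used in the first adjunction, the restriction $R^H_e\co M(G/H)\to M(G/e)$ need not be injective, so this does not force $\psi'=\psi$. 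The repair is immediate from the compatibility of $\psi$ with induction: for $v\in V^H$ one has $I^H_e v=\sum_{h\in H}hv=|H|\,v$ in $\mackeyfunctor{G}(V)$, hence $|H|\,\psi(G/H)(v)=\psi(G/H)\bigl(I^H_e v\bigr)=I^H_e\bigl(\psi(G/e)(v)\bigr)=|H|\,\psi'(G/H)(v)$, and dividing by $|H|$ (rationality again) gives $\psi=\psi'$.
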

\begin{proof}
Take a map $f \co M \to \mackeyfunctor{G}(V)$.
Evaluating at $G/e$ gives a map
$\bar{f} \co M(G/e) \to V$.
In the other direction, one starts with a map
$g \co M(G/e) \to V$ of $\bQ[G]$-modules.
The restriction map $R_e^H \co M(G/H) \to M(G/e)$
takes values in $M(G/e)^H$ as conjugation by elements of $H$ is trivial
in $M(G/e)$. We define
$\hat{g}$ as $f(G/e)^H \circ R_e^H$.

For the adjunction in the other direction, we
use the isomorphic description of $\mackeyfunctor{G}(V)(G/H)$ in terms of
$V/H$ and follow a similar pattern, using the induction maps of $M$ to define the
adjoint of a map $V \to M(G/e)$.
\end{proof}

\section{The classification of rational Mackey functors}\label{sec:classification}

Let $e_H^G \in \burnsidering(G) = \prod_{(H) \leqslant G} \bQ$
be the idempotent that is 1 on factor $H$ and zero elsewhere.
As described above, we can form a full subcategory of $\mackey{G}$ consisting of those
Mackey functors of the form $e_H^G M$.
Applying $e_H^G$ defines a functor $\mackey{G} \lra e_H^G \mackey{G}$.
It follows that we have a splitting
\[
\mackey{G} \cong \prod_{H \leqslant_G G} e_H^G \mackey{G}.
\]
To classify rational Mackey functors, it therefore suffices to classify the categories
$e_H^G \mackey{G}$. The key step is the following theorem giving a sequence of adjunctions.
The proof of the theorem occupies the rest of this section.

\begin{thm}\label{thm:manyadjunctions}
For $H \leqslant G$, there is a sequence of adjunctions of exact functors.
\[
\xymatrix@C+0.2cm{
e_H^G \mackey{G}
\ar@<0.7ex>[r]^-{\preextend} &
\ar@<0.7ex>[l]^-{\preforget }
R_{N_G H}^G (e_H^G)  \mackey{N_G H}
\ar@<0.7ex>[r]^-{\preinflate } &
\ar@<0.7ex>[l]^-{\otherfunctor }
\mackey{N_G H/H}
\ar@<0.7ex>[r]^-{(-)(W_G H/e)} &
\ar@<0.7ex>[l]^-{\mackeyfunctor{W_G H}}
\mathbb{Q}[W_G H] \leftmod
}
\]
with each pair both left and right adjoint to each other.
\end{thm}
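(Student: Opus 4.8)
The plan is to assemble Theorem \ref{thm:manyadjunctions} by stringing together the three adjunctions established in Section 2, checking at each stage that the relevant idempotent-localised subcategories match up and that the functors are exact. I would proceed from left to right through the composite.

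\medskip

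\noindent\textbf{First adjunction.}
First I would address the adjunction
\[
\adjunction{\preextend}{e_H^G \mackey{G}}{R_{N_G H}^G(e_H^G)\mackey{N_G H}}{\preforget}.
\]
The underlying adjunction $(\preextend, \preforget)$ between $\mackey{G}$ and $\mackey{N_G H}$ (both ways) is already available. The content here is that these functors restrict to the idempotent-localised subcategories. For $\preforget$ this amounts to checking that $\preforget$ sends $R_{N_GH}^G(e_H^G)\mackey{N_GH}$ into $e_H^G\mackey{G}$: since $\preforget$ is pre-composition with the forgetful functor on $G$-sets, the Burnside-ring action is compatible with it, and one uses Remark \ref{rmk:restrictidempotents} together with the fact that $e_H^G = \sum e_K^G$ summed over $G$-conjugates, to identify $R_{N_GH}^G(e_H^G)$ as exactly the idempotent cutting out the part of $\mackey{N_GH}$ that extends back to the $(H)$-part of $\mackey{G}$. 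Dually for $\preextend$. Exactness of both functors is immediate as they are defined levelwise (pre-composition with functors on $G$-sets preserves kernels and cokernels, which are computed pointwise). One should also note that the two localised categories are just the images of the idempotents $e_H^G$, $R_{N_GH}^G(e_H^G)$ acting on $\mackey G$, $\mackey{N_GH}$, so the adjunction descends formally once the functors are seen to respect the splitting.

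\medskip

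\noindent\textbf{Second adjunction.}
Next I would treat
\[
\adjunction{\preinflate}{R_{N_G H}^G(e_H^G)\mackey{N_G H}}{\mackey{N_G H/H}}{\otherfunctor}.
\]
The key observation is that $R_{N_GH}^G(e_H^G)\mackey{N_GH}$ is a full subcategory of $\mackey{N_GH}/H$, the category of $N_GH$-Mackey functors vanishing on $N_GH/K$ for $K \not\supseteq H$: indeed a Mackey functor in the $e_H^{N_GH}$-component (which is what $R_{N_GH}^G(e_H^G)$ picks out, since inside $N_GH$ the group $H$ is normal and the only $N_GH$-conjugate of $H$ subconjugate to things is $H$ itself) is supported on subgroups containing $H$. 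Conversely, on $\mackey{N_GH}/H$ the idempotent $e_H^{N_GH}$ acts as the identity. Hence $R_{N_GH}^G(e_H^G)\mackey{N_GH} \simeq \mackey{N_GH}/H$, and then the adjunction $(\preinflate, \otherfunctor)$ of the earlier lemma, applied with the normal subgroup $H \normal N_GH$ and quotient $N_GH/H = W_GH$, gives precisely the desired pair. Exactness again holds levelwise.

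\medskip

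\noindent\textbf{Third adjunction and assembly.}
The third adjunction $\adjunction{(-)(W_GH/e)}{\mackey{N_GH/H}}{\mackeyfunctor{W_GH}}{\bQ[W_GH]\leftmod}$ is exactly the last lemma of Section 2 applied to the group $W_GH = N_GH/H$; exactness of evaluation at $W_GH/e$ is clear, and exactness of $\mackeyfunctor{W_GH}$ follows since $(-)^H \cong (-)/H$ rationally (Example \ref{ex:fixedpointmackeyfunctors}) makes it simultaneously a right adjoint built from fixed points and a left adjoint built from orbits, so it preserves both kernels and cokernels. Finally I would assemble the three adjunctions: composites of left adjoints are left adjoints, composites of right adjoints are right adjoints, and since in each of the three each functor is both, the composite pairs are also both. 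I expect the main obstacle to be the bookkeeping in the \textbf{second} step — pinning down precisely that $R_{N_GH}^G(e_H^G)$ localises $\mackey{N_GH}$ onto the subcategory $\mackey{N_GH}/H$ of functors supported above $H$, i.e. that $R_{N_GH}^G(e_H^G) = e_H^{N_GH}$ and that this idempotent kills exactly the $N_GH/K$ with $K \not\supseteq H$. This uses Remark \ref{rmk:restrictidempotents} (so that $R_{N_GH}^G(e_H^G)$ is a single idempotent $e_H^{N_GH}$, because $H$ is its own $N_GH$-conjugacy class), together with Lemma \ref{lem:idempotentformula} to see which orbits $N_GH/K$ survive — everything else in the proof is formal manipulation of adjunctions and levelwise exactness.
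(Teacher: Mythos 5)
Your first and third adjunctions, and the formal assembly at the end, follow the paper's route (Lemma \ref{lem:restrictedforgetful} and the evaluation/fixed--point adjunction applied to the group $W_GH$), but your second step contains a genuine error. The identity $R_{N_GH}^G(e_H^G)=e_H^{N_GH}$ is false in general: by Remark \ref{rmk:restrictidempotents} the restriction is $\sum_K e_K^{N_GH}$ summed over \emph{all} $N_GH$-conjugacy classes of subgroups $K\leqslant N_GH$ that are $G$-conjugate to $H$, and the fact that $(H)_{N_GH}=\{H\}$ does not rule out other classes, since $G$-conjugate subgroups of $N_GH$ need not be $N_GH$-conjugate. Concretely, for $G=S_4$ and $H=\langle(12)\rangle$ one has $N_GH=\langle(12),(34)\rangle$, which is abelian, and $\langle(34)\rangle$ is $G$-conjugate but not $N_GH$-conjugate to $H$, so $R_{N_GH}^G(e_H^G)=e_{\langle(12)\rangle}^{N_GH}+e_{\langle(34)\rangle}^{N_GH}\neq e_H^{N_GH}$; this is exactly the ``$N$-bad'' phenomenon of Definition \ref{goodSub}, and it already occurs for finite groups. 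Consequently $R_{N_GH}^G(e_H^G)\mackey{N_GH}$ contains the whole non-zero summand $e_{\langle(34)\rangle}^{N_GH}\mackey{N_GH}$, whose objects can be non-zero at $N_GH/\langle(34)\rangle$ even though $\langle(34)\rangle\not\supseteq H$, so this category is not contained in $\mackey{N_GH}/H$. The reverse claim also fails, even when $H$ is $N_GH$-good: $e_H^{N_GH}$ does not act as the identity on $\mackey{N_GH}/H$. For instance, if $H\neq N_GH$, the Mackey functor $M$ with $M(N_GH/N_GH)=\bQ$ and all other values zero lies in $\mackey{N_GH}/H$, but by Lemma \ref{lem:idempotentformula} the idempotent $e_H^{N_GH}$ is a rational combination of basis elements $[N_GH/L]$ with $L\leqslant H$, each acting through $I_L^{N_GH}\circ R_L^{N_GH}$, which factors through $M(N_GH/L)=0$; hence $e_H^{N_GH}M=0\neq M$. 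So the equivalence $R_{N_GH}^G(e_H^G)\mackey{N_GH}\simeq\mackey{N_GH}/H$ you rely on is false, and the middle adjunction cannot simply be read off from the earlier $(\preinflate,\otherfunctor)$ lemma.

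The paper never compares these two categories wholesale. Its Lemma \ref{lem:inflationidempotent} proves only the object-level statement that $R_{N_GH}^G(e_H^G)$ acts as the identity on each $\otherfunctor\mackeyfunctor{W_GH}(V)$: evaluating at $A\supseteq H$, it expands $R_A^G(e_H^G)=\sum e_K^A$ over the classes $K\in(H)_G$, writes each $e_K^A$ in the additive basis using Lemma \ref{lem:idempotentformula}, observes that a basis element $[A/L]$ acts through $I_L^A\circ R_L^A$ and hence acts as zero unless $L\supseteq H$ (these particular functors vanish below $H$), notes that $K\in(H)_G$ with $K\supseteq H$ forces $K=H$ by an order count, and finally checks that $|W_AH|^{-1}I_H^A\circ R_H^A$ is the identity on $V^{A/H}$ by the fixed-point/orbit computation. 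With that in hand, the both-sided adjunction between $R_{N_GH}^G(e_H^G)\mackey{N_GH}$ and $\bQ[W_GH]\leftmod$ is obtained by composing the inclusion/multiply-by-the-idempotent adjunction with $(\preinflate,\otherfunctor)$ and $((-)(W_GH/e),\mackeyfunctor{W_GH})$; the failure of the functors to be inverse equivalences is exactly why the paper later has to pass to the image subcategories $\barmackey{N_GH/H}$. To repair your proposal, replace the claimed identification in your second step by this idempotent-fixing computation for the objects in the image of $\otherfunctor\mackeyfunctor{W_GH}$; your first and third steps and the final assembly can then stand as written.
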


\begin{lem}\label{lem:restrictedforgetful}
The adjunction $(\preextend, \preforget )$ restricts to an adjunction
\[
\adjunction{\preextend}{e_H^G \mackey{G}}{R_{N_G H}^G (e_H^G) \mackey{N_G H}}{\preforget }.
\]
The functors are exact and are both left and right adjoint to each other.
\end{lem}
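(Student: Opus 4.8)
The plan is to show that both $\preextend$ and $\preforget$ respect the decomposition of their source and target into an $e_H^G$-part and its complement, and then to read off the restricted adjunction purely formally. Throughout I write $i\co N_G H\hookrightarrow G$ for the inclusion, so $(\preextend,\preforget)$ is the two-sided adjoint pair attached to $i$, and I use that $R_{N_G H}^G(e_H^G)$ is again an idempotent (Remark \ref{rmk:restrictidempotents}). Since $M\mapsto e_H^G M$ is an exact idempotent endofunctor of $\mackey{G}$ — it is projection onto a direct factor — and likewise $N\mapsto R_{N_G H}^G(e_H^G)N$ on $\mackey{N_G H}$, there are product decompositions
\[
\mackey{G}\;\cong\; e_H^G\mackey{G}\times(1-e_H^G)\mackey{G},\qquad
\mackey{N_G H}\;\cong\; R_{N_G H}^G(e_H^G)\mackey{N_G H}\times(1-R_{N_G H}^G(e_H^G))\mackey{N_G H}.
\]
It therefore suffices to prove that $\preextend$ is \emph{block-diagonal} with respect to these decompositions; its two-sided adjoint $\preforget$ will then automatically be block-diagonal as well.

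The heart of the argument is a natural isomorphism $\preextend(eM)\cong R_{N_G H}^G(e)\,\preextend M$ for every $M\in\mackey{G}$ and every idempotent $e\in\burnsidering(G)$. Since $\preextend$ is precomposition with $G\times_{N_G H}(-)$ and $G\times_{N_G H}(N_G H/L)=G/L$ for $L\leqslant N_G H$, one has $(\preextend M)(N_G H/L)=M(G/L)$, and the induction, restriction and conjugation maps of $\preextend M$ relating values at subgroups of $N_G H$ are literally those of $M$. Consequently the $\burnsidering(L)$-module structure on $(\preextend M)(N_G H/L)$, namely $[L/L']=I_{L'}^L\circ R_{L'}^L$ extended linearly, agrees with the $\burnsidering(L)$-module structure on $M(G/L)$. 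Using transitivity of restriction in the Burnside Mackey functor of Example \ref{ex:burnsideringasmackey}, i.e.\ $R_L^{N_G H}\circ R_{N_G H}^G=R_L^G$, I then get
\[
\bigl(R_{N_G H}^G(e)\,\preextend M\bigr)(N_G H/L)
= R_L^{N_G H}\bigl(R_{N_G H}^G(e)\bigr)\,M(G/L)
= R_L^G(e)\,M(G/L)
= (eM)(G/L),
\]
which is exactly $(\preextend(eM))(N_G H/L)$, and this identification is natural in $M$. Taking $e=e_H^G$ and $e=1-e_H^G$ (and using $R_{N_G H}^G(1-e_H^G)=1-R_{N_G H}^G(e_H^G)$) shows $\preextend\cong\preextend_1\times\preextend_2$ with $\preextend_1\co e_H^G\mackey{G}\to R_{N_G H}^G(e_H^G)\mackey{N_G H}$.

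It remains to pass this to $\preforget$ and to dispatch exactness. A left or right adjoint of a block-diagonal functor between products of additive categories is again block-diagonal, with its blocks adjoint to the corresponding blocks; since $\preforget$ is simultaneously a left and a right adjoint of $\preextend\cong\preextend_1\times\preextend_2$, we obtain $\preforget\cong\preforget_1\times\preforget_2$ with $\preforget_1$ both left and right adjoint to $\preextend_1$. The pair $(\preextend_1,\preforget_1)$ is then precisely the claimed restricted adjunction between $e_H^G\mackey{G}$ and $R_{N_G H}^G(e_H^G)\mackey{N_G H}$, with each functor both left and right adjoint to the other. Exactness is immediate: $\preextend$ and $\preforget$ are computed levelwise on orbits — for $\preforget$ via $(\preforget N)(G/L)=\bigoplus_{x\in[N_G H\diagdown G\diagup L]}N\bigl(N_G H/(N_G H\cap xLx^{-1})\bigr)$ — and kernels and cokernels of maps of Mackey functors are formed levelwise, so both functors are exact, and exactness is inherited by the direct-factor subcategories. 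The one step that really needs care is the identification of $\burnsidering(L)$-module structures in the middle paragraph: it is conceptually immediate but requires tracking how $\preextend$ transports structure maps. A reader who insisted on arguing $\preforget$ directly would instead be forced into a double-coset computation via the Mackey axiom, which is exactly what the block-diagonal route sidesteps.
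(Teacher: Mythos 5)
Your proof is correct, and half of it coincides with the paper's: the key computation $\preextend(e_H^G M)\cong R_{N_G H}^G(e_H^G)\,\preextend M$, resting on $(\preextend M)(N_G H/L)=M(G/L)$ with unchanged structure maps plus transitivity of restriction of idempotents, is exactly how the paper shows $\preextend$ lands in $R_{N_G H}^G(e_H^G)\mackey{N_G H}$. Where you diverge is in handling $\preforget$: the paper checks directly that $\preforget M'\in e_H^G\mackey{G}$, using the orbit decomposition $i^*G/K=\coprod_\lambda N_G H/L_\lambda$ and the same transitivity trick on each summand, and then observes that the two-sided adjunction restricts; you instead run your $\preextend$ computation for both $e_H^G$ and $1-e_H^G$, package it as block-diagonality with respect to the splittings $\mackey{G}\cong e_H^G\mackey{G}\times(1-e_H^G)\mackey{G}$ and $\mackey{N_G H}\cong R_{N_G H}^G(e_H^G)\mackey{N_G H}\times(1-R_{N_G H}^G(e_H^G))\mackey{N_G H}$, and invoke the (correct, standard) fact that any adjoint of a diagonal functor between products is again diagonal with blocks adjoint to blocks. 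This buys you freedom from the double-coset analysis of $\preforget$, at the price of the formal lemma on adjoints of product functors and of verifying diagonality on the complementary factor as well; the paper's direct check is about equally short and has the virtue of exhibiting explicitly how $e_H^G$ acts on each summand of $(\preforget M')(G/K)$, which is reused in spirit elsewhere (e.g.\ in Lemma \ref{lem:inflationidempotent}). Your exactness argument (levelwise evaluation and direct sums of evaluations, with kernels and cokernels computed objectwise) also differs from the paper's (additive functors that are both left and right adjoints are exact), but both are sound.
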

\begin{proof}
Take $M \in e_H^G \mackey{G}$ and $K \leqslant N_G H$.
Since $M = e_H^G M$, we have the first equality below
\[
\begin{array}{rcl}
(\preextend  M) (N_G H /K)
&=&  R_K^G (e_H^G) M(G/K)  \\
&=& R_K^{N_G H} \circ  R_{N_G H}^G (e_H^G)  M(G/K) \\
&=& \left( R_{N_G H}^G (e_H^G)   (\preextend  M) \right ) (N_G H /K).
\end{array}
\]
Thus $(\preextend  M) \in R_{N_G H}^G (e_H^G) \mackey{N_G H}$.

Conversely, let $M' \in R_{N_G H}^G (e_H^G) \mackey{N_G H}$ and $K \leqslant G$.
Then
\[
(\preforget M') (G /K)
=  M'(i^* G/K)
= \oplus_{\lambda \in \Lambda} M'(N_G H/L_\lambda)
\]
where $i^* G / K$ decomposes as $\coprod_{\lambda \in \Lambda} N_G H/L_\lambda$.
Since
\[
M'(N_G H/L_\lambda)
=
R_{L_\lambda}^{N_G H} \circ R_{N_G H}^G (e_H^G) M'(N_G H/L_\lambda)
=
R_{L_\lambda}^G (e_H^G) M'(N_G H/L_\lambda),
\]
it follows that $\preforget M' = e_H^G \preforget M'$.

The functors are additive and left and right adjoint to each other.
Hence they are exact.
\end{proof}

\begin{lem}\label{lem:inflationidempotent}
For any $\mathbb{Q}[W_G H]$-module $V$, there is a canonical isomorphism
of $N_G H$-Mackey functors
\[
R_{N_G H}^G(e_H^G) \otherfunctor  \mackeyfunctor{W_G H}(V) \cong
\otherfunctor  \mackeyfunctor{W_G H}(V).
\]
It follows that we have an adjunction
\[
\xymatrix{
R_{N_G H}^G(e_H^G) \mackey{N_G H}
\ar@<0.7ex>[r]
&
\ar@<0.7ex>[l]
\bQ[W_G H] \leftmod
}
\]
with the functors both left and right adjoint to each other.
\end{lem}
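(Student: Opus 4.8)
The plan is to establish the two claims in turn: first to identify $R_{N_G H}^G(e_H^G)\otherfunctor\mackeyfunctor{W_G H}(V)$ exactly, by a levelwise computation, and then to deduce the adjunction by splicing together adjunctions that are already available.

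For the first claim I would in fact show that $R_{N_G H}^G(e_H^G)$ acts as the \emph{identity} on $\otherfunctor\mackeyfunctor{W_G H}(V)$, so that the canonical inclusion $R_{N_G H}^G(e_H^G)\otherfunctor\mackeyfunctor{W_G H}(V)\hookrightarrow\otherfunctor\mackeyfunctor{W_G H}(V)$ is an equality. Since $(eM)(N_G H/K)=R_K^{N_G H}(e)\,M(N_G H/K)$ and, by transitivity of restriction, $R_K^{N_G H}\bigl(R_{N_G H}^G(e_H^G)\bigr)=R_K^G(e_H^G)$, it is enough to check that $R_K^G(e_H^G)$ acts as the identity on $\bigl(\otherfunctor\mackeyfunctor{W_G H}(V)\bigr)(N_G H/K)$ for each $K\leqslant N_G H$. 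If $K\not\supseteq H$ that module is $0$; so assume $H\leqslant K$, whence $H\trianglelefteq K$ and the module is $V^{K/H}$. By Remark~\ref{rmk:restrictidempotents} write $R_K^G(e_H^G)=\sum_L e_L^K$, the sum over $K$-conjugacy classes of subgroups $L\leqslant K$ that are $G$-conjugate to $H$, and then make two observations. First, for a subgroup $J\leqslant K$ with $J\not\supseteq H$ the Burnside operator $[K/J]=I_{J}^K\circ R_{J}^K$ kills $V^{K/H}$, since it factors through $\bigl(\otherfunctor\mackeyfunctor{W_G H}(V)\bigr)(N_G H/J)=0$; as every subgroup of an $L\leqslant K$ with $L\not\supseteq H$ again fails to contain $H$, Lemma~\ref{lem:idempotentformula} (which writes $e_L^K$ as a combination of the $[K/J]$ with $J\leqslant L$) shows that $e_L^K$ kills $V^{K/H}$ whenever $L\not\supseteq H$. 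Since $L\in(H)_G$ together with $H\leqslant L$ forces $L=H$, only the term $L=H$ survives, so $R_K^G(e_H^G)$ acts on $V^{K/H}$ exactly as $e_H^K$ does. Second, by Lemma~\ref{lem:idempotentformula} (with $N_K H=K$) and the vanishing just noted, $e_H^K$ acts on $V^{K/H}$ as $\tfrac{|H|}{|K|}[K/H]$, while by Example~\ref{ex:fixedpointmackeyfunctors} the restriction $R_H^K$ is the inclusion $V^{K/H}\hookrightarrow V$ and the induction $I_H^K$ is the coset-orbit sum, which on a $K/H$-fixed vector is multiplication by $[K:H]=|K|/|H|$. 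So $e_H^K$, and hence $R_K^G(e_H^G)$, acts on $V^{K/H}$ as multiplication by $1$, as required.

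For the adjunction I would first build it on $\mackey{N_G H}/H$ and then transport it to $\mathcal{C}:=R_{N_G H}^G(e_H^G)\mackey{N_G H}$. Applied to $H\trianglelefteq N_G H$, the inflation adjunction recalled above realises $\preinflate$ and $\otherfunctor$ as mutually inverse equivalences $\mackey{N_G H}/H\simeq\mackey{W_G H}$ --- one has $\preinflate\otherfunctor=\id$ and $\otherfunctor\preinflate=\id$ on the nose, from the defining formulas and the identity $\varepsilon^*\bigl((N_G H/H)/(K/H)\bigr)=N_G H/K$. Composing with the two-sided adjunction between $\mackey{W_G H}$ and $\bQ[W_G H]\leftmod$ carried by $(-)(W_G H/e)$ and $\mackeyfunctor{W_G H}$ gives, in both orders, an adjunction between $\mackey{N_G H}/H$ and $\bQ[W_G H]\leftmod$ carried by $M'\mapsto(\preinflate M')(W_G H/e)=M'(N_G H/H)$ and $V\mapsto\otherfunctor\mackeyfunctor{W_G H}(V)$. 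To pass to $\mathcal{C}$, note that $\otherfunctor\mackeyfunctor{W_G H}(V)$ lies in $\mackey{N_G H}/H$ by construction and, by the first claim, in $\mathcal{C}$ as well. Let $f:=\sum_{L\supseteq H}e_L^{N_G H}\in\burnsidering(N_G H)$, the sum over $N_G H$-conjugacy classes of subgroups of $N_G H$ containing $H$; a short calculation with Remark~\ref{rmk:restrictidempotents} gives $R_K^{N_G H}(f)=\sum_{J\leqslant_K K,\ J\supseteq H}e_J^K$, so that $f\mackey{N_G H}=\mackey{N_G H}/H$, the splitting $M=fM\oplus(1-f)M$ holds in $\mackey{N_G H}$, and every morphism between an $f$-local and a $(1-f)$-local Mackey functor vanishes. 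Thus morphisms $M\to\otherfunctor\mackeyfunctor{W_G H}(V)$ out of $M\in\mathcal{C}$ are the same as morphisms $fM\to\otherfunctor\mackeyfunctor{W_G H}(V)$ in $\mackey{N_G H}/H$, and the adjunction above yields
\[
\Hom_{\mathcal{C}}\bigl(M,\otherfunctor\mackeyfunctor{W_G H}(V)\bigr)\cong\Hom_{\bQ[W_G H]}\bigl((fM)(N_G H/H),V\bigr);
\]
finally $R_H^{N_G H}(f)=e_H^H$, which for $M\in\mathcal{C}$ coincides with $R_H^G(e_H^G)$ and so acts as the identity on $M(N_G H/H)$, whence $(fM)(N_G H/H)=M(N_G H/H)$. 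This is one half of the required two-sided adjunction on $\mathcal{C}$; repeating the argument with the left and right adjoints interchanged (using $\mackeyfunctor{W_G H}\dashv(-)(W_G H/e)$) gives the other half, and the adjunction is carried by $M\mapsto M(N_G H/H)$ and $V\mapsto\otherfunctor\mackeyfunctor{W_G H}(V)$.

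The main obstacle, I expect, is the bookkeeping in the second step rather than any calculation. In general $\mathcal{C}=R_{N_G H}^G(e_H^G)\mackey{N_G H}$ is strictly larger than $\mackey{N_G H}/H$ --- they already differ whenever $N_G H$ contains a $G$-conjugate of $H$ other than $H$ itself --- so $\preinflate$ is not literally defined on all of $\mathcal{C}$; it is precisely the first claim (that $\otherfunctor\mackeyfunctor{W_G H}(V)$ is $e_H^G$-local over $N_G H$), together with the idempotent summand $f$, that lets the adjunction restrict to $\mathcal{C}$. One must also resist identifying $e_H^H\in\burnsidering(H)$ with the unit of $\burnsidering(H)$: what is true, and all that is used, is that $e_H^H$ acts as the identity on the $e_H^G$-local modules that occur.
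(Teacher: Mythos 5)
Your argument is correct, and the first half is essentially the paper's own proof: evaluate at $K$ with $H\leqslant K\leqslant N_G H$, use Remark \ref{rmk:restrictidempotents} to write $R_K^G(e_H^G)$ as a sum of idempotents $e_L^K$ with $L\in (H)_G$, discard every Burnside basis element $[K/J]$ with $J\not\supseteq H$ because it factors through a zero value, note that $L\in(H)_G$ together with $H\leqslant L$ forces $L=H$, and check via Lemma \ref{lem:idempotentformula} that $e_H^K$ acts through $|W_K H|^{-1}I_H^K\circ R_H^K$, which is the identity on $V^{K/H}$.

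Where you genuinely diverge is in deducing the two-sided adjunction, and your route is the more careful one. The paper asserts that $R_{N_G H}^G(e_H^G)\mackey{N_G H}$ sits inside $\mackey{N_G H}/H$ as a full subcategory, with ``apply the idempotent'' as a two-sided adjoint to the inclusion, and then composes with the $(\preinflate,\otherfunctor)$ and $((-)(W_G H/e),\mackeyfunctor{W_G H})$ adjunctions. As you point out, that containment fails whenever $N_G H$ contains a $G$-conjugate $H'\neq H$ of $H$: then $R_{H'}^G(e_H^G)=e_{H'}^{H'}\neq 0$, so objects of $R_{N_G H}^G(e_H^G)\mackey{N_G H}$ need not vanish at $N_G H/H'$ (the ``$N$-bad'' phenomenon of Definition \ref{goodSub}). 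Your substitute --- identifying $\mackey{N_G H}/H$ with the summand $f\,\mackey{N_G H}$ for $f=\sum_{(L),\,L\supseteq H}e_L^{N_G H}$, transporting the two-sided adjunction across the splitting $M=fM\oplus(1-f)M$ via the vanishing of morphisms between $f$-local and $(1-f)$-local objects, and checking $(fM)(N_G H/H)=M(N_G H/H)$ because $R_H^G(e_H^G)=e_H^H$ acts as the identity there --- is exactly what is needed to make this step precise, and it works uniformly whether or not $H$ is $N_G H$-good. One correction to your closing remark: $R_{N_G H}^G(e_H^G)\mackey{N_G H}$ is not ``strictly larger'' than $\mackey{N_G H}/H$; in general neither contains the other, since $\mackey{N_G H}/H$ contains functors $M$ with $M(N_G H/H)=0$ but $M(N_G H/K)\neq 0$ for some $K\supsetneq H$, on which $R_K^G(e_H^G)$ acts as $|W_K H|^{-1}I_H^K\circ R_H^K=0$, hence not as the identity. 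The slip is harmless, since your proof only uses that $\otherfunctor\mackeyfunctor{W_G H}(V)$ lies in both subcategories and that the two evaluations at $N_G H/H$ agree.
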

\begin{proof}
The inclusion of an idempotent summand gives the map.
To see that this inclusion is an isomorphism, we evaluate both sides
a subgroup $A \leqslant N_G H$ that contains $H$.
Both domain and codomain take value zero on
subgroups that do not contain $H$.

We first decompose $R_A^G(e_H^G)$ into idempotents of the
rational Burnside ring of $A$
\[
R_A^G (e_H^G) =   \sum_{\substack{K \leqslant_A A \\ K \in (H)_G }} e_K^A.
\]
Secondly, by Lemma \ref{lem:idempotentformula} $(e_K^A)$ is a sum of
$|W_A K|^{-1} [A/K]$ and rational multiples of basis elements
$[A/K']$, for $K'$ a proper subgroup of $K$.
The element $[A/L]$ acts on $\otherfunctor  \mackeyfunctor{W_G H}(V)(N_G H/A)$
through $I^A_L \circ R^A_L$. This is zero unless $L$ contains $H$.
Hence each $[A/K']$ acts as zero, and $[A/K]$ only acts non-trivially when
$K$ contains $H$. Since $K$ is also $G$-conjugate to $H$, we see that
$K=H$. Hence,
\[
\Big(
\sum_{\substack{K \leqslant_A A \\ K \in (H)_G }} e_K^A
\Big)
\otherfunctor  \mackeyfunctor{W_G H}(V)(N_G H/A)
=
e_H^A \otherfunctor  \mackeyfunctor{W_G H}(V)(N_G H/A)
=
e_H^A V^{A/H}
\]
with $e_H^A$ acting through $|W_A H|^{-1} I^A_H \circ R^A_H$.

Thirdly, $I^A_H \circ R^A_H$ is the composite
\[
V^A \lra V^H \lra V^A
\]
with the first map the inclusion and the second map taking the sum over
$A/H$-coset representatives. Hence this map is multiplication by $|A/H|$.
Since $H$ is normal in $A$, it follows that
$|W_A H|^{-1} I^A_H \circ R^A_H$ acts through the identity, giving the first statement.

For the second statement, the inclusion of the full subcategory
\[
R_{N_G H}^G (e_H^G)  \mackey{N_G H}
\lra
\mackey{N_G H}/H
\]
has an adjoint, which is applying the idempotent $R_{N_G H}^G (e_H^G)$.
This adjoint is both left and right adjoint to the inclusion.

Composing this with the adjunctions $(\preinflate ,\otherfunctor )$
and $((-)(W_G H/e) , \mackeyfunctor{W_G H})$ gives the result.
The functors in each adjunction are additive,
and are left and right adjoint to each other.
Hence they are exact.
\end{proof}

\begin{defn}
For $H \leqslant G$, define functors
\[
\begin{array}{l}
F_H \co \mathbb{Q}[W_G H] \leftmod \lra e_H^G \mackey{G} \\
U_H \co e_H^G \mackey{G} \lra \mathbb{Q}[W_G H] \leftmod
\end{array}
\]
where $F_H$ is the composite of the lower level functors from the diagram in Theorem \ref{thm:manyadjunctions}
and $U_H$ is the composite of the the upper level functors.
\end{defn}

We see immediately that the additive functors
$F_H$ and $U_H$ are both left and right adjoint
to each other and that $U_H M  = M(G/H)$.

\begin{prop}\label{prop:FHformula}
For $K \leqslant G$,
\[
F_H (V)(G/K) = (\mathbb{Q}[(G/K)^H] \otimes V)^{W_G H}.
\]
Moreover, the Mackey functor $F_H (V)$ is both projective and injective, and
$e_H^G F_H (V) =F_H (V)$.
\end{prop}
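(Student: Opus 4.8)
The plan is to unwind the definition of $F_H$ as the composite $\preforget \circ \otherfunctor \circ \mackeyfunctor{W_G H}$ and compute its value on $G/K$ one functor at a time. By Example~\ref{ex:fixedpointmackeyfunctors}, $\mackeyfunctor{W_G H}(V)$ has value $V^{J}$ on the orbit $(W_G H)/J$; applying $\otherfunctor$ reindexes this, so that $\otherfunctor \mackeyfunctor{W_G H}(V)$ has value $V^{L/H}$ on $N_G H/L$ when $H \leqslant L$ and value $0$ otherwise, using the identity $\varepsilon^*\big((N_G H/H)/(L/H)\big)=N_G H/L$. Finally $\preforget$ is precomposition with restriction of $G$-sets along $i\colon N_G H \hookrightarrow G$. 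Decomposing $i^*(G/K)$ by the double coset formula as $\coprod_{x} N_G H/L_x$ with $x$ ranging over $N_G H \backslash G / K$ and $L_x = N_G H \cap xKx^{-1}$, and using that Mackey functors turn disjoint unions into direct sums, one obtains
\[
F_H(V)(G/K) \;=\; \bigoplus_{x \,:\, H \leqslant L_x} V^{L_x/H}.
\]

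Next I would identify this with $(\mathbb{Q}[(G/K)^H]\otimes V)^{W_G H}$. The point is that $N_G H$ acts on $(G/K)^H$ by left multiplication (as it normalises $H$), that $H$ acts trivially on it, so that $W_G H$ acts, and that $gK \in (G/K)^H$ exactly when $H \leqslant gKg^{-1}$. The orbit--stabiliser theorem then presents $(G/K)^H$, as a $W_G H$-set, as $\coprod_{x \,:\, H \leqslant L_x} (W_G H)/(L_x/H)$ --- indexed by the same set of double cosets as above. Hence $\mathbb{Q}[(G/K)^H]\otimes V$ decomposes as a direct sum of $W_G H$-modules of the form $\mathbb{Q}[(W_G H)/(L_x/H)]\otimes V$, and taking $W_G H$-invariants of such a summand yields $V^{L_x/H}$: this is the standard identity $(\mathbb{Q}[W/J]\otimes V)^{W}\cong V^{J}$, which follows from the projection formula $\mathbb{Q}[W/J]\otimes V \cong \mathrm{Ind}_J^W \mathrm{res}_J V$ together with the fact that induction and coinduction agree for finite groups over $\mathbb{Q}$. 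Comparing with the displayed formula finishes this part.

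For the remaining assertions: $e_H^G F_H(V)=F_H(V)$ holds because $F_H$ is, by construction, the composite ending in $\preforget \co R_{N_G H}^G(e_H^G)\mackey{N_G H}\to e_H^G\mackey{G}$, which lands in $e_H^G\mackey{G}$ by Lemma~\ref{lem:restrictedforgetful}, and applying $e_H^G$ is the identity there. For projectivity and injectivity, I would invoke that $F_H$ is simultaneously left and right adjoint to $U_H$, and that $U_H$ is exact (all functors in the adjunctions of Theorem~\ref{thm:manyadjunctions} are exact): a left adjoint of an exact functor preserves projectives and a right adjoint of an exact functor preserves injectives, so $F_H$ preserves both. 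Since $W_G H$ is finite, $\mathbb{Q}[W_G H]$ is semisimple, so every module $V$ is projective and injective; hence $F_H(V)$ is projective and injective in $e_H^G\mackey{G}$, and therefore in $\mackey{G}$ via the product splitting $\mackey{G}\cong\prod_{H}e_H^G\mackey{G}$.

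The step I expect to require the most care is the double-coset bookkeeping: checking that the condition ``$H \leqslant L_x$'' singling out the summands of $F_H(V)(G/K)$ is precisely the condition that the $x$-th $N_G H$-orbit on $G/K$ lies in $(G/K)^H$, and that the stabiliser of such a fixed point is literally $N_G H \cap xKx^{-1}$, so that the two orbit decompositions match on the nose rather than merely up to conjugacy. Everything else is either formal manipulation of the adjunctions already in hand or the standard semisimplicity of $\mathbb{Q}[W_G H]$.
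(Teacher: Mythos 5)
Your proof is correct and takes essentially the same route as the paper: unwind $F_H = \preforget \circ \otherfunctor \circ \mackeyfunctor{W_G H}$, use the double coset decomposition of $i^*(G/K)$ into $N_G H$-orbits, and identify the nonzero summands with the $W_G H$-orbits of $(G/K)^H$ (the paper phrases this last step as regrouping summands inside $W_G H$-fixed points rather than citing $(\bQ[W/J]\otimes V)^W \cong V^J$, but it is the same computation). Your arguments for projectivity/injectivity (semisimplicity of $\bQ[W_G H]$ plus exactness of the two-sided adjoint $U_H$) and for $e_H^G F_H(V)=F_H(V)$ (via Lemmas \ref{lem:restrictedforgetful} and \ref{lem:inflationidempotent}) likewise match the paper's.
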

\begin{proof}
From the definitions, the composite is given by
\[
G/K \mapsto \underset{\lambda \in \Lambda}{\bigoplus} V^{L_\lambda/H}
\]
where $G / K$ decomposes as $\coprod_{\lambda \in \Lambda} N_G H/L_\lambda$
and $K$ contains $H$. If $K$ does not contain $H$, the composite takes value zero.
Each factor in this decomposition corresponds to
an $N_G H$-orbit in the set of $N_G H$-maps $N_G H/ H \to i^*G/K$. The $N_G H$-action is by right multiplication by the inverse on $N_G H/H$. Such a map corresponds to a $G$-map
$G/H \to G/K$, which is simply an element $\alpha$ of the set $(G/K)^H$.
By thinking of $(G/K)^H$ as a $W_G H$-set, we can sum over all $\alpha$ to obtain the formula
\[
G/K \mapsto \Big( \underset{\alpha \in (G/K)^H}{\bigoplus} V_\alpha \Big)^{W_G H}
\]
with $W_G H$ permuting the summands (it acts by right multiplication by
the inverse on $(G/K)^H$). Replacing summands by a tensor product gives the formula
\[
F_H (V)(G/K) = (\mathbb{Q}[(G/K)^H] \otimes V)^{W_G H}.
\]

Every $\bQ[W_G H]$-module is both injective and projective.
Hence, $F_H (V)$ is both projective and injective as
the functors $F_H$ and $U_H$ are exact.

Lemmas \ref{lem:restrictedforgetful}
and \ref{lem:inflationidempotent} give the statement about idempotents.
\end{proof}

This new formula is very compact, but for calculations 
(see Examples \ref{ex:freec6functors} and \ref{ex:freeongroupring}) 
and later theory we 
will also need simple description of the induction and restriction maps of $F_H (V)$.

\begin{lem}\label{lem:freeinductionrestriction}
For $L \leqslant  K \leqslant G$, the induction map
\[
(\mathbb{Q}[(G/L)^H] \otimes V)^{W_G H} =
F_H (V)(G/L) \lra F_H (V)(G/K) =
(\mathbb{Q}[(G/K)^H] \otimes V)^{W_G H}
\]
is induced by the projection $\alpha:G/L \to G/K$.

The restriction map
\[
F_H (V)(G/K) \lra F_H (V)(G/L)
\]
is induced by the map $\bQ[(G/K)^H] \to \bQ[(G/L)^H]$
that sends $gK$ to the sum of the elements in its preimage
under the projection $\alpha:G/L \to G/K$.
\end{lem}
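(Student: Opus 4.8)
The plan is to trace the induction and restriction maps through the chain of adjunctions whose composite is $F_H$, using the explicit description of each structure map given when the individual adjunctions were set up. Throughout, I fix $L \leqslant K \leqslant G$ with $H \leqslant L$ (if $H$ is not subconjugate to $K$, or to $L$, the relevant value is zero and there is nothing to check), and I keep track of how a single $G$-orbit in $(G/K)^H$ relates to the $G$-orbits in $(G/L)^H$ lying above it under the projection $\alpha \co G/L \to G/K$.

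First I would recall, from the proof of Proposition \ref{prop:FHformula}, that the summand of $F_H(V)(G/K)$ indexed by $\beta \in (G/K)^H$ arises from an $N_G H$-orbit of $N_G H$-maps $N_G H/H \to i^* G/K$, and correspondingly from a $G$-map $G/H \to G/K$. Under this identification, the restriction map $R^K_L$ of $F_H(V)$ is built from three pieces: the restriction $\preforget$ (which reindexes $G$-orbits in $i^*G/L$, using restriction maps of the underlying Mackey functor — by Lemma \ref{lem:restrictedforgetful} these act as in $M'$), the inflation $\preinflate$ (transparent on structure maps, by the computation $\varepsilon^*(G/N)/(K/N) = G/K$ in Lemma \ref{lem:inflationidempotent}), and the evaluation functor $(-)(W_G H/e)$, whose interaction with restriction is governed by the fixed-point Mackey functor $\mackeyfunctor{W_G H}$ — whose restriction maps are, by Example \ref{ex:fixedpointmackeyfunctors}, the inclusions of fixed points. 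Chasing an element of $(\mathbb{Q}[(G/K)^H] \otimes V)^{W_G H}$ along this composite, the basis element $\beta K$ is sent to the formal sum of those $G$-maps $G/H \to G/L$ which become $\beta$ after post-composing with $\alpha$; i.e. to the sum of the elements of $(G/L)^H$ in the $\alpha$-preimage of $\beta$. This is exactly the claimed map $\mathbb{Q}[(G/K)^H] \to \mathbb{Q}[(G/L)^H]$, tensored with $\id_V$ and restricted to $W_G H$-invariants.

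For the induction map $I^K_L$ the argument is dual: the relevant structure maps are the induction maps of $\mackeyfunctor{W_G H}$ (given by coset orbits / summing over a transversal), together with the induction-side behaviour of $\preforget$ and $\otherfunctor$, and one uses that $F_H$ and $U_H$ are left and right adjoint, so $F_H$ commutes with the induction maps on the nose. Tracing $\alpha \in (G/L)^H$ through the composite, it lands on the $G$-map $G/H \to G/K$ obtained by composing with $\alpha \co G/L \to G/K$, which is precisely the induced map $\mathbb{Q}[(G/L)^H] \to \mathbb{Q}[(G/K)^H]$ on the level of permutation modules, again restricted to $W_G H$-invariants. The main obstacle I anticipate is purely bookkeeping: matching up the two descriptions of the factors of $F_H(V)(G/K)$ — the one indexed by $N_G H$-orbits of maps $N_G H/H \to i^*G/K$ and the one indexed directly by elements of the $W_G H$-set $(G/K)^H$ — compatibly with the restriction-map reindexing that happens inside $\preforget$, and making sure the $W_G H$-actions (by right multiplication by the inverse) are carried along consistently on both sides so that taking $W_G H$-invariants commutes with the permutation-module maps. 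Once that compatibility is pinned down, both statements follow by unwinding definitions.
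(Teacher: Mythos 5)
Your proposal is correct and takes essentially the same route as the paper: the paper's proof is exactly this ``chase through the definitions'' of the composite $F_H = \preforget\circ\otherfunctor\circ\mackeyfunctor{W_G H}$, written in component form by indexing $F_H(V)(G/L)$ over maps $\sigma\co G/H\to G/L$ with the $W_G H$-action permuting summands, from which the formulas $(R^K_L y)_\sigma = y_{\alpha\circ\sigma}$ and $(I^K_L y)_\tau=\sum_{\alpha\circ\sigma=\tau} y_\sigma$ follow -- matching your conclusions. Your aside that adjointness of $(F_H,U_H)$ makes $F_H$ ``commute with induction maps'' is not needed (the induction maps come directly from the coset-sum inductions of $\mackeyfunctor{W_G H}$ and the covariant functoriality of $\preforget$), but this does not affect the argument.
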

\begin{proof}
Write $(\mathbb{Q}[(G/L)^H] \otimes V)^{W_G H}$ as
\[
\Big( \underset{\sigma \co G/H \to G/L}{\bigoplus} V_{\sigma} \Big)^{W_G H},
\]
we use the subscript $\sigma$ on $V$ to keep track of the factors.
The $W_G H$-action is given by both acting on $V$ and permuting the summands.
That is, for $w \in W_G H$ and $v \in F_H (V)(G/L)$, we define
$wv$ to have component in summand $\sigma$ given by
\[
g (v_{\sigma \circ r(w^{-1})})
\]
where $r(w^{-1}) \co G/H \lra G/H$ is right multiplication by $w^{-1}$.

Chasing through the definitions, it follows that the restriction map
is given by
\[
(R_L^K y)_{\sigma}  = y_{\alpha \circ \sigma}.
\]
The induction map is given by
\[
(I_L^K y)_{\tau}  = \sum_{\alpha \circ \sigma = \tau} y_{\sigma}
\]
the sum over those summands $\sigma$ that map to $\tau$ by $\alpha$.
\end{proof}

\begin{thm}\label{thm:mainsplitting}
For each $H \leqslant G$ there is an equivalence of categories
\[
\adjunction{U_H}{e_H^G \mackey{G}}{\bQ[W_G H] \leftmod}{F_H}
\]
Hence there is an equivalence of categories
\[
\mackey{G} \cong \prod_{(H) \leqslant G} \bQ[W_G H] \leftmod
\]
where the product runs over $G$-conjugacy classes of subgroups of $G$.
\end{thm}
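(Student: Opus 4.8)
The plan is to assemble the equivalence from the chain of adjunctions in Theorem \ref{thm:manyadjunctions}, using the explicit formula for $F_H$ from Proposition \ref{prop:FHformula} and the compatibility with idempotents already recorded. The composite $U_H$ is, by the remark following the definition of $F_H$ and $U_H$, simply evaluation $M \mapsto M(G/H)$; so I must produce natural isomorphisms $U_H F_H \cong \id$ and $F_H U_H \cong \id$.

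First I would verify $U_H F_H \cong \id_{\bQ[W_G H]\leftmod}$. By Proposition \ref{prop:FHformula}, $U_H F_H(V) = F_H(V)(G/H) = (\bQ[(G/H)^H]\otimes V)^{W_G H}$. Now $(G/H)^H$ is exactly the set of $G$-maps $G/H \to G/H$, which is a free transitive $W_G H$-set (via right multiplication), so $\bQ[(G/H)^H] \cong \bQ[W_G H]$ as $W_G H$-sets with the regular action. Hence $(\bQ[W_G H]\otimes V)^{W_G H} \cong V$ naturally in $V$, where $W_G H$ acts diagonally; this is the standard ``untwisting'' isomorphism for the regular representation, sending $v$ to the component of the identity coset. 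One must check this isomorphism is natural and respects the $\bQ[W_G H]$-module structure, which is routine.

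Next, $F_H U_H \cong \id_{e_H^G\mackey{G}}$. Here I would argue abstractly rather than by a formula: $F_H$ and $U_H$ are exact (Theorem \ref{thm:manyadjunctions}), mutually left and right adjoint, and $F_H$ lands in $e_H^G\mackey{G}$ with $e_H^G F_H(V) = F_H(V)$ (Proposition \ref{prop:FHformula}). Since every $\bQ[W_G H]$-module is projective and injective and $F_H$ is exact, $F_H(V)$ is projective and injective in $\mackey{G}$, hence in $e_H^G\mackey{G}$. The unit $\eta\colon \id \to F_H U_H$ and counit $\varepsilon\colon U_H F_H \to \id$ satisfy the triangle identities; the counit is the isomorphism from the previous paragraph, so $F_H\varepsilon$ is an isomorphism, and the triangle identity $\varepsilon_{F_H} \circ F_H\eta = \id$ then forces $F_H\eta$ to be an isomorphism. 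Because $U_H$ is evaluation at $G/H$, to check $\eta_M\colon M \to F_H U_H M$ is an isomorphism it suffices (by the splitting $\mackey{G}\cong\prod_{(K)} e_K^G\mackey{G}$ and the fact that $M = e_H^G M$) to check it on each $G/K$; and using Lemma \ref{lem:freeinductionrestriction} together with the description of the action of the Burnside ring idempotent $R_K^G(e_H^G)$ via $[A/H]$ as in the proof of Lemma \ref{lem:inflationidempotent}, one identifies $M(G/K) = R_K^G(e_H^G)M(G/K)$ with $(\bQ[(G/K)^H]\otimes M(G/H))^{W_G H}$ by sending $m$ to the collection $(R_H^{\,?}\!\circ C_?\,m)$ indexed over the $W_G H$-set $(G/K)^H$, using the restriction and conjugation maps of $M$. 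The key input is that $e_H^G M$ is ``built from $M(G/H)$'': applying the idempotent kills everything not detected at conjugates of $H$, so restriction-to-$H$ maps reconstruct all values.

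The main obstacle I expect is this last identification --- showing $\eta_M$ is an isomorphism on every $G/K$ --- because it requires unwinding how the idempotent $e_H^G$ acts through the Mackey structure maps and matching that against the explicit $W_G H$-set $(G/K)^H$ appearing in Proposition \ref{prop:FHformula}; the bookkeeping of coset representatives and conjugations is where the real content sits. The cleanest route is probably to avoid checking $\eta_M$ directly and instead invoke an abstract Morita-type criterion: $F_H$ is exact, faithful (its left adjoint $U_H$ composed with it is the identity, by the counit computation), and sends the generator $\bQ[W_G H]$ to a projective generator of $e_H^G\mackey{G}$ --- so that $e_H^G\mackey{G}$ is equivalent to modules over $\mathrm{End}(F_H\bQ[W_G H])^{\mathrm{op}}$, which one then identifies with $\bQ[W_G H]$ via $U_H$. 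Either way, once the single equivalence $e_H^G\mackey{G}\simeq\bQ[W_G H]\leftmod$ is in hand, the final product decomposition is immediate from the splitting $\mackey{G}\cong\prod_{(H)\leqslant G} e_H^G\mackey{G}$ stated at the start of Section \ref{sec:classification}.
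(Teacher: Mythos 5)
Your first half is correct and coincides with the paper's: the unit $V\to U_HF_HV=(\bQ[(G/H)^H]\otimes V)^{W_GH}$ is an isomorphism because $(G/H)^H$ is a free transitive $W_GH$-set, and formally this makes your $\eta$ an isomorphism on objects of the form $F_HV$. The gap is in promoting this to \emph{all} of $e_H^G\mackey{G}$, and neither of your routes closes it. In route (a) you write down the required statement --- that for $M=e_H^GM$ the map $M(G/K)\to(\bQ[(G/K)^H]\otimes M(G/H))^{W_GH}$ assembled from restrictions and conjugations is an isomorphism --- and then concede that the verification ``is where the real content sits''; that verification \emph{is} the theorem, so nothing has been proved. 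In route (b) the decisive claim is that $F_H(\bQ[W_GH])$ is a projective \emph{generator} of $e_H^G\mackey{G}$, and your justification (faithfulness of $F_H$, which does follow from $U_HF_H\cong\id$) is a non sequitur: generation is a property of the target category, equivalent via the adjunction to the assertion that an object $M$ of $e_H^G\mackey{G}$ with $M(G/H)=0$ must be zero, and nothing in your sketch establishes this. It is exactly the content the paper supplies by its descending induction on the height of subgroups: for $M$ of type $(j,k)$ it forms $\kappa\colon M\to F_HM(G/H)$, uses the inductive hypothesis to see that $e_H^G$ kills the kernel and cokernel of $\kappa$, concludes that $\kappa$ is an epimorphism, and splits it using projectivity of $F_HM(G/H)$, so that every Mackey functor is a finite direct sum of functors $F_JV_J$ and hence every object of $e_H^G\mackey{G}$ is of the form $F_HV$.

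The gap is repairable, and repairing it would make your Morita route a genuinely different argument from the paper's induction. Suppose $M=e_H^GM$ and $M(G/H)=0$. For any $K\leqslant G$ we have $M(G/K)=R_K^G(e_H^G)M(G/K)$; by Remark \ref{rmk:restrictidempotents} this idempotent is a sum of $e_L^K$ over $K$-conjugacy classes of subgroups $L\leqslant K$ that are $G$-conjugate to $H$, and by Lemma \ref{lem:idempotentformula} each $e_L^K$ is $|W_KL|^{-1}[K/L]$ plus rational multiples of $[K/L']$ for $L'$ a proper subgroup of $L$. Each $[K/L']$ acts through $M(G/L')$, which vanishes because $H$ is not $G$-subconjugate to $L'$ (so $R_{L'}^G(e_H^G)=0$), and $[K/L]=I_L^K\circ R_L^K$ acts through $M(G/L)\cong M(G/H)=0$ (a conjugation isomorphism). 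Hence $M(G/K)=0$ for all $K$, so $M=0$ and $F_H(\bQ[W_GH])$ is indeed a projective generator (compact, since evaluation at $G/H$ preserves colimits); the Gabriel--Morita argument then identifies $e_H^G\mackey{G}$ with $\bQ[W_GH]\leftmod$, and the product decomposition follows from the idempotent splitting as you say. As written, however, the proposal asserts rather than proves the one statement that carries the weight of the theorem.
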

\begin{proof}
We have already seen that $U_H$ and $F_H$ are both left and right adjoint to each other.
The unit is an isomorphism:
\[
V \lra U_H F_H V = (\mathbb{Q}[(G/H)^H] \otimes V)^{W_G H} \cong V.
\]
It follows that the counit is an isomorphism of Mackey functors of the form $F_H V$.

The rest of the proof shows that any Mackey functor is a finite direct sum of
Mackey functors of the form $F_H V$ for varying $H$ and $V$.

We partition the set of subgroups of $G$ into sets, which we may think
of as their height in the subgroup lattice.
We start with $S_0 = \{ e \}$, then we define
$S_j$ as those groups not in $S_{j-1}$ but all of whose subgroups
are in $S_i$ for $i < j$.
Each $S_j$ is closed under conjugation, with $n_j$ conjugacy classes.
Choose a $H_{j,k}$ in each conjugacy class, $1 \leqslant k \leqslant n_j$.
We say that a Mackey functor $M$ is of type $(j,k)$ if
$M(G/H_{j,k})$ is non-zero, but
\[
M(G/H_{j',k'})=0 \quad \text{ for } j'< j \text{ and for } j'=j \text{ and } k'<k.
\]

We argue via descending induction.
Starting at the top, if $M(G/H)=0$ for all proper subgroups $H$, then
$M = F_G M(G/G)$.
Fix $(j,k)$ inductively and assume that all Mackey functors of type $(j', k')$
for $j'> j$ and for $j'=j$ and $k'>k$ are finite direct sums of 
Mackey functors of the form $F_J V_J$ where $V_J$ is a $W_G J$-module
and
\[
J \in \{  H_{j'', k''} \mid
j'' > j' \text{ or } j'' = j' \text{ and } k'' \geqslant k'
\}.
\]
Let $M$ be a Mackey functor of type $(j,k)$.
For $H= H_{j,k}$,
there is a map of Mackey functors
\[
\kappa \co M \lra F_H M(G/H) = e_H^G F_H  M(G/H)
\]
that is the identity on $G/H$.
The kernel and cokernel of $\kappa$ are, by inductive assumption,
of the form $F_J V_J$. It follows that $e_H^G$ applied to the
kernel and cokernel are zero.
Thus $e_H^G \kappa$ is an isomorphism and so $\kappa$,
which is equal to the epimorphism $M \to e_H^G M$ followed by $e_H^G \kappa$,
is an epimorphism.

Since $F_H M(G/H)$ is projective, the epimorphism splits and
$M$ is a direct sum of $F_H M(G/H)$ and Mackey functors
of the form $F_J V_J$.
\end{proof}

\begin{cor}
Every rational Mackey functor is both projective and injective.
\end{cor}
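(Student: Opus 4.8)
The plan is to read this off directly from Theorem~\ref{thm:mainsplitting} and Proposition~\ref{prop:FHformula}, since almost all the work has already been done. The key observation recorded inside the proof of Theorem~\ref{thm:mainsplitting} is that every rational Mackey functor $M$ decomposes as a \emph{finite} direct sum
\[
M \cong \bigoplus_{J} F_J V_J
\]
of Mackey functors of the form $F_J V_J$, where each $V_J$ is a $\bQ[W_G J]$-module and $J$ ranges over (representatives of) conjugacy classes of subgroups of $G$; the sum is finite because $G$, being finite, has finitely many conjugacy classes of subgroups. So the first step is simply to invoke this decomposition.

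Next, Proposition~\ref{prop:FHformula} already asserts that each $F_H(V)$ is both projective and injective in $\mackey{G}$. The reason, recalled there, is that $\bQ[W_G H]$ is semisimple (Maschke's theorem, since $W_G H = N_G H / H$ is finite and $\bQ$ has characteristic zero), so every $\bQ[W_G H]$-module is both projective and injective, and $F_H$, $U_H$ are an exact adjoint pair, hence carry projectives to projectives and injectives to injectives. Thus each summand $F_J V_J$ in the decomposition above is projective and injective.

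Finally one applies the elementary facts that in any abelian category a finite direct sum of projective objects is projective, and a finite direct sum of injective objects is injective; applied to $M \cong \bigoplus_J F_J V_J$ this yields that $M$ is both projective and injective. There is no genuinely hard step here — the argument is just an assembly of results already proved — and the only point to watch is the finiteness of the indexing set, which is automatic. (One could equally argue more structurally: an equivalence of abelian categories preserves projectives and injectives; in a finite product of abelian categories an object is projective/injective iff each coordinate is; and over the semisimple ring $\bQ[W_G H]$ every module is both. Combining these three facts with Theorem~\ref{thm:mainsplitting} gives the corollary directly.)
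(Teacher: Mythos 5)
Your argument is correct and is exactly the one the paper intends: the corollary is stated as an immediate consequence of the decomposition into finite direct sums of $F_J V_J$ established in the proof of Theorem \ref{thm:mainsplitting}, together with the projectivity and injectivity of each $F_H(V)$ from Proposition \ref{prop:FHformula}. Your alternative structural remark (equivalences of abelian categories preserve projectives and injectives, and every $\bQ[W_G H]$-module is both) is an equally valid reading of the same results, so there is nothing to add.
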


By Remark \ref{rmk:restrictidempotents}, for $H \leqslant G$
we have $R^G_H (e^G_H) = e^H_H$. This gives the following corollary of
Theorem \ref{thm:mainsplitting}

\begin{cor}\label{cor:determinesmackey}
A $G$-Mackey functor $M$ is uniquely determined by the
collection
\[
\{ e_H^H M(G/H) \in \bQ[W_G H] \leftmod \ | \ (H) \leqslant G \}
\]
where we index over $G$-conjugacy classes of subgroups of $G$.
\end{cor}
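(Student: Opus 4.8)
The plan is to recognise the displayed collection as nothing but the image of $M$ under the equivalence of Theorem~\ref{thm:mainsplitting}, so that the corollary becomes a bookkeeping statement. First I would recall from the start of Section~\ref{sec:classification} that applying the orthogonal idempotents $e_H^G \in \burnsidering(G)$ produces a splitting
\[
\mackey{G} \cong \prod_{(H) \leqslant G} e_H^G \mackey{G},
\]
under which a Mackey functor $M$ corresponds to the tuple $(e_H^G M)_{(H)}$; hence $M$ is determined up to isomorphism by the functors $e_H^G M$, one for each $G$-conjugacy class of subgroups. Next, Theorem~\ref{thm:mainsplitting} identifies each factor $e_H^G \mackey{G}$ with $\bQ[W_G H]\leftmod$ via the functor $U_H$, and we observed just after its definition that $U_H N = N(G/H)$ for $N \in e_H^G \mackey{G}$. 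So under the composite equivalence $\mackey{G} \cong \prod_{(H)} \bQ[W_G H]\leftmod$, the Mackey functor $M$ corresponds precisely to the tuple $\big( (e_H^G M)(G/H) \big)_{(H)}$.

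It then remains only to rewrite $(e_H^G M)(G/H)$ in the stated form. By the construction of $eM$ (namely $(eM)(G/H) = R_H^G(e)\,M(G/H)$) we have $(e_H^G M)(G/H) = R_H^G(e_H^G)\,M(G/H)$, and by the identity $R_H^G(e_H^G) = e_H^H$ recorded just before the corollary (a special case of Remark~\ref{rmk:restrictidempotents} with $A = H$), this is exactly $e_H^H M(G/H)$. Therefore the collection $\{\, e_H^H M(G/H) \mid (H) \leqslant G \,\}$ is the image of $M$ under an equivalence of categories, and in particular an equivalence induces a bijection on isomorphism classes, so $M$ is determined up to isomorphism by this collection.

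Since every step is a direct invocation of an already-established result, I do not expect a genuine obstacle. The only points requiring care are mild: one must feed $U_H$ the functor $e_H^G M$ rather than $M$ itself (as $U_H$ is defined only on $e_H^G \mackey{G}$), and one must keep track of the $W_G H$-module structure throughout, so that the invariant is genuinely recorded as a tuple of $\bQ[W_G H]$-modules and not merely as underlying $\bQ$-modules.
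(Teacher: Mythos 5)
Your argument is correct and is exactly the route the paper intends: the idempotent splitting, the equivalences $U_H \co e_H^G\mackey{G} \to \bQ[W_G H]\leftmod$ from Theorem \ref{thm:mainsplitting} with $U_H N = N(G/H)$, and the identity $R_H^G(e_H^G)=e_H^H$ from Remark \ref{rmk:restrictidempotents}. No issues to flag.
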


The remaining question is how to conveniently find the values $M(G/H)$
of the Mackey functor $M$ from such a collection.
The next section gives a formula that provides a satisfying answer.

\section{The diagonal decomposition}

Rational Mackey functors for compact Lie groups are
considered in Greenlees \cite{greratmack}.
We present the decomposition formula for rational $G$-Mackey functors
for finite $G$, following Examples C i) and Corollary 5.3 of \cite{greratmack}.
The reference proves the result using equivariant stable homotopy theory;
a direct algebraic proof is given by Sugrue \cite[Lemma 6.1.9]{sugruethesis}.
We use the structure results to prove it via a calculation on
Mackey functors of the form $F_A V$.

\begin{thm}\label{thm:mackeydecomp}
Let $M$ be a $G$-Mackey functor and let $K \leqslant H$ be subgroups of $G$.
Then
\[
e_K^H M(G/H)
\cong \big( e_K^K M(G/K) \big)^{W_H K}.
\]
\end{thm}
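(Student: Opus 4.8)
The plan is to reduce the statement to the case of Mackey functors of the form $F_A V$ using the classification already established (Theorem~\ref{thm:mainsplitting} and Corollary~\ref{cor:determinesmackey}), and then to compute both sides directly via the explicit formula of Proposition~\ref{prop:FHformula}. Since every rational $G$-Mackey functor is a finite direct sum of Mackey functors $F_A V$ for various subgroups $A$ and $\bQ[W_G A]$-modules $V$, and since both sides of the claimed isomorphism are additive in $M$ and natural, it suffices to verify the isomorphism when $M = F_A V$. For such $M$, we have the formula $M(G/H) = (\bQ[(G/H)^A] \otimes V)^{W_G A}$, and we need to understand the effect of applying the idempotent $e_K^H$ to this and compare with $\big(e_K^K M(G/K)\big)^{W_H K}$.

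The first key step is to make the left-hand side explicit: $e_K^H$ acts on $M(G/H)$ through the $\burnsidering(H)$-module structure, which in turn is controlled by the restriction-then-induction operators $I_L^H \circ R_L^H$ for $L \leqslant H$; using Lemma~\ref{lem:freeinductionrestriction} these act concretely on the permutation module $\bQ[(G/H)^A]$ by transporting along the fibres of projections $G/L \to G/H$. The upshot I expect is that, as in the computation inside Lemma~\ref{lem:inflationidempotent}, the idempotent $e_K^H$ picks out precisely the summands indexed by those $\alpha \in (G/H)^A$ whose stabiliser in $H$ is $H$-conjugate to $K$; equivalently, $e_K^H M(G/H)$ is the sub-$\bQ[W_G A]$-module of $(\bQ[(G/H)^A] \otimes V)^{W_G A}$ spanned by the $H$-orbit of such $\alpha$. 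The second key step is to identify the right-hand side: by the same formula $M(G/K) = (\bQ[(G/K)^A]\otimes V)^{W_G A}$, then $e_K^K$ applied to this picks out the summands $\alpha \in (G/K)^A$ with stabiliser exactly $K$ (here $e_K^K$ is the "top" idempotent, so by Remark~\ref{rmk:restrictidempotents} it selects the free part), and finally one takes $W_H K = N_H K / K$-fixed points.

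The heart of the argument is then a bijection, compatible with all the group actions in sight, between (i) the set of $\alpha \in (G/H)^A$ with $H$-stabiliser conjugate to $K$, modulo $H$, and (ii) the set of $W_H K$-orbits on $\{\alpha \in (G/K)^A \text{ with } K\text{-stabiliser exactly } K\}$. Concretely, a $G$-map $G/A \to G/H$ that is "$K$-free" should correspond, after restricting along $G/K \to G/H$ and using that $A$ is subconjugate appropriately, to a $G$-map $G/A \to G/K$; one checks that the $H$-orbit on the source matches the $W_H K$-orbit on the target, and that the tensored-in copies of $V$ and the residual $W_G A$-action are carried along isomorphically so that the fixed-point functors $(-)^{W_G A}$ on both sides agree.

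The main obstacle I anticipate is bookkeeping the interaction of the three groups $W_G A$, $H$, and $W_H K$ acting on the indexing sets of $G$-maps $G/A \to G/H$ and $G/A \to G/K$: one must check that taking $W_H K$-fixed points on $\big(e_K^K M(G/K)\big)$ does not lose or double-count any summands relative to the $H$-orbit description of $e_K^H M(G/H)$, and that this is genuinely an isomorphism of $\bQ[W_G H \text{-relevant}]$ structure and not merely of $\bQ$-modules. I would handle this by first doing the case $V = \bQ[W_G A]$ (the free module, where $F_A(\bQ[W_G A])$ is a representable Mackey functor and the computation is purely a count of $G$-sets), extracting the bijection there, and then tensoring up to general $V$ by exactness of $F_A$; naturality in $M$ then extends the result from summands $F_A V$ to all $M$.
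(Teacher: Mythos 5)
Your plan is correct and follows essentially the same route as the paper's proof: reduce to $M = F_A V$ via Theorem \ref{thm:mainsplitting}, use the vanishing for stabilisers not $G$-conjugate to $K$ (the paper's Lemma \ref{lem:freefunctoridempotent}) to reduce to $A=K$, identify the action of $e_K^H$ as $|W_H K|^{-1} I_K^H \circ R_K^H$ on the surviving summands, and then match basis elements of $\bQ[(G/H)^K]$ with $W_H K$-orbits in $\bQ[(G/K)^K]$ via Lemma \ref{lem:freeinductionrestriction} --- exactly the bijection you describe as the heart of the argument. The bookkeeping you flag (compatibility with $V$ and the $W_G K$-fixed points) is precisely what the paper's proof carries out explicitly, so no genuinely different ideas are involved.
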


\begin{proof}
By Theorem \ref{thm:mainsplitting}, we may assume $M$ is of the form
$F_A V$, for $V$ a $\bQ[W_G A]$-module.
Lemma \ref{lem:freefunctoridempotent} implies that we
only need to consider the case $A=K$.

By Remark \ref{rmk:restrictidempotents} and Proposition \ref{prop:FHformula},
we may remove the idempotent $e_K^K$ from the formula.
Thus we must prove
\[
e_K^H (\bQ[(G/H)^K] \otimes V)^{W_G K}
=
e_K^H F_K(G/H)
\cong
\big( F_K(G/K) \big)^{W_H K}
=
\big( (\bQ[(G/K)^K] \otimes V)^{W_G K} \big)^{W_H K}.
\]

By Lemma \ref{lem:idempotentformula}, $(e_K^H)$ is a sum of
$|W_H K|^{-1} [H/K]$ and rational multiples of basis elements
$[H/L]$ for $L$ a proper subgroup of $K$.
An element $[H/L]$ acts on $F_K(G/H)$
through $I^H_L \circ R^H_L$.
As $F_K (V)(G/L)=0$ unless $L$ contains $K$ (up to $H$-conjugacy),
$(e_K^H)$ acts as 
\[
|W_H K|^{-1} [H/K] = |W_H K|^{-1} I_K^H \circ R_K^H.
\]

Lemma \ref{lem:freeinductionrestriction} shows how the map $I_K^H \circ R_K^H$ 
is induced from a composite 
\[
\bQ[(G/H)^K] \lra \bQ[(G/K)^K] \lra \bQ[(G/H)^K]. \tag^*{$\ast$}
\]
We look at this composite in detail. The reader may find it useful to 
compare the following argument with Example \ref{ex:idempotentdecomp}.

If an element $gK$ is fixed by left multiplication by elements of $K$
(that is, an element of $N_G K$), then $gH$ is also $K$-fixed.
Take $gK$ and $g'K$ that are $K$-fixed with $gH=g'H$.
Then $g' = gh$ for some $h \in H$, and
\[
K= (g')^{-1} K g' = (gh)^{-1} K gh = h^{-1} K h
\]
so $h \in N_H K$. It follows that the second map of ($\ast$) passes to an injection 
\[
\bQ[(G/K)^K]/W_H K \lra \bQ[(G/H)^K].
\]

Take $gH$ in the image of $(G/K)^K \to (G/H)^K$, the composite ($\ast$) sends this
to the sum of those $aH$ such that $aK = gK$. By the previous argument, this sum is
$|W_H K| gH$.
Now take a coset $gH$ that is not in the image of $(G/K)^K \to (G/H)^K$,
then the first map of ($\ast$) sends this to zero.

It follows that the composite 
\[
(\bQ[(G/H)^K] \otimes V)^{W_G K}
\lra
(\bQ[(G/K)^K] \otimes V)^{W_G K}
\lra
(\bQ[(G/H)^K] \otimes V)^{W_G K}
\]
induced from ($\ast$) in turn induces an isomorphism from
\[
\Big( (\bQ[(G/K)^K] \otimes V)^{W_G K} \Big)^{W_H K}
\]
onto the image of $I_K^H \circ R_K^H$ in $(\bQ[(G/H)^K] \otimes V)^{W_G K}$.
\end{proof}

\begin{lem}\label{lem:freefunctoridempotent}
Let $A$, $B$ and $C$ be subgroups of $G$, with $C \leqslant B$ and
let $V$ be a $\bQ[W_G A]$-module. Then
\[
e^B_C \big(  F_A (V)  (G/B) \big)= 0
\]
unless $A$ and $C$ are $G$-conjugate.
\end{lem}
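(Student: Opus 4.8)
The plan is to reduce the statement to a combinatorial fact about the $W_G A$-set $(G/B)^C$, using the formula for $F_A(V)$ from Proposition \ref{prop:FHformula} and the description of the action of $e^B_C$ on $F_A(V)(G/B)$ exactly as in the proof of Theorem \ref{thm:mackeydecomp}. First I would recall that
\[
F_A(V)(G/B) = \big( \bQ[(G/B)^A] \otimes V \big)^{W_G A},
\]
and that this vanishes outright if $A$ is not $G$-subconjugate to $B$; so we may assume $A \leqslant_G B$ and the question is whether $e^B_C$ can act nontrivially when $C$ is \emph{not} $G$-conjugate to $A$. By Lemma \ref{lem:idempotentformula}, $e^B_C$ is a $\bQ$-linear combination of basis elements $[B/L]$ for $L \leqslant C$, and each $[B/L]$ acts on $F_A(V)(G/B)$ through $I^B_L \circ R^B_L$, which factors through $F_A(V)(G/L) = \big(\bQ[(G/L)^A]\otimes V\big)^{W_G A}$.

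The key step is then: if $A$ is not $G$-conjugate to $C$, then for every subgroup $L \leqslant C$ the module $F_A(V)(G/L)$ is zero, and hence $e^B_C$ acts as zero. Indeed $F_A(V)(G/L)$ vanishes unless $A$ is $G$-subconjugate to $L$; but since $L \leqslant C$ with $C$ not $G$-conjugate to $A$, if $A$ were $G$-subconjugate to $L$ it would be $G$-subconjugate to a proper subgroup of $C$ (when $L$ is proper) or $G$-subconjugate to $C$ itself with $L = C$. In the first case combine this with $A \leqslant_G B$; the point is that $e^B_C$, being supported on chains terminating at $C$, only "sees" the value $F_A(V)(G/C)$ through the leading term $|W_B C|^{-1}[B/C] = |W_B C|^{-1} I^B_C \circ R^B_C$, and $R^B_C$ lands in $F_A(V)(G/C)$, which is zero when $A$ is not $G$-subconjugate to $C$. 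When $A$ \emph{is} $G$-subconjugate to $C$ but not $G$-conjugate to $C$, then $A$ is $G$-subconjugate to a proper subgroup of $C$, and one checks — exactly as in the argument following $(\ast)$ in the proof of Theorem \ref{thm:mackeydecomp} — that every summand $gC \in (G/C)^C$ on which $R^B_C$ is supported pulls back to cosets $gL$ with $A$-fixed points forcing $L \supseteq A$ up to conjugacy, so the relevant terms in $e^B_C$ all vanish. Assembling these cases: every basis element $[B/L]$ appearing in $e^B_C$ acts through a composite factoring through a zero module, so $e^B_C\big(F_A(V)(G/B)\big) = 0$.

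The main obstacle is bookkeeping the subconjugacy relations carefully: "$F_A(V)(G/L) = 0$ unless $A$ is $G$-subconjugate to $L$" must be combined with "$e^B_C$ is supported on $L \leqslant_B C$" to conclude that the only $L$ that could contribute is one $G$-conjugate to $A$, which forces $A \leqslant_G C$, and then the leading-term analysis (that $[B/C]$ acts through $I^B_C R^B_C$ and $R^B_C$ is supported on $C$-fixed cosets) must be invoked to rule out the case $A \lneqq_G C$. I would structure the write-up as: (i) dispose of $A \not\leqslant_G B$ trivially; (ii) expand $e^B_C$ via Lemma \ref{lem:idempotentformula} and note each $[B/L]$ factors through $F_A(V)(G/L)$; (iii) observe $F_A(V)(G/L)=0$ unless $A \leqslant_G L$, which given $L \leqslant C$ already kills all proper $L$ unless $A \leqslant_G C$; (iv) in the remaining case, run the $(\ast)$-style fixed-point computation to see the leading term $[B/C]$ also acts as zero when $A$ is not $G$-conjugate to $C$; (v) conclude. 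The computation in (iv) is essentially a special case of what was already done in the proof of Theorem \ref{thm:mackeydecomp}, so I would cross-reference rather than repeat it.
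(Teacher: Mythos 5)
Your steps (i)--(iii) are sound: if $A$ is not $G$-subconjugate to $B$ then $F_A(V)(G/B)=0$ outright, and if $A$ is not $G$-subconjugate to $C$ then every basis element $[B/L]$ of $e^B_C$ (with $L \leqslant C$) acts through $F_A(V)(G/L)=0$, so $e^B_C$ kills everything. The gap is in step (iv), the case where $A$ is $G$-subconjugate to $C$ but not $G$-conjugate to it. There you claim that the individual terms of $e^B_C$, in particular the leading term $|W_B C|^{-1}[B/C]$, act as zero; this is false. Take $G=B=C=C_2$ and $A$ the trivial group, with $V$ a nonzero $\bQ[C_2]$-module: then $F_A(V)(G/B)=V^{C_2}$, the leading term $[C_2/C_2]$ acts as the identity and $[C_2/C_1]$ acts as multiplication by $2$, so no single basis element acts as zero --- only the full combination $e^{C_2}_{C_2}=[C_2/C_2]-\tfrac{1}{2}[C_2/C_1]$ vanishes. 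In other words, in this case the vanishing comes from cancellation among the M\"{o}bius-weighted terms of the idempotent, not from term-by-term vanishing, and the computation following $(\ast)$ in the proof of Theorem \ref{thm:mackeydecomp} cannot be cited for it: that computation treats the case $A=K=C$, where the lower terms genuinely die and the top term is a multiple of the identity, which is precisely the opposite situation.

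The cancellation is exactly what the idempotent calculus is designed to handle, and the paper's proof goes that way: by Proposition \ref{prop:FHformula} one has $e_A^G F_A(V)=F_A(V)$, so the value at $G/B$ is fixed by $R^G_B(e^G_A)$ and hence
\[
e^B_C \big( F_A(V)(G/B) \big) = e^B_C \, R^G_B(e^G_A) \big( F_A(V)(G/B) \big).
\]
By Remark \ref{rmk:restrictidempotents}, $R^G_B(e^G_A)$ is the sum of the idempotents $e^B_K$ over $B$-conjugacy classes of subgroups $K\leqslant B$ that are $G$-conjugate to $A$, so orthogonality of idempotents in $\burnsidering(B)$ forces $e^B_C\,R^G_B(e^G_A)=0$ unless $C$ is $B$-conjugate to such a $K$, i.e.\ unless $A$ and $C$ are $G$-conjugate (combined with the observation that $A$ must be $G$-subconjugate to $B$ for $F_A(V)(G/B)$ to be nonzero). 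If you prefer to keep your expansion of $e^B_C$ into basis elements $[B/L]$, you would have to track the rational coefficients and exhibit the cancellation explicitly, which is substantially more work than invoking $e_A^G F_A(V)=F_A(V)$ and orthogonality.
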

\begin{proof}
By Proposition \ref{prop:FHformula}, $e_A^G F_A (V) =F_A (V)$.
Hence,
\[
e^B_C \big( F_A (V)  (G/B) \big)=
e^B_C R^G_B (e^G_A) \big( F_A (V)  (G/B) \big).
\]
The product $e^B_C R^G_B (e^G_A)$ is zero unless
$A$ is $G$-conjugate to a subgroup $A'$ of $G$ and that subgroup $A'$
is $B$-conjugate to $C$.
We also require that $A$ is $G$-subconjugate to $B$, as otherwise
$\bQ[(G/B)^A]=0$.
This is equivalent to requiring that $A$ is $G$-conjugate to $C$.
\end{proof}

We illustrate this decomposition with two examples.

\begin{ex}
Let $M$ be a rational Mackey functor for $C_{p^3}$.
Define $\bQ$-modules
\[
V_0 = M(C_{p^3}/C_1), \quad
V_1 = e_{C_{p^1}} M(C_{p^3}/C_{p^1}), \quad
V_2 = e_{C_{p^2}}M(C_{p^3}/C_{p^2}), \quad
V_3 = e_{C_{p^3}}M(C_{p^3}/C_{p^3}).
\]
where $e_{C_{p^i}} \in \burnsidering(C_{p^i})$ is the idempotent with support
$C_{p^i}$, $i \in \{0,1,2,3\}$.
Note that for $i=0$ this is $1 \in \burnsidering(C_1) = \bQ$.
The $\bQ$-module $V_i$ has an action of
$\bQ[W_{C_{p^3}} C_{p^i}  ] = \bQ[C_{p^3}/C_{p^i}]$.

The classification theorem implies that
\[
M \cong F_{e} V_0 \oplus F_{C_{p}} V_1  \oplus F_{C_{p^2}} V_2  \oplus F_{C_{p^3}} V_3.
\]
Writing out the values of $M$ at varying subgroups gives the diagram
\[
\xymatrix@C-0.8cm{
V_3 & \oplus &
V_2^{C_{p^3}/C_{p^2}} \ar@<-0.7ex>[d]
& \oplus &
V_1^{C_{p^3}/C_{p}} \ar@<-0.7ex>[d]
& \oplus &
V_0^{C_{p^3}/C_{1}} \ar@<-0.7ex>[d]
& = &
M(C_{p^3}/C_{p^3}) \ar@<-0.7ex>[d]
\\
&&
 V_2 \ar@<-0.7ex>[u]
& \oplus &
V_1^{C_{p^2}/C_{p}} \ar@<-0.7ex>[d] \ar@<-0.7ex>[u]
& \oplus &
V_0^{C_{p^2}/C_{1}} \ar@<-0.7ex>[d] \ar@<-0.7ex>[u]
& = &
M(C_{p^3}/C_{p^2}) \ar@<-0.7ex>[d] \ar@<-0.7ex>[u]
\\
&&
&&
V_1   \ar@<-0.7ex>[u]
& \oplus &
V_0^{C_{p}/C_{1}} \ar@<-0.7ex>[d] \ar@<-0.7ex>[u]
& = &
M(C_{p^3}/C_{p^1}) \ar@<-0.7ex>[d] \ar@<-0.7ex>[u]
\\
&&
&&
&&
 V_0 \ar@<-0.7ex>[u] & = & M(C_{p^3}/C_{1}) \ar@<-0.7ex>[u]
}
\]
With vertical maps indicating induction and restriction.
\end{ex}

\begin{rmk}
Given a $G$-Mackey functor $M$ and $H \leqslant G$,
we can construct $\overline{M}(G/H)$, the quotient of $M(G/H)$ by the images of the
induction maps from proper subgroups of $H$.
This example shows how $\overline{M}(G/H) = e^H_H M(G/H)$, so that the
classification result is based around stripping out the images of the induction functors.
\end{rmk}

\begin{ex}\label{ex:idempotentdecomp}
We look at a particular instance of Theorem \ref{thm:mackeydecomp} 
in order to illustrate the second half of the proof.
Let $G=S_4$ be the symmetric group on four letters, $K=\langle (12) \rangle $,
$H=\langle (12), (34) \rangle$. We have
\[
N_G K = H
\quad \quad
(G/K)^K = W_G K =W_H K= H/K = \{ K, (34) K \},
\quad \quad
(G/H)^K = \{ H, (14)(23) H \}.
\]
We consider the case of $F_K(\bQ[W_G K])$. We write out the two sides of 
Theorem \ref{thm:mackeydecomp} below. 
\[
\begin{array}{rcccl}
e_K^H F_K(\bQ[W_G K])(G/H)
& = &
e_K^H \bQ[(G/H)^K]
& = &
I_K^H \circ R_K^H \bQ[(G/H)^K] \\
\big( e_K^K F_K(\bQ[W_G K])(G/K) \big)^{W_H K} & \cong &  
(\bQ[(G/K)^K]^{W_H K} & =&  \bQ[\{ K + (34) K \}]
\end{array}
\]

To calculate $I_K^H \circ  R_K^H \bQ[(G/H)^K]$, 
consider the maps of Lemma \ref{lem:freeinductionrestriction}
\[
\bQ[\{ H, (14)(23) H \}]
\lra
\bQ[\{ K, (34) K \}]
\lra
\bQ[\{ H, (14)(23) H \}].
\]
The first map (restriction) sends $H$ to $K + (34) K$ and $(14)(23) H$ to zero.
The second map (induction) sends $K$ and $(34)K$ to $H$. 
The image of the composite is therefore the submodule 
of $\bQ[\{ H, (14)(23) H \}]$ generated by $H$.
Hence, $I_K^H \circ R_K^H \bQ[(G/H)^K]= \bQ[\{ H  \}]$
and the map induced by restriction
\[ 
e_K^H F_K(\bQ[W_G K])(G/H) = \bQ[\{ H  \}] \lra \bQ[\{ K + (34) K \}] \cong \big( e_K^K F_K(\bQ[W_G K])(G/K) \big)^{W_H K}
\]
is an isomorphism. 

\end{ex}

\section{Worked examples for \texorpdfstring{$G=C_6$}{G=C6}}\label{sec:c6examples}

Fixing the group to be $C_6$, the cyclic group of order 6, we give a detailed investigation of 
rational $C_6$-Mackey functors and their corresponding decomposition as modules over group rings. 
We choose a small group so that group-theoretic complexities do not obstruct the overall picture. 
We organise this section as a series of examples.

For clarity, write $C_6=\{0,1,2,3,4,5\}$, $C_3=\{0,2,4\}$, $C_2=\{ 0,3\}$, $C_1=\{0\}$
and quotients via underlining, so $C_6/C_3 = \{ \underline{0}, \underline{1} \}$.
We start by describing the rational Burnside ring of $C_6$ and its idempotents. \newline

\begin{example}\label{ex:c6burnside}
The rational Burnside ring of $C_6$ is additively generated by the sets $C_6$, $C_6/C_3$,
$C_6/C_2$ and $C_6/C_6$, which is the multiplicative unit. 
The multiplicative properties of these sets are:
\[
\begin{array}{ccc}
C_6 \times C_6 = 6 C_6 &
C_6 \times C_6/C_3 = 2 C_6 &
C_6 \times C_6/C_2 = 3 C_6  \\
C_6/C_3 \times C_6/C_3 = 2 C_6/C_3 &
C_6/C_2 \times C_6/C_3 = C_6   &
C_6/C_2 \times C_6/C_2 = 3 C_6/C_2 .
\end{array}
\]
Evidently, we have non-zero idempotents $C_6/C_6$, $(1/6)C_6$, $(1/2)C_6/C_3$, $(1/3)C_6/C_2$, but these
are not orthogonal to each other. Some algebraic manipulation gives an orthogonal decomposition
of the unit as 
\[
\begin{array}{rcl}
e_{C_1}^{C_6} &=& (1/6)C_6   \\
e_{C_2}^{C_6} &=& (1/3)C_6/C_2-(1/6)C_6/C_1   \\
e_{C_3}^{C_6} &=& (1/2)C_6/C_3-(1/6)C_6/C_1   \\
e_{C_6}^{C_6} &=& C_6/C_6 - (1/3)C_6/C_2 - (1/2)C_6/C_3 + (1/6)C_6.
\end{array}
\]
One can check that these formulas match those of Lemma \ref{lem:IsoBurnsideFinite} and 
verify the reverse formulas describing the homogeneous sets in terms of sums of idempotents. 
Furthermore, one can confirm that the map of Lemma \ref{lem:IsoBurnsideFinite}
is indeed an isomorphism in this case.
\end{example}

Now we have an understanding of the rational Burnside ring of $C_6$, we can look at the Burnside ring Mackey functor. 

\begin{example}\label{ex:c6burnsidemackey}
The rational Burnside ring Mackey functor of Example \ref{ex:burnsideringasmackey} is defined by 
\[
\burnsidering(C_6/H)=\burnsidering(H),
\]
with structure maps defined in terms of   
restriction and induction of sets with group actions. 
For example, $C_6/C_3 \in \burnsidering(C_6)$ restricts to $2 C_3/C_3 \in \burnsidering(C_3)$
and $C_2/C_1 \in \burnsidering(C_2)$ inducts to $C_6/C_1 \in \burnsidering(C_6)$. 

We can calculate the idempotents of the rational Burnside rings of $C_3$ and $C_2$ as we did for $C_6$.
\[
\begin{array}{ccc}
\burnsidering(C_3) & e_{C_1}^{C_3} = (1/3)C_3/C_1 & e_{C_3}^{C_3} = C_3/C_3 - (1/3)C_3/C_1 \\
\burnsidering(C_2) & e_{C_1}^{C_2} = (1/2)C_2/C_1 & e_{C_2}^{C_2} = C_2/C_2 - (1/2)C_2/C_1
\end{array}
\]

We present the rational Burnside ring Mackey functor as the following Lewis diagram, phrased in terms 
of idempotents. 
The downward arrows indicate that an idempotent restricts to another idempotent, 
where an idempotent restricts to zero, we omit an arrow. 
The upward arrows indicate that an idempotent inducts to a multiple of another idempotent. 

\begin{center}
\begin{tikzpicture}[xscale=1.2,yscale=1.4]
	 \tikzstyle{vertex}=[]
	 \tikzstyle{edge} = [draw,->,black]

	 \node[vertex] (a6) at (1+0.2,4) {$\burnsidering(C_6)=$};
	 \node[vertex] (c66) at (2.4,4) {$\bQ[e_{C_6}^{C_6}]$};
	 \node[vertex] at (3.1,4) {$\oplus$};
	 \node[vertex] (c63) at (3.8,4) {$\bQ[e_{C_3}^{C_6}]$};
	 \node[vertex] at (4.5,4) {$\oplus$};
	 \node[vertex] (c62) at (5.2,4) {$\bQ[e_{C_2}^{C_6}]$};
	 \node[vertex] at (5.9,4) {$\oplus$};
	 \node[vertex] (c61) at (6.6,4) {$\bQ[e_{C_1}^{C_6}]$};

	 \node[vertex] (a3) at (2-3+0.2,2) {$\burnsidering(C_3)=$};
	 \node[vertex] (c33) at (3.4-3,2) {$\bQ[e_{C_3}^{C_3}]$};
	 \node[vertex] at (4.1-3,2) {$\oplus$};
	 \node[vertex] (c31) at (4.8-3,2) {$\bQ[e_{C_1}^{C_3}]$};

	 \node[vertex] (a2) at (2+3+0.2,2) {$\burnsidering(C_2)=$};
	 \node[vertex] (c22) at (3.4+3,2) {$\bQ[e_{C_2}^{C_2}]$};
	 \node[vertex] at (4.1+3,2) {$\oplus$};
	 \node[vertex] (c21) at (4.8+3,2) {$\bQ[e_{C_1}^{C_2}]$};
	 
	 \node[vertex] (a1) at (3+0.2,0) {$\burnsidering(C_1)=$};
	 \node[vertex] (c11) at (4.4,0) {$\bQ[e_{C_1}^{C_1}]$};

 	 \draw[edge] ([xshift=-0.5ex]c61.south) to (c31.north east);
 	 \draw[edge] ([xshift=0.5ex]c61.south) to (c21.north);
 	 \draw[edge] (c63.south) to (c33.north east);
 	 \draw[edge] (c62.south) to (c22.north);
 	 \draw[edge] (c31.south) to ([xshift=-1ex]c11.north);
 	 \draw[edge] (c21.south) to ([xshift=1ex]c11.north);

  	 \draw[edge][bend left=15] (c11.north west) to node[fill=white]{3} ([xshift=-1ex]c31.south) ;
  	 \draw[edge][bend right=15] (c11.north east) to node[fill=white]{2} ([xshift=1ex]c21.south) ;
  	 \draw[edge][bend left=15] (c31.north) to node[fill=white]{2} ([xshift=-1.5ex]c61.south) ;
  	 \draw[edge][bend right=15] ([xshift=1ex]c21.north) to node[fill=white]{3} ([xshift=2ex]c61.south) ;
  	 \draw[edge][bend left=15] (c33.north) to node[fill=white]{2} ([xshift=-1.5ex]c63.south) ;
  	 \draw[edge][bend right=15] ([xshift=1ex]c22.north) to node[fill=white]{3} ([xshift=2ex]c62.south) ;

\end{tikzpicture}
\end{center}
\end{example}

As described at the start of Section \ref{sec:classification}, we may use the idempotents of 
$\burnsidering(C_6)$ to split the category of rational $C_6$-Mackey functors:
\[
\mackey{C_6} \cong 
e_{C_6}^{C_6} \mackey{C_6} \oplus 
e_{C_3}^{C_6} \mackey{C_6} \oplus 
e_{C_2}^{C_6} \mackey{C_6} \oplus 
e_{C_1}^{C_6} \mackey{C_6}. 
\]
Hence, any rational $C_6$-Mackey functor $M$ can be written as 
\[
M \cong 
e_{C_6}^{C_6} M \oplus 
e_{C_3}^{C_6} M \oplus 
e_{C_2}^{C_6} M \oplus 
e_{C_1}^{C_6} M. 
\]
Identifying the action of idempotents on a given Mackey functor $M$ 
from knowledge of the induction and restriction maps can be time-consuming.
It is often easier to produce examples of Mackey functors in the 
subcategories of the splitting. 
We do so using the equivalences of categories of 
Theorem \ref{thm:mainsplitting}:
\[
\begin{array}{c}
\adjunction{U_6}{e_{C_6}^{C_6} \mackey{C_6}}{\bQ[C_6/C_6] \leftmod}{F_6} \\
\adjunction{U_3}{e_{C_3}^{C_6} \mackey{C_6}}{\bQ[C_6/C_3] \leftmod}{F_3} \\
\adjunction{U_2}{e_{C_2}^{C_6} \mackey{C_6}}{\bQ[C_6/C_2] \leftmod}{F_2} \\
\adjunction{U_1}{e_{C_1}^{C_6} \mackey{C_6}}{\bQ[C_6/C_1] \leftmod}{F_1} \\
\end{array}
\]
where $U_H$ is evaluation at $G/H$ and 
\[
F_H (V)(G/K) = (\mathbb{Q}[(G/K)^H] \otimes V)^{W_G H}.
\]

\begin{example}\label{ex:freec6functors}
Take $\bQ \in \bQ[C_6/C_2] \leftmod$ (with trivial group action). 
We have 
\[
\begin{array}{rcl}
F_2 (\bQ) (C_6/C_6) &=& (\mathbb{Q}[(C_6/C_6)^{C_2}] \otimes \bQ)^{C_6/C_2}= \bQ \\
F_2 (\bQ) (C_6/C_3) &=& (\mathbb{Q}[(C_6/C_3)^{C_2}] \otimes \bQ)^{C_6/C_2}= 0 \\
F_2 (\bQ) (C_6/C_2) &=& (\mathbb{Q}[(C_6/C_2)^{C_2}] \otimes \bQ)^{C_6/C_2}= \bQ[C_6/C_2]^{C_6/C_2} = \bQ \\
F_2 (\bQ) (C_6/C_1) &=& (\mathbb{Q}[(C_6/C_1)^{C_2}] \otimes \bQ)^{C_6/C_2}= 0.
\end{array}
\]
The only non-trivial restriction map is $F_2 (\bQ) (C_6/C_6) \to F_2 (\bQ) (C_6/C_2)$.
By Lemma \ref{lem:freeinductionrestriction} this map is induced from the map
\[
\mathbb{Q}[(C_6/C_6)^{C_2}] \lra \mathbb{Q}[(C_6/C_2)^{C_2}]
\]
sending an element $q \underline{0}$ to $q (\underline{0}+ \underline{1}+ \underline{2})$. 
Hence, the restriction  map is the identity. 
It follows (either by using the Mackey axiom, or another application of Lemma \ref{lem:freeinductionrestriction})
that the non-trivial induction map is multiplication by 3 (the subgroup index).
We summarise this calculation, along with the analogous calculations for 
$F_1 (\bQ)$, $F_3 (\bQ)$ and $F_6 (\bQ)$, in the following Lewis diagrams.

\[
\xymatrix{
& \bQ 
\ar[dl] \ar[dr]
&&& &
\bQ
\ar[dl]|1 \ar[dr] 
\\
0 \ar[dr]
& F_6 (\bQ) &
0 \ar[dl] 
& &
\bQ\ar[dr] \ar@/^1.5pc/[ur]|{2}
& F_3 (\bQ) &
0 \ar[dl] 
\\
& 0
&&&& 0 \\
& \bQ 
\ar[dl] \ar[dr]|1 
&&& &
\bQ
\ar[dl]|1 \ar[dr]|1 
\\
0 \ar[dr]
& F_2 (\bQ) &
\bQ \ar[dl] \ar@/_1.5pc/[ul]|{3}
& &
\bQ \ar[dr]|1 \ar@/^1.5pc/[ur]|{2}
& F_1 (\bQ) &
\bQ \ar[dl]|1 \ar@/_1.5pc/[ul]|{3}
\\
& 0
&&&& \bQ \ar@/_1.5pc/[ur]|{2} \ar@/^1.5pc/[ul]|{3}
} 
\]

We can relate this to Example \ref{ex:c6burnsidemackey}, where our idempotent calculations 
make the splitting evident. Indeed, 
\[
\burnsidering(C_6) \cong 
F_1 (\bQ) \oplus F_2(\bQ) \oplus F_3 (\bQ) \oplus F_6 (\bQ).
\]
\end{example}

Rather than starting with $\bQ$, the other obvious example is to take 
$\bQ[C_6/C_3]$ in $\bQ[C_6/C_3] \leftmod$.

\begin{example}\label{ex:freeongroupring} We can calculate the following:
\[
\begin{array}{rcl}
F_3 (\bQ[C_6/C_3]) (C_6/C_6) &=& (\mathbb{Q}[(C_6/C_6)^{C_3}] \otimes \bQ[C_6/C_3])^{C_6/C_3}= \bQ \\
F_3 (\bQ[C_6/C_3]) (C_6/C_3) &=& (\mathbb{Q}[(C_6/C_3)^{C_3}] \otimes \bQ[C_6/C_3])^{C_6/C_3}= \bQ[C_6/C_3] \\
F_3 (\bQ[C_6/C_3]) (C_6/C_2) &=& (\mathbb{Q}[(C_6/C_2)^{C_3}] \otimes \bQ[C_6/C_3])^{C_6/C_3}= 0 \\
F_3 (\bQ[C_6/C_3]) (C_6/C_1) &=& (\mathbb{Q}[(C_6/C_1)^{C_3}] \otimes \bQ[C_6/C_3])^{C_6/C_3}= 0.
\end{array}
\]
The non-trivial restriction map is the inclusion of the fixed points. 
The induction map is the augmentation map (which
sends both $\underline{0}$ and $\underline{1}$ to $1 \in \bQ$). 
\[
\xymatrix{
& \bQ 
\ar[dl]|{\textrm{fix}} \ar[dr] 
\\
\bQ[C_6/C_3]  \ar[dr] \ar@/^1.5pc/[ur]|{\textrm{aug}}
&   &
\ \  0 \ \  \ar[dl] 
\\
& 0} 
\]
\end{example}

\begin{example} Similarly, we have the Lewis diagram of $F_1 (\bQ[C_6])$
\[
\xymatrix{
& \bQ[C_6/C_6]
\ar[dl] \ar[dr] 
\\
\bQ[C_6/C_3]  \ar[dr] \ar@/^1.5pc/[ur]
&   &
\bQ[C_6/C_2] \ar[dl] \ar@/_1.5pc/[ul]
\\
& \bQ[C_6/C_1]
\ar@/^1.5pc/[ul] \ar@/_1.5pc/[ur]
}
\]
As with previous examples, the induction maps are given by the projections $C_6/K \to C_6/H$. 
The restriction maps are given by summing over the preimages of that projection. 

One may check this is isomorphic to the Mackey functor $\mackeyfunctor{C_6}(\bQ[C_6])$ 
of Example \ref{ex:fixedpointmackeyfunctors}. 
This functor is defined by $H \mapsto \bQ[C_6]^H$. The claimed 
isomorphism of Mackey functors is induced from the isomorphism
$\bQ[C_6/H] \lra \bQ[C_6]^H$, $gH \mapsto \sum_{h \in H} hg$.
\end{example}




\section{Comparison to equivariant spectra}

For $G$ a finite group, we have a classification of rational $G$-spectra in
terms of an algebraic model, see Theorem \ref{thm:finitealgmodel}. The algebraic model is built from
chain complexes of $\bQ[W_G H]$-modules for $H$
running over conjugacy classes of subgroups of $G$.

The most modern approach to the classification takes several steps
\begin{itemize}
\item idempotent splitting
\item restriction to normalisers
\item passing to Weyl groups (by taking fixed points)
\item algebraicisation
\end{itemize}
which we see are analogous to our classification of rational Mackey functors.
At the level of homotopy categories, one takes a spectrum $X$,
and then splits it into $e_H^G X$ for varying $H$. Then one
forgets to $N_G H$-spectra and takes $H$-fixed points.
Taking homology of the algebraicisation of a spectrum  $e_H^G X$
gives the homotopy groups of the spectrum.
This results in a graded $\bQ[W_G H]$-module
\[
\pi_* ( \big(i^* (e_H^G X)\big)^H ) = i^*(e_H^G) \pi_*^H(X)  =  \left( e_H^G ([-, X]^G_* \otimes \bQ)\right) (G/H).
\]
This is exactly the functor $U_H$ applied to the $e_H^G$-part of the Mackey functor
$[-, X]^G_* \otimes \bQ$.

A major difference between the method we use for Mackey functors and the
approach for rational $G$-spectra is that in the latter one proves that
the various model categories are Quillen equivalent at each stage,
rather than arguing via the composite functor.
This is partly due to adjunctions in the topological setting not being both
left and right adjoint and partly due to the difficulty of working with
complex composite functors in model categories.
For Mackey functors, we see that the adjunctions in
\[
\xymatrix@C+0.2cm{
e_H^G \mackey{G}
\ar@<0.7ex>[r]^-{\preextend} &
\ar@<0.7ex>[l]^-{\preforget }
R_{N_G H}^G (e_H^G)  \mackey{N_G H}
\ar@<0.7ex>[r]^-{\preinflate } &
\ar@<0.7ex>[l]^-{\otherfunctor }
\mackey{N_G H/H}
\ar@<0.7ex>[r]^-{(-)(W_G H/e)} &
\ar@<0.7ex>[l]^-{\mackeyfunctor{W_G H}}
\mathbb{Q}[W_G H] \leftmod
}
\]
are not equivalences. To resolve this, we can restrict
$\mackey{N_G H/H}$ and $R_{N_G H}^G (e_H^G)  \mackey{N_G H}$
to the full subcategories in the image of the functors
from $\mathbb{Q}[W_G H] \leftmod$.
We write  $\barmackey{N_G H/H}$
and
$R_{N_G H}^G (e_H^G)  \barmackey{N_G H}$
for these categories.
Our calculation of the counit of the $(F_H, U_H)$ adjunction shows that
\[
\id \to (\mackeyfunctor{W_G H} V )(W_G H/e)
\quad \textrm{ and } \quad
\id \to (\preinflate  \otherfunctor  \mackeyfunctor{W_G H} V )(W_G H/e)
\]
are isomorphisms. Hence the functors
$\otherfunctor $ and $\mackeyfunctor{W_G H}$ are full and faithful.
It follows that on these full subcategories, we have equivalences of categories.
\[
\xymatrix@C+0.2cm{
e_H^G \mackey{G}
\ar@<0.7ex>[r]^-{\preextend} &
\ar@<0.7ex>[l]^-{\preforget }
R_{N_G H}^G (e_H^G)  \barmackey{N_G H}
\ar@<0.7ex>[r]^-{\preinflate } &
\ar@<0.7ex>[l]^-{\otherfunctor }
\barmackey{N_G H/H}
\ar@<0.7ex>[r]^-{(-)(W_G H/e)} &
\ar@<0.7ex>[l]^-{\mackeyfunctor{W_G H}}
\mathbb{Q}[W_G H] \leftmod
}
\]
Passing to a full subcategory is the algebraic equivalent of localisation at an idempotent as used in the classification of rational $G$-spectra for finite $G$.

We will discuss the topological analogues of these results in more detail in Part \ref{part:2}.

\section{Monoidal properties}

We end this part with a discussion of the monoidal structure on Mackey functors.
Details can be found in Green \cite{green71} and Luca \cite{luca96}.
Given $G$-Mackey functors $M$ and $N$, we define
\[
T(H) = \bigoplus_{K \leqslant H} M(G/K) \otimes _\bQ N(G/K)
\quad \quad
(M \square N)(G/H)  =  T(H)/I(H)
\]
where $I(H)$ is the $\bQ$-submodule of $T(H)$ generated by
\begin{align*}
R_L^K (x) \otimes y' - x \otimes I_L^K(y') \quad \quad   &
\text{for } x \in M(G/K), \ y' \in N(G/L), \ L \leqslant K \leqslant H   \\
x' \otimes R_L^K (y)  - I_L^K(x') \otimes y  \quad \quad  &
\text{for } x' \in M(G/L), \ y \in N(G/K), \ L \leqslant K \leqslant H   \\
C_h(x) \otimes y - x \otimes C_{h}^{-1}(y) \quad \quad  &
\text{for } x \in M(G/K), \ y \in N(G/hKh^{-1}), \ L \leqslant K \leqslant H .
\end{align*}

\begin{thm}
For $M$ and $N$ $G$-Mackey functors, the construction
\[
(M \square N)(G/H)  =  T(H)/I(H)
\]
defines a Mackey functor when equipped with the conjugation, restriction and induction maps
described below.
We call this Mackey functor the \emph{box product} of $M$ and $N$.
\end{thm}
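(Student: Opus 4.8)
The plan is to define the three families of structure maps on the modules $T(H)=\bigoplus_{K\leqslant H}M(G/K)\otimes_\bQ N(G/K)$ and then check that they descend to the quotients $(M\square N)(G/H)=T(H)/I(H)$. Conjugation $C_g$ is induced by the maps $C_g\otimes C_g\colon M(G/K)\otimes N(G/K)\to M(G/gKg^{-1})\otimes N(G/gKg^{-1})$, reindexing the summands $K\leqslant H$ of $T(H)$ to the summands $gKg^{-1}\leqslant gHg^{-1}$ of $T(gHg^{-1})$; the equivariance axioms for $M$ and $N$ show this carries $I(H)$ into $I(gHg^{-1})$, and associativity of conjugation is inherited from $M$ and $N$. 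For $K\leqslant H$, induction $I_K^H$ is induced by the inclusion $T(K)\hookrightarrow T(H)$ of the summands indexed by subgroups of $K$; since the generators of $I(K)$ are literally among the generators of $I(H)$, this descends at once, and transitivity of induction is immediate. The one genuinely delicate map is restriction: on the summand of $T(H)$ indexed by $L\leqslant H$ one sets
\[
R_K^H(m\otimes n)=\sum_{x\in[K\diagdown H\diagup L]} C_x R^L_{L\cap x^{-1}Kx}(m)\otimes C_x R^L_{L\cap x^{-1}Kx}(n),
\]
the $x$-term lying in the summand of $T(K)$ indexed by $K\cap xLx^{-1}$.

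First I would record the formulas above (these are the maps ``described below'' in the statement) and note that conjugation and induction are visibly well defined. The first real task is to check that $R_K^H$ descends to a map $T(H)/I(H)\to T(K)/I(K)$, i.e.\ that each of the three families of generators of $I(H)$ is sent into $I(K)$. This is a bookkeeping argument: for a Frobenius-type generator $R_L^J(x)\otimes y'-x\otimes I_L^J(y')$ with $L\leqslant J\leqslant H$ one applies the double-coset formula for $R_K^H$ to each term, simplifies using transitivity of restriction and equivariance of restriction and induction, invokes the Mackey axiom of $N$ to expand the composite $R^J_{\bullet}\circ I^J_L$ appearing on the $N$-factor, and then recognises the resulting element of $T(K)$ as a $\bQ$-linear combination of generators of $I(K)$; the second Frobenius family is symmetric (swapping the roles of $M$ and $N$) and the conjugation relation is handled directly from the equivariance axioms.

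With well-definedness in hand it remains to verify axioms (1)--(4) for the resulting triple of maps on $M\square N$. Axioms (1) and (2) are immediate, except for transitivity of restriction $R^H_L=R^K_L\circ R^H_K$, which follows from the standard compatibility of the double cosets $[L\diagdown K\diagup P]$, $[K\diagdown H\diagup P]$ and $[L\diagdown H\diagup P]$ together with transitivity of restriction for $M$ and $N$; axiom (3) reduces to the equivariance axioms for $M$ and $N$. The heart of the matter is the Mackey axiom (4) for $M\square N$: one evaluates both sides on a summand, expands using the restriction and induction formulas above, and matches terms against the Mackey axioms for $M$ and $N$ — once more a double-coset computation relating the two ways of pushing a class through the intermediate subgroups $K\cap xLx^{-1}$.

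I expect the main obstacle to be precisely this double-coset bookkeeping: both the well-definedness of restriction and the Mackey axiom for $M\square N$ require juggling several double-coset decompositions at once, tracking which summand of $T(\cdot)$ each term occupies, and repeatedly invoking the equivariance and Mackey axioms of the inputs. A cleaner but heavier alternative, which I would mention rather than pursue, is to realise $(M\square N)(G/H)$ as a coend over the orbit category — equivalently, to present $\square$ as a Day convolution along the Cartesian product of $G$-sets via Lindner's description of Mackey functors — so that functoriality and the pullback (Mackey) condition come out formally; the elementary approach above trades that machinery for explicit but routine computation.
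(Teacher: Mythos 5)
Your structure maps agree with the paper's: conjugation is the diagonal action, induction is the evident inclusion of summands, and your restriction formula $\sum_x C_x R^L_{L\cap x^{-1}Kx}(m)\otimes C_x R^L_{L\cap x^{-1}Kx}(n)$ is the paper's $\sum_l R^{lKl^{-1}}_{H\cap lKl^{-1}}C_l(x)\otimes R^{lKl^{-1}}_{H\cap lKl^{-1}}C_l(y)$ rewritten using the equivariance axiom. Beyond that there is little to compare line by line, because the paper does not actually prove this theorem: it records the structure maps and refers to Green and Luca for the verification, and it mentions (as you also do) the alternative description of $\square$ as a convolution/left Kan extension over products of $G$-sets via the span (Burnside category) presentation of Mackey functors, from which functoriality and the Mackey condition are formal. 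So you are proposing to carry out the elementary check that the paper delegates to the literature, and your outline locates the genuine content correctly: restriction must be shown to send the Frobenius-type and conjugation generators of $I(H)$ into $I(K)$ (and to be independent of double-coset representatives, which is absorbed by the conjugation relations in $I(K)$); transitivity of restriction uses the bijection $\coprod_{x}[L\diagdown K\diagup (K\cap xPx^{-1})]\cong[L\diagdown H\diagup P]$ together with transitivity and equivariance in $M$ and $N$; and the Mackey axiom for $M\square N$ is a further double-coset match-up against the Mackey axioms of the inputs. The one caveat is that your proposal asserts rather than performs these computations; they are standard but genuinely tedious, so a complete argument requires either writing them out in full or switching to the Day-convolution presentation you mention, where the axioms come for free at the cost of setting up the Burnside-category framework.
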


Conjugation is given by the diagonal action
\[
C_h (x \otimes y) = C_h (x) \otimes C_h(y).
\]
Induction from $H$ to $H'$ is given by the inclusion
\[
\Big( \bigoplus_{K \leqslant H} M(G/K) \otimes _\bQ N(G/K) \Big)
\lra
\Big( \bigoplus_{K \leqslant H'} M(G/K) \otimes _\bQ N(G/K) \Big)
\]
followed by taking quotients with respect to $I(H)$ and $I(H')$.
Restriction from $H'$ to $H$ is induced by the map
$T(H') \to T(H)/I(H)$ given by
\[
x \otimes y \longmapsto
\sum_{l\in[H \diagdown H' \diagup K]}
R_{H \cap l K l^{-1}}^{l K l^{-1}} C_l (x)
\otimes
R_{H \cap l K l^{-1}}^{l K l^{-1}} C_l (y)
\]
for $x \in M(K)$, $y \in N(K)$ and $K \leqslant H$.

One can also define the box product via a
convolution product (a left Kan extension over the product of $G$-sets),
using the definition of Mackey functors in terms of spans of $G$-sets
(the Burnside category), see \cite[Chapter 3]{luca96}. The unit for the box product is the
Burnside ring Mackey functor.

The equivalence of \cite[Chapter 3]{luca96} also implies that a (commutative) monoid for the
box product is a rational Mackey functor $M$, such that each $M(G/H)$
is a (commutative) $\bQ$-algebra, the conjugation and restriction maps
are maps of algebras and for $K \leqslant H$, the \emph{Frobenius relations} hold:
\[
x \cdot I_K^H (y) =   I_K^H (R_K^H(x) \cdot y)
\quad \quad
I_K^H (y) \cdot x =  I_K^H (y \cdot R_K^H(x))
\]
for $x \in M(G/H)$ and $y \in M(G/K)$. We call such a (commutative) monoid Mackey functor a (commutative) Green functor.

The category of $\mathbb{Q}[W_G H]$-modules
has a monoidal product, given by tensoring two modules over $\bQ$
and equipping the result with the diagonal $\bQ[W_GH]$-action.
We then see that $U_H$ sends (commutative) Green functors to
(commutative) monoids in $\mathbb{Q}[W_G H]$-modules.
In fact, we show that $U_H$ is a symmetric monoidal functor.

\begin{lem}\label{lem:box_prod}
Let $M$ and $N$ be Mackey functors.
Then
\[
e_H^G (M \square N )
\cong
(e_H^G M \square N )
\cong
(M \square e_H^G N)
\cong
(e_H^G M \square e_H^G N).
\]
Hence
\[
\left( e_H^G M \square e_H^G N \right) (G/H)  = e_H^G M(G/H) \otimes_\bQ e_H^G N(G/H).
\]
\end{lem}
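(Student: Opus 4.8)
The plan is to prove the chain of isomorphisms $e_H^G(M \square N) \cong (e_H^G M \square N) \cong (M \square e_H^G N) \cong (e_H^G M \square e_H^G N)$ first, and then extract the evaluation formula from the last term. The key observation is that applying the idempotent $e_H^G$ to a Mackey functor is the same as letting the idempotent $R_K^G(e_H^G) \in \burnsidering(K)$ act on each value at $G/K$ via the Burnside-ring action, and that this action is built from the composites $I_L^K \circ R_L^K$. So everything will come down to comparing how such composites interact with the defining relations $I(H) \subseteq T(H)$ of the box product.

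First I would set up notation: for each $K \leqslant H$ write $\pi_K := R_K^G(e_H^G) \in \burnsidering(K)$, acting on $M(G/K)$, $N(G/K)$, and on $M(G/K) \otimes_\bQ N(G/K)$ (diagonally). The claim $e_H^G(M \square N)(G/H) \cong (e_H^G M \square N)(G/H)$ amounts to showing that the diagonal action of $\pi_H$ on $T(H)/I(H)$ agrees, through the natural surjection, with the action that is $\pi_K$ on the $M$-factor and the identity on the $N$-factor in each summand $M(G/K) \otimes N(G/K)$. The heart of this is Frobenius reciprocity (stated in the excerpt): since $\pi_K = R_K^G(e_H^G)$ is a $\bQ$-linear combination of basis elements $[K/L]$, and $[K/L]$ acts as $I_L^K \circ R_L^K$, the relations generating $I(H)$ — precisely the relations $R_L^K(x) \otimes y' - x \otimes I_L^K(y')$ — are exactly what is needed to slide such operators from the $M$-factor to the $N$-factor (and back) modulo $I(H)$. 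Concretely, modulo $I(H)$ one has $(I_L^K R_L^K x) \otimes y \equiv x \otimes (I_L^K R_L^K y)$ for $x \in M(G/K)$, $y \in N(G/K)$, first moving $R_L^K$ across to get $x \otimes (\text{something in }N(G/L))$ and then moving $I_L^K$ back — so any $\bQ$-linear combination of such operators, in particular $\pi_K$, acts the same on the $M$-factor, the $N$-factor, or both, modulo $I(H)$. This gives the three isomorphisms simultaneously, all induced by the identity on $T(H)$ and compatible with the structure maps (conjugation, restriction, induction) because those are likewise built from the same maps and the Burnside action commutes with restriction and satisfies Frobenius reciprocity with induction, both of which are recorded in the excerpt.

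Next, having $e_H^G(M \square N) \cong (e_H^G M \square e_H^G N)$, I would evaluate the right-hand side at $G/H$. Write $M' = e_H^G M$, $N' = e_H^G N$. By the classification (Theorem \ref{thm:mainsplitting}, or directly Proposition \ref{prop:FHformula}), $M'$ and $N'$ are Mackey functors killed by all idempotents $e_K^G$ for $(K) \neq (H)$; concretely, for the box product $(M' \square N')(G/H) = T(H)/I(H)$ with $T(H) = \bigoplus_{K \leqslant H} M'(G/K) \otimes_\bQ N'(G/K)$. The point is that the $K = H$ summand $M'(G/H) \otimes_\bQ N'(G/H)$ surjects onto the quotient and the relations identify every other summand with something coming from the $K=H$ piece (using the induction maps $I_K^H$), so $(M' \square N')(G/H)$ is a quotient of $M'(G/H) \otimes_\bQ N'(G/H)$; conversely the composite $M'(G/H) \otimes N'(G/H) \hookrightarrow T(H) \twoheadrightarrow T(H)/I(H)$ is injective because, using $R^G_H(e^G_H) = e^H_H$ (Remark \ref{rmk:restrictidempotents}) and the fact that for $L \lneq H$ the relevant induction/restriction composites land in summands already killed, any relation among elements of $M'(G/H) \otimes N'(G/H)$ in $I(H)$ is trivial. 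Hence $(e_H^G M \square e_H^G N)(G/H) = e_H^G M(G/H) \otimes_\bQ e_H^G N(G/H)$, which is the displayed formula.

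The main obstacle I anticipate is the careful bookkeeping in the second paragraph: verifying that the identity-on-$T(H)$ map really does descend to an isomorphism $e_H^G(M \square N) \xrightarrow{\sim} (e_H^G M \square N)$ of \emph{Mackey functors}, i.e. that it is natural with respect to the restriction map (which involves the double-coset sum and conjugations) and the induction map. The algebra relation that makes it work — that the Burnside-ring action commutes with restriction and satisfies the Frobenius relations with induction — is exactly what is assembled in the excerpt (the commuting square before the Frobenius reciprocity display, and the reciprocity relations themselves), so the obstacle is one of organization rather than of genuine mathematical difficulty; the one genuinely delicate check is the injectivity of $M'(G/H) \otimes N'(G/H) \to T(H)/I(H)$, where one must use $R^G_H(e^G_H) = e^H_H$ to rule out nontrivial relations among top-level elements.
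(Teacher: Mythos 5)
Your first paragraph (sliding the idempotent between the two factors of the box product via the relations $R_L^K(x)\otimes y' \equiv x\otimes I_L^K(y')$ and Frobenius reciprocity) is in the right spirit; this is exactly the ``calculation of the action of the Burnside ring on the box product'' that the paper's proof leaves implicit, and it works.

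The second half of your argument, however, has a genuine gap. You claim that in $T(H)=\bigoplus_{K\leqslant H} e_H^G M(G/K)\otimes_\bQ e_H^G N(G/K)$ ``the relations identify every other summand with something coming from the $K=H$ piece (using the induction maps $I_K^H$).'' This mechanism is not valid: the generating relations only let you rewrite an element of a lower summand whose second (or first) factor is a \emph{restriction} of something, so a general element $u\otimes v$ with $u\in e_H^G M(G/L)$, $L\lneq H$, is not forced into the image of the top summand by the relations. Likewise your injectivity step is only gestured at; the identity $R_H^G(e_H^G)=e_H^H$ does not by itself ``rule out nontrivial relations among top-level elements.'' The missing observation, which is the whole content of the paper's proof, is that $e_H^G M$ and $e_H^G N$ vanish on every proper subgroup $K\lneq H$: by Remark \ref{rmk:restrictidempotents}, $R_K^G(e_H^G)=0$ unless $H$ is $G$-subconjugate to $K$, which is impossible for $K\lneq H$ by order reasons, so $e_H^G M(G/K)=R_K^G(e_H^G)M(G/K)=0$. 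With this, $T(H)$ collapses to the single summand $e_H^G M(G/H)\otimes_\bQ e_H^G N(G/H)$, and every generating relation of $I(H)$ either involves a factor lying in a zero module (when $L\lneq H$) or is trivially zero (when $L=K=H$, since $R_H^H=I_H^H=\id$ and $C_h=\id$ for $h\in H$), so $I(H)=0$ and the displayed formula follows immediately. Your surjectivity/injectivity bookkeeping becomes unnecessary once this vanishing is invoked, and without it the argument as written does not close.
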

\begin{proof}
The first statement is a calculation of the action of the Burnside ring on the box product.

For the second, the Mackey functor $e_H^G M$ is trivial on proper subgroups of $H$,
from which it follows that
\[
T(H) = e_H^G M(G/H) \otimes_\bQ e_H^G N(G/H)
\quad \textrm{ and } \quad
I(H) =0. \qedhere
\]
\end{proof}

The first statement of Lemma \ref{lem:box_prod} implies that the category $e^G_H \mackey{G}$ is monoidal with respect to $\square$ with the unit $e^G_H\burnsidering$.

\begin{cor}
For each $H \leqslant G$ the equivalence of categories
\[
\adjunction{U_H}{e^G_H \mackey{G}}{\bQ[W_G H] \leftmod}{F_H}
\]
is strong symmetric monoidal.

Moreover, the splitting result
\[
\mackey{G} \cong \prod_{H \leqslant_G G} e_H^G \mackey{G}
\]
is strong symmetric monoidal.
\end{cor}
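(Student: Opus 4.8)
The plan is to read off both statements from Lemma \ref{lem:box_prod}, using three facts established above: $U_H$ is evaluation at $G/H$; the monoidal product on $\bQ[W_G H]\leftmod$ is $\otimes_\bQ$ equipped with the diagonal $W_G H$-action; and the monoidal unit of $(e_H^G\mackey{G},\square)$ is $e_H^G\burnsidering$. First I would fix $M,N\in e_H^G\mackey{G}$. The first isomorphism of Lemma \ref{lem:box_prod} shows that $M\square N\cong e_H^G(M\square N)$ again lies in $e_H^G\mackey{G}$, and its final display gives
\[
U_H(M\square N)=(M\square N)(G/H)=M(G/H)\otimes_\bQ N(G/H)=U_H(M)\otimes_\bQ U_H(N),
\]
naturally in $M$ and $N$. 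Since conjugation on the box product is the diagonal action $C_h(x\otimes y)=C_h(x)\otimes C_h(y)$, this is an isomorphism of $\bQ[W_G H]$-modules for the diagonal action, hence an isomorphism in $\bQ[W_G H]\leftmod$; this natural isomorphism is the structure morphism that will make $U_H$ monoidal, and it is invertible by construction.

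For the unit I would compute $U_H(e_H^G\burnsidering)=R_H^G(e_H^G)\,\burnsidering(H)$. By Remark \ref{rmk:restrictidempotents} one has $R_H^G(e_H^G)=e_H^H$, and by tom Dieck's isomorphism (Lemma \ref{lem:IsoBurnsideFinite}) the summand $e_H^H\burnsidering(H)$ is a copy of $\bQ$ on which $W_G H$ acts trivially, i.e. the unit of $\bQ[W_G H]\leftmod$. It then remains to check the associativity, symmetry and unit coherence diagrams for these structure isomorphisms. This is the one non-formal step, and I would dispatch it by noting that the forgetful functor $\bQ[W_G H]\leftmod\to\bQ\leftmod$ is faithful and that each constraint isomorphism for $\square$, evaluated at $G/H$ on Mackey functors supported at $H$ (where the submodules $I(H)$ defining the box product vanish), is exactly the corresponding constraint for $\otimes_\bQ$; hence the coherence pentagon and hexagons for $U_H$ reduce to those of $\bQ[W_G H]\leftmod$. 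This makes $U_H$ strong symmetric monoidal, and since it is an equivalence its inverse $F_H$ acquires a canonical strong symmetric monoidal structure.

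Finally, for the splitting I would apply the first isomorphism of Lemma \ref{lem:box_prod} one conjugacy class at a time: $e_H^G(M\square N)\cong e_H^G M\square e_H^G N$, naturally in $M$ and $N$, so under the equivalence $\mackey{G}\cong\prod_{(H)\leqslant G}e_H^G\mackey{G}$ the box product is computed factor-wise. The idempotent decomposition $1=\sum_{(H)}e_H^G$ in $\burnsidering(G)$ gives $\burnsidering\cong\prod_{(H)}e_H^G\burnsidering$, which identifies the box-product unit of $\mackey{G}$ with the tuple of units $e_H^G\burnsidering$ of the factors. Together these exhibit the splitting as a strong symmetric monoidal equivalence for the product symmetric monoidal structure on the right-hand side, and composing with the first part identifies it with $\prod_{(H)\leqslant G}\bQ[W_G H]\leftmod$. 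The main obstacle, as indicated, is the coherence check in the middle step; but because a box product of Mackey functors supported at a single subgroup is literally a tensor product over $\bQ$ with the diagonal action, there is nothing to track there beyond unwinding the definitions of the associator and symmetry of $\square$.
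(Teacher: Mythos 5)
Your argument is correct and is essentially the paper's own (implicit) one: the corollary is stated there as an immediate consequence of Lemma \ref{lem:box_prod} together with the preceding identification of the unit $e_H^G\burnsidering$ and of $U_H$ as evaluation at $G/H$, which is exactly what you use. The details you add --- $R_H^G(e_H^G)=e_H^H$ giving the trivial $W_GH$-module $\bQ$ as unit, the diagonal conjugation action making the comparison map $\bQ[W_GH]$-linear, the coherence reduction, and the factor-wise treatment of the splitting --- are the routine verifications the paper leaves to the reader.
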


The topological equivalent of this result is
Barnes, Greenlees and K\k{e}dziorek \cite{BGKeinfty}.
This gives a description of $E_\infty$-algebras in rational $G$-spectra
in terms of differential graded algebras in
\[
\prod_{(H) \leqslant G} \bQ[W_G H] \leftmod.
\]
The more complicated case of commutative ring $G$-spectra (or $E_\infty^G$-algebras, where $E_\infty^G$ is the operad governing the highest level of equivariant commutativity)
is considered in work of Wimmer \cite{wim19}.
The extra data here comes from multiplicative norm maps,
which are related to Tambara functors (commutative Green functors
with additional structure), see
Strickland \cite{stricktamb},
Mazur \cite{mazur13}
and Hill and Mazur \cite{HM19}.
The idempotent splitting result we use destroys the additional
structure of a Tambara functor, leaving only a commutative Green functor.
Hence, there is no immediate
extension of the above results to Tamabara functors.
The question of which idempotents and splittings persevere norms in the Burnside ring is 
answered fully in work of B\"{o}hme \cite{boehme19}.

\part{The structure of rational \texorpdfstring{$G$}{G}-spectra}\label{part:2}

For $G$ a compact Lie group, it is natural to study the homotopy theory of
$G$-spectra as Brown representability holds equivariantly, see \cite[Section XIII.3]{may96}. 
That is, $G$-equivariant cohomology theories are represented by $G$-spectra,
so the category of $G$-equivariant cohomology theories and
stable natural transformations between them, is equivalent to the homotopy category of $G$-spectra. 
Due to the complexity of the non-equivariant case, 
one cannot expect a complete analysis of either $G$-equivariant cohomology theories or $G$-spectra integrally.
However, if we restrict ourselves to $G$-equivariant cohomology theories with values in rational vector spaces,
the situation is greatly simplified,
whilst valuable geometric and group theoretic structures remain. 
For this reason, the programme of understanding $G$-equivariant cohomology theories begun by Greenlees restricts attention to rational $G$-equivariant cohomology
theories and rational $G$-spectra. 

In this part,  we discuss the methods and tools used to obtain algebraic models for rational equivariant spectra. 
Recall from the introduction that an algebraic model for rational $G$-spectra is a model category $d\mcA(G)$, 
that is Quillen equivalent to $G$-spectra. This category must consist of 
differential objects (and morphisms) in a graded abelian category $\mcA$.
To start our journey we begin by recalling some useful facts about $G$-spectra.

\section{Preliminaries on \texorpdfstring{$G$}{G}-spectra}

Let $G$ be a compact Lie group. We work with orthogonal $G$-spectra, 
see Mandell and May \cite{mm02} for more details. 
Unless otherwise stated, our categories of $G$-spectra will be indexed on a complete $G$-universe $\mcU$.

For $H$ a closed subgroup of $G$, one can define homotopy groups of an orthogonal $G$-spectrum $X$ with structure map 
$\sigma$ as 
\[
\pi_0^H(X)= \colimit_V [S^V, X(V)]^H
\]
where the maps in the colimit send a map $\alpha: S^V \lra X(V)$ to the composite
\[
S^W\cong S^V \wedge S^{V^\perp} \stackrel{\alpha \wedge \id}\lra X(V)\wedge S^{V^\perp}  \stackrel{\sigma}\lra X(V\oplus V^\perp) \cong X(W).
\]
Here $V$ runs through the $G$ representations in the universe $\mcU$. More generally, the integer graded homotopy groups of a $G$-spectrum $X$ are defined using shift and loop functors on spectra and the formula above.
A map $f$ of $G$-spectra is a weak equivalence, also called a \emph{stable equivalence}, in orthogonal $G$-spectra 
if and only if $\pi^H_p(f)$ is an isomorphism for all closed subgroups $H$ of $G$ and all integers $p$. The class of stable equivalences is part of a stable model structure on $G$-spectra, $G\endash\mathrm{Sp}^\cO$.

Orthogonal $G$-spectra with the stable model structure is a convenient model category 
for $G$-equivariant homotopy theory. 
In particular, the homotopy category is a symmetric monoidal triangulated category (see \cite[Appendix 2]{Margolis} for introduction to triangulated categories)
with unit the sphere spectrum $\mathbb{S}$, see Hovey, \cite[Section 7]{hov99}. 
Furthermore, the stable  equivalences can be detected by objects in the category in the following sense.  
For a closed subgroup $H$ in $G$, an orthogonal spectrum $X$ and integers $p \geq 0$ and $q>0$ 
\begin{equation}\label{generatorsG/H}
[\Sigma^pS^0\wedge G/H_+,X]^G \cong \pi^H_p(X) \quad \quad [F_qS^0\wedge G/H_+,X]^G \cong \pi^H_{-q}(X)
\end{equation}
where $[-,-]^G$ denotes morphisms in the homotopy category of $G\endash\mathrm{Sp}^\cO$ and $F_q(-)$ is the left adjoint to the evaluation functor at $\bR^q$: $Ev_{\bR^q}(X)=X(\bR^q)$. 
In particular, $F_q(S^0)$ models $\mathbb{S}^{-q}$, the $q$-fold desuspension of the sphere spectrum.
We can put this relation between the shifts of $G/H_+$ and the weak equivalences into the formalism
of \cite[Section  2]{ss03stabmodcat}.

\begin{defn}\label{compactobj}
Let $\cC$ be a triangulated category with infinite coproducts. A full triangulated subcategory of $\cC$ (with shift and triangles induced from $\cC$) is called \emph{localising} if it is closed under coproducts in $\cC$. A set $\cP$ of objects of $\cC$ is called a  \emph{set of generators} if the only localising subcategory of $\cC$ containing objects of $\cP$ is the whole of $\cC$. An object of a stable model category is called a \emph{generator} if it is so when considered as an object of the homotopy category.

An object $X$ in $\cC$ is  \emph{homotopically compact}\footnote{There are different names used in the literature - compact, small. We chose to use the name \emph{homotopically compact} here.}.
if for any family of objects $\{A_i\}_{i \in I}$ the canonical map
$$\bigoplus_{i\in I}[X,A_i]^{\cC} \lra [X, \coprod_{i \in I}A_i]^{\cC}$$ is an isomorphism in the homotopy category of $\cC$.
\end{defn}

The set of suspensions and desuspensions of $G/H_+$, where $H$ varies through all closed subgroups of $G$, is a set of 
homotopically compact generators in the stable model category $G\endash\mathrm{Sp}^\cO$. Those objects are compact since homotopy groups commute with coproducts and it is clear from \cite[Lemma 2.2.1]{ss03stabmodcat} and Equation (\ref{generatorsG/H}) that this is a set of generators for $G\endash\mathrm{Sp}^\cO$.

There is an easy-to-check condition for a Quillen adjunction between stable model categories with sets of homotopically compact generators to be a Quillen equivalence. It is used often in the setting of algebraic models. 
Also notice that the derived functors of Quillen equivalences preserve homotopically compact objects.

\begin{lem}\label{qequiv_generators}Suppose $F: \cC \rightleftarrows \mcD : U$ is a Quillen pair between stable model categories with sets of homotopically compact generators, such that the right derived functor $RU$ preserves coproducts (or equivalently, such that the left derived functor sends homotopically compact generators to homotopically compact objects). 

If the derived unit and counit are weak equivalences for the respective sets of generators, 
then  $(F,U)$ is a Quillen equivalence.
\end{lem}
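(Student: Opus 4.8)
The plan is to reduce the statement to a standard criterion for Quillen equivalences: a Quillen pair $(F,U)$ between stable model categories is a Quillen equivalence if and only if the right derived functor $RU$ is fully faithful (equivalently, the derived counit is a weak equivalence on all of $\mcD$) and detects weak equivalences (equivalently, reflects trivial objects). The hypotheses give us these two properties at the level of generators, and the goal is to bootstrap from generators to the whole categories using that $RU$ preserves coproducts and that we are working triangulated-categorically, where both sides are compactly generated by the given sets.

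First I would pass to the homotopy categories, writing $\mathrm{Ho}(\cC)$ and $\mathrm{Ho}(\mcD)$, and let $\cP_\cC$, $\cP_\mcD$ denote the respective sets of homotopically compact generators. Since $RU$ preserves coproducts and is exact (being the right derived functor of a right Quillen functor between stable model categories, hence triangulated), the full subcategory of objects $Y \in \mathrm{Ho}(\mcD)$ for which the derived counit $LF\, RU\, Y \to Y$ is an isomorphism is localising: it is closed under triangles (by the five lemma, using that $LF$ and $RU$ are both exact) and under coproducts (using that $LF$ is a left adjoint hence preserves coproducts, and that $RU$ preserves coproducts by hypothesis). By assumption it contains $\cP_\mcD$, so by the definition of a set of generators it is all of $\mathrm{Ho}(\mcD)$; thus the derived counit is a natural isomorphism. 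By the dual argument — the full subcategory of $X \in \mathrm{Ho}(\cC)$ on which the derived unit $X \to RU\, LF\, X$ is an isomorphism is localising (here one uses that $LF$ preserves homotopically compact objects, which is the parenthetical hypothesis, so that $RU\, LF$ preserves coproducts) and contains $\cP_\cC$ — the derived unit is also a natural isomorphism. Having both derived unit and derived counit be isomorphisms on the entire homotopy categories is exactly the statement that $(F,U)$ is a Quillen equivalence, by \cite[Section 1.3]{hov99}.

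The only place that requires genuine care — and hence the main obstacle — is verifying that the two candidate full subcategories really are \emph{localising}, i.e.\ closed under arbitrary coproducts. Closure under triangles and retracts is formal from exactness of the derived functors and the five lemma. Closure under coproducts is where the compactness and coproduct-preservation hypotheses are used: for the counit subcategory one needs $LF$ and $RU$ to commute with coproducts, and $RU$ commuting with coproducts is precisely our hypothesis (and is equivalent, via \cite[Lemma 2.2.1]{ss03stabmodcat} and the compact generation of $\cC$, to $LF$ sending homotopically compact generators to homotopically compact objects); for the unit subcategory one needs $RU\,LF$ to commute with coproducts, which follows since $LF$ sends the compact generators $\cP_\cC$ to compact objects and then $RU$ commutes with the resulting coproducts. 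Once closure under coproducts is in hand, the generation hypothesis does the rest, and no further computation is needed.
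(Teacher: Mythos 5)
Your proposal is correct and follows essentially the same route as the paper's proof: in both cases one observes that the full subcategory of objects on which the derived unit (resp.\ counit) is an isomorphism is localising, using exactness together with the coproduct-preservation of $LF$ and the hypothesis on $RU$, and then concludes from the fact that it contains the homotopically compact generators. The only difference is presentational, in that you spell out the closure-under-coproducts verification that the paper compresses into the remark that the derived unit and counit are triangulated natural transformations.
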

\begin{proof} The result depends upon the fact that the homotopy category of a stable model category is a triangulated category. First notice that since the derived functor $RU$ preserves coproducts, the derived unit and counit are triangulated natural transformations. If the derived unit condition is an isomorphism for a set of objects $\mathcal{K}$ then they are also satisfied for every object in the localising subcategory for $\mathcal{K}$. Since we assume that $\mathcal{K}$ consists of homotopically compact generators, the localising subcategory for $\mathcal{K}$ is the whole category
and the derived unit is an isomorphism. The same argument applies to the counit and the result follows.
\end{proof}

To construct a model category of \emph{rational} $G$-spectra will we need to 
introduce the language of Bousfield localisations, see Section \ref{sec:locals}.
Since we will often localise the model category of rational $G$-spectra at idempotents
of the rational Burnside ring, we first look at this ring.

\section{Idempotents of the rational Burnside ring}\label{sec:idempotents}

For $G$ a compact Lie group, the Burnside ring $\A(G)$ was defined by tom Dieck in \cite{tomDieck_Burnside_ring_CLG1}
in terms of $G$-manifolds. 
For a survey on the subject see, for example, Fausk \cite{Fausk_Burnside}.
When working rationally, several descriptions of this ring exist. We give these descriptions
and use them to understand the idempotents of the rational Burnside ring. 
These idempotents are fundamental to the construction of the algebraic model
and the calculations therein.

\subsection{Two ways of understanding rational Burnside ring}\label{sec:two_ways_idempotents}
Recall that for $H$ a subgroup of $G$,  $N_GH=\{g \in G\ |\ gH=Hg\}$ is the normaliser of $H$ in $G$. 
We write $W=W_GH=N_GH/H$ for the Weyl group of $H$ in $G$.

Let $\mcF(G)$  be the set of closed subgroups of $G$ with finite index in their normalizer. 
That is,  all closed  $H\leqslant G$ such that $N_GH/H$ is finite. 
We give this set the topology induced by the Hausdorff metric, see \cite[Section V.2]{lms86}. 

The isomorphism of tom Dieck (see Lemma \ref{lem:IsoBurnsideFinite}) can be extended to 
compact Lie groups by \cite[ Propositions 5.6.4 and 5.9.13]{tomdieck1} giving an isomorphism of rings 
\begin{equation}\label{eq:tomDieck_iso}
\A(G)\otimes \bQ \cong C(\mcF(G)/G,\bQ),
\end{equation}
where $C(\mcF(G)/G,\bQ)$ denotes the ring of continuous functions 
on the orbit space $\mcF(G)/G$ with values in discrete space $\bQ$.
From now on, we will use notation $\A_\bQ(G)$ for  $\A(G)\otimes \bQ$. 
This isomorphism generalises that of Lemma \ref{lem:IsoBurnsideFinite}.
Notice that if $G$ is a finite group, 
then $\mathrm{Sub}(G)=\mcF(G)$, where $\mathrm{Sub}(G)$ is the set of all subgroups of $G$.

From the ring isomorphism above, it is clear that idempotents of the rational Burnside ring of $G$ correspond to the characteristic functions of open and closed subspaces of the orbit space  $\mcF(G)/G$. 
 By the characteristic function of a set $V$, we mean the function that takes the value $1$ for every point in $V$ and $0$ otherwise. Equivalently, an idempotent corresponds to an open and closed $G$-invariant subspace of $\mcF(G)$.
We write $e_{V}$, for the idempotent corresponding to an open and closed subset $V$ of $\mcF(G)/G$.

Every inclusion $i: H \lra G$ induces a ring homomorphism $i^* \colon \A_\bQ(G) \lra \A_\bQ(H)$. 
In general, it is difficult to 
explicitly describe the image of a given idempotent in terms of open and closed sets under 
\[
i^*: C(\mcF(G)/G,\bQ) \lra C(\mcF(H)/H,\bQ).
\] 
Even before taking conjugacy classes into account, notice that a subgroup $K \leqslant H$ with finite index in the normaliser $N_HK$ does not have to have a finite index in the normaliser $N_GK$.  Thus the map $i^*: C(\mcF(G)/G,\bQ) \lra C(\mcF(H)/H,\bQ)$ is not always induced by a map from $\mcF(H)$ to $\mcF(G)$. The exception is of course, when $G,H$ are finite groups, as we discussed in Remark \ref{rmk:restrictidempotents}.

A better approach to investigate the action of $i^*$ on idempotents is to view idempotents as 
corresponding to certain subspaces of the space of \emph{all closed subgroups} of $G$ as follows. 
We put a topology on the set of all closed subgroups of $G$, $\mathrm{Sub}(G)$.
This topology is called the $\mathrm{f}$-topology in Greenlees \cite[Section 8]{greratmack}.

For a closed subgroup $H\leqslant G$ and $\varepsilon >0$ we define a ball
$$
O(H,\varepsilon)=\{K\in \mcF (H) \mid d(H,K)<\varepsilon \}
$$
in $\mathrm{Sub}(G)$, where the distance above is measured with respect to the Hausdorff metric. Thus, subgroups close to $H$ that have infinite Weyl groups are ignored, for example if $H=SO(2)$ is a torus then $O(SO(2),\varepsilon)$ is a singleton. Given also a neighbourhood $A$ of the identity in $G$ consider $$O(H,\varepsilon, A)=\cup_{a\in A}O(H,\varepsilon)^a,$$
where $O(H,\varepsilon)^a$ is the set of $a$-conjugates of elements of $O(H,\varepsilon)$. 

\begin{defn}
For $G$ a compact Lie group, the 
$\mathrm{f}$-topology on $\mathrm{Sub}(G)$ is generated by 
the sets $O(H,\varepsilon, A)$ as $H, \varepsilon, A$ vary.
We write $\mathrm{Sub_f}(G)$ for this topological space. 
\end{defn}

We say that subgroups $K \leqslant H$ of $G$ are \emph{cotoral} if $H/K$ is a torus. 
We write $K \sim H$ for the equivalence relation generated by the cotoral pairs. 
An idempotent in a rational Burnside ring $\A_\bQ(G)$ corresponds to an open and closed, $G$-invariant subspace of $\mathrm{Sub_f}(G)$ that is a union of $\sim$-equivalence classes,  see Section \ref{sec:examples_idempotents_and_cotoral} for some examples.

Let $V$ be an open and closed $G$-invariant set in $\mathrm{Sub_f}(G)$ that is a union of $\sim$-equivalence classes.
Let $i^* V$ be the preimage of $V$ under $i^* \colon \mathrm{Sub_f}(H) \lra \mathrm{Sub_f}(G)$. 
We then let $\overline{i^*V}$ be the smallest $G$-invariant open and closed set of $\mathrm{Sub_f}(H)$, 
that is the union of $\sim$-equivalence classes containing $i^*V$. 
Using the techniques of Greenlees \cite[Section 8]{greratmack}
one can show the following.

\begin{lem}
Let $i \co H \to G$ be an inclusion of a closed subgroup.
Let $e_V$ an idempotent of $\A_\bQ(G)$ corresponding to 
$V$, an open and closed $G$-invariant set in $\mathrm{Sub_f}(G)$ that is a union of $\sim$-equivalence classes.
Then $i^*(e_V) =e_{\overline{i^*V}}$.
\end{lem}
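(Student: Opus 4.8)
The plan is to reduce the statement to a description of the ring map $i^*$ under the tom Dieck isomorphism, and then track what the preimage of a $\sim$-saturated set looks like on the $H$-side. First I would recall, via the isomorphism $\A_\bQ(G) \cong C(\mcF(G)/G, \bQ)$ of \eqref{eq:tomDieck_iso} and its refinement in terms of $\mathrm{Sub_f}(G)$, that idempotents of $\A_\bQ(G)$ biject with open-closed, $G$-invariant, $\sim$-saturated subsets of $\mathrm{Sub_f}(G)$, with $e_V$ corresponding to the characteristic function of (the image of) $V$. The key point is that the continuous map $i^* \co \mathrm{Sub_f}(H) \to \mathrm{Sub_f}(G)$ induces, at the level of rings of locally constant functions, the algebra homomorphism $\A_\bQ(G) \to \A_\bQ(H)$; this is exactly the content one extracts from Greenlees \cite[Section 8]{greratmack}. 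Granting that, for an idempotent $e_V$ the image $i^*(e_V)$ is the characteristic function of $(i^*)^{-1}(V) = i^*V$ as a subset of $\mathrm{Sub_f}(H)$.

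Next I would argue that $i^*V$ is already open and closed and $G$-invariant in $\mathrm{Sub_f}(H)$: openness and closedness are immediate because $i^*$ is continuous and $V$ is clopen, and $G$-invariance (in the appropriate sense, i.e. $N_G H$-invariance on subgroups of $H$, which is the invariance relevant for $\mathrm{Sub_f}(H)$) follows since $i^*$ commutes with the relevant conjugation actions. The only condition that can fail is $\sim$-saturation: a subgroup $K \leqslant H$ may be cotoral in some $H' \leqslant H$ whose $G$-conjugate behaviour means $H' \notin i^*V$ even though $K \in i^*V$, or vice versa — the cotoral relation inside $H$ need not be detected by restriction from $G$. This is where $\overline{i^*V}$, the smallest clopen $G$-invariant $\sim$-saturated set containing $i^*V$, enters. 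So the real work is to show that the characteristic function of $i^*V$, which a priori is only a locally constant function on $\mathrm{Sub_f}(H)$ and hence an idempotent of $\A_\bQ(H)$, actually equals the characteristic function of $\overline{i^*V}$ — equivalently, that $i^*V$ and $\overline{i^*V}$ define the same idempotent, i.e. that $i^*V$ differs from a $\sim$-saturated clopen set only on a set not meeting $\mcF(H)$ (subgroups with infinite Weyl group contribute nothing to $\A_\bQ(H)$).

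Concretely, I would show two inclusions of idempotents. On one hand $e_{\overline{i^*V}} \cdot i^*(e_V) = i^*(e_V)$ since $i^*V \subseteq \overline{i^*V}$. On the other hand, I would show $i^*(e_V)$ is the \emph{smallest} idempotent with this property: if $e_W$ is another idempotent (so $W$ clopen, $G$-invariant, $\sim$-saturated) with $e_W \cdot i^*(e_V) = i^*(e_V)$, then $W \supseteq i^*V$ on the subspace $\mcF(H)/H$ that controls $\A_\bQ(H)$, hence $W \supseteq \overline{i^*V}$ by minimality of the latter, so $e_W \cdot e_{\overline{i^*V}} = e_{\overline{i^*V}}$. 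Since $e_{\overline{i^*V}}$ is itself such an $e_W$, this forces $i^*(e_V) = e_{\overline{i^*V}}$. The main obstacle is the precise bookkeeping in the first paragraph's claim — that $i^* \co \A_\bQ(G) \to \A_\bQ(H)$ is genuinely computed by pulling back clopen sets along $i^* \co \mathrm{Sub_f}(H) \to \mathrm{Sub_f}(G)$, given that (as the surrounding text emphasizes) $i^*$ does \emph{not} come from a map of the spaces $\mcF(H) \to \mcF(G)$; one must instead use the $\mathrm{f}$-topology description and the behaviour of restriction of $G$-sets under passage to cotoral quotients, exactly as developed in \cite[Section 8]{greratmack}. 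Once that translation is in hand, the saturation argument is formal.
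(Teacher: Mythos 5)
The paper states this lemma without proof, deferring to the techniques of Greenlees \cite[Section 8]{greratmack}, so the relevant comparison is with what those techniques actually supply. Your first paragraph is the right starting point: restriction does not change marks (fixed-point degrees), so under the $\mathrm{Sub_f}$-description the idempotent $i^*(e_V)$ is given by the characteristic function of $(i^*)^{-1}(V)=i^*V$. But your diagnosis of where the remaining work lies is mistaken, and the mistake matters. A cotoral pair inside $H$, say $K\trianglelefteq K'\leqslant H$ with $K'/K$ a torus, is intrinsically a cotoral pair, hence also one inside $G$; since $V$ is assumed to be a union of $\sim$-equivalence classes in $\mathrm{Sub_f}(G)$, the preimage $i^*V$ is automatically a union of $\sim$-classes in $\mathrm{Sub_f}(H)$ (and is invariant, since $V$ is $G$-invariant). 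So the failure mode your proof is organised around (``$K\in i^*V$ but a cotoral overgroup $K'\leqslant H$ is not, or vice versa'') cannot occur: the phrase ``the cotoral relation inside $H$ need not be detected by restriction from $G$'' is exactly backwards. Note also that if saturation could fail at a subgroup lying in $\mcF(H)$, then your paragraph-1 description would contradict the lemma, so no purely formal argument could repair it; the saturation has to be (and is) automatic.

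Because of this misdiagnosis, your closing minimality argument has a genuine gap. From $e_W\cdot i^*(e_V)=i^*(e_V)$ you learn only that $W\supseteq i^*V\cap\mcF(H)$, since $\A_\bQ(H)$ is detected on $\mcF(H)$; the step ``hence $W\supseteq \overline{i^*V}$ by minimality'' does not follow, because $\overline{i^*V}$ is minimal among clopen invariant saturated sets containing \emph{all} of $i^*V$, not just its $\mcF(H)$-part. Bridging that requires exactly the facts you leave open: that every $\sim$-class in $\mathrm{Sub_f}(H)$ contains a subgroup with finite Weyl group, and that $i^*V$ is saturated. Two smaller points: a locally constant function on $\mathrm{Sub_f}(H)$ is not thereby an element of $\A_\bQ(H)$ (it must be invariant and constant on cotoral classes and correspond to a continuous function on $\mcF(H)/H$), and continuity of $\mathrm{Sub_f}(H)\to\mathrm{Sub_f}(G)$ is not ``immediate'' from the definition of the f-topology, though it can be sidestepped. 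The efficient argument, which is what the citation to Greenlees provides, is: marks of elements of $\A_\bQ(H)$ are constant on cotoral classes, continuous in the f-topology, and detect elements on $\mcF(H)$, every class meets $\mcF(H)$, and marks commute with restriction. Then the subset of $\mathrm{Sub_f}(H)$ attached to the idempotent $i^*(e_V)$ is precisely $i^*V$, which is therefore clopen, invariant and saturated, so that $\overline{i^*V}=i^*V$ and $i^*(e_V)=e_{\overline{i^*V}}$. In short, once your first paragraph is in place the lemma follows from the automatic saturation; your second and third paragraphs address a problem that does not arise while the genuine inputs remain outsourced to the reference.
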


\begin{rem} 
As it is more common in the literature, an idempotent $e_V$ will come from a 
open and closed subset $V$ of $\mcF(G)/G$ unless otherwise stated. 
\end{rem}

\subsection{Special idempotents}

There are two situations of particular interest to us. In these cases we have an idempotent in the rational Burnside ring and we can provide an algebraic model for the piece of homotopy theory of rational $G$-spectra that this idempotent governs. 

The first situation is where there is an idempotent that remembers only one, special subgroup. The second author called such a subgroup \emph{exceptional} in \cite{KedziorekExceptional}. The second situation is where the idempotent corresponds to the maximal torus $\T$ in $G$ and all its subgroups. This is called the \emph{toral part} of rational $G$-spectra in \cite{toralBGK}.

When we look at idempotents defined by subsets of $\mcF(G)/G$ the above two cases look identical at first glance: both idempotents are indexed by one subgroup. However in the case of a torus, there are subgroups 
of the torus that are ``hidden'' in the torus idempotent. 
This is visible when one uses the space $\mathrm{Sub_f}(G)$ to describe the idempotent. 
The subgroups that are cotoral in $\T$ are responsible for making the algebraic model for that part substantially more difficult than in the case of an exceptional subgroup.

We will start our analysis with the case of an exceptional subgroup of $G$.

\begin{defn}\label{defn:exceptional}
Suppose $G$ is a compact Lie group. We say that a closed subgroup $H \leqslant G$ is \emph{exceptional}\footnote{the name was motivated by the \emph{exceptional} behaviour of the algebraic model over such a subgroup.} in $G$ if $W_GH$ is finite, there exists an idempotent $e_H^G$ in the rational Burnside ring of $G$ corresponding to the conjugacy class of $H$ in $G$ 
(via tom Dieck's isomorphism, Equation \ref{eq:tomDieck_iso}) and $H$ has no cotoral subgroups. 
\end{defn}

If $H$ is exceptional in $G$, then $\{K \mid K\in (H)_G\}$ is an open and closed $G$-invariant subspace of $\mathrm{Sub_f}(G)$, which already is a union of $\sim$-equivalence classes, since $H$ does not contain any cotoral subgroup and $W_GH$ is finite. The other implication also holds; if there is an idempotent corresponding to $\{K|K\in (H)_G\}$ in $\mathrm{Sub_f}(G)$, then $H$ is an exceptional subgroup of $G$. Thus we could rephrase the definition in terms of the space $\mathrm{Sub_f}(G)$, but we decided to use the more familiar $\mcF (G)/G$ with the topology given by the Hausdorff metric.

Any subgroup of a finite group $G$ is exceptional. In $O(2)$ only finite dihedral subgroups are exceptional; in particular none of the finite cyclic subgroups are exceptional (since finite cyclic subgroups do not have idempotents in the rational Burnside ring of $O(2)$). The maximal torus $SO(2)$ in $O(2)$ has an idempotent in the rational Burnside ring of $O(2)$, however it is not an exceptional subgroup, since it contains cotoral subgroups, for example the trivial one. In $SO(3)$ all finite dihedral subgroups are exceptional except for $D_2$, which is conjugate to $C_2$ and therefore is a cotoral subgroup of a torus. There are four more conjugacy classes of exceptional subgroups: $A_4,\Sigma_4, A_5$ and $SO(3)$, where $A_4$ denotes rotations of a tetrahedron, $\Sigma_4$ denotes rotations of a cube and $A_5$ denotes rotations of a dodecahedron, 
see \cite[Section 2]{KedziorekSO(3)}.

If a trivial subgroup is exceptional in $G$, then $G$ has to be finite. This holds as the normaliser of a trivial subgroup is the whole $G$, $W_G \{1\}=G$ and the condition that the Weyl group is finite implies that $G$ is a finite group.

Given an exceptional subgroup $H$, we may use the corresponding idempotent in the rational Burnside ring to split 
(see Section \ref{sec:locals})
the category of rational  $G$-spectra into the part over an exceptional subgroup $H$ and its complement. 
\cite{KedziorekExceptional} presents the model for rational $G$-spectra over an exceptional subgroup $H$.

The exceptional subgroups of a group $G$ can be divided into two sets, 
according to how their idempotent behaves once restricted to the normaliser
of the exceptional subgroup. 
We closely follow \cite{KedziorekExceptional} in analysis of these different behaviours.

\begin{defn}
\label{goodSub}
Suppose $H\leqslant A$ are closed subgroups of $G$ such that $H$ is exceptional in $G$. Suppose further that $i:A \lra G$ is an inclusion. We say that $H$ is $A$-{\emph{good}} in $G$ if $i^*(e_H^G)= e_H^A$ and  $A$-{\emph {bad}} in $G$ if it is not $A$-good, i.e. $i^*(e_H^G)\neq e_H^A$.
\end{defn}

Notice that the above definition is all about subgroups conjugate to $H$ in $A$ and in $G$ and their relation to each other. If $L\leqslant A$ is such that $L$ is conjugate to $H$ in $A$, then it is also true that $L$ is conjugate to $H$ in $G$. Thus if $H$ is $A$-bad in $G$ it just means that there exists $L'\leqslant A$ such that $(L')_G=(H)_G$ and $(L')_{A}\neq(L)_A$.
An exceptional subgroup $H$ in a compact Lie group $G$ is always $H$-good in $G$.

\begin{lem}\label{goodAndBadSub}\cite[Lemma 2.3]{KedziorekExceptional} For the exceptional subgroups in $G=SO(3)$, 
we have the following relation between $H$ and its normaliser $N_GH$:
\begin{enumerate}
\item $A_5$ is $A_5$-good in $SO(3)$.
\item $\Sigma_4$ is $\Sigma_4$-good in $SO(3)$.
\item $A_4$ is $\Sigma_4$-good in $SO(3)$.
\item $D_4$ is $\Sigma_4$-bad in $SO(3)$.
\end{enumerate}
\end{lem}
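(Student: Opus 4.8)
The plan is to compute, case by case, the restriction $i^{*}(e_{H}^{G})\in\A_\bQ(N_{G}H)$ of the idempotent $e_{H}^{G}$ along the inclusion $i\colon N_{G}H\hookrightarrow G=SO(3)$ and to compare it with $e_{H}^{N_{G}H}$ via Definition~\ref{goodSub}. In each of the four cases $A:=N_{G}H$ is a finite group, so the restriction formula for idempotents proved just above (the compact Lie analogue of Remark~\ref{rmk:restrictidempotents}) specialises to
\[
i^{*}(e_{H}^{G})=\sum_{[K]}e_{K}^{A},
\]
the sum ranging over $A$-conjugacy classes of subgroups $K\leqslant A$ that are $G$-conjugate to $H$. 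I would first note that the $\sim$-closure contributes nothing here: every such $K$ is, like $H$, exceptional and $A$ is finite, so the $\sim$-class of $K$ in $\mathrm{Sub_f}(A)$ reduces to $\{K\}$. Hence $H$ is $A$-good precisely when, up to $A$-conjugacy, $H$ is the only subgroup of $N_{G}H$ that is $G$-conjugate to $H$.

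Cases (1) and (2) are then immediate. Both $A_{5}$ and $\Sigma_{4}$ are exceptional and maximal among the finite subgroups of $SO(3)$; since their Weyl groups are finite their normalisers in $SO(3)$ are finite, hence equal to $A_{5}$ and $\Sigma_{4}$ respectively, so $H=N_{G}H$ and an exceptional subgroup is tautologically good in its own normaliser, as observed after Definition~\ref{goodSub}. For case (3) I would use the standard fact $N_{SO(3)}(A_{4})=\Sigma_{4}\cong S_{4}$; a subgroup of $S_{4}$ that is $SO(3)$-conjugate to $A_{4}$ must have order $12$, and $A_{4}$ is the unique subgroup of $S_{4}$ of that order, so the displayed sum has a single term and $A_{4}$ is $\Sigma_{4}$-good.

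The substance is in case (4), where $D_{4}\cong C_{2}\times C_{2}$ is the Klein four-subgroup, normal in $\Sigma_{4}\cong S_{4}$, with $N_{SO(3)}(D_{4})=\Sigma_{4}$. The key geometric input is that any subgroup of $SO(3)$ isomorphic to $C_{2}\times C_{2}$ is generated by two rotations by $\pi$ about perpendicular axes --- two commuting order-two rotations must have perpendicular axes, their product being the $\pi$-rotation about the common perpendicular --- so \emph{all} copies of $C_{2}\times C_{2}$ in $SO(3)$ are $SO(3)$-conjugate. Consequently the subgroups of $S_{4}$ that are $SO(3)$-conjugate to $D_{4}$ are exactly those abstractly isomorphic to $C_{2}\times C_{2}$, and these split into \emph{two} $S_{4}$-conjugacy classes: the normal $V_{4}=\{e,(12)(34),(13)(24),(14)(23)\}$ and the class of the non-normal subgroups $\langle(ij),(kl)\rangle$. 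Hence $i^{*}(e_{D_{4}}^{SO(3)})$ is a sum of two distinct primitive idempotents of $\A_\bQ(\Sigma_{4})$, so it differs from $e_{D_{4}}^{\Sigma_{4}}$ and $D_{4}$ is $\Sigma_{4}$-bad.

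I expect the main obstacle to be the group-theoretic bookkeeping in case (4): confirming $N_{SO(3)}(D_{4})=\Sigma_{4}$, establishing that the non-normal Klein four-subgroups of $S_{4}$ genuinely become $SO(3)$-conjugate to the normal one --- which is exactly where the ``perpendicular axes'' description of $C_{2}\times C_{2}\leqslant SO(3)$ does the work --- and checking that nothing further enters the displayed sum, in particular that the cyclic subgroups of order $4$ in $S_{4}$ do not, being non-isomorphic to $D_{4}$. It is also worth verifying carefully in each case that the $\sim$-closure in $\mathrm{Sub_f}(N_{G}H)$ is trivial, so that the reduction to the finite restriction formula is legitimate.
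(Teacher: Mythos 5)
Your argument is correct and follows essentially the same route as the paper: parts (1) and (2) are the tautology that an exceptional subgroup is good in its own normaliser, part (3) comes from $A_4$ being the unique subgroup of order $12$ (index $2$) in $\Sigma_4$, and part (4) comes from the two $\Sigma_4$-conjugacy classes of Klein four-subgroups fusing into a single $SO(3)$-conjugacy class, all packaged via the restriction formula $i^*(e_H^G)=\sum e_K^A$. The only divergence is how fusion is established in (4): the paper conjugates the two order-$4$ subgroups of $D_8$ by the $45$-degree rotation in $D_{16}$, whereas you invoke the general fact that every copy of $C_2\times C_2$ in $SO(3)$ consists of $\pi$-rotations about perpendicular axes and hence all such copies are conjugate -- and you also make explicit (normal versus non-normal in $\Sigma_4$) why the two classes are not $\Sigma_4$-conjugate, a point the paper's proof leaves implicit.
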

\begin{proof} We only need to prove Part (3) and (4), since any exceptional subgroup $H$ in a compact Lie group $G$ is $H$-good in $G$. Part (3) follows from the fact that there is one conjugacy class of $A_4$ in $\Sigma_4$, as there is just one subgroup of index 2 in $\Sigma_4$. Part (4) follows from the observation that there are two subgroups of order 4 in $D_8$ (so also in $\Sigma_4$) and they are conjugate by an element $g\in D_{16}$, which is the generating rotation by 45 degrees (thus $g \notin D_8$ and thus $g\notin \Sigma_4$).
\end{proof}

\begin{rem} Notice that we can generalise Definition \ref{goodSub} to non-exceptional subgroups using the equivalent description in terms of conjugacy classes of $H$ in $A$ and in $G$. In that case, if $G=SO(3)$, $A=O(2)$ and $H=C_2 \leqslant A$, then $H$ is $A$-bad in $G$, which follows from the fact that $D_2 \leqslant A$ is $G$-conjugate to $H$, but not $A$-conjugate. This \emph{bad} behaviour of $C_2$ in $SO(3)$ is visible in the adjunctions used to obtain the algebraic model for toral part of rational $SO(3)$-spectra in \cite{KedziorekSO(3)}, which we recall in Proposition \ref{notQErest_ind}.
\end{rem}

Finishing the discussion about idempotents of rational Burnside ring, we note that there is always an idempotent corresponding to the maximal torus $\T$ in $G$ and all its subgroups. This fact was used in \cite{toralBGK} to obtain an algebraic model for rational toral $G$-spectra, thus the ones that have geometric isotropy contained in the set of subgroups of the maximal torus.

\subsection{Examples}\label{sec:examples_idempotents_and_cotoral}

\subsubsection{Closed subgroups of \texorpdfstring{$SO(2)$}{SO(2)}}

Recall that $SO(2)$ is the group of rotations of $\RR^2$. The closed subgroups of $SO(2)$ are 
the finite cyclic groups $C_n$. Each $C_n$ is cotoral in $SO(2)$, 
that is, it is normal in $SO(2)$ and $SO(2)/C_n \cong SO(2)$.
The only subgroup of $SO(2)$ with finite index in its normaliser is $SO(2)$ itself.
Hence, the space $\mcF(SO(2))/SO(2)$ is a single point and 
the rational Burnside ring of $SO(2)$ is $\bQ$.
Similar arguments show that $\A_\bQ(\T)= \bQ$ for $\T$ a torus of any rank.

\subsubsection{Closed subgroups of \texorpdfstring{$O(2)$}{O(2)}}\label{sec:subgroupsO2}

Recall that $O(2)$ is the group of rotations and reflections of $\RR^2$.
The closed subgroups are the finite cyclic groups, $T=SO(2)$, $O(2)$
and finite dihedral groups. For fixed $n$, the finite dihedral groups of order $2n$ are all
conjugate. We Write $D_{2n}$ for this conjugacy class.  
The space $\mcF(O(2))/O(2)$ consists of two parts, which we call the toral part and the dihedral part. 
The toral part $\tcT$, is just one point $T$ corresponding to the maximal torus and all its subgroups.
The dihedral part $\tcD$, is the set of all dihedral subgroups together with their limit point $O(2)$. 
Thus, we have idempotents $e_{\tcT}$ and $e_{\tcD}$ 
in the rational Burnside ring of $O(2)$ that sum to the identity. 

The toral idempotents for $O(2)$ and $SO(3)$ will behave very differently when we discuss the 
interactions between localisations and change of group functors
in Section \ref{sec:change_localisation}.
To help the notation for this comparison, we use a tilde to 
denote the dihedral and toral parts of $\mcF(O(2))/O(2)$
and no tilde for $SO(3)$.

\begin{center}
\begin{tikzpicture}[dot/.style={circle, inner sep=1pt,fill,label={#1},name=#1,color=black}]
\usetikzlibrary{calc}
\coordinate[label=$\mathrm{Space}\ \mcF(O(2))/O(2)$] (O) at (3,6);
\coordinate[label=$\mathrm{Part\ (Subspace)}$] (O) at (-3,6);

\coordinate [label=$\tcT$]  (FF) at (-3,5.5);
\coordinate[label=above:$T$] (M) at (0,5.5);

\coordinate [label=$\tcD$]  (FG) at (-3,4);
\coordinate [label= above:$D_2$]  (F) at (0,4);
\coordinate [label= above:$D_4$]  (G) at (3,4);
\coordinate [label= above:$D_{6}$]  (H) at (5,4);
\coordinate [label= above:$D_{8}$]  (I) at (6,4);
\coordinate [label= above:$D_{10}$]  (L) at (6.6,4);
\coordinate[label=$...$] (J) at (7.3,3.86);
\coordinate[label=above:$O(2)$] (K) at (8,4);

\fill[black] (F) circle (2pt);
\fill[black] (G) circle (2pt);
\fill[black] (H) circle (2pt);
\fill[black] (I) circle (2pt);
\fill[black] (K) circle (2pt);
\fill[black] (L) circle (2pt);
\fill[black] (M) circle (2pt);

\end{tikzpicture}
\end{center}

\subsubsection{Closed subgroups of \texorpdfstring{$SO(3)$}{SO(3)}}\label{sec:subgroupsSO3}

Recall that $SO(3)$ is a group of rotations of $\RR^3$. We choose a maximal torus $T$ in $SO(3)$ with rotation axis the $z$-axis. We divide the closed subgroups of $G$ into three types: \emph{toral} $\cT$, \emph{dihedral} $\mcD$ 
and \emph{exceptional} $\mcE$. This division is motivated by 
our preferred splitting of the category of rational $SO(3)$-spectra. 
The toral part consist of all tori in $SO(3)$ and all cyclic subgroups of these tori. 
Note that for any natural number $n$ there is one conjugacy class of subgroups from the toral part of order $n$ in $SO(3)$.

The dihedral part consists of all dihedral subgroups $D_{2n}$ (dihedral subgroups of order $2n$) of $SO(3)$ where $n$ is greater than 2, together with all subgroups isomorphic to $O(2)$. 
Note that $O(2)$ is the normaliser for itself in $SO(3)$. 
Moreover, there is only one conjugacy class of a dihedral subgroup $D_{2n}$ for each $n$ greater than $2$.
The normaliser of $D_{2n}$ in $SO(3)$ is $D_{4n}$ for $n>2$. 

We deliberately exclude the conjugacy classes of $D_2$ and $D_4$ from the dihedral part. 
Conjugates of $D_2$ are excluded from the dihedral part, as $D_2$ is conjugate to $C_2$ in $SO(3)$ and that subgroup is already taken into account in the toral part. Conjugates of $D_4$ are excluded from the dihedral part since its normaliser in $SO(3)$ is $\Sigma_4$ (symmetries of a cube), thus its Weyl group $\Sigma_4/D_4$ is of order $6$, whereas all other finite dihedral subgroups $D_{2n}, n>2$ have Weyl groups of order $2$. For simplicity we decided to treat $D_4$ separately.

There are five conjugacy classes of subgroups which we call exceptional, namely $SO(3)$ itself, the rotation group of a cube $\Sigma_4$,  the rotation group of a tetrahedron $A_4$, the rotation group of a dodecahedron $A_5$  and $D_4$, the dihedral group of order 4. Normalisers of these exceptional subgroups are as follows: $\Sigma_4$ is equal to its normaliser, $A_5$ is equal to its normaliser and the normaliser of $A_4$ is $\Sigma_4$, as is the normaliser of $D_4$.

Consider the space $\mcF(SO(3))/SO(3)$ of conjugacy classes of subgroups of $SO(3)$ with finite index in their normalisers. Recall that the topology on this space is induced by the Hausdorff metric. The division into these parts is an indication of idempotents of the rational Burnside ring for $SO(3)$ that are chosen to obtain an algebraic model for rational $SO(3)$-spectra.

\begin{center}
\begin{tikzpicture}[dot/.style={circle, inner sep=1pt,fill,label={#1},name=#1,color=black}]
\usetikzlibrary{calc}
\coordinate[label=$\mathrm{Space}\ \mcF(SO(3))/SO(3)$] (O) at (3,8);
\coordinate[label=$\mathrm{Part\ (Subspace)}$] (O) at (-3,8);
\coordinate[label=$\mcE$] (F) at (-3,7);
\coordinate [label= above:SO(3)]  (A) at (0,7);
\coordinate [label= above:$\Sigma_4$] (B) at (2,7) ;
\coordinate [label= above:$A_4$] (C) at (4,7) ;
\coordinate [label= above:$A_5$] (D) at (6,7) ;
\coordinate [label= above:$D_4$] (E) at (8,7) ;

\fill[black] (A) circle (2pt);
\fill[black] (B) circle (2pt);
\fill[black] (C) circle (2pt);
\fill[black] (D) circle (2pt);
\fill[black] (E) circle (2pt);

\coordinate [label=$\cT$]  (FF) at (-3,5.5);
\coordinate[label=above:$T$] (M) at (0,5.5);

\coordinate [label=$\mcD$]  (FG) at (-3,4);
\coordinate [label= above:$D_{6}$]  (H) at (5,4);
\coordinate [label= above:$D_{8}$]  (I) at (6,4);
\coordinate [label= above:$D_{10}$]  (L) at (6.6,4);
\coordinate[label=$...$] (J) at (7.3,3.86);
\coordinate[label=above:$O(2)$] (K) at (8,4);

\fill[black] (H) circle (2pt);
\fill[black] (I) circle (2pt);
\fill[black] (K) circle (2pt);
\fill[black] (L) circle (2pt);
\fill[black] (M) circle (2pt);

\end{tikzpicture}
\end{center}

\ \\

The topology on $\mcE$ is discrete, $\cT$ consists of one point $T$ and $\mcD$ forms a sequence of points converging to $O(2)$.

Note the difference between the dihedral parts for $O(2)$ and $SO(3)$: 
the conjugacy class of $D_2$ and $D_4$.
At a first glance, the toral part for $SO(3)$ looks the same as the toral part for $O(2)$. However, for $SO(3)$ it contains information about $D_2\leqslant O(2)$ (since $D_2$ is conjugate to $C_2$ in $SO(3)$), whereas for $O(2)$ it does not. These differences will become significant when we look at the interactions between localisations at idempotents and change of groups functors in Section \ref{sec:change_localisation}.

We use the following idempotents in the rational Burnside ring of $SO(3)$: $e_{\cT}$ corresponding to the characteristic function of the toral part $\cT$, $e_{\mcD}$ corresponding to the characteristic function of the dihedral part $\mcD$ and $e_{\mcE}$ corresponding to the characteristic function of the exceptional part $\mcE$. Since $\mcE$ is a disjoint union of five points, it is in fact a sum of five idempotents, 
one for every (conjugacy class of a) subgroup in the exceptional part: $e_{SO(3)}$, $e_{\Sigma_4}$, $e_{A_4}$, $e_{A_5}$ and $e_{D_4}$. We use a simplified notation $e_H$ to mean $e_H^{SO(3)}$ here.

\begin{rem} All finite dihedral subgroups in $SO(3)$ are exceptional, hence each has an idempotent
corresponding to it. 
However, as there are countably many conjugacy classes of dihedral subgroups, 
we cannot write $e_{\mcD}$ as the sum
of all these idempotents. 
Similarly, the characteristic function of the point $O(2)$ is not a continuous map to $\bQ$, 
hence it does not correspond to an idempotent.
\end{rem}

\section{Left and right Bousfield localisations and splittings}\label{sec:locals}

There are two well-understood ways of making a homotopy category of a given model category \emph{smaller}. Both ways boil down to adding weak equivalences in a tractable way. The first one keeps the cofibrations the same and is called a left Bousfield localisation (the particular version we use is also called a \emph{homological localisation}). 
The second one keeps the fibrations the same and is called the right Bousfield localisation (or cellularisation).

\subsection{Left Bousfield localisation} 
The general theory of left Bousfield localisations is given in Hirschhorn \cite{hir03}. 
For homological localisation we use 
the following result, which is \cite[Chapter IV, Theorem 6.3]{mm02}.

\begin{thm}Suppose $E$ is a cofibrant object in $G\endash\mathrm{Sp}^\cO$ or a cofibrant based $G$-space. There is a new model structure called the \emph{$E$-local model structure} on $G\endash\mathrm{Sp}^\cO$, denoted 
$L_E(G\endash\mathrm{Sp}^\cO)$, defined as follows. 
A map $f \colon X \lra Y$ is
\begin{itemize}
\item a weak equivalence if it is an $E$-equivalence, that is, $\id_E\wedge f \colon E\wedge X \lra E\wedge Y$ is a stable equivalence,
\item a cofibration if it is a cofibration with respect to the stable model structure,
\item a fibration if it has the right lifting property with respect to all trivial cofibrations.
\end{itemize}
The $E$-fibrant objects $Z$ are the fibrant $G$-spectra that are $E$-local, that is, the map 
\[
[f, Z]^G \colon [Y,Z]^G \lra [X,Z]^G
\] 
is an isomorphism for all $E$-equivalences $f$.
For $X$ a $G$-spectrum, $E$-fibrant approximation gives Bousfield localisation $\lambda \colon X\lra L_EX$ of $X$ at $E$.
\end{thm}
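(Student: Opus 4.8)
The plan is to realise $L_E(G\endash\mathrm{Sp}^\cO)$ as a left Bousfield localisation of the stable model structure, using the general machinery of Hirschhorn \cite{hir03}. The starting point is that the stable model structure on $G\endash\mathrm{Sp}^\cO$ is cofibrantly generated, cellular and left proper, all of which are established in \cite{mm02}. Since $E\wedge -$ is functorial, the two-out-of-three property for stable equivalences gives the two-out-of-three property for $E$-equivalences for free; and since $E$ is cofibrant (hence flat), $E\wedge -$ preserves stable equivalences, so every stable equivalence is an $E$-equivalence. Thus the $E$-equivalences are a sensible class of weak equivalences for a model structure retaining the stable cofibrations, with fibrations then forced by the right lifting property against the stably-trivial cofibrations.

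The main obstacle is the set-theoretic one: to invoke Hirschhorn's theorem one must replace the proper class of $E$-equivalences by a \emph{set} $S$ of maps with the same class of local objects. This is Bousfield's cardinality argument. One fixes a regular cardinal $\kappa$ larger than the cardinalities of $E$, of the (co)domains of the generating cofibrations and generating trivial cofibrations of the stable structure, and of the set of indexing representations in $\mcU$; then every $G$-spectrum is the filtered colimit of its $\kappa$-small subspectra, and $E\wedge -$ as well as the homotopy groups $\pi_*^H(-)$ commute with such colimits. Let $S$ be a set of representatives for the isomorphism classes of stable cofibrations $A\hookrightarrow B$ that are $E$-equivalences with $B$, and hence $A$, $\kappa$-small. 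A factorisation and smallness argument then shows that a stably fibrant $Z$ has the right lifting property against $S$ precisely when $[f,Z]^G$ is an isomorphism for every $E$-equivalence $f$; equivalently, the $S$-local fibrant objects are exactly the $E$-local stably fibrant $G$-spectra.

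Granting such a set $S$, Hirschhorn's left Bousfield localisation theorem produces the $S$-local model structure on $G\endash\mathrm{Sp}^\cO$. It is left proper and cofibrantly generated, its cofibrations are the stable cofibrations, and its fibrant objects are the $S$-local stably fibrant $G$-spectra. It then remains to identify its weak equivalences, the $S$-local equivalences, with the $E$-equivalences. One direction uses stability: the cofibre of an $E$-equivalence is $E$-acyclic, hence maps trivially into any $E$-local (equivalently $S$-local) object, so every $E$-equivalence is an $S$-local equivalence. For the converse one checks that the $S$-localisation map $X\to L_S X$ is itself an $E$-equivalence, since it is built from pushouts of maps in $S$ and transfinite composites, along which $E$-equivalences are closed because $E\wedge -$ commutes with these homotopy colimits and preserves stable equivalences; consequently an $S$-local equivalence becomes a stable equivalence after applying $L_S$, so by two-out-of-three it is an $E$-equivalence.

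Finally, functorial fibrant replacement in $L_E(G\endash\mathrm{Sp}^\cO)$, applied to a $G$-spectrum $X$, yields a natural map $\lambda\colon X\to L_E X$ which is an $E$-equivalence with $E$-local target; this is the asserted Bousfield localisation. The case of a cofibrant based $G$-space $E$ is handled identically, interpreting $E\wedge -$ levelwise. The remaining points — left properness being inherited, the explicit description of the generating trivial cofibrations of the localised structure, and the verification that the fibrant objects are as described — are routine once $\kappa$ and $S$ have been fixed.
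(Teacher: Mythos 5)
The paper does not prove this statement at all: it is quoted verbatim from Mandell--May \cite[Chapter IV, Theorem 6.3]{mm02}, so the only comparison available is with the proof in that reference. There, the localised model structure is built "by hand" in the style of EKMM: a cardinality bound is used to produce a set of generating $E$-trivial cofibrations, and the factorisation and lifting axioms are verified directly, without invoking Hirschhorn's general existence theorem (and hence without needing cellularity of $G\endash\mathrm{Sp}^\cO$). Your route --- left Bousfield localisation \`a la Hirschhorn \cite{hir03} at a set $S$ of $\kappa$-bounded $E$-trivial cofibrations, followed by the identification of the $S$-local equivalences with the $E$-equivalences using stability and the fact that the localisation map is itself an $E$-equivalence --- is a legitimate alternative packaging of the same underlying idea (Bousfield's cardinality argument), and the outline is correct: the deduction that $E$-equivalences are $S$-local equivalences via $E$-acyclic cofibres, and the converse via two-out-of-three applied to $\lambda_X$, $\lambda_Y$ and $L_Sf$, are exactly the standard steps.

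Two points in your sketch carry essentially all of the content and are only gestured at. First, the claim that a stably fibrant $Z$ has the right lifting property against $S$ if and only if $[f,Z]^G$ is an isomorphism for every $E$-equivalence $f$ is precisely Bousfield's cardinality argument; "a factorisation and smallness argument then shows" is where the entire proof lives (one must show every $E$-trivial cofibration is detected by its $\kappa$-bounded $E$-trivial subcofibrations, which requires a genuinely delicate closure/Zorn-type argument, not just smallness of the generators). Second, the fibrant replacement $X \to L_S X$ in Hirschhorn's machine is not built from pushouts of maps of $S$ themselves but from the augmented $S$-horns constructed via framings; to conclude it is an $E$-equivalence you must check that each horn is an $E$-trivial cofibration (its cofibre is, up to equivalence, an $E$-acyclic object smashed with a finite complex) and that $E$-trivial cofibrations are closed under pushout and transfinite composition. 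Neither issue is a wrong turn --- both are standard and repairable --- but in a complete proof they are the steps that must actually be carried out, and the direct construction in \cite{mm02} exists largely to handle exactly these points.
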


We will refer to the above model structure as the left Bousfield localisation of the category of  $G$-spectra at $E$.
This model category is proper, stable, symmetric monoidal and cofibrantly generated.  
An $E$-equivalence between $E$-local objects is a weak equivalence by \cite[Theorems 3.2.13 and 3.2.14]{hir03}.

As previously mentioned, the first simplification of the category of  $G$-spectra is rationalisation.
This means localisation at the Moore spectrum for $\bQ$,  $\mathbb{S}_\bQ$. 
For details see \cite[Definition 5.1]{barnessplitting}. 
This spectrum has the property that $\pi_*(X \wedge \mathbb{S}_\bQ)=\pi_*(X)\otimes \bQ$.
We refer to this model category as the model category of rational  $G$-spectra.

The self-maps of the rational sphere spectrum in the homotopy category of $G$-spectra are given by the rational Burnside ring
\[
\A_\bQ(G) 
\cong 
[\mathbb{S},\mathbb{S}]^{G\endash\mathrm{Sp}^\cO} \otimes \bQ
\cong 
[\mathbb{S},\mathbb{S}]^{L_{\mathbb{S}_\bQ}G\endash\mathrm{Sp}^\cO}
\cong
[\mathbb{S}_\bQ,\mathbb{S}_\bQ]^{G\endash\mathrm{Sp}^\cO}.
\]
It follows that $e \in \A_\bQ(G)$
can be represented by a map $e \colon \mathbb{S}_\bQ \lra \mathbb{S}_\bQ$.  
We define $e\mathbb{S}_\bQ$ to be the homotopy colimit (a mapping telescope) of the diagram
\[
\xymatrix{\mathbb{S}_\bQ \ar[r]^{e} & \mathbb{S}_\bQ\ar[r]^{e} & \mathbb{S}_\bQ\ar[r]^{e} &...\ .}
\]
We ask for this spectrum to be cofibrant either by choosing a good construction of homotopy colimit, or by cofibrantly replacing the result in the stable model structure for $G$-spectra. 
We thus have model structures 
$L_{e\mathbb{S}_\bQ}(G\endash\mathrm{Sp})$ and  $L_{(1-e)\mathbb{S}_\bQ}(G\endash\mathrm{Sp})$.
Fibrant replacement in $L_{e\mathbb{S}_\bQ}(G\endash\mathrm{Sp}^\cO)$ 
is given by taking the fibrant replacement of $X \smashprod e\mathbb{S}_\bQ$.
Since this commutes with taking infinite coproducts, the localisation is 
smashing in the sense of Ravenel \cite{Ravenel_Localization} and Hovey et al.\ \cite{hps97}).
In particular, this localisation preserves homotopically compact generators.

We know from Section \ref{sec:idempotents} that $e$ corresponds to an open and closed, $G$-invariant subspace of $\mathrm{Sub_f}(G)$ that is a union of $\sim$-equivalence classes, call it $V_e$.
By considering the geometric fixed point functors $\Phi^H$, for all $H \leqslant G$ 
(see \cite[Section V.4]{mm02}), we can see that the homotopy category of 
$L_{e\mathbb{S}_\bQ}(G\endash\mathrm{Sp}^\cO)$  
is the homotopy category of rational  $G$-spectra $X$ with geometric isotropy 
\[
\mcG\mcI(X)=\{H\leqslant G \mid \Phi^H(X)\not\simeq *\},
\]
concentrated over the subgroups $H$ that are in $V_e$.

\subsection{Splitting}
A common step in the classification of rational $G$-spectra is to split the 
category using idempotents of the rational Burnside ring. 
Work of the first author \cite{barnessplitting}
allows us to perform a compatible splitting at the level of model categories.

\begin{thm}\label{thm:splittingofexep}\cite[Theorem 4.4]{barnessplitting} Let $e$ be an idempotent in the rational Burnside ring $\A_\bQ(G)$. There is a strong symmetric monoidal Quillen equivalence:
\[
\xymatrix{
\triangle\ :\ L_{\mathbb{S}_\bQ}(G \endash\mathrm{Sp}^\cO)\ \ar@<+1ex>[r] & \ L_{e \mathbb{S}_\bQ}(G\endash\mathrm{Sp}^\cO)\times L_{(1-e) \mathbb{S}_\bQ}(G\endash\mathrm{Sp}^\cO)\ :\Pi \ar@<+0.5ex>[l].
}
\]
The left adjoint is a diagonal functor, the right adjoint is a product and the product category on the right is considered with the objectwise model structure (a map $(f_1,f_2)$ is a weak equivalence, a fibration or a cofibration if both factors $f_i$ are so).
\end{thm}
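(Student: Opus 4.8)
The plan is to apply the recognition criterion of Lemma~\ref{qequiv_generators} to the adjunction $(\triangle,\Pi)$, where $\triangle(X)=(X,X)$ is the diagonal and $\Pi(Y_1,Y_2)=Y_1\times Y_2$ the product (which in this stable setting agrees with $Y_1\vee Y_2$). All the homotopical content will come from two facts already in hand: localisation at $e\mathbb{S}_\bQ$ (and at $(1-e)\mathbb{S}_\bQ$) is smashing, so $L_{e\mathbb{S}_\bQ}X$ is modelled by a fibrant replacement of $X\smashprod e\mathbb{S}_\bQ$ (set up just before the statement); and, since $e+(1-e)=1$ and $e(1-e)=0$ in $[\mathbb{S}_\bQ,\mathbb{S}_\bQ]=\A_\bQ(G)$, there is an idempotent splitting $\mathbb{S}_\bQ\simeq e\mathbb{S}_\bQ\vee(1-e)\mathbb{S}_\bQ$ in the homotopy category, together with the orthogonality $e\mathbb{S}_\bQ\smashprod(1-e)\mathbb{S}_\bQ\simeq *$.

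First I would check that $\triangle$ is left Quillen. The cofibrations of $L_{\mathbb{S}_\bQ}(G\endash\mathrm{Sp}^\cO)$, of each localised factor, and of the objectwise product model structure all coincide with the stable cofibrations, since left Bousfield localisation leaves cofibrations unchanged; hence $\triangle$ preserves cofibrations. A trivial cofibration on the left is a rational equivalence $f$, and from $f\smashprod\mathbb{S}_\bQ\simeq(f\smashprod e\mathbb{S}_\bQ)\vee(f\smashprod(1-e)\mathbb{S}_\bQ)$ each wedge summand is a stable equivalence, so $f$ is both an $e\mathbb{S}_\bQ$-equivalence and a $(1-e)\mathbb{S}_\bQ$-equivalence, and $\triangle f$ is a trivial cofibration in the product. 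Thus $\triangle$ is left Quillen with right adjoint $\Pi$.

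Next I would run the criterion. For $X$ rational and cofibrant, the derived unit is $X\to L_{e\mathbb{S}_\bQ}X\times L_{(1-e)\mathbb{S}_\bQ}X$, which by the smashing property is identified with $X\simeq X\smashprod\mathbb{S}_\bQ\to(X\smashprod e\mathbb{S}_\bQ)\times(X\smashprod(1-e)\mathbb{S}_\bQ)$ obtained by applying $X\smashprod(-)$ to the canonical map $e\mathbb{S}_\bQ\vee(1-e)\mathbb{S}_\bQ\to e\mathbb{S}_\bQ\times(1-e)\mathbb{S}_\bQ$; since a binary coproduct and a binary product agree in a stable category this is an equivalence, in particular on the homotopically compact generators $\Sigma^pS^0\sm G/H_+$ and $F_qS^0\sm G/H_+$. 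For the derived counit, use that an $e\mathbb{S}_\bQ$-local object $Y$ satisfies $Y\smashprod e\mathbb{S}_\bQ\simeq Y$ and, by orthogonality, $Y\smashprod(1-e)\mathbb{S}_\bQ\simeq *$, and symmetrically for $(1-e)$-local objects: on a generator of the product of the shape $(L_{e\mathbb{S}_\bQ}(\Sigma^pS^0\sm G/H_+),\,*)$ the functor $\Pi$ returns $\Sigma^pS^0\sm G/H_+\smashprod e\mathbb{S}_\bQ$, and applying $\triangle$ followed by product-fibrant replacement recovers that generator with the counit the identity in each slot; the remaining families of generators are treated identically. Finally $L\triangle$ carries the generators above to homotopically compact objects (each $L_{e\mathbb{S}_\bQ}$ is smashing, hence preserves homotopically compact generators, and a pair of homotopically compact objects is homotopically compact in the product), which supplies the equivalent coproduct-preservation hypothesis of Lemma~\ref{qequiv_generators}. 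The lemma then gives the Quillen equivalence.

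For the monoidal statement, the product model category carries the objectwise smash product with unit $(\mathbb{S},\mathbb{S})$, and the diagonal is \emph{strictly} strong symmetric monoidal: $\triangle X\smashprod\triangle Y=\triangle(X\smashprod Y)$ and $\triangle(\mathbb{S})=(\mathbb{S},\mathbb{S})$; combined with $\triangle$ being left Quillen (and with the fact that the product of monoidal model categories is again one, objectwise) this is exactly the assertion that $(\triangle,\Pi)$ is a strong symmetric monoidal Quillen equivalence. Given the facts granted above the argument is essentially formal, and the step demanding real care is the derived-counit computation: one must correctly identify the generators of the product model category and track the annihilation $Y\smashprod e\mathbb{S}_\bQ\simeq *$ for $(1-e)$-local $Y$ (and its mirror) through fibrant replacement --- equivalently, one must make the point-set smashing representation of $L_{e\mathbb{S}_\bQ}$ and the orthogonality $e\mathbb{S}_\bQ\smashprod(1-e)\mathbb{S}_\bQ\simeq *$ precise, which is where the work actually sits.
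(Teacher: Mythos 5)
Your proof is essentially correct, and it is worth noting that the paper does not prove this theorem itself: it is quoted from \cite[Theorem 4.4]{barnessplitting}, whose general splitting argument for stable monoidal model categories uses exactly the ingredients you assemble — the idempotent splitting $\mathbb{S}_\bQ \simeq e\mathbb{S}_\bQ \vee (1-e)\mathbb{S}_\bQ$, the smashing property of the two localisations, the orthogonality $e\mathbb{S}_\bQ \smashprod (1-e)\mathbb{S}_\bQ \simeq *$, and a unit/counit check on homotopically compact generators in the style of Lemma~\ref{qequiv_generators}. The only phrasing to tighten is in your unit step: the generators $\Sigma^p S^0 \smashprod G/H_+$ are not rational spectra, so ``$X \simeq X \smashprod \mathbb{S}_\bQ$'' should be read as a weak equivalence in $L_{\mathbb{S}_\bQ}(G\endash\mathrm{Sp}^\cO)$ (a rational equivalence), which is all the recognition lemma requires.
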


One can also look at splittings non-rationally, as in B{\"{o}}hme \cite{boehme19}.

\subsection{Cellularisation}\label{sec:cellularisation}

A cellularisation of a model category
is a right Bousfield localisation at a set of objects.
Such a localisation exists by \cite[Theorem 5.1.1]{hir03}
whenever the model category is right proper and cellular.
When we are in a stable context, the results of \cite[Section 5]{brstable}
can be used, which allows us to relax the cellularity condition. 

The most common use of   cellularisation in the context of algebraic models
is the Cellularisation Principle, which we recall in Theorem \ref{thm:cellprin}.

\begin{defn}
Let $\cC$ be a stable model category and $K$ a stable set of objects of $\cC$, i.e. a set such that a class of $K$-cellular objects of $\cC$ is closed under desuspension (Note that this class is always closed under suspension.) We call $K$ a set of \emph{cells}.
We say that a map $f \colon A \longrightarrow B$ of $\cC$ is a \emph{$K$-cellular equivalence} if
the induced map
\[
[k,f]^\cC_* \colon [k,A]^\cC_* \longrightarrow [k,B]^\cC_*
\]
is an isomorphism of graded abelian groups for each $k \in K$. An object $Z \in \cC$ is said to be
\emph{$K$-cellular} if
\[
[Z,f]^\cC_* \colon [Z,A]^\cC_* \longrightarrow [Z,B]^\cC_*
\]
is an isomorphism of graded abelian groups for any $K$-cellular equivalence $f$.
\end{defn}

The following is Hirschhorn \cite[Theorem 5.1.1]{hir03}.
\begin{thm}
For $K$ a set of objects in a right proper, cellular model category $\cC$, the 
\emph{right Bousfield localisation} or \emph{cellularisation} of $\cC$ with respect to
$K$ is the (right proper) model structure $K \cell \cC$ on $\cC$ defined as follows.
\begin{itemize}
\item The weak equivalences are $K$-cellular equivalences,
\item the fibrations of $K \cell \cC$ are the fibrations of $\cC$,
\item the cofibrations of $K \cell \cC$ are defined via left lifting property.
\end{itemize}
The cofibrant objects of $K \cell \cC$
are called $K$-cofibrant and are precisely the
$K$-cellular and cofibrant objects of $\cC$.
\end{thm}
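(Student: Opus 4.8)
The plan is to place the three prescribed classes of maps into the framework of cofibrantly generated model categories, where the only non-formal ingredient is a generating set of cofibrations detecting the trivial fibrations. One declares the weak equivalences to be the $K$-cellular equivalences, the fibrations to be the fibrations of $\cC$, and the cofibrations to be the maps with the left lifting property against all maps that are simultaneously fibrations of $\cC$ and $K$-cellular equivalences; these last will be the trivial fibrations of $K\cell\cC$. Two-out-of-three and closure under retracts for $K$-cellular equivalences, and retract closure for the remaining classes, are immediate from the defining isomorphisms on the graded groups $[k,-]^\cC_*$. A good deal is then formal. Every trivial cofibration of $\cC$ is a $K$-cellular equivalence, since a $\cC$-weak equivalence induces an isomorphism on every $[k,-]^\cC_*$, and it lifts against all $\cC$-fibrations hence against all trivial fibrations of $K\cell\cC$; so it is a trivial cofibration of $K\cell\cC$, and therefore the functorial factorisation ``trivial cofibration then fibration'' of $\cC$ already has the shape needed for one of the two factorisation axioms. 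The last lifting axiom follows by the retract argument: given a trivial cofibration $i$ of $K\cell\cC$, factor it in $\cC$ as a trivial cofibration followed by a fibration, use two-out-of-three to see the fibration is a $K$-cellular equivalence, lift $i$ against it, and conclude $i$ is a retract of a $\cC$-trivial cofibration, hence lifts against every fibration.

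The substantive step is the construction of a set $\Lambda$ of cofibrations of $\cC$ whose right lifting property carves out exactly the maps that are both fibrations of $\cC$ and $K$-cellular equivalences. First I would cofibrantly replace the objects of $K$, which changes nothing because $[k,-]^\cC_*$ depends only on the homotopy type of $k$. Using the cosimplicial resolution (framing) machinery present in any cellular model category, I would attach to each such cofibrant $k$ the tensors $k\otimes\Delta^n$ together with the function complex $\map(k,X)$, a simplicial set with $\pi_*\map(k,X)=[k,X]^\cC_*$ and with defining adjunction $\cC(k\otimes L,X)\cong\mathrm{sSet}(L,\map(k,X))$. When $p$ is a fibration of $\cC$ the map $\map(k,p)$ is a Kan fibration, hence a weak equivalence exactly when it lifts against the boundary inclusions $\partial\Delta^n\hookrightarrow\Delta^n$, which by the adjunction above is equivalent to $p$ lifting against $k\otimes\partial\Delta^n\hookrightarrow k\otimes\Delta^n$. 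Taking $\Lambda$ to be a generating set $J$ of trivial cofibrations of $\cC$ together with all inclusions $k\otimes\partial\Delta^n\hookrightarrow k\otimes\Delta^n$ for $k\in K$ and $n\geq 0$, a map then has the right lifting property against $\Lambda$ precisely when it is a fibration of $\cC$ and a $K$-cellular equivalence. Since $\cC$ is cellular the domains in $\Lambda$ are small, so the small object argument on $\Lambda$ provides the functorial factorisation ``cofibration then trivial fibration'' and finishes the model structure axioms.

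Finally I would record right properness and identify the cofibrant objects. Right properness descends from $\cC$: the function complex $\map(k,-)$ preserves pullbacks, $\map(k,p)$ is a Kan fibration when $p$ is a fibration, and $\mathrm{sSet}$ is right proper, so a $K$-cellular equivalence pulls back along a fibration to a $K$-cellular equivalence. An object $X$ is cofibrant in $K\cell\cC$ exactly when $\varnothing\to X$ lifts against the trivial fibrations of $K\cell\cC$; as these include the trivial fibrations of $\cC$, this forces $X$ to be $\cC$-cofibrant, and the remaining lifting requirement unwinds --- by the usual argument comparing maps out of $X$ up to homotopy and lifting homotopies along fibrations --- to the assertion that $[X,-]^\cC_*$ inverts every $K$-cellular equivalence, that is, $X$ is $K$-cellular; conversely a $\cC$-cofibrant $K$-cellular object has the required lifting property by the same homotopy-lifting argument. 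The step I expect to be the main obstacle is the framing bookkeeping --- arranging that $\map(k,-)$ and the tensors $k\otimes\Delta^n$ are homotopically meaningful in a model category that is only cellular, not simplicial; once $\cC$ is replaced by a Quillen-equivalent simplicial model category or Hirschhorn's resolution apparatus is invoked, the rest is routine. In the stable setting of this paper one may instead follow Barnes--Roitzheim and trade cellularity for properness together with a stable set of generators.
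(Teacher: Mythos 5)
Your outline follows the strategy of the source the paper cites for this statement (the paper itself gives no proof; it quotes Hirschhorn's Theorem 5.1.1), but the central step is not correct as stated. You claim that a map has the right lifting property against $\Lambda = J \cup \{k\otimes\partial\Delta^n \to k\otimes\Delta^n\}$ precisely when it is a fibration of $\cC$ and a $K$-cellular equivalence. The equivalence ``$\map(k,p)$ is a trivial Kan fibration $\iff$ $[k,p]_*$ is an isomorphism'' only holds when the (co)domain of $p$ is fibrant: the simplicial set $\map(k,X)$ built from a cosimplicial resolution of $k$ computes the derived mapping space, hence the groups $[k,X]^{\cC}_*$, only for fibrant $X$. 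For a fibration with non-fibrant codomain neither implication survives, so $\Lambda$-injectivity neither implies nor is implied by being a trivial fibration of $K \cell \cC$. This is not a removable technicality: if the trivial fibrations of $K \cell \cC$ were the injectives against a set of cofibrations, the cellularisation would be cofibrantly generated, and right Bousfield localisations generally are not. Consequently your small-object-argument factorisation does not produce a (cofibration, trivial fibration) factorisation for $K \cell \cC$, and this is exactly the axiom that carries all the content.

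This is also where the right properness hypothesis must be used, and in your sketch it only appears in the (comparatively easy) verification that the localised structure is again right proper. In Hirschhorn's argument one factors the composite $X \to Y \to \widehat{Y}$, with $\widehat{Y}$ a fibrant replacement of $Y$, into a relative $\Lambda(K)$-cell complex followed by a $\Lambda(K)$-injective $W \to \widehat{Y}$; since $W$ is then fibrant, this map really is a fibration and a $K$-cellular equivalence. One then pulls $W \to \widehat{Y}$ back along $Y \to \widehat{Y}$: right properness and two-out-of-three show the resulting fibration $W\times_{\widehat{Y}}Y \to Y$ is still a $K$-cellular equivalence, and a further (non-formal) argument shows $X \to W\times_{\widehat{Y}}Y$ is a cofibration of $K \cell \cC$; the identification of the cofibrant objects as the $\cC$-cofibrant $K$-cellular objects likewise requires the colocal Whitehead-type results rather than the direct unwinding you indicate. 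The purely formal parts of your write-up (two-out-of-three, the unchanged trivial cofibrations, the retract argument for the remaining lifting axiom, cellularity supplying smallness) are fine; the gap is the claimed set-based characterisation of the trivial fibrations and the ensuing factorisation.
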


When $\cC$ is stable and $K$ is a stable set of cofibrant objects, then 
the cellularisation of a proper, cellular stable model category 
is proper, cellular and stable by Barnes and Roitzheim 
\cite[Theorem 5.9]{brstable}.

We can further ask the cells $K$ to be homotopically compact objects. 
By \cite[Section 9]{brstable} the homotopy category $K \cell \cC$
is the full triangulated subcategory of the homotopy category of $\cC$
generated by $K$. In particular, $K$ is a set of homotopically compact 
generators for $K \cell \cC$. These ideas lead to the following theorem.
For examples of its use, see Section \ref{sec:alternativetosplitting}, Theorem \ref{thm:so3so2adjuncation} or Theorem \ref{thm:generalToral}.

\begin{thm}[The Cellularisation Principle]\label{thm:cellprin}
Let $M$ and $N$ be right proper,
stable, cellular model categories with $(F,U)$ a Quillen adjunction between $M$ and $N$. 
Let $\mcQ$ be a cofibrant replacement functor in $M$ and $\mcR$ a fibrant replacement
functor in $N$.

\begin{itemize}
\item Let $\mcK$ be a set of objects in $M$ with $F\mcQ\mcK$ its image in $N$. 
Then $F$ and $U$ induce a Quillen adjunction
\[
\adjunction{F}{\mcK \cell M}{F \mcQ \mcK \cell N}{U}
\]
between the $\mcK$-cellularisation of $M$ and the $F \mcQ \mcK$-cellularisation of $N$.

\item If $\mcK$ is a stable set of homotopically compact objects in $M$ such that for each $A$ in $\mcK$
the object $F\mcQ A$ is homotopically compact in $N$ and the derived unit $\mcQ A \to U \mcR F \mcQ A$ 
is a weak equivalence in $M$, then $F$ and $U$ induce a Quillen equivalence between the 
cellularisations:
\[
\mcK \cell M \simeq F \mcQ \mcK \cell N.
\]

\item If $\mcL$ is a stable set of homotopically compact objects in $N$ such that for each $B$ in $L$
the object $U \mcR B$ is homotopically compact in $M$ and the derived counit $F \mcQ U \mcR B \to \mcR B$ 
is a weak equivalence in $N$, then $F$ and $U$ induce a Quillen equivalence between the
cellularisations:
\[
U \mcR \mcL \cell M \simeq L \cell N.
\]
\end{itemize}
\end{thm}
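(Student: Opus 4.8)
The plan is to treat the three assertions separately: the first (the bare Quillen adjunction) is the technical heart, and the second and third (the two Quillen equivalences) are mirror images of one another, each obtained by feeding the first into Lemma~\ref{qequiv_generators}. Throughout I take the cells to be cofibrant, replacing them by cofibrant objects if necessary (this changes no cellularisation, since it alters neither the fibrations nor the cellular equivalences), so that, by \cite[Section 9]{brstable}, each cellularisation below is a stable model category whose homotopy category is the localising subcategory generated by the cells, with the cells forming a set of homotopically compact generators. The step I expect to require the most care is the first one.

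\emph{The Quillen adjunction.} I would check that $F$ is left Quillen from $\mcK\cell M$ to $F\mcQ\mcK\cell N$. The point that makes this work is that right Bousfield localisation at a set leaves the fibrations (hence also the trivial cofibrations) unchanged: so $F$ preserves trivial cofibrations of $\mcK\cell M$ automatically, and only the behaviour on cofibrations needs attention. Equivalently, one checks that $U$ carries trivial fibrations of $F\mcQ\mcK\cell N$ to trivial fibrations of $\mcK\cell M$; the fibration part is automatic, and for the cellular-equivalence part one uses the derived adjunction isomorphism
\[
[k, \mathbf{R}U(-)]^M_* \;\cong\; [F\mcQ k, -]^N_* \qquad (k \in \mcK),
\]
natural over $\mathrm{Ho}(N)$. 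Since being a cellular equivalence depends only on the image of a map in the homotopy category, and the cells downstairs are exactly $F\mcQ$ of the cells upstairs, this identifies $\mcK$-cellular equivalences in $M$ with $F\mcQ\mcK$-cellular equivalences in $N$ after applying $U$. This bookkeeping, together with keeping straight that $\mathbf{L}F$ sends $\mcK$-cellular objects to $F\mcQ\mcK$-cellular ones, is the delicate point.

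\emph{The second equivalence.} Granting the first part with $\mcK$ as given, I feed the Quillen adjunction $(F,U)\colon \mcK\cell M \rightleftarrows F\mcQ\mcK\cell N$ into Lemma~\ref{qequiv_generators}. Both sides are stable with sets of homotopically compact generators, namely $\mcK$ and $F\mcQ\mcK$ (homotopically compact in $M$, respectively $N$, by hypothesis, hence in the cellularisations). The left derived functor sends $A\in\mcK$ to $F\mcQ A$, which is homotopically compact in $N$, so $\mathbf{R}U$ preserves coproducts. The derived unit $\mcQ A \to U\mcR F\mcQ A$ is a weak equivalence in $M$ by hypothesis, hence in $\mcK\cell M$; and the triangle identity $\varepsilon_{\mathbf{L}F A}\circ \mathbf{L}F(\eta_A)=\id$, with $\eta_A$ an isomorphism in $\mathrm{Ho}(M)$, forces the derived counit to be an isomorphism on $F\mcQ\mcK$, the generators of $F\mcQ\mcK\cell N$. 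Lemma~\ref{qequiv_generators} then yields the Quillen equivalence.

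\emph{The third equivalence.} This is the mirror image. Since the derived counit $F\mcQ U\mcR B \to \mcR B$ is a weak equivalence in $N$ for each $B\in\mcL$, the cells $F\mcQ(U\mcR B)$ and $B$ agree in $\mathrm{Ho}(N)$, so cellularising $N$ at $F\mcQ(U\mcR\mcL)$ or at $\mcL$ gives the same model category; applying the first part with cells $U\mcR\mcL$ thus gives a Quillen adjunction $(F,U)\colon U\mcR\mcL\cell M \rightleftarrows \mcL\cell N$. I then apply Lemma~\ref{qequiv_generators} once more: the generators are $U\mcR\mcL$ and $\mcL$, homotopically compact in $M$, respectively $N$, by hypothesis; $\mathbf{L}F(U\mcR B)\simeq B$ is homotopically compact in $N$, so $\mathbf{R}U$ preserves coproducts; the derived counit on $\mcL$ is a weak equivalence in $N$ by hypothesis, hence in $\mcL\cell N$; and the triangle identity $\mathbf{R}U(\varepsilon_B)\circ\eta_{\mathbf{R}U B}=\id$, with $\varepsilon_B$ an isomorphism, shows the derived unit is an isomorphism on $U\mcR\mcL$, the generators of $U\mcR\mcL\cell M$. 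Lemma~\ref{qequiv_generators} then concludes.
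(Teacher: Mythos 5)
The paper does not actually prove Theorem \ref{thm:cellprin}; it recalls it (it is Greenlees--Shipley's Cellularisation Principle), so your argument has to be judged on its own terms. Your second and third bullet points are argued correctly and along the standard lines: once the Quillen adjunction between the cellularisations is available, Lemma \ref{qequiv_generators} applied to the cells, with the triangle identities supplying the other derived natural transformation on generators, gives both Quillen equivalences, and your identification of $F\mcQ(U\mcR\mcL) \cell N$ with $\mcL \cell N$ is legitimate because a cellularisation depends only on the isomorphism classes of the cells in the homotopy category. Those parts are essentially the intended use of Lemma \ref{qequiv_generators}.

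The genuine gap is in the first bullet point, at exactly the step you flag as delicate. Reducing the problem to ``$U$ preserves trivial fibrations'' is right (and your observation that the acyclic cofibrations are unchanged is correct), but the derived adjunction $[k,\mathbf{R}U(-)]^{M}_* \cong [F\mcQ k,-]^{N}_*$ only shows that $\mathbf{R}U(p)$ is a $\mcK$-cellular equivalence when $p$ is an $F\mcQ\mcK$-cellular equivalence. What you need is that the point-set map $U(p)$ is a $\mcK$-cellular equivalence, and a trivial fibration of $F\mcQ\mcK \cell N$ is a fibration of $N$ whose domain and codomain are in general not fibrant; on non-fibrant objects $U$ does not compute its derived functor, so the image of $U(p)$ in $\operatorname{Ho}(M)$ need not agree with $\mathbf{R}U(p)$. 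This is not a pedantic point: in the paper's applications $U$ is, for example, $H$-fixed points or restriction of orthogonal $G$-spectra, which are homotopically meaningless on non-fibrant objects. So ``a cellular equivalence depends only on the image in the homotopy category'', while true, does not bridge $U(p)$ and $\mathbf{R}U(p)$, and I see no elementary patch (stability and right properness do not remove the fibrancy issue). The standard proof of the first bullet does not go through this shortcut: it obtains the descent of the Quillen adjunction from the structure theory of right Bousfield localisations --- the horns $\Lambda(\mcK)$ on the cells, which control the colocal acyclic fibrations, together with the fact that a left Quillen functor carries a cosimplicial resolution of $\mcQ A$ to one of $F\mcQ A$ --- see \cite[Theorem 3.3.18 and Sections 5.2--5.3]{hir03}; that machinery, not the derived adjunction, is what makes the statement true. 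A smaller point: in the first bullet the cells are neither stable nor homotopically compact, so your blanket appeal to \cite[Section 9]{brstable} should be restricted to the second and third parts, where its hypotheses hold and where it is actually needed.
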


\subsection{Alternatives to splitting}\label{sec:alternativetosplitting}
In the case of $SO(2)$, the rational Burnside ring is $\bQ$, 
so there are no idempotents to give a splitting. 
Instead, one must look for replacements for the idempotents 
or other methods of simplifying the category of rational $SO(2)$-spectra. 
One approach comes from inducing idempotents from the smaller subgroups. Suppose $H$ is a subgroup of $SO(2)$ such that $\A_\bQ(H)$ has an idempotent $e$. 
Then $SO(2)_+ \smashprod_H e \mathbb{S}$ is 
a retract of $SO(2)/H_+$
that does not come from an idempotent of $\A_\bQ(SO(2))$. 
The set of these spectra as $H$ and $e$ vary give a better 
behaved set of homotopically compact generators for rational $SO(2)$-spectra. 
We can think of this construction as applying an \emph{induced idempotent} 
to $SO(2)/H_+$. 
While they are not used directly in constructing the algebraic model 
for rational $SO(2)$-spectra, they are highly useful in understanding it.

Generalising the situation above, the rational Burnside ring of any torus $\T$ has no idempotents. Greenlees and Shipley \cite{tnqcore} provided a new method of obtaining an algebraic model in this case.
Suppose $\mcF$ is the family of all proper subgroups of $\T$, we define the universal space $E\mcF_+$ 
as a $\T$-CW-complex with the following universal property
\[
(E\mcF_+)^H \simeq \begin{cases}S^0 &\text{iff\ } H\in \mcF \\
* & \text{otherwise.} \end{cases}
\]
The universal space $E\mcF_+$ is part of a cofiber sequence called the \emph{isotropy separation sequence}
 \[
 \xymatrix{E\mcF_+ \ar[r]& S^0 \ar[r]& \widetilde{E}\mcF}
 \]
which can be turned into a homotopy pullback diagram in $\T$-spectra\footnote{We slightly abuse the notation and whenever we write a $\T$-space we actually mean its suspension spectrum.}.
In the case of $\T=SO(2)$, this is also called the \emph{Hasse square}:
 \[
 \xymatrix{ \mathbb{S} \ar[r]\ar[d]& \widetilde{E}\mcF \ar[d]\\
 DE\mcF_+ \ar[r]& DE\mcF_+ \wedge \widetilde{E}\mcF. }
 \]

The diagram with $\mathbb{S}$ removed is called the punctured cube and is denoted
by $\mathbb{S}^\lrcorner$. 
Using \cite{gsmodules}, we may construct a model category of 
modules over $\mathbb{S}^\lrcorner$ in rational $SO(2)$-spectra, which we call 
$\mathbb{S}^\lrcorner\text{-mod}$ (slightly abusing notation and not mentioning the ambient category). 
Any $SO(2)$-spectrum $X$ defines a module over the diagram by smashing with the 
ring spectra $\widetilde{E}\mcF$, $DE\mcF_+$ and $DE\mcF_+ \wedge \widetilde{E}\mcF$.
This functor has a right adjoint that is a type of pullback, giving an adjunction between 
$\mathbb{S}^\lrcorner\text{-mod}$ and rational $SO(2)$-spectra. 
The Cellularisation Principle, Theorem \ref{thm:cellprin},
can be used to construct a Quillen equivalence from this adjunction,
see either \cite[Sections 4-6]{tnqcore} or \cite[Section 3.2]{BGKSso2} for details.

In case of a torus of rank $r$, repeatedly using the isotropy separation sequence one can obtain a $r+1$-dimensional cube diagram. The terms of this cube are all  genuine-commutative equivariant ring $\T$-spectra
by Greenlees \cite{CouniversalCommutative_greenlees}.  We again use the notation $\mathbb{S}^\lrcorner$  for the punctured cube of these ring $\T$-spectra and obtain the following theorem.

\begin{thm}\label{thm:septorus}\cite[Proposition 4.1]{tnqcore}
There is a strong symmetric monoidal Quillen equivalence 
\[ 
L_{\mathbb{S}_\bQ}(\T \endash\mathrm{Sp}^\cO) \simeq_{QE} \  
\mathcal{K}\endash\mathrm{cell}\endash\mathbb{S}^\lrcorner\text{-mod}
 \]
 where  $\mathcal{K}$ is the image in $\mathbb{S}^\lrcorner\text{-mod}$ of the set of compact generators for $L_{\mathbb{S}_\bQ}(\T)\endash\mathrm{Sp}^\cO)$.
\end{thm}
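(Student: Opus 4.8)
The plan is to realise this equivalence as an application of the Cellularisation Principle, Theorem \ref{thm:cellprin}, to the adjunction between rational $\T$-spectra and $\mathbb{S}^\lrcorner\text{-mod}$ described above. First I would make that adjunction precise. Iterating the isotropy separation cofibre sequence $E\mcF_+ \to S^0 \to \widetilde{E}\mcF$ one family of subgroups at a time produces an $(r+1)$-dimensional cube of rational $\T$-spectra with initial vertex $\mathbb{S}$, whose other vertices are iterated smash products of ring $\T$-spectra of the form $\widetilde{E}\mcF$ and $DE\mcF_+$. By \cite{CouniversalCommutative_greenlees} each vertex is a genuine-commutative ring $\T$-spectrum, so the punctured cube $\mathbb{S}^\lrcorner$ is a diagram of commutative ring $\T$-spectra, and \cite{gsmodules} supplies a proper, cellular, stable, symmetric monoidal model category $\mathbb{S}^\lrcorner\text{-mod}$ of modules over it. The left adjoint $F$ sends a rational $\T$-spectrum $X$ to the module $X \wedge \mathbb{S}^\lrcorner$ obtained by smashing $X$ with every vertex; it is strong symmetric monoidal, and its right adjoint $U$ sends a module to its homotopy limit over the (finite) punctured cube — a type of iterated homotopy pullback. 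Since $L_{\mathbb{S}_\bQ}(\T\endash\mathrm{Sp}^\cO)$ is itself proper, cellular and stable, $(F,U)$ is a strong symmetric monoidal Quillen pair to which Theorem \ref{thm:cellprin} applies, and the Quillen equivalence we aim to extract will then be strong symmetric monoidal.

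Next I would choose the cells. Let $\mcG$ be the set of suspensions and desuspensions of the orbits $\T/H_+$ for $H$ a closed subgroup of $\T$; by the discussion of Section \ref{sec:locals} this is a set of homotopically compact generators of $L_{\mathbb{S}_\bQ}(\T\endash\mathrm{Sp}^\cO)$, so a map is a $\mcG$-cellular equivalence exactly when it is a stable equivalence, and hence $\mcG\endash\mathrm{cell}\endash L_{\mathbb{S}_\bQ}(\T\endash\mathrm{Sp}^\cO) = L_{\mathbb{S}_\bQ}(\T\endash\mathrm{Sp}^\cO)$. Set $\mcK = F\mcQ\mcG$, the image in $\mathbb{S}^\lrcorner\text{-mod}$ of these generators. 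The first part of Theorem \ref{thm:cellprin} gives a Quillen adjunction between $L_{\mathbb{S}_\bQ}(\T\endash\mathrm{Sp}^\cO)$ and $\mcK\endash\mathrm{cell}\endash\mathbb{S}^\lrcorner\text{-mod}$; to promote it to a Quillen equivalence by the second part it remains to check, for each closed subgroup $H \leqslant \T$, that (i) $F\mcQ(\T/H_+)$ is homotopically compact in $\mathbb{S}^\lrcorner\text{-mod}$ and (ii) the derived unit $\mcQ(\T/H_+) \to U\mcR F\mcQ(\T/H_+)$ is a stable equivalence of rational $\T$-spectra. Condition (i) is easy: $U$ is a homotopy limit over a \emph{finite} diagram, hence commutes with coproducts in the stable category $\mathbb{S}^\lrcorner\text{-mod}$, so $F$ carries homotopically compact objects to homotopically compact objects.

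The main obstacle is condition (ii), which says exactly that each orbit $\T/H_+$ is recovered as the homotopy limit of its image under the punctured cube. For $H = \T$, where $\T/\T_+ = \mathbb{S}$, this is the rank-$r$ Hasse square: the statement that iterating the isotropy separation cofibre sequences produces a homotopy limit $(r+1)$-cube with $\mathbb{S}$ at the initial vertex. I would prove this by induction on the rank of $\T$, peeling off one isotropy separation square at a time; the key input at each stage is that $E\mcF_+ \to S^0 \to \widetilde{E}\mcF$ exhibits $\mathbb{S}$ as the homotopy pullback of $\widetilde{E}\mcF$ and $DE\mcF_+$ over $DE\mcF_+ \wedge \widetilde{E}\mcF$ (using $E\mcF_+ \wedge \widetilde{E}\mcF \simeq *$), and that the $\mathbb{S}^\lrcorner$-module structure records precisely the resulting gluing data, exactly as in the rank-one arguments of \cite[Sections 4--6]{tnqcore} and \cite[Section 3.2]{BGKSso2}. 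For a general closed subgroup $H$ one smashes the whole cube with $\T/H_+$; since smashing preserves cofibre sequences, this is again a homotopy limit cube, so (ii) holds on all of $\mcG$. With (i) and (ii) in hand, the second part of Theorem \ref{thm:cellprin} (equivalently Lemma \ref{qequiv_generators}) yields a Quillen equivalence $\mcG\endash\mathrm{cell}\endash L_{\mathbb{S}_\bQ}(\T\endash\mathrm{Sp}^\cO) \simeq \mcK\endash\mathrm{cell}\endash\mathbb{S}^\lrcorner\text{-mod}$; the left-hand side is $L_{\mathbb{S}_\bQ}(\T\endash\mathrm{Sp}^\cO)$, and the monoidality from the first paragraph upgrades the equivalence to a strong symmetric monoidal one, as claimed.
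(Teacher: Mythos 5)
Your proposal is correct and takes essentially the same route as the paper, which gives no independent proof but cites \cite[Proposition 4.1]{tnqcore} and, in the surrounding discussion, sketches exactly the argument you spell out: the strong symmetric monoidal adjunction given by smashing with the punctured cube $\mathbb{S}^\lrcorner$ (right adjoint a pullback/homotopy limit) combined with the Cellularisation Principle at the compact generators $\T/H_+$. The only genuinely technical input --- that the iterated isotropy-separation $(r+1)$-cube is a homotopy limit cube, so the derived unit is an equivalence on orbits --- you defer to the same references (\cite[Sections 4--6]{tnqcore}, \cite[Section 3.2]{BGKSso2}) that the paper itself relies on, so nothing essential is added or missing.
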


When $G$ is a finite group, we let 
$\mcF$ be the family of all proper subgroups of $G$. 
The homotopy pullback diagram obtained by using the isotropy separation sequence gives exactly the idempotent splitting, since
\[
E\mcF_+\simeq \prod_{(H), \ H\in \mcF}e_H^G\mathbb{S} \simeq DE\mcF_+,
\] 
$\widetilde{E}\mcF \simeq e_G^G\mathbb{S}$ and  $DE\mcF_+\wedge \widetilde{E}\mcF \simeq *$.

However, the spectra $e_H^G\mathbb{S}$ are not genuine-commutative equivariant ring spectra
(they are only na\"ive-commutative).
Hence, it is easier to use the splitting approach for finite $G$.
See B{\"{o}}hme \cite{boehme19} for a complete explanation of the relation
between genuine-commutative equivariant ring spectra and localisation at idempotents.

An interesting case when there are some, but not enough idempotents, is the case of the dihedral
part of $O(2)$-spectra, see \cite{barneso2}.
In that case, there is no idempotent whose support is exactly $O(2)$. 
The abelian (resp. algebraic) model for the dihedral part of rational $O(2)$-spectra is given in terms 
of sheaves of $\bQ[W]$-modules (resp. differential $\bQ[W]$-modules) over the space $\tcD$, 
where the stalk over the point $O(2)$ has a trivial $W$-action. 
The stalk over $O(2)$ can be described in terms of a \emph{virtual idempotent} -- 
a colimit of idempotents, see \cite[Section 5]{barneso2}. 

A similar approach occurs for profinite groups in work of Barnes and Sugrue \cite{barnessugrue20}
and Sugrue \cite{sugruethesis}.

\section{Change of groups and localisations}\label{sec:change_localisation}

Once we split the category of rational $G$-spectra using idempotents, 
our main aim is to get rid of the remaining equivariance in each piece separately
by applying certain fixed points functors. 
Assume we are working with the category $L_{e\mathbb{S}_\bQ}(G\endash\mathrm{Sp}^\cO)$ and we want to take $H$ fixed points. First we must move to the category $N\endash\mathrm{Sp}^\cO$ where $N$ is the normaliser of $H$ in $G$, appropriately localised. We need $N$, since we want to have a residual Weyl group ($W=N_GH/H$) action. At the same time we need to localise $N\endash\mathrm{Sp}^\cO$ at some idempotent of the rational Burnside ring of $N$ corresponding to $e$, since we want to obtain a Quillen equivalence with $L_{e\mathbb{S}_\bQ}(G\endash\mathrm{Sp}^\cO)$.

In work of the second author \cite{KedziorekExceptional} and \cite{KedziorekSO(3)}, there was a precise analysis of two adjunctions: the induction--restriction and restriction--coinduction adjunctions in relation to localisations of categories of equivariant spectra at idempotents. Below we summarise how these results 
allow us to make the restriction--coinduction adjunction into Quillen equivalence
in suitable situations. Our examples are based on finite groups,  $O(2)$ and $SO(3)$.

\subsection{Restriction--coinduction adjunction and localisations}

Suppose we have an inclusion $i \colon N \hookrightarrow G$ of a subgroup $N$ in a group $G$. This gives a pair of adjoint functors at the level of orthogonal spectra (see for example \cite[Section V.2 ]{mm02}), namely induction, restriction and coinduction as below (the left adjoint is above the corresponding right adjoint). We note here, that for the induction functor to be a left Quillen functor we must take care over the universes involved.
\[
\xymatrix@C=6pc{
G\endash\mathrm{Sp}^\cO\
\ar@<-0ex>[r]|-(.5){\ i^*\ }
&
N\endash\mathrm{Sp}^\cO\
\ar@/^1pc/[l]^(.5){F_N(G_+,-)}
\ar@/_1pc/[l]_(.5){G_+\wedge_N -}
}
\]

We assume that $G$-spectra are indexed over a complete $G$-universe $\mathcal{U}$ 
and $N$-spectra are indexed over one of two universes. 
In the case where we want to use the restriction functor as a right adjoint, we use
the restriction of $\mathcal{U}$ to an $N$-universe.
If we consider restriction as a left Quillen functor 
we use a complete $N$-universe. 
With these conventions, the two pairs of adjoint functors are Quillen pairs 
with respect to stable model structures by \cite[Chapter V, Proposition 2.3 and 2.4]{mm02}. 
Given this, we slightly abuse the notation by not mentioning universes or 
the change of universe functors of \cite[Section V.2]{mm02}.

The restriction functor as a right adjoint is often used when we want to take 
(both categorical and geometric) $H$-fixed points of  $G$-spectra, 
where $H$ is not a normal subgroup of $G$. 
The procedure is to restrict to $N_G H$-spectra and then to take $H$-fixed points
to land in $W_G H$-spectra. 
This is usually done in one go, since the restriction functor and the $H$-fixed points 
functor are both right Quillen functors.

It is natural to ask when the pair of adjunctions above passes to the localised categories, in our case localised at $e_H^G\mathbb{S}_\bQ$ and $e_H^N\mathbb{S}_\bQ$ respectively.  The answer is related to $H$ being a good or bad subgroup in $G$. 
The induction--restriction adjunction does not always induce a Quillen adjunction on the localised categories,
unless $H$ is $N$-good in $G$. 
However, the restriction--coinduction adjunction induces a Quillen adjunction on these localised categories, for all exceptional subgroups $H$. Before we discuss this particular adjunction we state a general result.

\begin{lem}\label{locAdjAtObject} Suppose that $F: \cC \rightleftarrows \mcD: R$ is a Quillen adjunction of model categories where the left adjoint is strong (symmetric) monoidal. Suppose further that $E$ is a cofibrant object in $\cC$ and that both $L_E\cC$ and $L_{F(E)}\mcD$ exist. Then
\[
\xymatrix{
F \ :\ L_{E}\cC \  \ar@<+1ex>[r] & \  L_{F(E)}\mcD\ : U \ar@<+0.5ex>[l]
}
\]
is a strong (symmetric) monoidal Quillen adjunction. Furthermore,  
if the original adjunction was a Quillen equivalence, then
the induced adjunction on localised categories is as well.
\end{lem}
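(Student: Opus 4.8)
The plan is to verify the four defining conditions for a Quillen adjunction on the localised categories, then the statement about monoidality, then the statement about Quillen equivalences. First I would recall the setup: by the theorem of Mandell--May quoted above, $L_E\cC$ and $L_{F(E)}\mcD$ have the same cofibrations as $\cC$ and $\mcD$ respectively, only more weak equivalences (the $E$-equivalences, resp. $F(E)$-equivalences). Since $F$ is already a left Quillen functor $\cC \to \mcD$, it preserves cofibrations, and these are unchanged in the localised model structures, so $F$ preserves cofibrations between the localised categories. It remains to show $F$ preserves trivial cofibrations, i.e.\ that $F$ sends an $E$-equivalence that is also a cofibration to an $F(E)$-equivalence. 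This is the heart of the matter and I expect it to be the main (though short) obstacle: one uses that $F$ is strong monoidal, so $F(E \smashprod X) \cong F(E) \smashprod F(X)$ naturally; hence if $f \colon X \to Y$ is an $E$-equivalence, meaning $\id_E \smashprod f$ is a stable equivalence in $\cC$, then applying $F$ (which preserves stable equivalences between cofibrant objects, being left Quillen — and here one should cofibrantly replace as needed, using that cofibrant replacement in the localised structure agrees with that in the original) gives that $\id_{F(E)} \smashprod F(f)$ is a stable equivalence in $\mcD$, i.e.\ $F(f)$ is an $F(E)$-equivalence. Thus $(F,U)$ is a Quillen adjunction $L_E\cC \rightleftarrows L_{F(E)}\mcD$.

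Next, the monoidality: the localised model structures are symmetric monoidal (this is part of the Mandell--May theorem), and $F$ was assumed strong (symmetric) monoidal as a functor, with structure isomorphisms independent of the model structure. Since we have just checked it is a left Quillen functor between the localised symmetric monoidal model categories, it is automatically a strong (symmetric) monoidal Quillen functor in the localised setting.

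Finally, suppose the original adjunction $(F,U) \colon \cC \rightleftarrows \mcD$ is a Quillen equivalence. I would show the localised adjunction is too by checking the derived unit and counit are weak equivalences, using that an $E$-equivalence between $E$-local objects is already a weak equivalence in $\cC$ (quoted above from Hirschhorn), and similarly for $F(E)$. Concretely: take $X$ cofibrant in $L_E\cC$ (so cofibrant in $\cC$); the derived unit in $L_E\cC$ is $X \to U R^{F(E)}_{\mathrm{fib}} F(X)$ where $R^{F(E)}_{\mathrm{fib}}$ is fibrant replacement in $L_{F(E)}\mcD$. Now $F(X)$ is cofibrant in $\mcD$, and $F(X) \to R^{F(E)}_{\mathrm{fib}} F(X)$ is in particular an $F(E)$-equivalence, hence applying $U$ and composing with the original (stable-equivalence) derived unit $X \to U R_{\mathrm{fib}} F(X)$, one reduces to comparing the two fibrant replacements of $F(X)$ in $\mcD$ versus in $L_{F(E)}\mcD$; the map between them is an $F(E)$-equivalence between $F(E)$-local objects, hence a stable equivalence, and $U$ of a stable equivalence between fibrant objects is a stable equivalence. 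So the derived unit of the localised adjunction is a stable equivalence, hence a fortiori an $E$-equivalence, as required. The counit is handled symmetrically, using that $U$ preserves $F(E)$-local fibrant objects appropriately. Therefore $(F,U)$ is a Quillen equivalence $L_E\cC \simeq L_{F(E)}\mcD$.
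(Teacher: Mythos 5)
Your first half (the Quillen adjunction) is essentially the paper's argument: cofibrations are unchanged by the localisation, and strong monoidality turns $E$-equivalences into $F(E)$-equivalences via $F(\id_E\wedge f)\cong \id_{F(E)}\wedge F(f)$. One small repair: a trivial cofibration $f\co X\to Y$ of $L_E\cC$ need not have cofibrant source or target, so the appeal to Ken Brown plus ``cofibrantly replace as needed'' does not obviously close (knowing $\id_{F(E)}\wedge F(Qf)$ is a stable equivalence does not give the same for $\id_{F(E)}\wedge F(f)$, since $F$ need not preserve the comparison $QX\to X$ for non-cofibrant $X$). The clean route, which is what the paper does, is to note that $\id_E\wedge f$ is itself a trivial cofibration in $\cC$ (it is the pushout product of the cofibration $\emptyset\to E$ with the cofibration $f$, and it is a weak equivalence by the definition of the $E$-local structure), and $F$ preserves trivial cofibrations with no cofibrancy hypotheses on source and target.

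The Quillen equivalence part contains a genuine error. You claim that the fibrant replacement of $F(X)$ in $\mcD$ is $F(E)$-local, so that the comparison map to the $F(E)$-fibrant replacement is an $F(E)$-equivalence between $F(E)$-local objects, hence a stable equivalence, and you conclude that the derived unit of the localised adjunction is a stable equivalence. Both claims are false in general: a fibrant object of $\mcD$ need not be $F(E)$-local, and the derived unit of the localised adjunction is typically only an $F(E)$-equivalence. Take $F=U=\id$ on $G$-spectra with $E=e\mathbb{S}_\bQ$ (or $\mathbb{S}_\bQ$): the localised derived unit is the localisation map $X\to L_E X$, which is precisely not a stable equivalence in the cases of interest. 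So your unit argument proves something false and cannot be patched as stated; a direct verification of the unit would require a projection-formula-type compatibility of $U$ with smashing by $E$ that is not available in this generality. The paper sidesteps the unit entirely by using Part (2) of Hovey's criterion \cite[Corollary 1.3.16]{hov99}: it suffices that (i) $F$ reflects weak equivalences between cofibrant objects of $L_E\cC$, which follows because for cofibrant $X,Y$ the map $\id_E\wedge f$ is a map of cofibrant objects, $F(\id_E\wedge f)\cong\id_{F(E)}\wedge F(f)$, and the original equivalence makes $F$ reflect stable equivalences between cofibrant objects; and (ii) the derived counit at every $F(E)$-fibrant $Y$ is a weak equivalence in $L_{F(E)}\mcD$, which is immediate since such $Y$ is fibrant in $\mcD$, cofibrant replacement is unchanged, and the old derived counit is a stable equivalence, hence in particular an $F(E)$-equivalence. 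Your counit paragraph is essentially (ii); it is the unit half that must be replaced by the reflection statement (i).
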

\begin{proof}Since the localisation did not change the cofibrations, the left adjoint $F$ still preserves them. To show that it also preserves acyclic cofibrations, take an acyclic cofibration ${f \colon  X\lra Y}$ in $L_E\cC$. By definition, 
$f\wedge \id_E$ is an acyclic cofibration in $\cC$. 
Since $F$ was a left Quillen functor before localisation, $F(f\wedge \id_E)$ is an acyclic cofibration in $\mcD$. 
As $F$ was strong  monoidal, we have $F(f\wedge \id_E)\cong F(f)\wedge \id_{F(E)}$, 
so $F(f)$ is an acyclic cofibration in $L_{F(E)}\mcD$ which finishes the proof of the first part.

To prove the second part of the statement we use Part (2) from \cite[Corollary 1.3.16]{hov99}.
Since $F$ is strong monoidal, and the original adjunction was a Quillen equivalence, 
$F$ reflects $F(E)$-equivalences between cofibrant objects. It remains to check that the derived counit is an $F(E)$-equivalence. An $F(E)$-fibrant object is fibrant in $\mcD$ and the cofibrant replacement functor remains unchanged by localisation. Thus the claim follows from the fact that $(F,U)$ was a Quillen equivalence before localisations.
\end{proof}

We will use this result in several cases. We start with the restriction--coinduction adjunction.

\begin{cor}\label{cor:coindQuillenadj} Let $i \colon N \lra G$ denote the inclusion of a subgroup and let $E$ be a cofibrant object in $G\endash\mathrm{Sp}^\cO$. Then
\[
\xymatrix{
i^\ast \ :\ L_{E}(G\endash\mathrm{Sp}^\cO) \  \ar@<+1ex>[r] & \  L_{i^\ast(E)}(N\endash\mathrm{Sp}^\cO)\ : F_N(G_+,-) \ar@<+0.5ex>[l]
}
\]
is a strong symmetric monoidal Quillen pair.
\end{cor}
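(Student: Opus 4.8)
The plan is to obtain this as a direct application of Lemma~\ref{locAdjAtObject}. All that needs checking is that the restriction--coinduction pair $(i^\ast, F_N(G_+,-))$ satisfies the hypotheses of that lemma: it is a Quillen adjunction whose left adjoint is strong symmetric monoidal, the object $E$ is cofibrant, and both of the localised model structures in question exist.

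First I would recall, with the universe conventions fixed earlier in this section, that $i^\ast \colon G\endash\mathrm{Sp}^\cO \to N\endash\mathrm{Sp}^\cO$ is a left Quillen functor whose right adjoint is the coinduction functor $F_N(G_+,-)$; this is \cite[Chapter V, Propositions 2.3 and 2.4]{mm02}, using a complete $N$-universe on the target. Next I would observe that $i^\ast$ is strong symmetric monoidal: restriction along the group inclusion sends the sphere spectrum to the sphere spectrum and carries the natural isomorphisms $i^\ast(X \wedge Y) \cong i^\ast X \wedge i^\ast Y$ (the smash product of orthogonal spectra being a levelwise construction that only sees the underlying spectrum and is equivariant for restriction), and these are coherent. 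Hence $(i^\ast, F_N(G_+,-))$ is a strong symmetric monoidal Quillen adjunction.

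Then I would verify the existence hypotheses. By assumption $E$ is cofibrant in $G\endash\mathrm{Sp}^\cO$, so $L_E(G\endash\mathrm{Sp}^\cO)$ exists by the localisation theorem \cite[Chapter IV, Theorem 6.3]{mm02} recalled above. Since $i^\ast$ is a left Quillen functor it preserves cofibrant objects, so $i^\ast(E)$ is cofibrant in $N\endash\mathrm{Sp}^\cO$ and therefore $L_{i^\ast(E)}(N\endash\mathrm{Sp}^\cO)$ exists as well. All hypotheses of Lemma~\ref{locAdjAtObject} are now met, with $\cC = G\endash\mathrm{Sp}^\cO$, $\mcD = N\endash\mathrm{Sp}^\cO$, $F = i^\ast$ and $R = F_N(G_+,-)$, and the lemma yields that $i^\ast \colon L_E(G\endash\mathrm{Sp}^\cO) \rightleftarrows L_{i^\ast(E)}(N\endash\mathrm{Sp}^\cO) \colon F_N(G_+,-)$ is a strong symmetric monoidal Quillen pair.

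The only point requiring care --- and it is a minor one --- is the universe bookkeeping: one must use a \emph{complete} $N$-universe on the target so that $i^\ast$ is the \emph{left} adjoint, rather than the restricted $N$-universe which would make $i^\ast$ a right adjoint instead. As in the rest of this section we suppress the change-of-universe functors, but it is precisely this choice that makes the left adjoint the strong monoidal one, which is what Lemma~\ref{locAdjAtObject} requires.
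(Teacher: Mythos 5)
Your proposal is correct and is essentially the paper's argument: the corollary is obtained as a direct application of Lemma \ref{locAdjAtObject}, using that restriction--coinduction is a Quillen pair with $i^\ast$ strong symmetric monoidal (under the complete $N$-universe convention fixed earlier in the section) and that $i^\ast(E)$ is cofibrant so both localisations exist. The universe remark you add matches the paper's stated conventions, so nothing is missing.
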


Notice that if $E=e\mathbb{S}_\bQ$ for some idempotent $e \in \A_\bQ(G)$ then we get the following

\begin{cor}\label{QuillenAdjForIandH} Suppose $G$ is any compact Lie group, $i \colon N \lra G$ is an inclusion of a subgroup and $e$ is an idempotent in $\A_\bQ(G)$. Then the adjunction
\[
\xymatrix{
i^\ast\ :\ L_{e\mathbb{S}_\bQ}(G\endash\mathrm{Sp}^\cO)\ \ar@<+1ex>[r] & L_{i^*(e)\mathbb{S}_\bQ}(N\endash\mathrm{Sp}^\cO)\ :\ F_N(G_+,-) \ar@<+0.5ex>[l]
}
\]
is a Quillen pair.
\end{cor}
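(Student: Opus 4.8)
The plan is to obtain this as the special case $E = e\mathbb{S}_\bQ$ of Corollary \ref{cor:coindQuillenadj}. Recall that $e\mathbb{S}_\bQ$ was arranged to be cofibrant in $G\endash\mathrm{Sp}^\cO$, so Corollary \ref{cor:coindQuillenadj} applies verbatim and yields a strong symmetric monoidal Quillen pair between $L_{e\mathbb{S}_\bQ}(G\endash\mathrm{Sp}^\cO)$ and $L_{i^*(e\mathbb{S}_\bQ)}(N\endash\mathrm{Sp}^\cO)$. Hence the only thing left is to check that localising $N\endash\mathrm{Sp}^\cO$ at $i^*(e\mathbb{S}_\bQ)$ produces the same model structure as localising at $i^*(e)\mathbb{S}_\bQ$; since a left Bousfield localisation depends only on the stable homotopy type of the localising spectrum, it suffices to show that $i^*(e\mathbb{S}_\bQ)$ and $i^*(e)\mathbb{S}_\bQ$ are stably equivalent $N$-spectra.

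To do this I would first record that, in the universe conventions relevant to Corollary \ref{cor:coindQuillenadj}, restriction $i^*$ is a strong symmetric monoidal left Quillen functor. Being strong monoidal it preserves the unit, so it sends the rational sphere for $G$ to the rational sphere for $N$, and the homomorphism it induces on homotopy classes of self-maps of the unit is, under the identifications $\A_\bQ(G) \cong [\mathbb{S}_\bQ,\mathbb{S}_\bQ]^{G\endash\mathrm{Sp}^\cO}$ and $\A_\bQ(N) \cong [\mathbb{S}_\bQ,\mathbb{S}_\bQ]^{N\endash\mathrm{Sp}^\cO}$, precisely the restriction homomorphism $i^* \colon \A_\bQ(G) \to \A_\bQ(N)$ of rational Burnside rings. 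Consequently a point-set representative $e \colon \mathbb{S}_\bQ \to \mathbb{S}_\bQ$ of the idempotent is carried by $i^*$ to a map representing the idempotent $i^*(e) \in \A_\bQ(N)$.

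Second, since $i^*$ is left Quillen its derived functor commutes with homotopy colimits of cofibrant objects, so it takes the mapping telescope defining $e\mathbb{S}_\bQ$ to the mapping telescope of $i^*(e)$. A mapping telescope depends, up to stable equivalence, only on the homotopy class of its structure map, and by the previous step $i^*(e)$ represents the idempotent $i^*(e)$ of $\A_\bQ(N)$; hence this telescope is stably equivalent to $i^*(e)\mathbb{S}_\bQ$ as constructed in Section \ref{sec:locals}. This gives $L_{i^*(e\mathbb{S}_\bQ)}(N\endash\mathrm{Sp}^\cO) = L_{i^*(e)\mathbb{S}_\bQ}(N\endash\mathrm{Sp}^\cO)$, and combined with the first paragraph this proves the claim.

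The main obstacle, and really the only non-formal point, is the naturality statement in the second paragraph: that the abstractly defined restriction map of rational Burnside rings agrees with the map induced on endomorphisms of the sphere spectrum by the restriction functor. I would deduce this from the compatibility of tom Dieck's isomorphism \eqref{eq:tomDieck_iso} with restriction, or directly from the fact that $i^*$ is a unital monoidal functor and therefore respects the ring of self-maps of the unit; the remaining steps are routine bookkeeping with universes and homotopy colimits.
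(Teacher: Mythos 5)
Your proposal is correct and follows essentially the same route as the paper: the paper obtains this corollary simply by specialising Corollary \ref{cor:coindQuillenadj} to $E = e\mathbb{S}_\bQ$, implicitly identifying $i^*(e\mathbb{S}_\bQ)$ with $i^*(e)\mathbb{S}_\bQ$. Your extra paragraphs just make that implicit identification explicit (restriction is strong monoidal and left Quillen, so it carries the telescope of a representative of $e$ to a telescope of a representative of $i^*(e)\in\A_\bQ(N)$), which is a correct and welcome elaboration rather than a different argument.
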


\subsection{Exceptional part of rational \texorpdfstring{$G$}{G}-spectra}\label{subs:exceptional_res_coind}

We will repeatedly use the above result, mainly in situations where after further localisation of the right hand side we will get a Quillen equivalence.

\begin{cor}\label{localisedQAdjunctions}\cite[Corollary 4.7]{KedziorekExceptional}
 Suppose $G$ is a compact Lie group and $H$ is an exceptional subgroup of $G$.  Then 
\[
\xymatrix{
i^\ast\ :\ L_{e_H^G\mathbb{S}_\bQ}(G\endash\mathrm{Sp}^\cO)\ \ar@<+1ex>[r] & L_{e_H^N\mathbb{S}_\bQ}(N\endash\mathrm{Sp}^\cO)\ :\ F_N(G_+,-) \ar@<+0.5ex>[l]
}
\]
is a Quillen pair.

\end{cor}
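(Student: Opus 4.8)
The plan is to deduce this from Corollary~\ref{QuillenAdjForIandH} by one further left Bousfield localization on the $N$-side. Applying Corollary~\ref{QuillenAdjForIandH} to the inclusion $i\colon N\hookrightarrow G$ of $N=N_GH$ and to the idempotent $e=e_H^G$ shows at once that
\[
i^\ast\colon L_{e_H^G\mathbb{S}_\bQ}(G\endash\mathrm{Sp}^\cO)\ \rightleftarrows\ L_{i^\ast(e_H^G)\mathbb{S}_\bQ}(N\endash\mathrm{Sp}^\cO)\colon F_N(G_+,-)
\]
is a (strong symmetric monoidal) Quillen pair. The real content is therefore to replace the target by $L_{e_H^N\mathbb{S}_\bQ}(N\endash\mathrm{Sp}^\cO)$. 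Here the subtlety is that, unlike for finite groups (Remark~\ref{rmk:restrictidempotents}), one need \emph{not} have $i^\ast(e_H^G)=e_H^N$: this equality fails precisely when $H$ is $N$-bad in $G$, for instance for $D_4\leqslant SO(3)$ with $N=\Sigma_4$ (Lemma~\ref{goodAndBadSub}). What does hold, and what I would prove, is the weaker relation $e_H^N\cdot i^\ast(e_H^G)=e_H^N$ in $\A_\bQ(N)$, i.e.\ the support of $e_H^N$ is contained in that of $i^\ast(e_H^G)$.

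To establish this I would compare the two idempotents through their supports in $\mathrm{Sub_f}(-)$, following Section~\ref{sec:two_ways_idempotents}. Since $H$ is exceptional in $G$, the subspace of $\mathrm{Sub_f}(G)$ attached to $e_H^G$ is $\{K\leqslant G\mid K\in (H)_G\}$; because every $G$-conjugate of an exceptional subgroup is again exceptional, each such $K$ has no cotoral subgroups and $W_NK$ is finite, so $\{K\}$ is its own $\sim$-class in $\mathrm{Sub_f}(N)$. Hence the preimage $\{K\leqslant N\mid K\in (H)_G\}$ in $\mathrm{Sub_f}(N)$ is already open, closed, $N$-invariant and a union of $\sim$-classes, so by the lemma describing restriction of idempotents ($i^\ast e_V=e_{\overline{i^\ast V}}$, applied to $i\colon N\to G$) the support of $i^\ast(e_H^G)$ is exactly $\{K\leqslant N\mid K\in (H)_G\}$. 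On the other hand $H\trianglelefteq N$, so the $N$-conjugacy class of $H$ is the single point $\{H\}$, which is the support of $e_H^N$; this also records why $e_H^N$ is well defined, as $W_NH=N/H=W_GH$ is finite and $H$ has no cotoral subgroups, so $H$ is exceptional in $N$. Since $H\in (H)_G$ we get $\{H\}\subseteq\{K\leqslant N\mid K\in (H)_G\}$, which is the desired relation $e_H^N\cdot i^\ast(e_H^G)=e_H^N$.

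From this relation, $i^\ast(e_H^G)=e_H^N+(i^\ast(e_H^G)-e_H^N)$ is a sum of orthogonal idempotents, so $e_H^N\mathbb{S}_\bQ$ is a retract of $i^\ast(e_H^G)\mathbb{S}_\bQ$; consequently every $i^\ast(e_H^G)$-equivalence is an $e_H^N$-equivalence, and $L_{e_H^N\mathbb{S}_\bQ}(N\endash\mathrm{Sp}^\cO)$ is a left Bousfield localization of $L_{i^\ast(e_H^G)\mathbb{S}_\bQ}(N\endash\mathrm{Sp}^\cO)$ — the identity functor between them is left Quillen, with the identity as right adjoint. Composing this localization pair after the Quillen pair above yields a Quillen pair $L_{e_H^G\mathbb{S}_\bQ}(G\endash\mathrm{Sp}^\cO)\rightleftarrows L_{e_H^N\mathbb{S}_\bQ}(N\endash\mathrm{Sp}^\cO)$ whose left adjoint is $i^\ast$ and whose right adjoint is $F_N(G_+,-)$, which is the claim. (One can also verify this directly: $i^\ast$ preserves cofibrations, as these are the stable cofibrations in all three structures, and it preserves acyclic cofibrations because for an $e_H^G$-equivalence $f$ the map $i^\ast(f)\wedge e_H^N\mathbb{S}_\bQ\simeq i^\ast(f\wedge e_H^G\mathbb{S}_\bQ)\wedge e_H^N\mathbb{S}_\bQ$ is a stable equivalence, using that $i^\ast$ preserves stable equivalences and that $i^\ast(e_H^G)\cdot e_H^N=e_H^N$.) I expect the only genuine obstacle to be the support computation of the second paragraph: resisting the finite-group reflex that $i^\ast$ carries $e_H^G$ to $e_H^N$, and checking that replacing $i^\ast V$ by $\overline{i^\ast V}$ changes nothing — which is exactly where exceptionality of $H$, hence of all its $G$-conjugates lying in $N$, is used. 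Everything afterwards is formal, since a smashing localization at a smaller idempotent is a further Bousfield localization and Quillen pairs compose.
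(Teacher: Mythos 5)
Your argument is correct and is essentially the paper's own proof: apply Corollary \ref{QuillenAdjForIandH} with $e=e_H^G$ and then compose with the further left Bousfield localisation at $e_H^N$ via the identity adjunction, using the relation $e_H^N\, i^*(e_H^G)=e_H^N$ (the paper phrases this by splitting into the $N$-good case, where $i^*(e_H^G)=e_H^N$, and the $N$-bad case). Your only addition is the support computation in $\mathrm{Sub_f}(N)$ justifying that relation (and the exceptionality of $H$ in $N$, where strictly one should also note that the idempotent $e_H^N$ exists, as required by Definition \ref{defn:exceptional}), which the paper simply asserts, deferring to the cited reference.
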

\begin{proof} For $N$-good $H$, the result follows from the fact that the idempotent on the right hand side satisfies $e_H^N=i^*(e_H^G)$. For $N$-bad $H$, it is true since the left hand side is a further localisation of $L_{i^*(e_H^G)\mathbb{S}_\bQ}(N-\mathrm{Sp}^\cO)$ at the idempotent $e_H^N$:
\[
\xymatrix@C=3pc{
L_{e_H^G\mathbb{S}_\bQ}(G\endash\mathrm{Sp}^\cO)\
\ar@<+1ex>[r]^{i^\ast}
&
\ L_{i^*(e_H^G) \mathbb{S}_{\bQ}}(N\endash\mathrm{Sp}^\cO)\
\ar@<+0.5ex>[l]^{F_{N}(G_+,-)}
\ar@<+1ex>[r]^(.53){\id}
&
\ L_{e_H^N\mathbb{S}_\bQ}(N-\mathrm{Sp}^\cO)
\ar@<+0.5ex>[l]^(.47){\id}
}
\]
Note that since $H$ is bad, $e_H^N \neq i^*(e_H^G)$ and $e_H^N i^*(e_H^G)=e_H^N$.
\end{proof}

\begin{thm}\label{badLeftAdjoint}\cite[Theorem 4.8]{KedziorekExceptional} Suppose $H$ is an exceptional subgroup of $G$. Then the adjunction
\[
\xymatrix@C=3pc{
i^\ast_N\ :\ L_{e_H^G\mathbb{S}_{\bQ}}(G\endash\mathrm{Sp}^\cO)\
\ar@<+1ex>[r]
&
\ L_{e_H^N \mathbb{S}_{\bQ}}(N\endash\mathrm{Sp}^\cO)\ :\ F_N(G_+,-)
\ar@<+0.5ex>[l]
}
\]
is a strong symmetric monoidal Quillen equivalence. 
\end{thm}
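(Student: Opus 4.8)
We already know from Corollary~\ref{cor:coindQuillenadj} and Corollary~\ref{localisedQAdjunctions} that the stated adjunction is a strong symmetric monoidal Quillen pair, so the only thing to prove is that it is a Quillen equivalence, and the plan is to verify the hypotheses of Lemma~\ref{qequiv_generators}. Both sides are stable model categories, and since localisation at an idempotent of the rational Burnside ring is smashing it preserves homotopically compact generators; hence the (de)suspensions of the spectra $e_H^G\Sigma^\infty G/K_+$ (for $K\leqslant G$) form a set of homotopically compact generators of $L_{e_H^G\mathbb{S}_\bQ}(G\endash\mathrm{Sp}^\cO)$, and likewise the (de)suspensions of $e_H^N\Sigma^\infty N/L_+$ (for $L\leqslant N=N_GH$) generate $L_{e_H^N\mathbb{S}_\bQ}(N\endash\mathrm{Sp}^\cO)$. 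Inspecting geometric fixed points $\Phi^K$ and using that $H$ is exceptional --- so $e_H^G$ is supported on the single $\sim$-class $(H)_G$ (Definition~\ref{defn:exceptional}) --- one checks that $e_H^G\Sigma^\infty G/K_+\simeq \ast$ unless $K$ is $G$-conjugate to $H$, and conjugate subgroups give isomorphic objects; thus the (de)suspensions of $e_H^G\Sigma^\infty G/H_+$ already generate the $G$-side. Since $H$ is normal in $N$, the same reasoning shows the (de)suspensions of $e_H^N\Sigma^\infty N/H_+$ generate the $N$-side.

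Next I would check the structural hypothesis of Lemma~\ref{qequiv_generators}, namely that the right derived functor of $F_N(G_+,-)$ preserves coproducts, equivalently that the left derived functor of $i^\ast$ takes homotopically compact generators to homotopically compact objects. This is immediate: $i^\ast(\Sigma^\infty G/K_+)=\Sigma^\infty (i^\ast G/K)_+$ is a finite wedge of suspension spectra of $N$-orbits, hence homotopically compact, and smashing with $e_H^N\mathbb{S}_\bQ$ preserves compactness.

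It then remains to show the derived unit and counit are equivalences on these generating sets, which is the computational heart of the argument and where exceptionality is used decisively. Two inputs combine here. First, $i^\ast(G/H)$ decomposes as a finite disjoint union of $N$-orbits, and applying the idempotent $e_H^N\,i^\ast(e_H^G)$ kills every summand whose stabiliser is not $G$-conjugate to $H$ (using Remark~\ref{rmk:restrictidempotents} and that $H$ is normal in $N$); one concludes that $i^\ast(e_H^G\Sigma^\infty G/H_+)$ is a finite wedge of copies of the $N$-side generator $e_H^N\Sigma^\infty N/H_+$. Second, because $H$ is exceptional the fixed set $(G/N_GH)^K$ is, up to the idempotent $e_H^G$, a single point for each $K\in(H)_G$, so that $e_H^G\Sigma^\infty_+(G/N_GH)\simeq e_H^G\mathbb{S}_\bQ$; consequently, after $e_H^G$-localisation the Wirthm\"uller-type comparison map $G_+\wedge_N(-)\to F_N(G_+,-)$ is an equivalence and identifies derived coinduction from $N$ with a single copy of its input, twisted by the $W_GH$-action. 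Feeding these facts into the (co)unit formulas shows that $\mathrm{id}\to R F_N(G_+,-)\,L i^\ast$ and $L i^\ast\, R F_N(G_+,-)\to\mathrm{id}$ are equivalences on the respective generators.

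The main obstacle is exactly this last computation, and in particular the assertion that $e_H^G$-locally restriction followed by coinduction is the identity on the generator: it is here that one needs $W_GH$ finite, $H$ without cotoral subgroups, and the existence of the idempotent $e_H^G$ --- i.e.\ the full strength of the exceptional hypothesis (Definition~\ref{defn:exceptional}). When $H$ is $N$-bad, so that $i^\ast(e_H^G)\neq e_H^N$, the argument is unchanged once one observes, as in Corollary~\ref{localisedQAdjunctions}, that $L_{e_H^N\mathbb{S}_\bQ}(N\endash\mathrm{Sp}^\cO)$ is a further smashing localisation of $L_{i^\ast(e_H^G)\mathbb{S}_\bQ}(N\endash\mathrm{Sp}^\cO)$; the derived functors therefore only see the $e_H^N$-local part, and the extra localisation does not affect the comparison on generators.
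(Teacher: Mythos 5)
First, note that the paper does not prove this theorem at all: it is quoted from \cite[Theorem 4.8]{KedziorekExceptional}, so your proposal can only be compared with that reference. Your overall skeleton (it is already a monoidal Quillen pair by Corollary \ref{localisedQAdjunctions}, so verify the hypotheses of Lemma \ref{qequiv_generators} on homotopically compact generators) is indeed the right shape of argument and matches the strategy of the reference. However, several of the claims you use to carry out the reduction are false or unjustified. The assertion that $e_H^G\Sigma^\infty G/K_+\simeq \ast$ unless $K$ is $G$-conjugate to $H$ is wrong: taking $K=G$ gives $e_H^G\mathbb{S}_\bQ$, the unit of the localised category, and more generally the object is nontrivial whenever $H$ is subconjugate to $K$. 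The conclusion that (de)suspensions of $e_H^G G/H_+$ generate is true, but it needs a genuine argument (geometric fixed points detect equivalences, and local objects have geometric isotropy concentrated in $(H)_G$, so vanishing of $e_H^H\pi_*^H$ forces triviality), not the vanishing you state. Likewise, for a compact Lie group $G$ the restriction $i^*G/H$ is generally \emph{not} a finite disjoint union of $N$-orbits (e.g.\ $G=SO(3)$, $N=\Sigma_4$: a $3$-manifold restricted to a finite group), so your orbit-counting justifications of compactness and of the wedge decomposition of $i^*(e_H^G G/H_+)$ do not stand as written.

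The more serious gap is in the computational heart, exactly where you locate it. Your key input, $e_H^G\Sigma^\infty_+(G/N_GH)\simeq e_H^G\mathbb{S}_\bQ$, holds in the $N$-good case but \emph{fails} in the $N$-bad case: for $H=D_4\leqslant SO(3)$ with $N=\Sigma_4$, the proof of Lemma \ref{goodAndBadSub} exhibits $g\notin\Sigma_4$ with $g^{-1}D_4g\leqslant\Sigma_4$, so $(G/N)^{D_4}$ has at least two points and $\Phi^{D_4}$ distinguishes the two sides. The Wirthm\"uller comparison you invoke is also not the untwisted map $G_+\wedge_N(-)\to F_N(G_+,-)$ when $N$ has infinite index in $G$ (a dimension shift by the tangent representation of $G/N$ intervenes), so the identification of derived coinduction with ``a single copy of its input'' is unsubstantiated. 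Finally, your closing remark that in the bad case ``the argument is unchanged'' because the target is a further smashing localisation dismisses precisely the delicate point: the fibrant replacement in $L_{e_H^N\mathbb{S}_\bQ}(N\endash\mathrm{Sp}^\cO)$ discards the $(i^*(e_H^G)-e_H^N)$-part of $i^*X$, and one must show that this discarded part contributes nothing to $F_N(G_+,-)$ after passage to $G$-homotopy groups; that the bad case genuinely behaves differently is already visible in Proposition \ref{notQErest_ind_excep}, where the induction--restriction adjunction fails even to be Quillen. So the unit/counit verification on generators, which you state as ``feeding these facts in,'' is missing, and the facts fed in are partly false; this is the content of the proof in \cite{KedziorekExceptional} and it is not recovered here.
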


Part of the difficulty in providing an algebraic model for a \emph{piece} of homotopy category of rational $G$-spectra governed by an idempotent $e$ comes from balancing two things. On the one hand, one wants to simplify the ambient category as much as one can. On the other one must preserve all the relevant homotopical information.
This balancing act requires deep understanding of the homotopy category of $L_{e\mathbb{S}_{\bQ}} G\endash\mathrm{Sp}$.
In the case of an exceptional subgroup $H$ of $G$, this is achieved by passing to $L_{e_H^{N}\mathbb{S}_{\bQ}}N\endash\mathrm{Sp}$ using restriction as a left Quillen functor, as we described above.

There was a reason why we considered restriction to be a left Quillen functor and it is related to the good and bad exceptional subgroups in $G$. 

\begin{prop}\cite[Proposition 4.5]{KedziorekExceptional}\label{notQErest_ind_excep} Suppose $H$ is an exceptional subgroup of $G$ that is $N$-bad in $G$. Then
\[
\xymatrix{
i^\ast\ :\ L_{e_H^G\mathbb{S}_\bQ}(G\endash\mathrm{Sp}^\cO)\ \ar@<-1ex>[r] & L_{e_H^N\mathbb{S}_\bQ}(N\endash\mathrm{Sp}^\cO)\ :\ G_+\wedge_N- \ar@<-0.5ex>[l]
}
\]
is not a Quillen adjunction. If $H$ is an $N$-good subgroup in $G$, then 
the above adjunction is a Quillen pair.
\end{prop}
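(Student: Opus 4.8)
The plan is to reduce everything to a statement about geometric isotropy, exactly as is done for the restriction--coinduction adjunction earlier in the section. First I would record that, by \cite[Chapter V, Propositions 2.3 and 2.4]{mm02}, the induction--restriction pair $(G_+\wedge_N -,\, i^\ast)$ --- with induction $G_+\wedge_N-$ the left adjoint and restriction $i^\ast$ the right adjoint --- is a Quillen pair for the stable model structures (using the conventions on universes from Section \ref{subs:exceptional_res_coind}), and that this persists after rationalisation. Both localised model structures $L_{e_H^G\mathbb{S}_\bQ}(G\endash\mathrm{Sp}^\cO)$ and $L_{e_H^N\mathbb{S}_\bQ}(N\endash\mathrm{Sp}^\cO)$ exist by \cite[Chapter IV, Theorem 6.3]{mm02}. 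Since a left Bousfield localisation alters neither the cofibrations nor the trivial fibrations, and a functor is left Quillen precisely when its right adjoint is right Quillen, the pair descends to a Quillen pair between the localised categories if and only if $i^\ast$ sends $L_{e_H^G\mathbb{S}_\bQ}$-fibrant $G$-spectra to $L_{e_H^N\mathbb{S}_\bQ}$-fibrant $N$-spectra; I would cite the universal property of left Bousfield localisations (Hirschhorn \cite[Section 3.3]{hir03}) for this reduction. Because $i^\ast$ already preserves fibrant objects for the stable structures, the condition becomes: $i^\ast X$ is $e_H^N\mathbb{S}_\bQ$-local for every fibrant $e_H^G\mathbb{S}_\bQ$-local $G$-spectrum $X$.

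Next I would compute this in terms of geometric fixed points. Geometric fixed points commute with restriction, so $\Phi^L(i^\ast X)\simeq\Phi^L(X)$ for every closed $L\leqslant N$, whence $\mcG\mcI(i^\ast X)=\{(L)_N : L\leqslant N,\ \Phi^L(X)\not\simeq *\}$. By Section \ref{sec:locals} an object is $e\mathbb{S}_\bQ$-local exactly when its geometric isotropy lies in the support $V_e$ of $e$; and by the lemma of Section \ref{sec:two_ways_idempotents} describing the restriction of idempotents, together with the fact that $H$ exceptional (so $W_GH$ is finite and $H$ has no cotoral subgroups) makes its $\sim$-equivalence class a single conjugacy class, the support of $i^\ast(e_H^G)$ is precisely the set of $N$-conjugacy classes of subgroups of $N$ that are $G$-conjugate to $H$. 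Consequently, for $X$ with $\mcG\mcI(X)\subseteq\{(H)_G\}$ one gets $\mcG\mcI(i^\ast X)\subseteq\mathrm{supp}(i^\ast(e_H^G))$, and $i^\ast X$ is $e_H^N\mathbb{S}_\bQ$-local for all such $X$ if and only if $\mathrm{supp}(i^\ast(e_H^G))=\{(H)_N\}$, i.e.\ if and only if $i^\ast(e_H^G)=e_H^N$ --- which is the definition of $H$ being $N$-good in $G$.

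This already delivers the ``good'' half: if $H$ is $N$-good in $G$ then $i^\ast$ commutes with the (smashing) localisation functors up to weak equivalence, so $i^\ast$ is right Quillen for the localised structures and the induction--restriction pair is a Quillen pair. For the ``bad'' half I would produce an explicit obstruction. If $H$ is $N$-bad in $G$ there is a subgroup $L\leqslant N$ that is $G$-conjugate but not $N$-conjugate to $H$, so $(L)_N\in\mathrm{supp}(i^\ast(e_H^G))\setminus\{(H)_N\}$. Let $Z$ be a fibrant replacement of $e_H^G\mathbb{S}_\bQ$ in $L_{e_H^G\mathbb{S}_\bQ}(G\endash\mathrm{Sp}^\cO)$. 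Since the localisation is smashing, $e_H^G\mathbb{S}_\bQ$ is itself $e_H^G\mathbb{S}_\bQ$-local, so the structure map $e_H^G\mathbb{S}_\bQ\to Z$ is an $e_H^G\mathbb{S}_\bQ$-equivalence between $e_H^G\mathbb{S}_\bQ$-local objects and hence a stable equivalence; therefore $\mcG\mcI(Z)=\{(H)_G\}$, yet $i^\ast Z\simeq i^\ast(e_H^G)\mathbb{S}_\bQ$ (as $i^\ast$ is strong monoidal and preserves homotopy colimits) has $(L)_N$ in its geometric isotropy. Thus $Z$ is fibrant in $L_{e_H^G\mathbb{S}_\bQ}(G\endash\mathrm{Sp}^\cO)$ while $i^\ast Z$ --- although fibrant as an $N$-spectrum --- is not $e_H^N\mathbb{S}_\bQ$-local and so not fibrant in $L_{e_H^N\mathbb{S}_\bQ}(N\endash\mathrm{Sp}^\cO)$. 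Hence $i^\ast$ is not right Quillen for the localised structures, and the induction--restriction pair is not a Quillen pair.

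The routine steps are the geometric-isotropy computations, which follow at once from Sections \ref{sec:idempotents}--\ref{sec:locals}. The main obstacle is being precise about the first paragraph's criterion --- that passing to left Bousfield localisations turns a Quillen pair into one again exactly when the right adjoint preserves local (equivalently, local--fibrant) objects, with the variance taken correctly --- and, in the last paragraph, keeping careful track of the fibrant replacement: one needs that the idempotent localisations are smashing so that the fibrant replacement $Z$ is stably equivalent to $e_H^G\mathbb{S}_\bQ$ and therefore genuinely has geometric isotropy $\{(H)_G\}$.
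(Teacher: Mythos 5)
Your argument is sound, but note that this paper never proves Proposition \ref{notQErest_ind_excep} itself --- it quotes it from \cite{KedziorekExceptional} --- so the comparison is with that source and with the toolkit of this section. The route there (and the one this paper's machinery suggests, since Lemma \ref{locAdjAtObject} is unavailable here because the left adjoint $G_+\wedge_N -$ is not strong monoidal) is to test the \emph{left} adjoint directly: for the bad case pick $K\leqslant N$ that is $G$-conjugate but not $N$-conjugate to $H$; then $*\to e_K^N\mathbb{S}_\bQ$ is a trivial cofibration in $L_{e_H^N\mathbb{S}_\bQ}(N\endash\mathrm{Sp}^\cO)$ (as $e_K^Ne_H^N=0$), while the projection formula $G_+\wedge_N(i^*A\wedge X)\simeq A\wedge(G_+\wedge_N X)$ together with $e_K^N\,i^*(e_H^G)=e_K^N\neq 0$ shows its image under induction is not an $e_H^G\mathbb{S}_\bQ$-equivalence; the same formula shows induction sends $e_H^N$-equivalences to $e_H^G$-equivalences when $i^*(e_H^G)=e_H^N$, giving the good case with no descent criterion at all. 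You instead argue on the \emph{right} adjoint: your fibrant witness $Z\simeq e_H^G\mathbb{S}_\bQ$, whose restriction $i^*(e_H^G)\mathbb{S}_\bQ$ has $(L)_N$ in its geometric isotropy and hence is not $e_H^N\mathbb{S}_\bQ$-local, is a perfectly valid obstruction (a right Quillen functor must preserve fibrant objects, and locality is invariant under stable equivalence), and it is essentially the dual of the witness above. What each approach buys: yours reduces both halves to one geometric-isotropy computation, but its good half rests entirely on the nontrivial ``if'' direction of your descent criterion --- that a Quillen pair passes to left Bousfield localisations as soon as the right adjoint carries local fibrant objects to local objects --- which is true but needs a proper citation (Hirschhorn-style function-complex argument plus the reduction to trivial cofibrations between cofibrant objects, with care about non-cofibrant sources); the projection-formula check on the left adjoint avoids that input and is closer to what the cited source does. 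You should also record explicitly why $e_H^N$ exists and why $\mathrm{supp}(i^*(e_H^G))$ is exactly the set of $N$-classes of subgroups of $N$ in $(H)_G$ (exceptionality kills cotoral closure, so the lemma of Section \ref{sec:two_ways_idempotents} adds nothing), which you use but only gesture at.
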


\subsection{Toral part of rational \texorpdfstring{$SO(3)$}{SO(3)}-spectra}

The functor $i^*$ is not always a right Quillen functor 
when considered between categories localised at the toral idempotents. 
One can argue that this is because the toral idempotents do not always correspond with each other. 
One example is when $G=SO(3)$, $\T=SO(2)$ and $N=O(2)$. In that case the proof is based on the fact that $D_2$ is conjugate to $C_2$ in $SO(3)$ and thus $i^*(e_{\cT})\neq e_{\tcT}$.

\begin{prop}\label{notQErest_ind}\cite[Proposition 2.7]{KedziorekSO(3)} Suppose $e_{\cT}$ is the toral idempotent of $SO(3)$ and $e_{\tcT}$ is the toral idempotent of $O(2)$.That is, $e_{\cT}$ is the  idempotent in $\A_\bQ(SO(3))$ corresponding to the characteristic function of the toral part $\cT$ (i.e. all subconjugates of the maximal torus of $SO(3)$) and $e_{\tcT}$ is the idempotent in $\A_\bQ(O(2))$ corresponding to the characteristic function of the toral part $\tcT$, i.e. all  subconjugates of the maximal torus of $O(2)$ (see Sections  \ref{sec:subgroupsO2} and \ref{sec:subgroupsSO3}). Then
\[
\xymatrix{
i^\ast\ :\ L_{e_{\cT}\mathbb{S}_\bQ}(SO(3)\endash\mathrm{Sp})\ \ar@<-1ex>[r] & L_{e_{\tcT}\mathbb{S}_\bQ}(O(2)\endash\mathrm{Sp})\ :\ SO(3)_+\wedge_{O(2)}- \ar@<-0.5ex>[l]
}
\]
is not a Quillen adjunction.
\end{prop}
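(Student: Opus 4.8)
As indicated just before the statement, the obstruction is that $i^*(e_{\cT}) \ne e_{\tcT}$, and the plan is to pin this down quantitatively and then turn it into a failure of $SO(3)_+\wedge_{O(2)}-$ to be a left Quillen functor between the localised categories. Concretely, I would first prove
\[
i^*(e_{\cT}) = e_{\tcT} + e_{D_2}^{O(2)},
\]
where $e_{D_2}^{O(2)} \in \A_\bQ(O(2))$ is the idempotent supported on the conjugacy class of the order-two reflection subgroup $D_2 \leqslant O(2)$. Indeed, a closed subgroup of $O(2)$ is $SO(3)$-conjugate into the maximal torus exactly when it is one of the subgroups recorded by $\tcT$ (a finite cyclic subgroup of $SO(2)$, or $SO(2)$ itself) or is conjugate to $D_2$ --- the latter because inside $SO(3)$ a reflection of $O(2)$ is a rotation by $\pi$, so $D_2$ fuses with $C_2 \leqslant SO(2)$, whereas no dihedral subgroup of order at least four is cyclic and hence none fuses into the torus (Sections~\ref{sec:subgroupsO2} and~\ref{sec:subgroupsSO3}). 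Since $\{D_2\}$ is an isolated point of $\mcF(O(2))/O(2)$ it supports an idempotent, and it is already a union of $\sim$-equivalence classes, so the description of the restriction map on idempotents in Section~\ref{sec:two_ways_idempotents} yields the displayed identity. In particular $e_{\tcT}$ and $e_{D_2}^{O(2)}$ are orthogonal, so $e_{D_2}^{O(2)}\mathbb{S}_\bQ$ is $e_{\tcT}\mathbb{S}_\bQ$-acyclic.

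The remaining steps are then formal, parallel to the proof of Proposition~\ref{notQErest_ind_excep}. Suppose, for a contradiction, that $(SO(3)_+\wedge_{O(2)}-,\,i^*)$ were a Quillen adjunction between $L_{e_{\cT}\mathbb{S}_\bQ}(SO(3)\endash\mathrm{Sp})$ and $L_{e_{\tcT}\mathbb{S}_\bQ}(O(2)\endash\mathrm{Sp})$. Since $e_{D_2}^{O(2)}\mathbb{S}_\bQ$ is cofibrant and $e_{\tcT}\mathbb{S}_\bQ$-acyclic, and fibrant replacement in the $e_{\tcT}\mathbb{S}_\bQ$-local structure is smashing with $e_{\tcT}\mathbb{S}_\bQ$, the map $\ast \to e_{D_2}^{O(2)}\mathbb{S}_\bQ$ is an acyclic cofibration in $L_{e_{\tcT}\mathbb{S}_\bQ}(O(2)\endash\mathrm{Sp})$; a left Quillen functor preserves acyclic cofibrations, so $SO(3)_+\wedge_{O(2)}e_{D_2}^{O(2)}\mathbb{S}_\bQ$ would be $e_{\cT}\mathbb{S}_\bQ$-acyclic, i.e.\ $(SO(3)_+\wedge_{O(2)}e_{D_2}^{O(2)}\mathbb{S}_\bQ)\wedge e_{\cT}\mathbb{S}_\bQ \simeq \ast$. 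On the other hand, by the projection formula together with the identity $i^*(e_{\cT}) = e_{\tcT}+e_{D_2}^{O(2)}$,
\[
(SO(3)_+\wedge_{O(2)}e_{D_2}^{O(2)}\mathbb{S}_\bQ)\wedge e_{\cT}\mathbb{S}_\bQ \simeq SO(3)_+\wedge_{O(2)}\big(e_{D_2}^{O(2)}\mathbb{S}_\bQ \wedge (e_{\tcT}+e_{D_2}^{O(2)})\mathbb{S}_\bQ\big) \simeq SO(3)_+\wedge_{O(2)}e_{D_2}^{O(2)}\mathbb{S}_\bQ,
\]
which is not stably contractible, since $e_{D_2}^{O(2)}\mathbb{S}_\bQ$ is not and induction preserves non-triviality. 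This contradiction shows that $(SO(3)_+\wedge_{O(2)}-,\,i^*)$ is not a Quillen adjunction.

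I expect the only genuinely non-formal step to be the first one: the identification $i^*(e_{\cT}) = e_{\tcT}+e_{D_2}^{O(2)}$, i.e.\ the bookkeeping of how closed subgroups of $O(2)$ sit inside $SO(3)$ up to conjugacy. This is precisely where the special feature of $SO(3)$ (the fusion of $D_2$ with $C_2$) is used, and it is the crux of the proposition; once it is available, everything else is a consequence of the projection formula and of the fact that $L_{e\mathbb{S}_\bQ}$ is a smashing localisation, exactly as in the exceptional case. The one point to be careful about is to phrase the contradiction in terms of preservation of acyclic cofibrations (equivalently, of weak equivalences between cofibrant objects) rather than of derived functors, so as not to presuppose that $(SO(3)_+\wedge_{O(2)}-,\,i^*)$ descends to the localised homotopy categories at all.
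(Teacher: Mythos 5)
Your identification $i^*(e_{\cT}) = e_{\tcT} + e_{D_2}^{O(2)}$ is correct and is exactly the input the paper points to (the fusion of $D_2$ with $C_2$ inside $SO(3)$), and the reduction is sound: if induction were left Quillen it would have to send the acyclic cofibration $\ast\to e_{D_2}^{O(2)}\mathbb{S}_\bQ$ to an $e_{\cT}\mathbb{S}_\bQ$-equivalence, while the projection formula and your idempotent identity show that $SO(3)_+\wedge_{O(2)}e_{D_2}^{O(2)}\mathbb{S}_\bQ$ is already $e_{\cT}$-local up to equivalence, so everything hinges on this induced spectrum being non-trivial. The genuine gap is the closing assertion that ``induction preserves non-triviality''. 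That is not a quotable general principle, and in this instance it is precisely where the work lies: by the Wirthm\"uller isomorphism, $\pi_*^{SO(3)}(SO(3)_+\wedge_{O(2)}e_{D_2}^{O(2)}\mathbb{S}_\bQ)\cong e_{D_2}^{O(2)}\pi_*^{O(2)}(\mathbb{S}_\bQ\wedge S^L)$, where $L$ is the tangent $O(2)$-representation of $SO(3)/O(2)$ (the standard two-dimensional representation), and the rational tom Dieck splitting identifies this with the $W_{O(2)}D_2\cong C_2$-coinvariants of $\pi_*(S^{L^{D_2}})\otimes\bQ$; since the Weyl group acts by $-1$ on the line $L^{D_2}$, these groups vanish. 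So the most obvious invariant of your induced spectrum is zero, and its non-triviality must be detected elsewhere, for instance by computing $[SO(3)/K_+,\,SO(3)_+\wedge_{O(2)}e_{D_2}^{O(2)}\mathbb{S}_\bQ]^{SO(3)}_*$ for $K$ an order-two subgroup, where the fixed set $(SO(3)/K)^{D_2}$ is two circles interchanged by $W_{O(2)}D_2$, so the sign representation appears and the groups are non-zero. Without some such computation the proof is incomplete, and the danger is real: a sign action of the Weyl group is exactly the kind of phenomenon that can rationally kill an induced spectrum.

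A cleaner way to finish, still using only your idempotent identity, is to test the right adjoint instead: a right Quillen functor preserves fibrant objects, and the $e_{\cT}$-local replacement of $\mathbb{S}_\bQ$ restricts to $(e_{\tcT}+e_{D_2}^{O(2)})\mathbb{S}_\bQ$, which is not $e_{\tcT}$-local because $\ast\to e_{D_2}^{O(2)}\mathbb{S}_\bQ$ is an $e_{\tcT}\mathbb{S}_\bQ$-equivalence while $[e_{D_2}^{O(2)}\mathbb{S}_\bQ,\,(e_{\tcT}+e_{D_2}^{O(2)})\mathbb{S}_\bQ]^{O(2)}\cong e_{D_2}^{O(2)}\A_\bQ(O(2))\neq 0$. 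This route avoids any appeal to non-triviality of induced spectra.
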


The restriction--coinduction adjunction is often better behaved with respect to localisation at idempotents. 

\begin{prop}\label{cyclicAdj}Let $i \colon O(2) \lra SO(3)$ be the inclusion. Then the following adjunction
\[
\xymatrix{
i^\ast \ :\ L_{e_{\cT}\mathbb{S}_\bQ}(SO(3)\endash\mathrm{Sp})\  \ar@<+1ex>[r] & \ L_{e_{\tcT}\mathbb{S}_\bQ}(O(2)\endash\mathrm{Sp}) \ :\ F_{O(2)}(SO(3)_+,-) \ar@<+0.5ex>[l]
}
\]
is a strong symmetric monoidal Quillen adjunction. 
\end{prop}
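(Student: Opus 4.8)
The plan is to realise this adjunction as a composite of two strong symmetric monoidal Quillen adjunctions, following the template of the proof of Corollary \ref{localisedQAdjunctions} for $N$-bad exceptional subgroups: first the restriction--coinduction adjunction localised at the idempotents coming from $e_{\cT}$, and then the identity functors exhibiting $L_{e_{\tcT}\mathbb{S}_\bQ}(O(2)\endash\mathrm{Sp})$ as a further left Bousfield localisation. The role played by the bad behaviour of a subgroup in Corollary \ref{localisedQAdjunctions} is here played by the failure $i^\ast(e_{\cT}) \neq e_{\tcT}$ already noted in Proposition \ref{notQErest_ind}.

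First I would apply Corollary \ref{QuillenAdjForIandH}, which is strong symmetric monoidal by Corollary \ref{cor:coindQuillenadj}, to the inclusion $i\colon O(2)\to SO(3)$ and the idempotent $e_{\cT}\in\A_\bQ(SO(3))$. This immediately gives a strong symmetric monoidal Quillen pair
\[
i^\ast \colon L_{e_{\cT}\mathbb{S}_\bQ}(SO(3)\endash\mathrm{Sp}) \;\rightleftarrows\; L_{i^\ast(e_{\cT})\mathbb{S}_\bQ}(O(2)\endash\mathrm{Sp}) \;\colon\; F_{O(2)}(SO(3)_+,-),
\]
so the task reduces to identifying the restricted idempotent $i^\ast(e_{\cT})\in\A_\bQ(O(2))$ and comparing $L_{i^\ast(e_{\cT})\mathbb{S}_\bQ}(O(2)\endash\mathrm{Sp})$ with $L_{e_{\tcT}\mathbb{S}_\bQ}(O(2)\endash\mathrm{Sp})$.

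The second step, which carries the only real content, is to show
\[
i^\ast(e_{\cT}) = e_{\tcT} + e_{D_2},
\]
where $e_{D_2}\in\A_\bQ(O(2))$ is the idempotent supported on the conjugacy class of $D_2$. To see this I would use the description of restriction of idempotents in terms of $\mathrm{Sub_f}$ recalled above, or equivalently compute geometric isotropy via $\Phi^K(i^\ast X)=\Phi^K(X)$ for $K\leqslant O(2)$. Either way, the support of $i^\ast(e_{\cT})$ consists of the conjugacy classes of subgroups of $O(2)$ with finite Weyl group that are $SO(3)$-conjugate into the toral part $\cT$ of $SO(3)$. These are exactly the maximal torus $SO(2)\leqslant O(2)$ (with its cyclic subgroups) together with the class of $D_2$, since $D_2$ is generated by a reflection of $\RR^2$, which as an element of $SO(3)$ is a rotation by $\pi$ and hence $SO(3)$-conjugate to $C_2\leqslant SO(2)$; every $D_{2n}$ with $n\geqslant 2$, and $O(2)$ itself, restrict into the dihedral or exceptional parts of $SO(3)$ and so do not contribute. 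Since $\{(D_2)\}$ is an isolated point of the dihedral sequence (distinct from its limit $O(2)$) and is a single $\sim$-equivalence class, $\tcT\sqcup\{(D_2)\}$ is open, closed and a union of $\sim$-classes, which justifies the displayed formula and shows in particular that $e_{\tcT}\cdot i^\ast(e_{\cT}) = e_{\tcT}$.

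Finally, since these localisations are smashing and $e_{\tcT}\mathbb{S}_\bQ\wedge i^\ast(e_{\cT})\mathbb{S}_\bQ\simeq e_{\tcT}\mathbb{S}_\bQ$, the category $L_{e_{\tcT}\mathbb{S}_\bQ}(O(2)\endash\mathrm{Sp})$ is the further left Bousfield localisation of $L_{i^\ast(e_{\cT})\mathbb{S}_\bQ}(O(2)\endash\mathrm{Sp})$ at $e_{\tcT}\mathbb{S}_\bQ$, so the identity functors form a (trivially strong symmetric monoidal) Quillen pair between these two; this is exactly the move made in the second half of the proof of Corollary \ref{localisedQAdjunctions}. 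Composing with the Quillen pair of the first step produces the required strong symmetric monoidal Quillen adjunction, with left adjoint $i^\ast$ and right adjoint $F_{O(2)}(SO(3)_+,-)$. I expect the computation of $i^\ast(e_{\cT})$ to be the only delicate point; once $e_{\tcT}i^\ast(e_{\cT})=e_{\tcT}$ is in hand the rest is formal, and the extra summand $e_{D_2}$ (present precisely because $D_2$ is $SO(3)$-conjugate to $C_2$) is exactly the obstruction to upgrading this to a Quillen equivalence without a further localisation.
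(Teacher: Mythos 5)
Your proposal is correct and follows essentially the same route as the paper, which proves this proposition by the same two-step composite: the restriction--coinduction adjunction localised at $e_{\cT}$ and its restriction $i^\ast(e_{\cT})$ (via Lemma \ref{locAdjAtObject}/Corollary \ref{QuillenAdjForIandH}), followed by the further localisation of $O(2)$-spectra that excludes $D_2$. Your explicit identification $i^\ast(e_{\cT}) = e_{\tcT} + e_{D_2}$, using that $D_2$ is $SO(3)$-conjugate to $C_2$, is exactly the point the paper leaves implicit in the phrase ``which excludes the subgroup $D_2$''.
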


The proof follows the same argument as Corollary \ref{localisedQAdjunctions} above, in the sense that 
the adjunction is a composite of the restriction--coinduction adjunction localised at an idempotent $e_{\cT}$ 
(and its restriction $i^\ast(e_{\cT})$) followed by a further localisation of $O(2)$-spectra (which excludes the subgroup $D_2$).

This adjunction of restriction and coinduction is not quite a Quillen equivalence. However cellularising the right hand side at the derived images of the homotopically compact generators $\mathcal{K}$ for rational toral $SO(3)$-spectra and using the Cellularisation Principle (see Theorem \ref{thm:cellprin}) gives a Quillen equivalence.

\begin{thm}\label{thm:so3so2adjuncation}\cite[Theorem 3.28]{KedziorekSO(3)}
The following adjunction
\[
\xymatrix{
i^\ast \ :\ L_{e_{\cT} \mathbb{S}_\bQ}(SO(3)\endash\mathrm{Sp})\  \ar@<+1ex>[r] & \ i^\ast(\mathcal{K})\endash\mathrm{cell}\endash L_{e_{\tcT} \mathbb{S}_\bQ}(O(2)\endash\mathrm{Sp}) \ : F_{O(2)}(SO(3)_+,-) \ar@<+0.5ex>[l]
}
\]
is a Quillen equivalence, where $\mathcal{K}$ denotes the set of homotopically compact generators for the model category $L_{e_{\cT} \mathbb{S}_\bQ}(SO(3)\endash\mathrm{Sp})$.
\end{thm}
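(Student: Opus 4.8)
The plan is to deduce the theorem from Proposition~\ref{cyclicAdj} by a single application of the Cellularisation Principle, Theorem~\ref{thm:cellprin} (second part). Write $M = L_{e_{\cT}\mathbb{S}_\bQ}(SO(3)\endash\mathrm{Sp})$ and $N = L_{e_{\tcT}\mathbb{S}_\bQ}(O(2)\endash\mathrm{Sp})$, and let $(F,U) = \big(i^*,\, F_{O(2)}(SO(3)_+,-)\big)$ be the strong symmetric monoidal Quillen adjunction of Proposition~\ref{cyclicAdj}. Let $\mcK$ be the chosen set of homotopically compact generators of $M$. Since $\mcK$ generates $M$, the identity functor is a Quillen equivalence $M \simeq \mcK \cell M$: cellularising a stable model category at a set of homotopically compact generators leaves the homotopy category unchanged (Section~\ref{sec:cellularisation}, \cite[Section~9]{brstable}). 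So it is enough to exhibit a Quillen equivalence $\mcK \cell M \simeq i^*(\mcK)\cell N$, and this is exactly the conclusion of Theorem~\ref{thm:cellprin} once its hypotheses are verified.

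There are four hypotheses to check. (i) $M$ and $N$ are right proper, stable, cellular model categories; this holds because each is the left Bousfield localisation of orthogonal $G$-spectra at a cofibrant object ($e_{\cT}\mathbb{S}_\bQ$, resp.\ $e_{\tcT}\mathbb{S}_\bQ$), and such localisations of $G\endash\mathrm{Sp}^\cO$ are proper, stable and cellular (Section~\ref{sec:locals}). (ii) $\mcK$ is a stable set of homotopically compact objects of $M$: true by construction. (iii) For each $A \in \mcK$, the object $i^*\mcQ A$ is homotopically compact in $N$; this follows since $SO(3)_+$ restricts to a finite $O(2)$-CW spectrum (indeed $SO(3)/O(2)$ is a compact manifold), so derived restriction sends $SO(3)$-spectra built from finitely many cells $SO(3)/H_+$ to homotopically compact $O(2)$-spectra, and the smashing localisation $L_{e_{\tcT}\mathbb{S}_\bQ}$ preserves homotopical compactness. (iv) For each $A \in \mcK$, the derived unit $\mcQ A \to U\mcR F\mcQ A = F_{O(2)}\big(SO(3)_+,\, \mcR\, i^*\mcQ A\big)$ is a weak equivalence in $M$, i.e.\ an $e_{\cT}\mathbb{S}_\bQ$-equivalence.

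The verification of (iv) is the main obstacle, and the only step requiring real computation. Recall from the remark following Proposition~\ref{cyclicAdj} that $(F,U)$ factors as the restriction--coinduction adjunction localised at $e_{\cT}$ and its restriction $i^*(e_{\cT})$, followed by the identity-functor localisation $L_{i^*(e_{\cT})\mathbb{S}_\bQ}(O(2)\endash\mathrm{Sp}) \rightleftarrows N$; by Section~\ref{sec:subgroupsSO3} and Proposition~\ref{notQErest_ind} one has $i^*(e_{\cT}) \neq e_{\tcT}$ because $D_2 \leqslant O(2)$ is $SO(3)$-conjugate, but not $O(2)$-conjugate, to $C_2$. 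To evaluate the derived unit I would first apply the Wirthm\"uller isomorphism to rewrite the coinduction as a shifted induction, $F_{O(2)}(SO(3)_+, Y) \simeq SO(3)_+ \wedge_{O(2)} \big(S^{-\mathfrak a}\wedge Y\big)$, where $\mathfrak a = \mathfrak{so}(3)/\mathfrak{o}(2)$ is the tangent $O(2)$-representation of $SO(3)/O(2)$. Since a weak equivalence in $M$ is detected by the geometric fixed point functors $\Phi^H$ at toral subgroups $H$ of $SO(3)$ (Section~\ref{sec:locals}), it suffices to show $\Phi^H$ of the unit is an equivalence for every such $H$. Here one uses the standard double-coset description of $\Phi^H\big(SO(3)_+ \wedge_{O(2)}(-)\big)$ as a sum over the $SO(3)$-conjugates of $H$ contained in $O(2)$, together with the fact that $S^{\mathfrak a^H}$ is an invertible sphere that is trivial for most toral $H$. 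The role of the $e_{\tcT}$-localisation is precisely to discard the $D_2$-contributions, so that for each toral $H$ this double-coset sum collapses to the single term corresponding to $H$ itself, making $\Phi^H$ of the unit an isomorphism. The generator indexed by the trivial subgroup --- namely $SO(3)_+$ itself, whose $SO(3)$-normaliser is all of $SO(3)$ rather than a subgroup of $O(2)$ --- should be handled as a separate base case in this bookkeeping. Assembling the $\Phi^H$-computations over all toral $H$ yields (iv), and Theorem~\ref{thm:cellprin} then produces the desired Quillen equivalence.
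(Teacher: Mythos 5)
Your overall route is the same one the paper takes (and the one used in the cited source): start from the Quillen adjunction of Proposition~\ref{cyclicAdj}, cellularise the right-hand side at the derived images of the homotopically compact generators $\mathcal{K}$, and invoke the Cellularisation Principle, Theorem~\ref{thm:cellprin}, so that the only real content is the derived unit condition on the generators. Your reductions and the checks (i)--(iii) are fine: cellularising $L_{e_{\cT}\mathbb{S}_\bQ}(SO(3)\endash\mathrm{Sp})$ at a set of homotopically compact generators changes nothing, restriction along $O(2)\hookrightarrow SO(3)$ sends the generators to finite $O(2)$-CW objects, and the smashing localisations preserve homotopical compactness.

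The gap is in your verification of (iv). The ``standard double-coset description'' of $\Phi^H\big(SO(3)_+\wedge_{O(2)}-\big)$ as a finite sum over $SO(3)$-conjugates of $H$ inside $O(2)$ is a finite-group statement; for a compact Lie group the correct formula is a Thom-spectrum/bundle construction over the fixed space $(SO(3)/O(2))^H$, which is positive-dimensional here: for $H=C_2$ it contains an $\RR P^1$-family (these are exactly the $D_2$-contributions, and your point that $e_{\tcT}$-locality of the target kills them is correct), and for $H=\{1\}$ it is all of $SO(3)/O(2)=\RR P^2$. In particular, for the trivial subgroup the sum does \emph{not} collapse to a single term, and this case is not confined to the one generator $SO(3)_+$: it enters the check of $\Phi^{\{1\}}$ of the unit for every generator with free isotropy contributions. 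What actually makes the unit an equivalence there is the classical input that $SO(3)/O(2)$ is the maximal-torus-normaliser quotient, so $\chi\big((SO(3)/O(2))^H\big)=1$ for every closed $H$ (equivalently $\RR P^2$ is rationally trivial); combined with the untwisting isomorphism $F_{O(2)}(SO(3)_+,i^*X)\cong F(SO(3)/O(2)_+,X)$ and the correction by the further $e_{\tcT}$-localisation discarding the $D_2$-part, this is precisely what forces the unit to be an $e_{\cT}\mathbb{S}_\bQ$-equivalence on the generators. You flag a ``separate base case'' but give no argument for it, and this Euler-characteristic/rational-homology input is the heart of the proof of the unit condition in \cite[Theorem 3.28]{KedziorekSO(3)}; without it, the bookkeeping you describe does not close.
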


\subsection{Dihedral part of rational \texorpdfstring{$SO(3)$}{SO(3)}-spectra}

In other cases of idempotents it is not always clear to which category one should restrict. 
For the dihedral idempotent in rational $SO(3)$-spectra, 
restricting to certain part of the rational dihedral $O(2)$-spectra is the correct choice, 
but in general there is no good recipe for obtaining an algebraic model.

In the dihedral part of $SO(3)$ we can use restriction as a right or left Quillen functor, we chose the following one, which also follows from Lemma \ref{locAdjAtObject}.

\begin{cor}
Let $\mcD$ denote the dihedral part of $SO(3)$ and $e_{\mcD}$ the corresponding idempotent. 
Let $i \colon O(2) \lra SO(3)$ be the  inclusion. Then
\[
\xymatrix{
i^\ast\ :\ L_{e_{\mcD}\mathbb{S}_\bQ}(SO(3)\endash\mathrm{Sp})\ \ar@<+1ex>[r] & L_{i^*(e_{\mcD})\mathbb{S}_\bQ}(O(2)\endash\mathrm{Sp})\ :\ F_{O(2)}(SO(3)_+,-) \ar@<+0.5ex>[l]
}
\]
is a Quillen adjunction.
\end{cor}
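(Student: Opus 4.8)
The plan is to recognise this statement as the special case $G = SO(3)$, $N = O(2)$, $e = e_{\mcD}$ of Corollary \ref{QuillenAdjForIandH}; equivalently, it is a direct application of Lemma \ref{locAdjAtObject}, so the work consists of verifying the hypotheses of that lemma for the inputs at hand.

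First I would start from the restriction--coinduction adjunction between $SO(3)$-spectra and $O(2)$-spectra, with $O(2)$-spectra indexed on a complete $O(2)$-universe so that restriction is a left Quillen functor: this is a Quillen pair for the stable model structures by \cite[Chapter V, Proposition 2.4]{mm02}, and the left adjoint $i^*$ is strong symmetric monoidal. Next I would take the cofibrant object $E = e_{\mcD}\mathbb{S}_\bQ$ of $SO(3)$-spectra (cofibrant by the convention of Section \ref{sec:locals}). Since $i^*$ is strong monoidal and commutes with the mapping telescope defining $e_{\mcD}\mathbb{S}_\bQ$, and since $i^*(\mathbb{S}_\bQ) = \mathbb{S}_\bQ$, its image is $i^*(E) \simeq i^*(e_{\mcD})\mathbb{S}_\bQ$, where $i^*(e_{\mcD})$ is the restriction of the idempotent $e_{\mcD}$ to $\A_\bQ(O(2))$. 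Both homological localisations $L_{e_{\mcD}\mathbb{S}_\bQ}(SO(3)\endash\mathrm{Sp})$ and $L_{i^*(e_{\mcD})\mathbb{S}_\bQ}(O(2)\endash\mathrm{Sp})$ exist, being the smashing localisations at idempotents of Section \ref{sec:locals}. Feeding this data into Lemma \ref{locAdjAtObject} then yields the asserted strong symmetric monoidal Quillen adjunction.

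The one ingredient that is not purely formal, and the step I expect a reader to want spelled out, is the identification of the restricted idempotent $i^*(e_{\mcD})$. Unlike the finite-group case of Remark \ref{rmk:restrictidempotents}, there is no map $\mathrm{Sub_f}(O(2)) \to \mathrm{Sub_f}(SO(3))$ inducing $i^*$ on the nose, so one argues via the $\mathrm{f}$-topology picture of Section \ref{sec:two_ways_idempotents}: $i^*(e_{\mcD}) = e_{\overline{i^*\mcD}}$, and here $i^*\mcD$ --- the set of closed subgroups of $O(2)$ that are $SO(3)$-conjugate into $\mcD$ --- consists of the dihedral subgroups $D_{2n}$ with $n > 2$ together with $O(2)$, which is already open, closed, $O(2)$-invariant and a union of $\sim$-equivalence classes. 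In particular $i^*(e_{\mcD})$ is strictly smaller than the full dihedral idempotent $e_{\tcD}$ of $O(2)$: it omits the conjugacy classes of $D_2$ and $D_4$, reflecting the difference between the dihedral parts of $O(2)$ and $SO(3)$ recorded in Section \ref{sec:subgroupsSO3}. For the present Quillen-pair statement, however, only the formal facts that $i^*(e_{\mcD})$ is an idempotent of $\A_\bQ(O(2))$ and that the localisation at $i^*(e_{\mcD})\mathbb{S}_\bQ$ exists are needed, so I would keep the argument at the level of Lemma \ref{locAdjAtObject} and postpone the explicit description --- and the cellularisation that would upgrade this Quillen pair to a Quillen equivalence --- to the rest of the development.
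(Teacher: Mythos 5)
Your proposal is correct and follows essentially the same route as the paper, which obtains this corollary directly from Lemma \ref{locAdjAtObject} applied to the restriction--coinduction adjunction (i.e.\ as the special case $e = e_{\mcD}$ of Corollary \ref{QuillenAdjForIandH}). Your identification of $i^*(e_{\mcD})$ as the dihedral idempotent of $O(2)$ omitting $D_2$ and $D_4$ matches the paper's Remark \ref{rmk:different_localisations}, and you are right that it is not needed for the Quillen-pair statement itself.
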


\begin{rmk}\label{rmk:different_localisations} The idempotent on the right hand side $i^*(e_{\mcD})$ corresponds to the dihedral part of $O(2)$ \emph{excluding} all subgroups $D_2$ and $D_4$. Thus, $i^*(e_{\mcD}) = i^*(e_{\mcD}) e_{\tcD}$.
\end{rmk}

\subsection{Inflation and fixed point adjunction}\label{subs:infl_fixedpts}

Suppose $H$ is a normal subgroup of $N$ and consider the natural projection $\varepsilon \colon N\lra N/H=W$. 
Then there is a pair of adjoint functors
\[
\xymatrix{
\varepsilon^\ast \ :\ W\endash\mathrm{Sp}^\cO \  \ar@<+1ex>[r] & \  N\endash\mathrm{Sp}^\cO\ : (-)^H \ar@<+0.5ex>[l]
}
\]
where the right adjoint is the $H$ fixed points functor and the left adjoint is called inflation. 
For details see \cite[Section V.3]{mm02}.

We would like to understand the interaction between the localisation at idempotents and the above adjunction. Notice that since inflation is strong symmetric monoidal, the result below follows from Lemma \ref{locAdjAtObject}.

\begin{cor}Let $\varepsilon \colon N \lra W$ denote the projection of groups, where $H$ is normal in $N$ and $W=N/H$. Let $E$ be a cofibrant object in $W\endash\mathrm{Sp}^\cO$. Then
\[
\xymatrix{
\varepsilon^\ast \ :\ L_{E}(W\endash\mathrm{Sp}^\cO) \  \ar@<+1ex>[r] & \  L_{\varepsilon^\ast(E)}(N\endash\mathrm{Sp}^\cO)\ : (-)^H \ar@<+0.5ex>[l]
}
\]
is a strong symmetric monoidal Quillen pair.
\end{cor}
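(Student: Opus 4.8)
The plan is to deduce this directly from Lemma \ref{locAdjAtObject}, taking $\cC = W\endash\mathrm{Sp}^\cO$, $\mcD = N\endash\mathrm{Sp}^\cO$, the left adjoint $F = \varepsilon^*$ to be inflation, and the right adjoint $R = (-)^H$ to be the $H$-fixed point functor. The hypothesis that $(F,R)$ is a Quillen adjunction of the unlocalised model categories is exactly the statement recalled immediately above (from \cite[Section V.3]{mm02}), so the only inputs still to verify are that $\varepsilon^*$ is strong symmetric monoidal and that both $L_E(W\endash\mathrm{Sp}^\cO)$ and $L_{\varepsilon^*(E)}(N\endash\mathrm{Sp}^\cO)$ exist.

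For the monoidality, I would recall that inflation along $\varepsilon \colon N \to W$ is given on representations by restriction along $\varepsilon$ and on orthogonal spectra by the induced functor; it carries the sphere spectrum to the sphere spectrum and commutes with the smash product up to coherent natural isomorphism, so it is strong symmetric monoidal (see \cite[Section V.3]{mm02}). For the existence of the localisations: $E$ is cofibrant in $W\endash\mathrm{Sp}^\cO$ by hypothesis, so $L_E(W\endash\mathrm{Sp}^\cO)$ exists by the Mandell--May theorem recalled in Section \ref{sec:locals} (\cite[Chapter IV, Theorem 6.3]{mm02}); and since $\varepsilon^*$ is a left Quillen functor it preserves cofibrant objects, whence $\varepsilon^*(E)$ is cofibrant in $N\endash\mathrm{Sp}^\cO$ and the same theorem gives $L_{\varepsilon^*(E)}(N\endash\mathrm{Sp}^\cO)$.

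With these two points in place, Lemma \ref{locAdjAtObject} applies verbatim with $E$ the chosen cofibrant object and yields that $\varepsilon^* \dashv (-)^H$ descends to a strong symmetric monoidal Quillen pair between $L_E(W\endash\mathrm{Sp}^\cO)$ and $L_{\varepsilon^*(E)}(N\endash\mathrm{Sp}^\cO)$, as claimed. There is essentially no obstacle beyond bookkeeping; the one place to be slightly careful is to note that it is $\varepsilon^*(E)$ --- the inflation of $E$ --- and not some a priori different object, that one localises $N\endash\mathrm{Sp}^\cO$ at, which is precisely what makes the strong monoidality of $\varepsilon^*$ do its job in the proof of Lemma \ref{locAdjAtObject}.
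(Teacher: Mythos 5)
Your proposal is correct and follows exactly the paper's route: the paper likewise deduces this corollary immediately from Lemma \ref{locAdjAtObject}, noting only that inflation is strong symmetric monoidal. Your additional bookkeeping (existence of the two localisations and cofibrancy of $\varepsilon^*(E)$ via $\varepsilon^*$ being left Quillen) is accurate and simply makes explicit what the paper leaves implicit.
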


\begin{lem}\label{lem:exceptionalfixed}\cite[Theorem 5.2]{KedziorekExceptional}
For $H$ an exceptional subgroup of $N$, the adjunction
\[
\xymatrix{
\varepsilon^\ast \ :\ L_{e_1^W \mathbb{S}_\bQ}(W\endash\mathrm{Sp}^\cO) \  \ar@<+1ex>[r] & \  L_{e_H^N \mathbb{S}_\bQ}(N\endash\mathrm{Sp}^\cO)\ : (-)^H \ar@<+0.5ex>[l]
}
\]
is a Quillen equivalence. Here $e_1^W$ denotes an idempotent for the trivial subgroup $\{1\} \leqslant W$.
\end{lem}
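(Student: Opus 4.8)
The plan is to verify the hypotheses of Lemma \ref{qequiv_generators}. By the corollary immediately preceding the statement (a special case of Lemma \ref{locAdjAtObject}, since inflation is strong symmetric monoidal), $(\varepsilon^\ast, (-)^H)$ is a Quillen pair between the two localised model categories, with $\varepsilon^\ast$ the left adjoint. Both localisations are at an idempotent of a rational Burnside ring and hence smashing, so both sides are stable model categories with sets of homotopically compact generators: the generators of $L_{e_H^N\mathbb{S}_\bQ}(N\endash\mathrm{Sp}^\cO)$ are the spectra $e_H^N \Sigma^a N/K_+$ for $K \leqslant N$ and $a \in \bZ$ (with $\Sigma^a$ read as suspension/desuspension as in Equation (\ref{generatorsG/H})), and those of $L_{e_1^W\mathbb{S}_\bQ}(W\endash\mathrm{Sp}^\cO)$ are the $e_1^W \Sigma^a W/L_+$ for $L \leqslant W$ and $a \in \bZ$.

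Two identities organise the calculation. First, since the geometric $L$-fixed points of an inflated spectrum $\varepsilon^\ast X$ are $\Phi^{L/H}(X)$ when $H \leqslant L$ and trivial otherwise, the spectrum $\varepsilon^\ast(e_1^W\mathbb{S}_\bQ)$ has geometric isotropy exactly the conjugacy class of $H$; as idempotents of $\A_\bQ(N)$ are determined by their support, $\varepsilon^\ast(e_1^W\mathbb{S}_\bQ) \simeq e_H^N\mathbb{S}_\bQ$. Second, because $H$ is normal, $N/K$ carries the trivial $H$-action whenever $H \leqslant K$, so that $N/K_+ \cong \varepsilon^\ast(W/(K/H)_+)$; combining these, $e_H^N \Sigma^a N/K_+ \simeq \varepsilon^\ast\big(e_1^W \Sigma^a W/(K/H)_+\big)$ for $H \leqslant K \leqslant N$, and as $K/H$ runs over the subgroups of $W$ these account for all the generators of the $N$-side that are not already contractible: if $H$ is not subconjugate to $K$ then $(N/K)^H = \emptyset$, whence $\Phi^H(N/K_+) \simeq \ast$ and $e_H^N \Sigma^a N/K_+ \simeq \ast$.

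Next I would observe that on $L_{e_H^N\mathbb{S}_\bQ}(N\endash\mathrm{Sp}^\cO)$ the categorical $H$-fixed point functor coincides with the geometric one: writing $\mathcal F$ for the family of subgroups of $N$ not containing $H$, an object $X$ of this category satisfies $\mathcal{GI}(E\mathcal F_+ \smashprod X) = \mathcal{GI}(X) \cap \mathcal F \subseteq \{H\} \cap \mathcal F = \emptyset$, so $E\mathcal F_+ \smashprod X \simeq \ast$ and the isotropy separation comparison gives $X^H \simeq \Phi^H X$. Hence $R(-)^H$ restricted to this category is $\Phi^H$, which commutes with arbitrary coproducts, so the coproduct hypothesis of Lemma \ref{qequiv_generators} holds; and since $\Phi^H \circ \varepsilon^\ast$ is (naturally equivalent to) the identity functor on $W$-spectra, the derived unit $Y \to R(-)^H L\varepsilon^\ast Y$ is a weak equivalence for every cofibrant $Y$, in particular on the generators $e_1^W \Sigma^a W/L_+$. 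For the derived counit it suffices, by Lemma \ref{qequiv_generators}, to treat the generators $e_H^N \Sigma^a N/K_+$: for $H$ not subconjugate to $K$ the object is contractible, while for $H \leqslant K$ it equals $\varepsilon^\ast(e_1^W \Sigma^a W/(K/H)_+)$, and on objects in the image of $\varepsilon^\ast$ the triangle identities together with the derived unit being an isomorphism force the derived counit to be an isomorphism too. Lemma \ref{qequiv_generators} then yields the Quillen equivalence.

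The part that needs genuine care — and where the argument is not mere bookkeeping with orbit spectra and idempotents — is the passage between categorical and geometric $H$-fixed points and the identity $\Phi^H \circ \varepsilon^\ast = \id$ once everything is phrased inside the \emph{rationalised and localised} model structures, together with the change-of-universe functors implicit in writing $\varepsilon^\ast$ and $(-)^H$ between complete $W$- and $N$-universes; this technical core is carried out in \cite[Section V.3]{mm02} and \cite{KedziorekExceptional}.
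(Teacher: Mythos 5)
Your overall plan (reduce to the homotopically compact generators via Lemma \ref{qequiv_generators}, identify categorical with geometric $H$-fixed points on $e_H^N$-local objects, and use $\Phi^H\varepsilon^\ast\simeq\id$) is the right one, and it is essentially the strategy of the cited source \cite[Theorem 5.2]{KedziorekExceptional} (the paper itself gives no proof, only the citation). But there is a genuine error at the step you lean on most. The formula ``$\Phi^L(\varepsilon^\ast X)\simeq\Phi^{L/H}(X)$ when $H\leqslant L$ and trivial otherwise'' is false: for any $L\leqslant N$ one has $\Phi^L(\varepsilon^\ast X)\simeq\Phi^{LH/H}(X)$, so for instance $\Phi^{1}(\varepsilon^\ast X)$ is the underlying spectrum of $X$, not a point. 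Consequently the support of $\varepsilon^\ast(e_1^W)$ is the set of \emph{all} subgroups $L\leqslant H$, not the single conjugacy class $(H)$, and the identification $\varepsilon^\ast(e_1^W\mathbb{S}_\bQ)\simeq e_H^N\mathbb{S}_\bQ$ fails whenever $H$ is nontrivial (already for $N=H=C_2$, $W=1$: the left side is $\mathbb{S}_\bQ$, whose underlying spectrum is nontrivial, while $\Phi^1(e_{C_2}^{C_2}\mathbb{S}_\bQ)\simeq\ast$). This false identification is what you use both to land the Quillen pair in $L_{e_H^N\mathbb{S}_\bQ}(N\endash\mathrm{Sp}^\cO)$ and to claim $e_H^N\Sigma^a N/K_+\simeq\varepsilon^\ast(e_1^W\Sigma^a W/(K/H)_+)$ as $N$-spectra; the latter is likewise false in general (its two sides have different geometric isotropy).

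Both uses can be repaired, and then your remaining steps go through. For the Quillen pair, argue as in the proof of Corollary \ref{localisedQAdjunctions}: the corollary preceding the lemma gives a Quillen pair into $L_{\varepsilon^\ast(e_1^W)\mathbb{S}_\bQ}(N\endash\mathrm{Sp}^\cO)$, and since $H$ lies in the support of $\varepsilon^\ast(e_1^W)$ we have $e_H^N\cdot\varepsilon^\ast(e_1^W)=e_H^N$, so $L_{e_H^N\mathbb{S}_\bQ}(N\endash\mathrm{Sp}^\cO)$ is a further localisation and composing with the identity adjunction gives the desired Quillen pair. For the generators, the identification only holds \emph{after} $e_H^N$-localisation: the derived image of $e_1^W\Sigma^a W/(K/H)_+$ is $e_H^N$-locally equivalent to $e_H^N\Sigma^a N/K_+$ for $H\leqslant K$, which is all you need. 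Correspondingly, the idempotent bookkeeping in the unit computation should run through fixed points rather than inflation: the derived unit at $Y$ is $Y\to\Phi^H(e_H^N\mathbb{S}_\bQ\wedge\varepsilon^\ast Y)\simeq\Phi^H(e_H^N\mathbb{S}_\bQ)\wedge Y$, and the relevant identity is $\Phi^H(e_H^N\mathbb{S}_\bQ)\simeq e_1^W\mathbb{S}_\bQ$ (support computation using $\Phi^{L/H}\Phi^H\simeq\Phi^L$), which exhibits the unit as the $e_1^W$-localisation map, hence a weak equivalence in $L_{e_1^W\mathbb{S}_\bQ}(W\endash\mathrm{Sp}^\cO)$. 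With these corrections your isotropy-separation argument that categorical and geometric $H$-fixed points agree on local objects, the triangle-identity argument for the counit on generators, and the appeal to Lemma \ref{qequiv_generators} are all sound.
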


In case of a torus $\T$,  
we define $(\mathbb{S}^\lrcorner)^\T$ to be the diagram of commutative ring spectra obtained by taking 
objectwise $\T$-fixed points of $\mathbb{S}^\lrcorner$ (from Section \ref{sec:alternativetosplitting}).
We illustrate this in the case $\T=SO(2)$.
  \[
  \mathbb{S}^\lrcorner =  
  \left(
  \begin{gathered}
  \xymatrix{\ & \widetilde{E}\mcF \ar[d]\\
  DE\mcF_+ \ar[r]& DE\mcF_+ \wedge \widetilde{E}\mcF }
  \end{gathered}
  \right)
  \quad \quad 
  (\mathbb{S}^\lrcorner)^\T = 
  \left( 
  \begin{gathered} \xymatrix{\ & \widetilde{E}\mcF^\T \ar[d]\\
  DE\mcF_+^\T \ar[r]& (DE\mcF_+ \wedge \widetilde{E}\mcF)^\T } 
  \end{gathered}
  \right)
  \]
  
The inflation--fixed point adjunction lifts to the level of module categories over 
the diagrams of rings $\mathbb{S}^\lrcorner$ and  $(\mathbb{S}^\lrcorner)^\T$ by \cite{gsfixed}.
This adjunction is a Quillen equivalence and by the 
Cellularisation Principle, Theorem \ref{thm:cellprin}, it induces a 
Quillen equivalence on the cellularised categories as follows. 
We refer the reader to \cite[Section 7]{tnqcore} for more details.
 
\begin{thm}\label{thm:torusfixedpoints} Let $\T$ be a torus.
The fixed point functor induces strong symmetric monoidal Quillen equivalences  
\begin{align*}
\mathbb{S}^\lrcorner\text{-mod}
& \simeq_{QE}
(\mathbb{S}^\lrcorner)^\T\text{-mod} \\
\mathcal{K}\endash\mathrm{cell}\endash\mathbb{S}^\lrcorner\text{-mod} 
& \simeq_{QE}
\mathcal{K}^\T\endash\mathrm{cell}\endash(\mathbb{S}^\lrcorner)^\T\text{-mod}
\end{align*}
 where $\mathcal{K}$ is the image in $\mathbb{S}^\lrcorner\text{-mod}$ of the set of compact generators for $L_{\mathbb{S}_\bQ}(\T\endash\mathrm{Sp}^\cO)$ and
$\mathcal{K}^\T$ its image in $(\mathbb{S}^\lrcorner)^\T\text{-mod}$. 
\end{thm}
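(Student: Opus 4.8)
The plan is to deduce both equivalences from the inflation--fixed point adjunction of Section~\ref{subs:infl_fixedpts}, specialised to the case where the normal subgroup is all of $\T$ (so that the quotient is trivial and $(-)^H$ is the categorical $\T$-fixed point functor), combined with the Cellularisation Principle (Theorem~\ref{thm:cellprin}). Throughout write $R = \mathbb{S}^\lrcorner$ for the punctured cube of $\T$-ring spectra and $R^\T = (\mathbb{S}^\lrcorner)^\T$ for its objectwise $\T$-fixed points; recall from \cite{CouniversalCommutative_greenlees} that every vertex of $R$ is a genuine-commutative $\T$-ring spectrum, so $R^\T$ is a genuine diagram of commutative ring spectra and the constructions below make sense.

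First I would establish the (strong symmetric monoidal) Quillen equivalence $R\text{-mod} \simeq_{QE} R^\T\text{-mod}$. By \cite{gsfixed} the inflation--fixed point Quillen pair lifts, for any genuine-commutative $\T$-ring spectrum $A$, to a Quillen pair between $A\text{-mod}$ and $A^\T\text{-mod}$ with right adjoint induced by $(-)^\T$ and left adjoint of extension-of-scalars--inflation type. Applying this at each vertex and assembling over the punctured cube gives the desired Quillen pair between the diagram-module categories, and its model-categorical properties (including strong symmetric monoidality, since inflation is strong monoidal and the lift is compatible with the relative smash products) are inherited from the vertices, as weak equivalences and fibrations of modules over the diagrams are detected at the vertices. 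To see this pair is a Quillen equivalence it then suffices to verify the vertexwise statement: for each ring $A$ occurring in $\mathbb{S}^\lrcorner$ --- each spectrum of the form $\widetilde{E}\mcF$, $DE\mcF_+$, or a smash of such for the various families used in building the cube --- the functor $(-)^\T \colon A\text{-mod} \to A^\T\text{-mod}$ is a Quillen equivalence. For the vertices that invert some $E\mcF_+$ this amounts to the statement that on $\widetilde{E}\mcF$-local $\T$-spectra the categorical and geometric $\T$-fixed points agree and identify the category with modules over $\widetilde{E}\mcF^\T = \mathbb{S}$; for the ``cofree'' vertices built from $DE\mcF_+$ one uses that these are assembled from the free cells $\T/H_+$ with $H$ proper, on which fixed points again lose no homotopical information. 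This is precisely the analysis carried out in \cite[Section 7]{tnqcore}.

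With the first equivalence in hand, the second follows formally from the last part of the Cellularisation Principle (Theorem~\ref{thm:cellprin}), applied to the Quillen equivalence $R^\T\text{-mod} \rightleftarrows R\text{-mod}$ with $U = (-)^\T$ the right adjoint and with $\mcL = \mcK$, the stable set of homotopically compact objects of $R\text{-mod}$ coming (via Theorem~\ref{thm:septorus}) from the compact generators of $L_{\mathbb{S}_\bQ}(\T\endash\mathrm{Sp}^\cO)$. The hypotheses are automatic: since the derived functors of a Quillen equivalence between stable model categories with sets of homotopically compact generators preserve homotopical compactness (as noted above Lemma~\ref{qequiv_generators}), each $U\mcR k$ for $k \in \mcK$ is homotopically compact in $R^\T\text{-mod}$ and, by definition, lies in $\mcK^\T$; and the derived counit is a weak equivalence because we already have a Quillen equivalence. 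The Cellularisation Principle therefore yields $\mcK^\T \cell R^\T\text{-mod} \simeq_{QE} \mcK \cell R\text{-mod}$, and the equivalence is strong symmetric monoidal because both the underlying equivalence and the cellularisations at these generators (chosen compatibly with the monoidal structure) are.

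The main obstacle is the vertexwise analysis underpinning the first equivalence. The functor $(-)^\T$ is only lax monoidal and is certainly not a Quillen equivalence on all of $\T$-spectra, so one must genuinely exploit the special homotopical shape of the rings in $\mathbb{S}^\lrcorner$ --- namely that, after the localisations built into the cube, each such ring is concentrated in a range of isotropy on which categorical and geometric $\T$-fixed points coincide and invert exactly the right cells. Organising this uniformly over the $2^{r+1}-1$ vertices of the cube for a torus of arbitrary rank $r$, and checking that the module-level fixed point theorem of \cite{gsfixed} applies to each genuine-commutative ring occurring there, is where the real work lies; everything after that is the formal bookkeeping of the Cellularisation Principle.
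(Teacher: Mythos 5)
Your proposal is correct and follows essentially the same route as the paper: the paper likewise obtains the module-level equivalence from the inflation--fixed point adjunction lifted to modules over the diagrams via \cite{gsfixed} (with the substantive vertexwise analysis deferred to \cite[Section 7]{tnqcore}, exactly as you do), and then deduces the cellularised equivalence from the Cellularisation Principle, Theorem \ref{thm:cellprin}. Your extra detail on the vertexwise verification and on which part of the Cellularisation Principle is invoked is consistent with that treatment.
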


The advantage of this last theorem is that it gives
a model for rational $\T$-spectra in terms of non-equivariant spectra.

The base idea for the toral part of rational $N$-spectra (where $\T$ is normal in $N$)
is to use the same steps, but in a context where after taking $\T$-fixed points we 
land in a category of spectra with an action of $W=N/\T$. 
This requires some very detailed constructions
to make precise, which we leave to \cite{toralBGK}.

\section{An algebraic model for rational \texorpdfstring{$G$}{G}-spectra - overview of some cases}

In this section we provide a summary of the necessary steps  to obtain an algebraic model for a (part of) rational $G$-spectra in two cases. The first case is when $G$ is a finite group and we follow 
the steps presented in the algebraic case in Part \ref{part:1}. 
The second case is when we are interested in the toral part of rational $G$-spectra, for any compact Lie group $G$.
We discuss briefly  the series of simplifications required for the classification result in this case.

\subsection{An algebraic model for rational \texorpdfstring{$G$}{G}-spectra for finite \texorpdfstring{$G$}{G}}

Building on the results of Sections \ref{subs:exceptional_res_coind} and \ref{subs:infl_fixedpts} we can sketch the passage to the algebraic model for rational $G$-spectra when $G$ is a finite group.

Theorem \ref{thm:splittingofexep} allows us to split the category of rational $G$-spectra into a finite product
\[
\prod_{(H) \leqslant G} L_{e_H^G \mathbb{S}_\bQ}(G\endash\mathrm{Sp}^\cO).
\]

The next step uses the restriction--coinduction Quillen equivalence 
\[L_{e_H^G \mathbb{S}_\bQ}(G\endash\mathrm{Sp}^\cO) \simeq_{QE} L_{e_H^N \mathbb{S}_\bQ}(N\endash\mathrm{Sp}^\cO) 
\]
for each factor of the product seperately. We then follow with the inflation-fixed point Quillen equivalence
\[L_{e_H^N \mathbb{S}_\bQ}(N\endash\mathrm{Sp}^\cO) \simeq_{QE} L_{e_1^W \mathbb{S}_\bQ}(W\endash\mathrm{Sp}^\cO) 
\]
of the previous section.

The model category $L_{e_1^W \mathbb{S}_\bQ}(W\endash\mathrm{Sp}^\cO)$ obtained after taking $H$-fixed points of $L_{e_H^N \mathbb{S}_\bQ}(N\endash\mathrm{Sp}^\cO)$ can be described in a much easier way. It is Quillen equivalent to the model category $\mathrm{Sp}^\cO[W]$ of orthogonal spectra with the $W$ action, where the model structure is created from the one on $\mathrm{Sp}^\cO$ by the forgetful functor $U \colon \mathrm{Sp}^\cO[W] \lra \mathrm{Sp}^\cO$.
This allows us to remove the equivariance from \emph{inside} of the complicated category $W\endash\mathrm{Sp}^\cO$ (where it appeared in the indexing spaces for the spectrum) to the \emph{outside} of much simpler $\mathrm{Sp}^\cO[W]$.

Shipley \cite{shiHZ} gives a (zig-zag of weak)
symmetric monoidal Quillen equivalences between rational spectra and 
chain complexes of $\bQ$-modules (with the projective model structure). 
This is often referred to in the literature as a \emph{algebraicisation}.
This result readily extends to a Quillen equivalence between 
rational spectra with a finite group action and rational chain complexes with a 
finite group action. 
Hence, we obtain an algebraic model for $L_{e_H^N \mathbb{S}_\bQ}(N\endash\mathrm{Sp}^\cO)$ in terms of chain complexes of $\bQ[W_G H]$-modules. 

Combining all the steps mentioned in this section we obtain the following result.

\begin{thm}\label{thm:finitealgmodel}
For $G$ a finite group, there is a zig-zag of symmetric monoidal Quillen equivalences between 
$L_{e_H^G \mathbb{S}_\bQ}(G\endash\mathrm{Sp}^\cO)$
and $\Ch(\bQ[W_G H])$.

The algebraic model for rational $G$-spectra is therefore
\[
\prod_{(K) \leqslant G} \Ch(\bQ[W_G K]).
\]
Moreover, if $X$ is a rational $G$-spectrum with corresponding 
object $(A_K)_{(K) \leqslant G}$ in the algebraic model, then 
\[
\pi_* ( \big(i^* (e_K^G X)\big)^K ) 
\cong 
\pi_*(\Phi^K X)
\cong 
H_*(A_K).
\]
Here $i^*$ and $(-)^K$ denote derived functors of restriction and fixed points discussed in Sections \ref{subs:exceptional_res_coind} and \ref{subs:infl_fixedpts}, respectively and $H_*$ denotes homology.
\end{thm}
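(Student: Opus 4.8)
The plan is to assemble the zig-zag of Quillen equivalences recalled in Sections~\ref{subs:exceptional_res_coind} and~\ref{subs:infl_fixedpts} and then to track a spectrum through each stage. Fix a subgroup $K\leqslant G$ and write $N=N_G K$ and $W=W_G K=N/K$. First I would note that, since $G$ is finite, every subgroup is exceptional, so Theorem~\ref{badLeftAdjoint} applied with $H=K$ gives a strong symmetric monoidal Quillen equivalence
\[
L_{e_K^G \mathbb{S}_\bQ}(G\endash\mathrm{Sp}^\cO)\ \simeq_{QE}\ L_{e_K^N \mathbb{S}_\bQ}(N\endash\mathrm{Sp}^\cO)
\]
realised by restriction and coinduction. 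Since $N$ is also finite, $K$ is exceptional in $N$, so Lemma~\ref{lem:exceptionalfixed} gives a Quillen equivalence
\[
L_{e_1^W \mathbb{S}_\bQ}(W\endash\mathrm{Sp}^\cO)\ \simeq_{QE}\ L_{e_K^N \mathbb{S}_\bQ}(N\endash\mathrm{Sp}^\cO)
\]
with inflation as left adjoint and $K$-fixed points $(-)^K$ as right adjoint. The category $L_{e_1^W \mathbb{S}_\bQ}(W\endash\mathrm{Sp}^\cO)$ of free rational $W$-spectra is Quillen equivalent to rational orthogonal spectra with a $W$-action, and Shipley's algebraicisation \cite{shiHZ}, extended to a finite group action, provides a symmetric monoidal zig-zag of Quillen equivalences with $\Ch(\bQ[W])$. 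Composing all of these proves the first assertion, and combining it with the monoidal splitting of Theorem~\ref{thm:splittingofexep} over conjugacy classes of subgroups gives the algebraic model $\prod_{(K)\leqslant G}\Ch(\bQ[W_G K])$.

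Next I would track $e_K^G X$ along this chain: under restriction it becomes $i^*(e_K^G X)$, under the equivalence of Lemma~\ref{lem:exceptionalfixed} it corresponds to the derived $K$-fixed points $\big(i^*(e_K^G X)\big)^K$, an $e_1^W$-local $W$-spectrum, and the two remaining equivalences preserve its homotopy as a graded $\bQ[W_G K]$-module (Shipley's equivalence turning homotopy into homology), so that $H_*(A_K)\cong\pi_*\big((i^*(e_K^G X))^K\big)$, where $A_K$ is the object corresponding to $X$ in the algebraic model. It then remains to identify this group with $\pi_*(\Phi^K X)$. Here I would use that $e_K^G X$ has geometric isotropy concentrated at the $G$-conjugates of $K$, so $\Phi^L(e_K^G X)\simeq\ast$ whenever $L$ is not $G$-conjugate to $K$; since the idempotent splitting of $X$ into $\bigvee_{(L)\leqslant G}e_L^G X$ is finite and $\Phi^K$ commutes with wedges, this already gives $\Phi^K X\simeq\Phi^K(e_K^G X)$. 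Then, restricting to $N$ and applying the isotropy separation (tom Dieck) splitting to the $N$-spectrum $i^*(e_K^G X)$ with $K\trianglelefteq N$, the contributions to $\pi_*\big((i^*(e_K^G X))^K\big)$ are indexed by subgroups $K\leqslant L\leqslant N$ through the terms $\Phi^L\big(i^*(e_K^G X)\big)\simeq\Phi^L(e_K^G X)$, which vanish unless $L$ is $G$-conjugate to $K$; as $K\leqslant L$ this forces $L=K$. Hence the natural comparison map $\big(i^*(e_K^G X)\big)^K\to\Phi^K(e_K^G X)$ is a $\pi_*$-isomorphism, finishing the chain of identifications.

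The step I expect to be the real obstacle is this last identification: one must check that the categorical and geometric $K$-fixed points of $e_K^G X$ agree, which rests precisely on the vanishing of the off-diagonal geometric fixed points forced by the idempotent $e_K^G$, together with the tom Dieck splitting; and one must verify that passing through Shipley's algebraicisation, itself a zig-zag of several Quillen equivalences, preserves the underlying homotopy as a $\bQ[W_G K]$-module at each stage. The remainder is bookkeeping with the Quillen equivalences cited above.
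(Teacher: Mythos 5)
Your proposal follows the paper's own route: the idempotent splitting of Theorem \ref{thm:splittingofexep}, the restriction--coinduction equivalence of Theorem \ref{badLeftAdjoint}, the inflation--fixed-point equivalence of Lemma \ref{lem:exceptionalfixed}, passage to spectra with a $W$-action, and Shipley's algebraicisation, so it matches the paper's (sketched) proof, and your extra argument identifying categorical with geometric $K$-fixed points via the vanishing of geometric fixed points away from the conjugacy class of $K$ is the standard justification the paper leaves implicit. One small correction: the rational (tom Dieck) splitting of $\pi_*\big((i^*(e_K^G X))^K\big)=\pi_*^K(i^*(e_K^G X))$ is indexed by subgroups $L\leqslant K$, not by $K\leqslant L\leqslant N$, though your conclusion is unchanged since $L$ being $G$-conjugate to $K$ again forces $L=K$.
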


\subsection{Morita equivalences}
A different approach to obtaining an algebraic model for rational $G$-spectra for a finite group $G$ is presented in \cite{barnesfinite} and uses Morita equivalences developed in the spectral setting by Schwede and Shipley \cite{ss03stabmodcat}.

The idea is to present $L_{e_H^G \mathbb{S}_\bQ}(G\endash\mathrm{Sp}^\cO)$
as a category of modules over the endomorphism ring spectrum of the compact generator $e_H^GG/H_+$. 

Let $\underline{\Hom}(-,=)$ denote the enrichment of $G$-spectra in non-equivariant spectra.  
Then 
\[
E_H=\underline{\Hom}(e_H^GG/H_+,e_H^GG/H_+)
\] 
(with fibrant replacements omitted from the notation)
is a ring spectrum under composition.
Furthermore, the model category of modules over $E_H$ (in non-equivariant spectra)
is Quillen equivalent to $L_{e_H^G \mathbb{S}_\bQ}(G\endash\mathrm{Sp}^\cO)$.
One can then use algebraicisation (the results of Shipley \cite{shiHZ}) to obtain an algebraic model 
for this part of rational $G$-spectra. 

 However, $E_H$ it is not (in general) a \emph{commutative} ring spectrum in 
orthogonal spectra. 
The problem is fundamental and can be seen by looking at homotopy groups. 
The homotopy groups of $E_H$ are non-trivial only in degree $0$, where
they take value
\[
\pi_0(\underline{\Hom}(e_H^GG/H_+,e_H^GG/H_+))= \bQ [W_GH].
\]
While this has a cocommutative Hopf algebra structure, 
it does not have a commutative ring structure in general. 

This makes it much harder to obtain a comparison that takes into account the monoidal structures. 
In particular, we would need to check that the algebraicisation of the ring spectrum $E_H$
also has a cocommutative Hopf algebra structure. 
As we only have control over the homology of the algebraicised object, 
we would also need a formality argument that preserves the 
cocommutative Hopf algebra structure. 

\subsection{An algebraic model for the toral part of rational \texorpdfstring{$G$}{G}-spectra}

We give a brief overview of the remaining steps needed to classify 
rational toral $G$-spectra. Details are left to the references. 
While reading the summary, the reader may like to keep in mind the case $\T=SO(2)$, $G=SO(3)$ and $N=O(2)$.
These are the easiest cases of interest and have been discussed in previous sections.

Greenlees and Shipley \cite{tnqcore} give an algebraic model for 
rational $\T$-spectra, where $\T$ is a torus.
See also \cite{gre99} for a full explanation of the algebraic model
and \cite{BGKSso2} for the classification of rational $SO(2)$-spectra. 
The first two steps of the classification are to apply 
Theorems \ref{thm:septorus} and \ref{thm:torusfixedpoints}. 
The next step is to algebraicise using work of Shipley \cite{shiHZ}.
This gives an algebraic model for rational $\T$-spectra in terms of 
(a cellularisation of) a category of modules 
over a diagram of commutative differential graded algebras. 
Formality of these commutative dgas allows us to simplify the 
rings in the diagram. An additional simplification of the algebra 
removes the cellularisation and 
gives the algebraic model for rational $\T$-spectra. 

Work of the authors and Greenlees gives an algebraic model
for the toral part of rational $G$-spectra for any compact Lie group $G$, see \cite{toralBGK}. 
Given $G$, we let $\T$ be a maximal torus and $N$ its normaliser in $G$.
We can lift the classification for rational $\T$-spectra to 
a classification of rational toral $N$-spectra.
We then use the following result to reduce 
problem of classifying rational  toral $G$-spectra to 
understanding a cellularisation of rational toral $N$-spectra. 

\begin{thm}\cite[Theorem 2.2]{toralBGK}\label{thm:generalToral}
The following adjunction
\[
\xymatrix{
i^\ast \ :\ L_{e_{\T}^G \mathbb{S}_\bQ}(G\endash\mathrm{Sp})\  \ar@<+1ex>[r] & \ i^\ast(\mathcal{L})\endash\mathrm{cell}\endash L_{e_{\T}^N \mathbb{S}_\bQ}(N\endash\mathrm{Sp}) \ : F_{N}(G_+,-) \ar@<+0.5ex>[l]
}
\]
is a Quillen equivalence, where the idempotent on both sides corresponds to the families of all subgroups of maximal torus $\T\leq N \leq G$ and $\mathcal{L}$ denotes the set of homotopically compact generators for $L_{e_{\T}^G \mathbb{S}_\bQ}(G\endash\mathrm{Sp})$.
\end{thm}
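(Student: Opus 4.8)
The plan is to deduce this from the Cellularisation Principle (Theorem~\ref{thm:cellprin}) applied to the restriction--coinduction adjunction, following the $SO(3)$ template (Theorem~\ref{thm:so3so2adjuncation}) with the maximal torus $\T$ in the role of $SO(2)$ and $N=N_G\T$ in the role of $O(2)$. First I would set up the underlying Quillen adjunction of model categories. By Corollary~\ref{QuillenAdjForIandH} the pair $(i^*,F_N(G_+,-))$ is a Quillen adjunction from $L_{e_{\T}^G \mathbb{S}_\bQ}(G\endash\mathrm{Sp})$ to $L_{i^*(e_{\T}^G)\mathbb{S}_\bQ}(N\endash\mathrm{Sp})$. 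As in Proposition~\ref{notQErest_ind}, the restricted idempotent $i^*(e_{\T}^G)$ need not equal $e_{\T}^N$, since $N$ may contain closed subgroups that are $G$-subconjugate but not $N$-subconjugate to $\T$ (the $D_2\leqslant O(2)$ inside $SO(3)$ phenomenon). One checks, however, that $e_{\T}^N\cdot i^*(e_{\T}^G)=e_{\T}^N$, so $L_{e_{\T}^N\mathbb{S}_\bQ}(N\endash\mathrm{Sp})$ is a further left Bousfield localisation of $L_{i^*(e_{\T}^G)\mathbb{S}_\bQ}(N\endash\mathrm{Sp})$; composing the two Quillen adjunctions (exactly as in Proposition~\ref{cyclicAdj} and Corollary~\ref{localisedQAdjunctions}) yields a Quillen adjunction
\[
i^*\,:\,L_{e_{\T}^G \mathbb{S}_\bQ}(G\endash\mathrm{Sp})\ \rightleftarrows\ L_{e_{\T}^N \mathbb{S}_\bQ}(N\endash\mathrm{Sp})\,:\,F_N(G_+,-).
\]

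Next I would record the generators and verify the compactness hypotheses. Because rationalisation and localisation at $e_{\T}^G$ are smashing, the derived images $\mathcal{L}$ of the spectra $e_{\T}^G(G/K)_+$, together with their integer shifts, as $K$ runs over a set of representatives of the $G$-conjugacy classes of closed subgroups of $\T$, form a stable set of homotopically compact generators for $L_{e_{\T}^G \mathbb{S}_\bQ}(G\endash\mathrm{Sp})$. To apply the second part of Theorem~\ref{thm:cellprin} I need, for each $A\in\mathcal{L}$, that $i^*A$ is homotopically compact in $L_{e_{\T}^N \mathbb{S}_\bQ}(N\endash\mathrm{Sp})$ and that the derived unit $A\to F_N(G_+,i^*A)$ is a weak equivalence. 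The compactness is routine: $G/K$ is a compact smooth $G$-manifold, so its restriction is a finite $N$-CW complex, hence $i^*\Sigma^\infty(G/K)_+$ is homotopically compact, and smashing localisations preserve homotopically compact objects (equivalently, the derived functor of $F_N(G_+,-)$ preserves coproducts by the Wirthm\"uller isomorphism, so $i^*$ preserves homotopically compact objects).

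The heart of the argument, and the step I expect to be the main obstacle, is showing that the derived unit $A\to F_N(G_+,i^*A)$ is a weak equivalence for every $A\in\mathcal{L}$. I would check this by applying the geometric fixed point functors $\Phi^H$ for all closed $H\leqslant G$ and reducing to an identification of $H$-fixed points of $G/K$ with those of the object coinduced from $G/N$; the delicate point is that passing to the idempotent $e_{\T}^G$ on the source and $e_{\T}^N$ on the target removes exactly the contributions of the subgroups of $N$ responsible for the discrepancy between $i^*(e_{\T}^G)$ and $e_{\T}^N$, so the unit becomes an equivalence after localisation. This is precisely the point requiring the detailed constructions of \cite{toralBGK}.

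Finally, granting the two hypotheses, the second bullet of Theorem~\ref{thm:cellprin} gives a Quillen equivalence
\[
\mathcal{L}\cell L_{e_{\T}^G \mathbb{S}_\bQ}(G\endash\mathrm{Sp})\ \simeq_{QE}\ i^*(\mathcal{L})\cell L_{e_{\T}^N \mathbb{S}_\bQ}(N\endash\mathrm{Sp}).
\]
Since $\mathcal{L}$ is a set of homotopically compact generators for $L_{e_{\T}^G \mathbb{S}_\bQ}(G\endash\mathrm{Sp})$, its $\mathcal{L}$-cellular equivalences are exactly the weak equivalences, so cellularising at $\mathcal{L}$ changes nothing and the left-hand side is $L_{e_{\T}^G \mathbb{S}_\bQ}(G\endash\mathrm{Sp})$ itself. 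This is the claimed Quillen equivalence.
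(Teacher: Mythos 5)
Your overall scaffold is the right one, and it matches how this result is set up here and in the source: this theorem is quoted from \cite[Theorem 2.2]{toralBGK} rather than proved in the paper, and the intended route is exactly the one you describe --- compose the localised restriction--coinduction Quillen adjunction of Corollary \ref{QuillenAdjForIandH} with the further localisation from $i^*(e_{\T}^G)$ to $e_{\T}^N$ (the same device as in Corollary \ref{localisedQAdjunctions} and Proposition \ref{cyclicAdj}, and needed here because of the $D_2\leqslant O(2)$ phenomenon of Proposition \ref{notQErest_ind}), then apply the second part of the Cellularisation Principle (Theorem \ref{thm:cellprin}) to the homotopically compact generators $\mathcal{L}$, and finally observe that cellularising the source at its own generators does not change the weak equivalences, exactly as in the $SO(3)$ case of Theorem \ref{thm:so3so2adjuncation}.

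However, as a proof your proposal has a genuine gap, and you flag it yourself: the derived unit condition $A \to F_N(G_+, i^*A)$ being a weak equivalence in $L_{e_{\T}^G\mathbb{S}_\bQ}(G\endash\mathrm{Sp})$ for each generator $A \in \mathcal{L}$ is the entire mathematical content of the theorem, and you defer it to \cite{toralBGK} rather than establish it. Everything else in your argument is formal bookkeeping, so without this step nothing has been proved. Moreover, the method you sketch for it is doubtful as stated: you propose to check the unit by applying geometric fixed points $\Phi^H$ to both sides, but geometric fixed points do not interact with coinduction $F_N(G_+,-)$ in any simple way, so there is no ready identification of $\Phi^H F_N(G_+, i^*A)$ to compare against $\Phi^H A$. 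The standard route is instead to test against the compact generators, i.e.\ to compare $[G/K_+, A]^G_*$ with $[i^*(G/K_+), i^*A]^N_*$ via the adjunction, using the double-coset decomposition of $i^*(G/K)$ and the fact that localisation at $e_{\T}^G$ and $e_{\T}^N$ kills precisely the orbits whose isotropy is $G$-subconjugate but not $N$-subconjugate to a subgroup of $\T$; this analysis is the substance of \cite{toralBGK} and would need to be carried out for your argument to be complete.
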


By the Cellularisation Principle, Theorem \ref{thm:cellprin}, 
we can cellularise each term of the classification of rational toral $N$-spectra at the 
derived images of the cells $\mathcal{L}$.
This gives a classification of rational toral $G$-spectra in terms of 
a cellularisation of the algebraic model for rational toral $N$-spectra. 
The final simplification is to remove this cellularisation,
which is based on another formality argument.

\bibliographystyle{alpha}
\bibliography{ourbib}

\end{document}